\documentclass[a4paper, 10pt, oneside, reqno]{amsart}

\usepackage[utf8]{inputenc}
\usepackage{amsfonts,amssymb,amsthm,amsmath}
\usepackage{mathrsfs}
\usepackage{geometry}
\usepackage{graphicx,graphics}
\usepackage{color}
\usepackage{tikz}
\usepackage{relsize}
\usepackage{tikz-cd}
\usepackage{comment}
\usepackage{subfiles}
\usepackage{stmaryrd}
\usepackage[T1]{fontenc}
\usepackage{hyperref}
\usepackage{todonotes}
\usepackage{cleveref}
\usepackage{mathtools}

\hypersetup{colorlinks = true, linkbordercolor = {white}, linkcolor = {blue}, citecolor = {blue}}

\numberwithin{equation}{section}

\newcommand\bbD{\mathbb D}

\theoremstyle{plain}
\newtheorem{thm}{Theorem}[subsection]
\newtheorem{lem}[thm]{Lemma}
\newtheorem{prop}[thm]{Proposition}
\newtheorem{corol}[thm]{Corollary}
\newtheorem{conj}[thm]{Conjecture}

\theoremstyle{definition}
\newtheorem{defi}[thm]{Definition}

\theoremstyle{remark}
\newtheorem{rque}[thm]{Remark}

\newtheorem{ex}[thm]{Example}

\usepackage{comment}
\usepackage[backend=bibtex,
style=alphabetic,
bibencoding=ascii,
maxbibnames=99,
minbibnames=99,
]{biblatex}
\def \F {\mathbb{F}}

\def \Fq {\F_q}

\def \Fqb {\overline{\F}_q}
\def \Flb {\overline{\F}_{\ell}}
\def \loc {\tn{loc}}
\def \ov {\overline}

\def \Z {\mathbb{Z}}

\def \Zlb {\overline{\Z}_{\ell}}

\def \Q {\mathbb{Q}}

\def \Qlb {\overline{\mathbb{Q}}_{\ell}}

\def \Ind {\mathrm{Ind}}

\def \Gm {\mathbb{G}_m}
\def \Ga {\mathbb{G}_a}
\def \Spec {\mathrm{Spec}}

\def \SL {\mathrm{SL}}

\def \Hom {\mathrm{Hom}}
\def \D {\mathbb{D}}

\def \1 {\mathbb{1}}
\def \ev {\mathrm{ev}}

\def \QCoh {\mathrm{QCoh}}
\def \IndCoh {\mathrm{IndCoh}}
\def \End {\mathrm{End}}
\def \Mod {\mathrm{Mod}}

\def \ind {\mathrm{ind}}

\def \Cat {\mathrm{Cat}}

\def \Weil {\mathrm{W}}
\def \Gal {\mathrm{Gal}}
\def \Coh {\mathrm{Coh}}

\def \cP {\mathcal{P}}
\def \Q {\mathcal{Q}}

\def \Res {\mathrm{Res}}

\def \pt {\mathrm{pt}}
\def \spec {\mathrm{spec}}

\def \pt {\mathrm{pt}}

\def \Rep {\mathrm{Rep}}

\def \lis {\mathrm{lis}}

\def \Perf {\mathrm{Perf}}

\def \spec {\mathrm{spec}}

\def \Ccal {\mathcal{C}}

\def \GL {\mathrm{GL}}
\def \Ocal {\mathcal{O}}

\def \autom {\mathrm{autom}}

\def \FinSet {\mathrm{FinSet}}

\def \Bun {\mathrm{Bun}}

\def \Sat {\mathrm{Sat}}
\def \ad {\mathrm{ad}}

\def \alg {\mathrm{alg}}

\def \GL {\mathrm{GL}}

\def \Alg {\mathrm{Alg}}

\def \Ocal {\mathcal{O}}

\def \loc {\mathrm{loc}}

\def \Kal {\mathrm{Kal}}
\def \Par {\mathrm{Par}}

\def \oblv {\mathrm{oblv}}
\def \Div {\mathrm{Div}}
\def \Ecal {\mathcal{E}}

\def \Spd {\mathrm{Spd}}
\def \Maps {\mathrm{Maps}}
\def \Inert {\mathrm{I}}
\def \CAlg {\mathrm{CAlg}}

\def \ex {\mathrm{ex}}
\def \Res {\mathrm{Res}}
\def \mc {\mathcal}
\def \tn {\textnormal}
\def \Spec {\mathrm{Spec}}

\def \Perfd {\mathrm{Perfd}}
\def \Isoc {\mathrm{Isoc}}
\def \Kott {\mathrm{Kott}}

\def \Bdr {\mathrm{B}_{\mathrm{dR}}}
\def \Div {\mathrm{Div}}
\def \Spa {\mathrm{Spa}}
\def \Hck {\mathrm{Hck}}
\def \Wcal {\mathcal{W}}
\def \nilp {\mathrm{nilp}}
\def \Hk {\mathrm{Hck}}
\def \Ge {\mathrm{Ge}}
\def \et {\mathrm{et}}
\def \alg {\mathrm{alg}}
\def \lb {[} 
\def \rb {]}
\DeclareFieldFormat[article]{title}{\mkbibemph{#1\isdot}}
\DeclareFieldFormat
  [article,incollection]
  {title}{\mkbibemph{#1\isdot}}

\title{The extended Fargues--Scholze spectral action}
\author{Peter Dillery and Arnaud Eteve}

\begin{document}

\maketitle

\textbf{Abstract.} Let $G$ be a connected reductive group over a non-archimedean local field. The main result of Fargues and Scholze \cite{Geometrization} for the geometrization of the local Langlands correspondence is the construction of a ``spectral action'' on the category of $\ell$-adic sheaves on $\Bun_G$, the stack of $G$-torsors on the Fargues--Fontaine curve. The goal of this paper is to prove a conjecture of Fargues which says that one can extend this construction to the larger stack $\Bun_G^e$ of $G$-torsors on the Kaletha gerbe over the curve, as introduced by Fargues \cite{Fargues22}. This ``extended spectral action'' allows for a version of the categorical local Langlands conjecture for an arbitrary connected reductive group $G$, and is the first such statement for those $G$ which are not extended pure inner forms of a quasi-split group, such as non-trivial inner forms of $\mathrm{SL}_{n}$. Finally, we prove this conjecture for tori, following the original argument of Zou \cite{Zou24} and, under the same assumptions as \cite{Zou26} (including connected center), we reduce the ``extended'' version of the categorical conjecture to the one in Fargues--Scholze. 

\tableofcontents

\section{Introduction}\label{sec:intro}

\subsection{Motivation and main results}
Let $F$ be a non-archimedean local field of residue characteristic $p > 0$ and $G$ a connected reductive group over $F$. 
In their seminal work, Fargues and Scholze \cite{Geometrization} proposed a categorical version of the local Langlands correspondence in which the two sides of the correspondence appear in terms of the following categories:
\begin{enumerate}
    \item On the automorphic side, one considers $\mc{D}_{\lis}(\Bun_G)$ the category of $\Lambda \in \{\Flb, \Zlb, \Qlb\}$ étale sheaves on the stack of $G$-torsors on the Fargues--Fontaine curve. 
    \item On the Galois side, one considers the category of coherent sheaves on $\Par_G$ the stack of $L$-parameters, introduced independently in \cite{Geometrization}, \cite{ZhuCoherentSheaves} and \cite{DHKM}. 
\end{enumerate}
The main result of \cite{Geometrization} is the construction of a spectral action. 
\begin{thm}[\protect{\cite[Theorem X.0.1]{Geometrization}}]\label{thm:intro-FS-spectral-action}
    Assuming that $\ell$ does not divide the order of $\pi_{1}(\widehat{G})_{\tn{tor}}$ if $\Lambda \in \{\Flb, \Zlb\}$, there exists a canonically defined action, called the spectral action, of $\Perf(\Par_G)$ on $\mc{D}_{\lis}(\Bun_G)$. 
\end{thm}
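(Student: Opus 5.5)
The plan follows the strategy of Fargues--Scholze, building the action from the excursion/Hecke formalism on $\Bun_G$. The first step is to use the geometric Satake equivalence for the $\Bdr$-affine Grassmannian over $\Div^1$. This produces, for every finite set $I$, a monoidal functor
\[
\Rep_\Lambda({}^LG^I) \to \End(\mc{D}_{\lis}(\Bun_G)^{BW_F^I}),
\]
given by the Hecke operators $T_V$. These functors are compatible with fusion, i.e. with maps of finite sets $I \to J$, and are $W_F^I$-equivariant through the partial Frobenii and the identification of $\pi_1(\Div^1)$ with $W_F$. Together they assemble into an $\FinSet$-indexed family of exact monoidal actions. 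To get this, I would first prove that Hecke operators preserve compact and ULA objects, so that they extend to the lisse category $\mc{D}_{\lis}$, and that they are continuous in the Weil-group variable.

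The second, purely spectral step identifies such data with an action of $\Perf(\Par_G)$. Write $Z^1(W_F,\widehat{G})$ for the moduli of condensed $1$-cocycles. It is a disjoint union of affine schemes, flat and lci over $\Lambda$, and it can be written as a colimit over discretizations $W \to W_F/P$ of finitely generated dense subgroups. For each such $W$ one has the identification
\[
\Perf(Z^1(W,\widehat{G})/\widehat{G}) \simeq \varinjlim_{I} \Rep(\widehat{G}^I \rtimes \ldots)\text{-modules with } W^I\text{-action},
\]
the Nadler--Yun / Gaitsgory-type statement, which says that $\Perf$ of a quotient of a representation variety is generated by the images of $\Rep(\widehat{G}^I)$ compatibly with fusion. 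I would prove this by showing that $\Perf(\Par_G)$ is generated under finite colimits and retracts by the pullbacks of $V \in \Rep({}^LG)$. This in turn follows from the fact that $Z^1$ is affine with reductive group action. Here the hypothesis that $\ell \nmid |\pi_1(\widehat{G})_{\tn{tor}}|$ enters: it makes the relevant tensor-generation and semisimplicity statements hold for $\widehat{G}$ over $\Zlb$, and in particular $\Ocal(Z^1)$ is generated by matrix coefficients in a good way. Combining this with the first step, and checking compatibility with colimits in the discretization $W$ and in $P$, gives the functor $\Perf(\Par_G)\otimes \mc{D}_{\lis}(\Bun_G) \to \mc{D}_{\lis}(\Bun_G)$. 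Canonicity comes from the fact that the identification is an equivalence of $\infty$-categories of such data, not merely a construction.

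I expect the main obstacle to be this spectral identification, which must be done $\infty$-categorically: one has to show that the $W_F^I$-equivariant Hecke data, a priori defined only on the condensed/continuous side, factor through the discrete group $W$ and assemble coherently. That requires the finiteness and generation results for $\Perf(\Par_G)$ in positive and mixed characteristic. I would attack it by first treating $\Lambda=\Qlb$, where $\Par_G$ is smooth enough and $\Rep$ is semisimple, and then descending to $\Zlb$ using flatness of $Z^1$ and the torsion hypothesis.
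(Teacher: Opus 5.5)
The paper does not prove this statement; it quotes \cite[Theorem X.0.1]{Geometrization}. Your two-step architecture is the Fargues--Scholze one, which the paper recalls in Theorem \ref{thm:spectralDecompositionTheorem}. First come Hecke operators from geometric Satake over $(\Div^1)^I$, made $\Weil_F^I$-equivariant via Drinfeld's lemma for $(\Div^1)^I$. Then a Gaitsgory/Nadler--Yun-type equivalence identifies such $\FinSet$-indexed data with compactly supported $\Perf(\Par_G)$-actions.

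The gap is in the one step where the hypothesis on $\ell$ matters. The categorical argument only sees $\Par_G$ through $\Ind\Perf(\pt/\widehat G)$: it produces modules over $\Ocal(\Par_G^{\square})$ in $\Ind\Perf(\pt/\widehat G)$. To turn this into a $\Perf(\Par_G)$-action you need $\Ind\Perf(\Par_G)\simeq\Ocal(\Par_G^{\square})\text{-}\Mod(\Ind\Perf(\pt/\widehat G))$, equivalently that $\Perf(\Par_G)$ is generated under cones and retracts by pullbacks from $\Rep_\Lambda(\widehat G)$. This is \cite[Theorem VIII.5.1]{Geometrization}, the same input the paper invokes in the proof of Theorem \ref{thm:relative-tensor-product}.

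Your justification does not establish this for $\Lambda\in\{\Flb,\Zlb\}$. The argument ``$Z^1$ is affine with a reductive group action'' needs the objects $V\otimes\Ocal$ to be compact, i.e.\ exactness of $\widehat G$-invariants, i.e.\ linear reductivity. But $\widehat G_{\Flb}$ is not linearly reductive for any $\ell$ unless it is a torus, and no condition on $\ell$ makes $\Rep_{\Flb}(\widehat G)$ semisimple. The hypothesis therefore cannot be supplying ``semisimplicity''. In Fargues--Scholze it enters through filtration properties (good/excellent filtrations) of $\Ocal(Z^1(\Weil_F/P,\widehat G))$ as a $\widehat G$-representation; this is the missing idea.

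Your fallback of settling $\Qlb$ and descending by flatness cannot replace it. Over $\Qlb$ the generation statement is formal precisely because of linear reductivity, so it carries no information about the modular fibre. For $\Lambda=\Flb$ there is nothing to descend from.

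There are also two smaller inaccuracies. First, $Z^1(\Weil_F,\widehat G)$ is a union over open $P'\subset P_F$ of $Z^1(\Weil_F/P',\widehat G)$. For each $P'$ one fixes a single discretization $W\subset \Weil_F/P'$, and restriction $Z^1(\Weil_F/P',\widehat G)\to Z^1(W,\widehat G)$ is already an isomorphism, so no colimit over discretizations is involved. Second, your displayed identification of $\Perf(Z^1(W,\widehat G)/\widehat G)$ with a colimit of module categories is not a correct statement. What is true is an equivalence of \emph{data}: compactly supported actions versus $\FinSet$-indexed Hecke functors. That equivalence rests on a colimit description of the algebra $\Ocal(Z^1(W,\widehat G))$ in $\Ind\Perf(\pt/\widehat G)$ in terms of the $\Ocal(\widehat G^I)$, together with the generation theorem above.
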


Once this action is in place, one can formulate the following categorical local Langlands correspondence. 
\begin{conj}[\protect{\cite[Conjecture X.3.5]{Geometrization}}]\label{conj:geometrization-intro}
    Assume $G$ is quasi-split and $\ell$ as in Theorem \ref{thm:intro-FS-spectral-action}. For $(U,\psi)$ a fixed Whittaker datum for $G$, there exists a canonical $\Perf(\Par_G)$-linear and Whittaker normalized equivalence 
    \begin{equation}\label{eq:conj-geometrization-intro}
        \mc{D}_{\lis}(\Bun_G) \cong \IndCoh_{\nilp}(\Par_G).
    \end{equation}
\end{conj}

The Whittaker normalization in the statement of Conjecture \ref{conj:geometrization-intro} means that the equivalence \eqref{eq:conj-geometrization-intro} sends the Whittaker sheaf to the structure sheaf of $\Par_G$. This normalization is also the reason why it is necessary to assume that $G$ is quasi-split.

From the point of view of the theory of representations of $p$-adic groups, this categorical local Langlands conjecture misses certain inner forms of $G$. The inner forms that Conjecture \ref{conj:geometrization-intro} does encompass appear via the decomposition of $\Bun_{G}$ into geometric connected components along the fibers of the Kottwitz map $|\Bun_{G}| \xrightarrow{\kappa} \pi_{1}(G)_{\Gal_{F}}$. Sheaves supported on $\Bun_{G}^{\kappa=x}$ encode, among other valuable information, representations of $G_{x}(F)$, where $G_{x}$ is the inner form of $G$ obtained from the image of $x$ under the isomorphism $\pi_{1}(G)_{\Gal_{F}} \xrightarrow{\sim} B(G)_{\tn{basic}}$---such inner forms are called \textit{extended pure} inner forms. However, there are in general many inner forms which are not captured by $\Bun_{G}$; one critical example is the case of $\SL_n$, none of whose non-trivial inner forms are extended pure. 

To access all inner forms in a manner allowing for endoscopy, Kaletha \cite{Kal16} (building on ideas of Vogan \cite{Vog93}) introduced the notion of rigid inner forms of $G$ and subsequently proposed in \cite[Section 5.4]{Kal16} an extension of the refined local Langlands correspondence which incorporates all inner forms of $G$ and allows for a general statement of the endoscopic character identities. In more recent work, Fargues \cite{Fargues22} has proposed a geometric construction to explain how to frame rigid inner forms within the geometrization program. Specifically, one defines a certain gerbe\footnote{We use the notation $\mathfrak{X}_{S}$ for this gerbe only in the introduction for ease of exposition; in the main body of the paper, this gerbe is denoted by $[\mc{T}_{S}/\tilde{t}_{S}]$.} $\mathfrak{X}_{S}$ over the Fargues--Fontaine curve $X_S$ (for some test perfectoid space $S$) banded by $u := \varprojlim_{E/F,n} \mathrm{Res}_{E/F}(\mu_{n})$ and defines 
$$\Bun_G^e$$
the `extended stack of $G$-torsors' as the stack of $G$-torsors on the gerbe $\mathfrak{X}_{S}$ which are basic (the transition maps for $u$ are given by norms and $n/m$ power maps for $m \mid n$; see Definition \ref{defi:bandmap} for what we mean by ``basic'' here).  

\begin{thm}[\cite{Fargues22}]
    The stack $\Bun_G^e$ is a small $v$-stack and decomposes canonically as 
    \begin{equation*}
        \Bun_G^e \cong \bigsqcup_{\lambda \in \Hom_{F}(u,Z_{G})} \Bun_{G}^{e,\lambda}.
    \end{equation*}
   
    Moreover, there is a further non-canonical decomposition
    \begin{equation} \label{eq:decomp-bun-g-e-intro}
        \bigsqcup_{\lambda \in \Hom_{F}(u,Z_{G})} \Bun_{G}^{e,\lambda}  \cong  \bigsqcup_{\lambda \in \Hom_{F}(u,Z_{G})} \Bun_{G_{b_{\lambda}}},
    \end{equation}
     induced by twisting isomorphisms $\Bun_{G}^{e,\lambda} \to  \Bun_{G_{b_{\lambda}}}$ for all $\lambda$, where the collection of groups $\{G_{b_{\lambda}}\}_{\lambda}$ represents all rigid inner twists of $G$ up to extended pure inner twists. 
\end{thm}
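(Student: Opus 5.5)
The plan has three steps, in order: the canonical decomposition, the small $v$-stack property, and the twisting isomorphisms.

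\textbf{Canonical decomposition.} A $G$-torsor $\mathcal{E}$ on the gerbe $\mathfrak{X}_S$ carries a natural action of the band $u$ through automorphisms of $\mathcal{E}$. Because the gerbe is banded by the commutative profinite group $u$, this action is given by a homomorphism $u \to \underline{\mathrm{Aut}}(\mathcal{E})$ over $X_S$. The basic condition of Definition \ref{defi:bandmap} says precisely that this homomorphism factors through the center $Z_G$. So the basic condition yields a map $u_S \to Z_G$, defined over the curve and compatible with all the transition maps.

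Next, $\underline{\Hom}(u, Z_G)$ is étale-locally constant. Indeed, $u$ is a limit of finite multiplicative groups and $Z_G$ is of multiplicative type, so these Hom groups are discrete; concretely, $\Hom_F(u,Z_G)=\varinjlim \Hom(\mathrm{Res}_{E/F}\mu_n, Z_G)$. The map is therefore locally constant on $S$, and its Galois-equivariance means it takes values in $\Hom_F(u,Z_G)$. Taking the fibers of this locally constant function gives the decomposition into open and closed substacks $\Bun_G^{e,\lambda}$, canonically, since no choices were made.

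\textbf{Small $v$-stack.} I would present the gerbe $\mathfrak{X}_S$ at a finite level. Any torsor with band map $\lambda$ factors through a quotient $\mathrm{Res}_{E/F}\mu_n$ for some finite $E$ and $n$. It is then described as a $G$-torsor on a finite étale $\mu$-gerbe over $X_S$. Such a gerbe is $[\mathcal{T}_S/\tilde t_S]$, where $\mathcal{T}_S$ is a torsor under a torus on $X_S$ and $\tilde t_S$ a torus mapping to it. Hence $\Bun_G^{e,\lambda}$ is a stack of equivariant $G$-torsors on a $v$-sheaf. From this description, $v$-descent follows from $v$-descent of vector bundles on Fargues--Fontaine curves, exactly as for $\Bun_G$ in \cite{Geometrization}. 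Smallness comes from a surjection from a small object, such as a chart given by a finite-level $\Bun$ of a larger group.

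\textbf{Twisting isomorphisms.} Fix $\lambda$ and choose one $G$-torsor $\mathcal{E}_\lambda$ on $\mathfrak{X}$ over $\Spd \bar F$ (or $\Spd \breve F$) with band $\lambda$. Such a torsor corresponds to Kaletha's rigid inner twist, i.e. a class in $H^1(\mathcal{E}, Z\to G)$ lifting $\lambda$ via the surjectivity results of Kaletha \cite{Kal16} / Fargues \cite{Fargues22}. Its inner automorphism group is $G_{b_\lambda}$, where $b_\lambda$ is the associated basic element of $G(\breve F)$ defined up to the $Z$-twist. Then I would send $\mathcal{E} \mapsto \underline{\mathrm{Isom}}(\mathcal{E}_\lambda,\mathcal{E})$. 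Both torsors have the same band, so the band acts trivially on this Isom, and it descends to a $G_{b_\lambda}$-torsor on $X_S$. The inverse is the contracted product $\mathcal{E}_\lambda\times^{G_{b_\lambda}}(-)$.

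The non-canonicity is exactly the choice of $\mathcal{E}_\lambda$. The statement that the $G_{b_\lambda}$ exhaust all rigid inner twists modulo extended pure ones follows from Kaletha's exact sequence $H^1(F,G)\to H^1(\mathcal{E},Z\to G)\to \Hom_F(u,Z)$ and the identification $B(G)_{\tn{basic}}$-twists $=$ extended pure inner twists.

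\textbf{Main obstacle.} I expect the hardest step to be the existence of $\mathcal{E}_\lambda$ for every $\lambda$ compatibly with the Fargues--Fontaine gerbe. This means comparing Fargues's geometric gerbe with Kaletha's Galois gerbe $\mathcal{E}$, i.e. showing that the extension class of $\mathfrak{X}$ pulls back to Kaletha's canonical class in $H^2(F,u)$. I would try to verify this at each finite level $\mathrm{Res}_{E/F}\mu_n$ using the computation of $H^2$ of the curve via isocrystals, then pass to the limit using vanishing of $\varprojlim^1$.
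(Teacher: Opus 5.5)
Your proposal is correct and matches the argument the paper relies on; the paper proves this statement only by citing \cite[Proposition 12.5, Exemple 12.6]{Fargues22}, and records the same ingredients you use elsewhere: local constancy of $\lambda_{\cP}$ in Lemma~\ref{lem:inertialhom}, where the input behind your ``Galois-equivariance'' step is $\pi_1^{\mathrm{et}}(X_C)=\Gal_F$; twisting by a fixed basic torsor viewed as a bitorsor in the proof of Lemma~\ref{lem:naiveHck}; and the identification $[\mc{T}_S/\tilde{t}_S]\cong X_S\times_{\Spa(F)}\Kal$ from \cite{Fargues22}, which is exactly your ``main obstacle''. Two small corrections: the twisting torsor must live on the gerbe over $F$ (on $\Kal$, or on $\Kott\times_F\Kal$ as some $b_\lambda\in B_e(G)_{\tn{basic}}$) rather than over $\bar F$ or $\breve F$, since $X_S$ lies only over $F$ and this is what makes the twist functorial in $S$ and $G_{b_\lambda}$ an $F$-group; and once these twisting isomorphisms exist, smallness and the $v$-stack property follow immediately from those of $\Bun_{G_{b_\lambda}}$ \cite{Geometrization}, so your vaguer chart argument for smallness is unnecessary.
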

In very much the same way that open strata of $\Bun_G$ are classifying stacks of the form $\pt/\underline{G_b(F)}$ for extended pure inner forms of $G$, open strata of $\Bun_G^e$ are classifying stacks of rigid inner twists of $G$.

On the coherent side, one introduces \cite[Section 12.1]{Fargues22} (originally appearing in \cite{Vog93} and \cite{Kaletha18}) the extended dual group $\widehat{G}^e$ defined as 
\begin{equation*}
    \widehat{G}^e = \varprojlim_{Z} \widehat{G/Z}
\end{equation*}
where the limit runs through the set of all finite central subgroups of $G$.
\begin{rque}
    If $G$ is semisimple, we have $\widehat{G}^e = \widehat{G}_{\tn{sc}}$ the simply connected cover of $\widehat{G}$.
\end{rque}
Following \cite{Kal16} and \cite{Fargues22}, one defines the extended stack of parameters. 
\begin{defi}
    Let $\Par_G^{\square}$ be the stack of framed $L$-parameters (also denoted by $Z^1(W_F, \widehat{G})$ in \cite{DHKM}), so that $\Par_G = \Par_G^{\square}/\widehat{G}$. The extended stack of parameters is 
    $$\Par_G^e = \Par_G^{\square}/\widehat{G}^e,$$
    where the $\widehat{G}^{e}$-action is inflated along $\widehat{G}^{e} \to \widehat{G}$. 
\end{defi}

It follows from Theorem \ref{thm:intro-FS-spectral-action} and the decomposition \eqref{eq:decomp-bun-g-e-intro} that the category $\mc{D}_{\lis}(\Bun_G^e)$ can be equipped with an action of $\Perf(\Par_G)$ (provided $\ell$ is not too small for modular coefficients). Our main theorem is as follows, which resolves part (1) of \cite[Conjecture 12.8]{Fargues22}: 
\begin{thm}[Theorem \ref{thm:extended-spectral-action-on-bun_G}]\label{thm:intro-extended-spectral-action}
    Assume $\ell$ is as in Theorem \ref{thm:intro-FS-spectral-action}. There is an action of $\Perf(\Par_G^e)$ on $\mc{D}_{\lis}(\Bun_G^e)$ which extends the Fargues--Scholze spectral action on each of the components coming from the decomposition \eqref{eq:decomp-bun-g-e-intro}. This action is canonical up to the choice of an isomorphism $\hat{\ov{F}} \xrightarrow{\sim} C^{\sharp}$, where $C$ is a fixed algebraically closed field of characteristic $p$.
\end{thm}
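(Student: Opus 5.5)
The strategy is to reduce to finite levels and then pass to the limit. Write $u = \varprojlim_{n} u_n$ with $u_n = \Res_{E_n/F}(\mu_n)$ for a cofinal system of finite Galois extensions $E_n/F$ and integers $n$. Since $Z_G$ is of finite type, any $\lambda \in \Hom_F(u, Z_G)$ factors through some $u_n$. It is therefore natural to introduce the finite-level stacks $\Bun_G^{e,n}$ of basic $G$-torsors on the gerbe banded by $u_n$, so that $\Bun_G^e \cong \varinjlim_n \Bun_G^{e,n}$ as a union of open and closed substacks. Dually, $\widehat{G}^e = \varprojlim_Z \widehat{G/Z}$. Because $\Perf(\Par_G^\square/\widehat{G}^e)$ is generated, compatibly with the limit, by the categories $\Perf(\Par_G^\square/\widehat{G/Z})$, an action of $\Perf(\Par^e_G)$ amounts to a compatible system of actions of $\Perf(\Par_G^\square/\widehat{G/Z})$ on the pieces $\mc{D}_{\lis}(\Bun_G^{e,n})$. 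Here $Z$ is matched with $n$ via $Z \supseteq \mathrm{im}(\lambda)$, for instance $Z = Z_G[n]$ together with a Galois-descent adjustment.

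The key geometric input I would use is the following identification at finite level. For $Z \subset Z_G$ finite central, a $G$-torsor on the $u_n$-gerbe with band map $\lambda: u_n \to Z$ should correspond to a $G/Z$-torsor on $X_S$ together with a lift datum. Concretely, I expect a cartesian square expressing $\Bun_G^{e,\lambda}$ as the fibre of $\Bun_G \to \Bun_{G/Z}$ over a "$\lambda$-twisted" component, so that
$$\bigsqcup_{\lambda : u_n \to Z} \Bun_G^{e,\lambda} \longrightarrow \Bun_{G/Z}\times_{\Bun_{Z'}} \pt,$$
where $Z' = $ the appropriate quotient torus or finite group. On the spectral side this matches $\Par_G^\square/\widehat{G/Z} \to \Par_G^\square/\widehat{G}$, which is a gerbe for the finite group $\ker(\widehat{G/Z}\to\widehat{G}) = Z^\vee$ (Cartier dual). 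A $Z^\vee$-gerbe decomposes perfect complexes into $Z^\vee$-weight spaces, which are indexed by characters of $Z^\vee$, i.e. by elements of $Z$. These weights should match the band maps $\lambda$, with $\lambda$ related to $Z$ via Tate–Nakayama for $u$, as in Kaletha's $H^1(u \to W, Z\to G)$ duality. So I would construct the $\Perf(\Par_G^\square/\widehat{G/Z})$-action on $\bigoplus_\lambda \mc{D}_{\lis}(\Bun_G^{e,\lambda})$ by running the Fargues–Scholze construction of Theorem \ref{thm:intro-FS-spectral-action}, namely Hecke operators via geometric Satake and excursion, for the group $G/Z$. I would then restrict along $\Bun_G^{e}\to \Bun_{G/Z}$, or more cleanly rerun the Fargues–Scholze argument with Hecke stacks of $G$-torsors on the gerbe, for which the Satake category is $\Rep(\widehat{G/Z})$ rather than $\Rep(\widehat{G})$. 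On each component, the representations of $\widehat{G/Z}$ on which $Z^\vee$ acts by the character $\lambda$ shift the component by $\lambda$, and the twisting isomorphism $\Bun_G^{e,\lambda}\cong \Bun_{G_{b_\lambda}}$ identifies the weight-$0$ part with the original Fargues–Scholze action.

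The remaining steps are, in order. First, check the compatibility in $Z$: enlarging $Z\subset Z'$ gives restriction along $\widehat{G/Z'}\to\widehat{G/Z}$, and this must match the inclusion of components. Second, pass to the colimit and limit to get the $\Perf(\Par_G^e)$-action. Third, verify that the result agrees with the Fargues–Scholze action on each component after twisting, which follows since the $\Perf(\Par_G)$-action is recovered by pullback along $\Par_G^e\to\Par_G$. Fourth, account for the noncanonicity: the Tate–Nakayama identification of characters with band maps, i.e. the gerbe class itself, depends on a fundamental class, hence on an isomorphism $\hat{\ov F}\cong C^\sharp$.

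The main obstacle I expect is the second paragraph's geometric Satake input. One must construct the Hecke stack for torsors on the gerbe and show that its Satake category is $\Rep(\widehat{G/Z})$ with $Z^\vee$-grading matching the band decomposition, including the Galois/Weil-group action and the cyclotomic twist. I would try to deduce this from the Satake equivalence for $G/Z$, by identifying the local Hecke stack of the gerbe with that of $G/Z$ restricted to components of $\pi_1$ coming from $Z$. I would then check that the Fargues–Scholze excursion-algebra-to-$\Par$ argument goes through verbatim with $\widehat{G/Z}$ in place of $\widehat G$.
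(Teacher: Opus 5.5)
\textbf{The gap is in how you obtain the $\widehat{G/Z}$-part of the action.} You propose to run the Fargues--Scholze excursion argument ``verbatim with $\widehat{G/Z}$ in place of $\widehat{G}$''. That argument needs Hecke operators indexed by $\Rep_\Lambda((\widehat{G/Z}\rtimes Q)^I)$ that are $\Weil_F^I$-equivariant, i.e.\ Hecke stacks whose legs move over $(\Div^1)^I$ with fusion. Its output would be an action of $\Perf(Z^1(\Weil_F,\widehat{G/Z})/\widehat{G/Z})=\Perf(\Par_{G/Z})$. That is not $\Perf(\Par_G^{\square}/\widehat{G/Z})$, in which $\widehat{G/Z}$ acts through $\widehat{G}$ on $\widehat{G}$-valued cocycles. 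Such an action should not exist in general: it would force every Fargues--Scholze parameter of $G$ to lift to $\widehat{G/Z}$ (Remark \ref{rque:non-gluing-to-div-1}).

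The geometry behind this is as follows. Hecke stacks of $G$-torsors on the gerbe with ordinary modifications only see $\Rep(\widehat{G})$; these are the naive stacks of Lemma \ref{lem:naiveHck}. To reach $\Rep(\widehat{G/Z})$ one needs twisted modifications $\Q_{\lambda_1/\lambda_0,S}\cdot\cP_0\xrightarrow{\sim}\cP_1$. The $Z$-torsors $\Q_{\lambda,S}$ exist only on the punctured disc or curve at the single fixed point $y\colon\Spd(C)\to\Div^1$. They are built from the trivialization $\psi$ of $\mc{T}_C$ on $X_C\setminus y$ determined by the compatible untilts $y^{(E)}$. So these modifications do not glue over $\Div^1$, and no Weil-equivariant $\widehat{G/Z}$-structure is available.

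\textbf{What the paper does instead.} The extended operators only give a plain monoidal functor $T^e\colon\Rep_\Lambda(\widehat{G}^e)\to\End(\mc{D}_{\lis}(\Bun_G^e))$, with no $\Weil_F$-equivariance. It is built in three steps:
\begin{enumerate}
\item the identification $\mathrm{Gr}^{e,(\lambda_0,\lambda_1)}_{Z\subset G}\simeq\mathrm{Gr}^{\lambda_0/\lambda_1}_{G/Z,\Spd(C)}$;
\item a modified convolution using the cocycle $z$ and $\psi$, giving an extended Satake equivalence with $\bigoplus_{(\lambda_0,\lambda_1)}\Rep(\widehat{G/Z})^{\lambda_0/\lambda_1}$;
\item the diagonal embedding of $\Rep(\widehat{G/Z})$ into this category.
\end{enumerate}
One then checks that, after forgetting, $T^e$ agrees with the $\Weil_F$-equivariant Fargues--Scholze functor on $\Rep({}^LG)$ coming from the naive stacks.

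The action of $\Perf(\Par_G^e)$ is then produced formally from
\[
\Ind\Perf(\Par_G^e)\simeq\Ind\Perf(\Par_G)\otimes_{\Ind\Perf(\pt/\widehat{G})}\Ind\Perf(\pt/\widehat{G}^e)
\]
(Theorems \ref{thm:relative-tensor-product} and \ref{thm:extendedSpectralDecompositionTheorem}). This tensor-product decomposition is precisely what lets one avoid any excursion argument for $\widehat{G/Z}$.

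It also locates the non-canonicity correctly. The choice of $\hat{\ov F}\cong C^\sharp$ enters through the untilts $y^{(E)}$ and the trivialization $\psi$. It does not enter through the class of the gerbe or a Tate--Nakayama identification of characters with band maps, both of which are canonical here.
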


This extended spectral action is not just a result of applying the action from \cite{Geometrization} to each $\Bun_{G_{b_{\lambda}}}$ via the decomposition \eqref{eq:decomp-bun-g-e-intro}. This ``naive action'' only produces Hecke operators $T^{e}_{V}$ which are endomorphisms of each $\mc{D}_{\tn{lis}}(\Bun_{G}^{e,\lambda})$ and correspond to representations $V$ of $\widehat{G}^{e}$ which are inflated from $\widehat{G}$. Using this naive action, the chosen Whittaker sheaf for $G$, which is supported on $\Bun_{G}^{e,1} = \Bun_{G}$, does not interact at all with the nontrivial $\Bun_{G}^{e,\lambda}$. Since these $\Bun_{G}^{e,\lambda}$ are precisely the components of $\Bun_{G}^{e}$ encoding the inner forms of $G$ which are not extended pure, this approach does not suffice. 

By contrast, the Hecke operators we construct in order to prove Theorem \ref{thm:intro-extended-spectral-action} transport sheaves between distinct components $\Bun_{G}^{e,\lambda}$ for varying $\lambda$; taken together, they transitively permute the components occurring in \eqref{eq:decomp-bun-g-e-intro}. These extended Hecke operators, indexed by $V^{e} \in \Rep(\widehat{G}^{e})$, are built using a notion of modifications of torsors on $\mathfrak{X}_{S}$ that allow the inertial morphisms to change, which is the main geometric innovation of this paper.

With this action in place, we can formulate an extended version of the categorical conjecture.
\begin{conj}[\protect{\cite[Conjecture 12.8.2]{Fargues22}}, Conjecture \ref{conj:extended-categorical-equivalence}]\label{conj:intro-extended-cat-equiv}
    Assume that $\ell$ is as in Theorem \ref{thm:intro-FS-spectral-action} and $G$ is quasi-split. 
    There exists a $\Perf(\Par_G^e)$-linear and Whittaker normalized equivalence of categories
    $$\mc{D}_{\lis}(\Bun_G^e) \cong \IndCoh_{\nilp}(\Par_G^e).$$
\end{conj}

Following results of \cite{Zou24}, we prove the conjecture when $G$ is a torus. 
\begin{thm}[Theorem \ref{thm:case-of-tori}]
    If $G$ is a torus, then Conjecture \ref{conj:intro-extended-cat-equiv} holds for $G$.
\end{thm}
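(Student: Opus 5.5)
We reduce to Zou's theorem for tori \cite{Zou24} by making both sides of Conjecture \ref{conj:intro-extended-cat-equiv} explicit when $G=T$ is a torus.

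\textbf{Automorphic side.} Every $T$-torsor is basic, so each $\Bun_T^{e,\lambda}$ is identified, via the twisting isomorphism of \eqref{eq:decomp-bun-g-e-intro}, with $\Bun_{T}\cong\bigsqcup_{\pi_1(T)_{\Gal_F}}[\pt/\underline{T(F)}]$; here $T_{b_\lambda}=T$ since inner twists of a torus are trivial. Hence
\[
\mc{D}_{\lis}(\Bun_T^e)\cong \prod_{\lambda\in \Hom_F(u,T)}\ \prod_{\pi_1(T)_{\Gal_F}} \mc{D}(\underline{T(F)},\Lambda).
\]

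\textbf{Spectral side.} We have $\widehat{T}^e=\varprojlim_Z\widehat{T/Z}$, a pro-torus whose kernel to $\widehat T$ is the pro-finite group Cartier dual to the inertial band data. The conjugation action on $\Par_T^\square$ is trivial, so
\[
\Par_T^e=\Par_T^\square\times B\widehat{T}^e .
\]
Thus $\IndCoh_{\nilp}(\Par_T^e)$ is $\IndCoh(\Par_T^\square)$ tensored with $\Rep(\widehat{T}^e)$. Since $\widehat T^e$ is diagonalizable, $\Rep(\widehat{T}^e)$ is graded by $X^*(\widehat{T}^e)=\varinjlim_Z X_*(T/Z)$. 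Using Kottwitz/Kaletha's computation, this character group identifies with pairs $(\lambda,\bar\mu)$, where $\lambda\in\Hom_F(u,T)$ and $\bar\mu\in\pi_1(T)_{\Gal_F}$, up to the extension structure; more precisely it is the extended Kottwitz set of \cite{Kal16}. The key check is that this bijection matches the component decomposition of $\Bun_T^e$; we verify it first via $H^1(u\to W,Z\to T)$.

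\textbf{Matching.} For each fixed grading piece, Zou's equivalence $\mc D(\underline{T(F)})\cong \IndCoh(\Par_T^\square)$ (local class field theory, Whittaker normalized) identifies the corresponding factors, exactly as in \cite{Zou24}. Taking the product over the grading gives the equivalence, and the Whittaker sheaf goes to the structure sheaf on the trivial component.

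\textbf{Linearity.} The remaining and main difficulty is $\Perf(\Par_T^e)$-linearity. It suffices to check it on generators $\Ocal\otimes\chi$ for $\chi\in X^*(\widehat T^e)$ together with the $\Ocal(\Par_T^\square)$-action. The latter is the Fargues--Scholze action and is handled by \cite{Zou24}. For the former, we must compute the extended Hecke operator $T^e_\chi$ for a torus. Its modifications are of the form $\Ecal\mapsto\Ecal(\mu\cdot x)$ with changing inertial morphism, so $T^e_\chi$ shifts the component $(\lambda,\bar\mu)$ by $\chi$ and acts on the fiber by the Weil-group character attached to $\chi$ through the parameter. We plan to prove this by reducing to $T/Z$ for $Z$ finite with $\chi$ factoring through $\widehat{T/Z}$, where it becomes the usual Fargues--Scholze Hecke operator for $T/Z$ pulled back along $\Bun_T^{e}\to\Bun_{T/Z}$. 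Compatibility with the grading shift on $\Rep(\widehat T^e)$ then follows.
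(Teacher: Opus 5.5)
\paragraph{Spectral side for non-split $T$.} Your computation of the spectral side is only correct when $T$ is split. The conjugation action of $\widehat T$ on $\Par_T^\square=Z^1(\Weil_F,\widehat T)$ is not trivial in general: $t$ multiplies a cocycle by the coboundary $w\mapsto t\,w(t)^{-1}$, so the stabilizers are $\widehat T^{\Gal_F}$ rather than all of $\widehat T$. Hence $\Par_T^e$ is not $\Par_T^\square\times B\widehat T^e$. It is a gerbe over the coarse space $\Hom(T(F),\Gm)$, banded by the preimage $\widehat T^{\Gal_F,+}$ of $\widehat T^{\Gal_F}$ in $\widehat T^e$. The grading that matches the components of $\Bun_T^e$ is therefore the one by $X^*(\widehat T^{\Gal_F,+})\cong\pi_1(T)^e_{\Gal_F}=X_*(T)_{\mathbb{Q}}/I_{\Gal_F}X_*(T)$, and its graded pieces are (twisted) sheaves on $\Hom(T(F),\Gm)$, not copies of $\IndCoh(\Par_T^\square)$. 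In particular $X^*(\widehat T^e)=\varinjlim_Z X_*(T/Z)=X_*(T)_{\mathbb{Q}}$ is \emph{not} the extended Kottwitz set $B_e(T)$. It surjects onto $B_e(T)$ with kernel $I_{\Gal_F}X_*(T)$; for the norm-one torus of a quadratic extension you get $\mathbb{Q}$ versus $\mathbb{Q}/2\Z$. So your factor-by-factor matching of index sets breaks down.

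\paragraph{Linearity.} Even for split $T$ this step has a gap. Building an equivalence piece by piece and then producing objectwise isomorphisms $\Phi(T^e_\chi A)\cong\chi\otimes\Phi(A)$ on generators does not give $\Phi$ a $\Perf(\Par_T^e)$-module structure, because the coherent compatibilities are missing. Your description of $T^e_\chi$ as twisting ``by the Weil-group character attached to $\chi$ through the parameter'' also makes no sense for $\chi\notin X^*(\widehat T)$. The extended Hecke operators exist only over $\Spd(C)$ and carry no $\Weil_F$-action, and a $\widehat T$-valued parameter does not lift to $\widehat T^e$ (cf.\ Remark \ref{rque:non-gluing-to-div-1}). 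Finally, comparing with the Hecke operator of $T/Z$ pulled back along $\Bun^e_{Z\subset T}\to\Bun_{T/Z}$ computes $T^e_\chi$ only on objects in the image of that pullback, not on all of $\mc{D}_{\lis}(\Bun_T^e)$.

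\paragraph{The paper's route.} The paper avoids all of this by working with $a^e_\psi=(-)*\Wcal^e_\psi$, which is linear and Whittaker-normalized by construction:
\begin{enumerate}
\item Lemma \ref{lem:Konradlem1} shows that $T^e_{V^e}$ shifts $\kappa$ by $-\chi$. It is proved with the extended Beauville--Laszlo map and the isomorphism $\tn{Gr}^{e}\cong\tn{Gr}_{G/Z}$.
\item Lemma \ref{lem:gradedaction}, using Theorem \ref{thm:relative-tensor-product}, shows that $a^e_\psi$ respects the $\pi_1(T)^e_{\Gal_F}$-gradings.
\item The argument of \cite[Lemma 6.2.5]{Zou24} then reduces to the degree-$0$ piece.
\item In degree $0$, objects have trivial $\widehat T^{\Gal_F,+}$-action, so they descend to $\Par_T$, and the functor is literally Zou's.
\end{enumerate}
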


We also expect that it should be possible to reduce Conjecture \ref{conj:intro-extended-cat-equiv} to Conjecture \ref{conj:geometrization-intro}, which takes the form of the following statement. 
\begin{conj}[Conjecture \ref{conj:reduction-extended-to-classical}]\label{conj:intro-reduction-to-classical}
    The natural functor 
    \begin{equation}\label{eq:reduction-to-classical-intro}
        \mc{D}_{\lis}(\Bun_G) \otimes_{\Ind\Perf(\Par_G)} \Ind\Perf(\Par_G^e) \to \mc{D}_{\lis}(\Bun_G^e)
    \end{equation}
    is an equivalence. 
\end{conj}

In particular, if Conjectures \ref{conj:intro-reduction-to-classical} and \ref{conj:geometrization-intro} hold, then Conjecture \ref{conj:intro-extended-cat-equiv} holds. 

\begin{thm}[Theorem \ref{thm:reduction-connected-center-case}]\label{thm:intro-reduction-to-classical}
    If $Z_G$ is connected and $H^1(F,Z_G) = 0$ then Conjecture \ref{conj:intro-reduction-to-classical} holds. 
\end{thm}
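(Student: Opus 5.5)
The plan is to compute both sides of \eqref{eq:reduction-to-classical-intro} explicitly in terms of the finite-type data attached to $Z_G$, and to compare them component by component.

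\textbf{Spectral side.} Since $Z_G$ is connected, $\widehat{G}$ has simply connected derived group. The finite central subgroups $Z \subset Z_G$ are then contained in a torus, so the maps $\widehat{G/Z} \to \widehat{G}$ are central isogenies whose kernels are dual to $Z$. Hence $\widehat{G}^e \to \widehat{G}$ is a surjection with central kernel
$$K := \varprojlim_Z \widehat{Z}^{\vee} \cong \Hom(\Hom_F(u,Z_G)^{\vee}, \Gm),$$
a diagonalizable pro-group. (I expect to identify its character group with $\Hom_F(u,Z_G)$, or rather with the torsion in the relevant coinvariants.) Therefore $\Par_G^e \to \Par_G$ is a gerbe banded by the diagonalizable group $K$ acting trivially on $\Par_G^\square$. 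One then gets a decomposition
$$\Ind\Perf(\Par_G^e) \cong \bigoplus_{\chi \in X^*(K)} \Ind\Perf(\Par_G^e)_\chi$$
by $K$-weights. Each summand is an invertible $\Ind\Perf(\Par_G)$-module, namely the category of $\chi$-twisted sheaves, and the tensor product in \eqref{eq:reduction-to-classical-intro} splits accordingly.

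\textbf{Automorphic side.} Use the decomposition $\Bun_G^e = \bigsqcup_\lambda \Bun_G^{e,\lambda}$ and identify the index set $\Hom_F(u,Z_G)$ with $X^*(K)$ compatibly with the extended spectral action. The key point is that $K$ acts on $\mc{D}_{\lis}(\Bun_G^{e,\lambda})$ through the character $\lambda$; this should follow from the construction of the extended Hecke operators, since they shift $\lambda$ by the central character of $V^e$. The functor \eqref{eq:reduction-to-classical-intro} therefore becomes a direct sum of functors
$$\mc{D}_{\lis}(\Bun_G)\otimes_{\Ind\Perf(\Par_G)} \Ind\Perf(\Par_G^e)_\lambda \to \mc{D}_{\lis}(\Bun_G^{e,\lambda}).$$
To show each is an equivalence, I choose $V^e_\lambda \in \Rep(\widehat{G}^e)$ of $K$-weight $\lambda$. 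A one-dimensional character suffices: because $\widehat{G}^e$ is an extension of $\widehat{G}$ by a torus-like central group, a character of $\widehat{G}^e$ extending $\lambda$ should exist, since $Z_G$ is connected and $\widehat{G}^{\mathrm{ab}}$ is a torus. The line bundle $\mc{O}(V^e_\lambda)$ on $\Par_G^e$ is invertible and implements an equivalence $\Ind\Perf(\Par_G^e)_0 \simeq \Ind\Perf(\Par_G^e)_\lambda$. Correspondingly, the extended Hecke operator $T^e_{V^e_\lambda}$ is an equivalence $\mc{D}_{\lis}(\Bun_G) \to \mc{D}_{\lis}(\Bun_G^{e,\lambda})$, with inverse given by the dual character, and the two functors are compatible. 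The $\lambda=0$ summand is the identity, so the whole functor is an equivalence.

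\textbf{Main obstacle.} The hardest step is to show that the Hecke operator attached to a character of $\widehat{G}^e$ is invertible and identifies $\Bun_G^{e,\lambda}$ with $\Bun_G$. This is where the hypothesis $H^1(F,Z_G)=0$ enters. I would first show that modification along such a "central" character is a translation by a $Z_G$-torsor on the gerbe, i.e.\ a twisting isomorphism $\Bun_G \cong \Bun_G^{e,\lambda}$ coming from the $\Bun_{Z_G}^e$-action. Then $H^1(F,Z_G)=0$ guarantees that the relevant $\Bun_{Z_G}^{e,\lambda}$ component contains a basic point whose automorphisms act trivially. This removes the ambiguity in the non-canonical decomposition \eqref{eq:decomp-bun-g-e-intro} and makes the twisting isomorphism canonical, and hence compatible with the spectral action by $\Perf(\Par_G)$.
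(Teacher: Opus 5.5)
Your argument is correct, but it takes a genuinely different route from the paper's.

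\textbf{The paper's route.} The paper reduces everything to the center. It rewrites the source via Lemma \ref{lem:reduction-to-center} as $\mc{D}_{\lis}(\Bun_G,\Lambda)\otimes_{\Ind\Perf(\pt/\widehat{Z_G})}\Ind\Perf(\pt/\widehat{Z_G}^e)$. It then passes to $\Par_{Z_G}\to\Par_{Z_G}^e$ by Theorem \ref{thm:relative-tensor-product}. Next it replaces these by $\mc{D}_{\lis}(\Bun_{Z_G})\to\mc{D}_{\lis}(\Bun_{Z_G}^e)$, using the extended Fourier--Mukai equivalence for tori (Proposition \ref{prop:equivalence-monoidal-categories}) and Zou's compatibility of the spectral action with the $\Bun_{Z_G}$-action. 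It concludes with $\Bun_G\times^{\Bun_{Z_G}}\Bun_{Z_G}^e\simeq\Bun_G^e$ (Proposition \ref{prop:quotient-groupoid-by-center}, Corollary \ref{corol:relative-tensor-over-center}). The hypothesis $H^1(F,Z_G)=0$ enters only as an input to Zou's results.

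\textbf{Your route.} You split $\Ind\Perf(\pt/\widehat{G}^e)$ into $K$-weight pieces and trivialize each piece, as an $\Ind\Perf(\pt/\widehat{G})$-module, by a character $\theta_\chi$ of $\widehat{G}^e$. On the $\chi$-piece the functor is then $T^e_{\theta_\chi}\circ\iota_*$; this uses that $\iota_*$ is $\Ind\Perf(\pt/\widehat{G})$-linear, which follows from Theorem \ref{thm:extended-spectral-action-on-bun_G}. Since $T^e$ is monoidal (Theorem \ref{thm:diagHckact}), $T^e_{\theta_\chi}$ is an autoequivalence with inverse $T^e_{\theta_\chi^{-1}}$. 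By Lemma \ref{lem:Konradlem1} it permutes the components, so the $\chi$-piece lands in $\Bun_G^{e,-\chi}$; the sign is harmless.

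\textbf{Comparison.} Your route is shorter and purely formal given the paper's extended Hecke action. As far as I can see, it never uses $H^1(F,Z_G)=0$ or Zou's work. The paper's route yields a more structural statement: $\mc{D}_{\lis}(\Bun_G^e)$ is the base change of $\mc{D}_{\lis}(\Bun_G)$ along $\mc{D}_{\lis}(\Bun_{Z_G})\to\mc{D}_{\lis}(\Bun_{Z_G}^e)$, compatibly with the center. That is the shape the authors expect a general reduction to take.

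\textbf{Two corrections.} First, the existence of $\theta_\chi$ does not come from $K$ being ``torus-like''; $K$ is a pro-system of finite diagonalizable groups. The correct reason is the isomorphism $\widehat{G}^e\cong\widehat{G}\times_{\widehat{Z_G}}\varprojlim_n\widehat{Z_G}$ of Lemma \ref{lem:reduction-to-center}. It gives characters of $\widehat{G}^e$ indexed by $X_*(Z_G)\otimes\mathbb{Q}$, and their restrictions surject onto $X_*(Z_G)\otimes\mathbb{Q}/\Z\cong\Hom_F(u,Z_G)=X^*(K)$. This is exactly where connectedness of $Z_G$ is needed: when $\widehat{G}^e=\widehat{G}_{\tn{sc}}$ there are no nontrivial characters and the argument collapses.

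Second, you should drop your final paragraph. Invertibility of $T^e_{\theta_\chi}$ is already formal from monoidality. There is no need to identify it with a geometric twisting isomorphism $\Bun_G\simeq\Bun_G^{e,\lambda}$ or to make such an isomorphism canonical. The role you assign to $H^1(F,Z_G)=0$ there is not one it actually plays.
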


The $H^{1}$-vanishing assumption is inherited from the same assumption in \cite{Zou26}, an assumption-free version of which has been announced that we expect transfers easily to the extended setting. There is also the recent result of Hansen--Mann:
\begin{thm}[\cite{HansenMann26}]
    Let $G = \mathrm{GL}_{n}$, $\Lambda = \ov{\mathbb{Q}_{\ell}}$, and $F$ a finite extension of $\mathbb{Q}_{p}$. Assuming \cite[Conjecture 1.6.2]{HansenMann26} (compatibility with Eisenstein functors), the spectral action induces a Whittaker-normalized natural equivalence 
    \begin{equation*}
        \mc{D}_{\tn{lis}}(\Bun_{\GL_{n}}) \xrightarrow{\sim} \IndCoh(\Par_{\GL_{n}}).
    \end{equation*}
\end{thm}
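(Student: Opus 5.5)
The plan is to build the functor from the spectral action applied to the Whittaker sheaf. Then I would prove it is an equivalence by induction on $n$, using parabolic induction on both sides, which the assumed Eisenstein compatibility \cite[Conjecture 1.6.2]{HansenMann26} lets us compare. Write $\mathcal{W} \in \mc{D}_{\lis}(\Bun_{\GL_n})$ for the Whittaker sheaf, supported on the open stratum $\Bun_{\GL_n}^{1} = \pt/\underline{\GL_n(F)}$.

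\textbf{Step 1: the functor on perfect complexes.} Theorem \ref{thm:intro-FS-spectral-action} gives a $\Perf(\Par_{\GL_n})$-linear functor
$$L^{\Perf} \colon \Perf(\Par_{\GL_n}) \to \mc{D}_{\lis}(\Bun_{\GL_n}), \qquad A \mapsto A \ast \mathcal{W}.$$
The first key input is $\End(\mathcal{W}) \cong \mathcal{O}(\Par_{\GL_n})$ as derived rings, with the map induced by the spectral action. Since $\mathcal{W}$ is the compactly induced Gelfand--Graev representation, $\End(\mathcal{W})$ is the Bernstein-center-type algebra acting on it. I would identify this with $\mathcal{O}(\Par_{\GL_n})$ block by block. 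The inputs are the compatibility of the Fargues--Scholze parameters with Harris--Taylor for $\GL_n$, and the known description of the endomorphisms of the Gelfand--Graev module via Bushnell--Kutzko types. The absence of higher Ext in characteristic zero is also used here. This gives full faithfulness of $L^{\Perf}$ on $\mathcal{O}$, and hence, by linearity, on all of $\Perf(\Par_{\GL_n})$.

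\textbf{Step 2: extension to $\IndCoh$.} Over $\Qlb$, $\Par_{\GL_n}$ is a disjoint union of lci, hence quasi-smooth, pieces. I expect the nilpotent singular support condition to be vacuous for $\GL_n$: the spectral Eisenstein series from Levis should already generate. Concretely, $\IndCoh(\Par_{\GL_n})$ should be generated by $\Perf$ together with spectral Eisenstein series $\mathrm{Eis}^{\spec}_{P}(\mathcal{F}_M)$ for standard parabolics $P$ with Levi $M = \prod \GL_{n_i}$. I would define $L$ on these generators by $L(\mathrm{Eis}^{\spec}_P \mathcal{F}_M) := \mathrm{Eis}_P(L_M \mathcal{F}_M)$, using induction on $n$ for $L_M$. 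The assumed compatibility with Eisenstein functors is exactly what makes this well defined and linear over $\Perf(\Par_{\GL_n})$. Full faithfulness on these generators then reduces, via the adjunction between $\mathrm{Eis}_P$ and the constant term, to Hom computations on the Levi $M$. These hold by induction once both sides' second-adjointness and geometric Bernstein decompositions are shown to match.

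\textbf{Step 3: essential surjectivity.} On the automorphic side, $\mc{D}_{\lis}(\Bun_{\GL_n})$ is compactly generated by Eisenstein images from proper Levis together with cuspidal objects on basic strata. In the cuspidal case, the component of $\Par_{\GL_n}$ is the smooth orbit of unramified twists of an irreducible $\varphi$, so $\IndCoh = \mathrm{IndPerf}$ there. The corresponding cuspidal part of $\mc{D}_{\lis}$ is controlled by Harris--Taylor and the Jacquet--Langlands transfer to inner forms on the basic strata $\Bun_{\GL_n}^{b}$. The point is that the spectral action of the line bundles on this component moves $\mathcal{W}$ across the Kottwitz components, and this recovers all cuspidal sheaves.

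I expect Step 2 to be the main obstacle. It has two parts: showing that the singular-support condition is vacuous, so that $\IndCoh$ rather than $\IndCoh_{\nilp}$ is correct, and verifying the Hom identifications between Eisenstein objects, which requires the full strength of the Eisenstein compatibility.
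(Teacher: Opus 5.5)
The paper does not prove this statement. It is quoted from Hansen--Mann and used only as a black box, combined with the connected-center reduction, to obtain the $\GL_n$ corollary. Judged on its own terms, your outline has a genuine gap already in Step~1.

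From $\End(\mathcal{W}) \cong \mathcal{O}(\Par_{\GL_n})$ you cannot conclude full faithfulness on all of $\Perf(\Par_{\GL_n})$ ``by linearity''. The stack $\Par_{\GL_n} = \Par^{\square}_{\GL_n}/\widehat{G}$ is not affine, and $\mathcal{O}$ does not generate $\Perf(\Par_{\GL_n})$ as a thick subcategory. Indeed, take $V$ a nontrivial irreducible representation of $\widehat{G}$. The fibre of $\alpha^*V$ at the trivial parameter, whose automorphism group is all of $\widehat{G}$, is $V$, and $V$ is not in the thick closure of the trivial representation.

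The thick generators are the $\alpha^*V$, which act by $T_V$. By rigidity, $\Perf$-linear full faithfulness is equivalent to
\[
R\Gamma(\Par_{\GL_n}, \alpha^*V) \longrightarrow \mathrm{RHom}(\mathcal{W}, T_V\mathcal{W}) = \mathrm{RHom}_{\GL_n(F)}\bigl(c\textnormal{-}\mathrm{ind}_{U(F)}^{\GL_n(F)}\psi,\, i_1^*T_V\mathcal{W}\bigr)
\]
being an isomorphism for every $V \in \Rep(\widehat{G})$. Equivalently, $c_{\psi}(\mathcal{W}) \cong \mathcal{O}$ must hold as an object, not merely after taking global sections.

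Your Bernstein-center argument only covers the case $V = \mathbf{1}$. When $V$ has nontrivial central character, both sides vanish: the left by central weight, the right by Kottwitz degree. But for, e.g., $V = \mathrm{ad}$ you must control the restriction to the open stratum of a Hecke translate of the Whittaker sheaf, and nothing in your sketch does this.

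Step~2 has a structural problem as well. An exact functor out of $\IndCoh(\Par_{\GL_n})$ cannot be produced by prescribing $L(\mathrm{Eis}^{\spec}_P\mathcal{F}_M) := \mathrm{Eis}_P(L_M\mathcal{F}_M)$ on generators. That would require all mapping complexes between generators and their compositions, coherently.

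Moreover, the assumed Conjecture 1.6.2 of Hansen--Mann is, as the paper remarks, a compatibility of the already-defined functor $c_{\psi}$ (the right adjoint of $a_{\psi}$) with Eisenstein functors. It is a property of that functor, not a recipe for building one. The spectral action only gives $a_{\psi}$ on $\Ind\Perf(\Par_{\GL_n})$, and the functor in the statement goes from $\mc{D}_{\lis}$ to $\IndCoh$ via $c_{\psi}$. The actual tasks are therefore:
\begin{enumerate}
\item Show that $c_{\psi}$ sends compact objects into $\Coh(\Par_{\GL_n})$. This is where three ingredients enter: generation of $\mc{D}_{\lis}(\Bun_{\GL_n})^{\omega}$ by Eisenstein images and cuspidal objects (which you only assert), the Eisenstein compatibility, and properness of $\Par_P \to \Par_{\GL_n}$.
\item Prove full faithfulness. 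This needs a Mackey-type formula for $\mathrm{CT}_{P}\circ\mathrm{Eis}_{P'}$ on $\Bun_{\GL_n}$ matched with its spectral counterpart, together with the mixed Homs between $\Perf$-objects and Eisenstein objects. ``Second adjointness and Bernstein decompositions match'' does not supply these.
\end{enumerate}

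Finally, the point you call the main obstacle is not one. Over $\Qlb$ the nilpotent singular support condition is automatic for any $G$, independently of generation by Eisenstein series. By local Tate duality, a singular-support covector at $\varphi$ is some $X \in \widehat{\mathfrak{g}}^{*}$ with $\mathrm{Ad}^{*}(\varphi(\mathrm{Frob}))X = q^{\pm 1}X$. Hence every homogeneous invariant polynomial $f$ of degree $d>0$ satisfies $f(X) = q^{\pm d}f(X)$, so $f(X)=0$ and $X$ is nilpotent. This is why the statement can use $\IndCoh$ rather than $\IndCoh_{\nilp}$.
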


Combining the above theorem with  Theorem \ref{thm:intro-reduction-to-classical} implies that we can prove the case of $\GL_n$.

\begin{corol}
    If $F$ is a finite extension of $\mathbb{Q}_p$, $G = \GL_n$, $\Lambda = \Qlb$, and \cite[Conjecture 1.6.2]{HansenMann26} holds, then Conjecture \ref{conj:intro-extended-cat-equiv} holds. 
\end{corol}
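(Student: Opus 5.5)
The corollary is a formal combination of three results already stated above, so the plan is simply to chain them.

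First, I check that the hypotheses of Theorem \ref{thm:intro-reduction-to-classical} hold for $G=\GL_n$ over any non-archimedean $F$. The center is $Z_{\GL_n}=\Gm$, which is connected. By Hilbert's Theorem 90, $H^1(F,\Gm)=0$. Theorem \ref{thm:intro-reduction-to-classical} therefore shows that the functor \eqref{eq:reduction-to-classical-intro} is an equivalence:
$$\mc{D}_{\lis}(\Bun_{\GL_n}) \otimes_{\Ind\Perf(\Par_{\GL_n})} \Ind\Perf(\Par_{\GL_n}^e) \xrightarrow{\sim} \mc{D}_{\lis}(\Bun_{\GL_n}^e).$$
There is no restriction on $\ell$ here, since $\Lambda=\Qlb$. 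In any case $\pi_1(\widehat{\GL_n})=\Z$ is torsion-free, so Theorem \ref{thm:intro-FS-spectral-action} and Theorem \ref{thm:intro-extended-spectral-action} apply.

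Second, I invoke the Hansen--Mann theorem, which is available under the assumed \cite[Conjecture 1.6.2]{HansenMann26} with $F/\mathbb{Q}_p$ finite. It gives a Whittaker-normalized, $\Perf(\Par_{\GL_n})$-linear equivalence $\mc{D}_{\lis}(\Bun_{\GL_n})\simeq \IndCoh(\Par_{\GL_n})$. The nilpotent singular support condition is vacuous for $\GL_n$ in this setting, or is absorbed into their statement. I base change this equivalence along $\Ind\Perf(\Par_{\GL_n})\to\Ind\Perf(\Par_{\GL_n}^e)$. Composing with the first step then gives a $\Perf(\Par^e_{\GL_n})$-linear equivalence
$$\mc{D}_{\lis}(\Bun_{\GL_n}^e)\simeq \IndCoh_{\nilp}(\Par_{\GL_n})\otimes_{\Ind\Perf(\Par_{\GL_n})}\Ind\Perf(\Par_{\GL_n}^e).$$

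The remaining and main step is to identify the right-hand side with $\IndCoh_{\nilp}(\Par^e_{\GL_n})$. The map $\Par^e\to\Par$ is $\Par^\square/\widehat{G}^e\to\Par^\square/\widehat{G}$, a gerbe-type map whose fibers are $B\ker(\widehat{G}^e\to\widehat{G})$. For $\GL_n$, $\widehat{G}^e=\varprojlim_Z \widehat{\GL_n/\mu_m}$ is a pro-cover of $\GL_n$ with pro-finite-multiplicative kernel. Over $\Qlb$ the classifying stack of this kernel has semisimple representation theory, and the map is affine up to this gerbe. I therefore expect base change for $\IndCoh$ along it, which I would argue via $1$-affineness of $\Par_G$ and the fact that $\IndCoh_{\nilp}(\Par_G)$ is a module over $\Ind\Perf(\Par_G)$. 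This is presumably what the paper records when deducing the general reduction, namely the compatibility of Conjectures \ref{conj:intro-reduction-to-classical} and \ref{conj:geometrization-intro} stated above.

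Finally, the Whittaker normalization is preserved. The Whittaker sheaf lives on $\Bun_G=\Bun_G^{e,1}$, and under the functor \eqref{eq:reduction-to-classical-intro} it maps to $\mathcal{W}\otimes \Ocal_{\Par^e}$. This corresponds to the pullback of $\Ocal_{\Par_G}$, which is $\Ocal_{\Par_G^e}$.
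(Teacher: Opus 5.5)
Your proposal is correct and follows the paper's argument: the paper deduces the corollary directly from the Hansen--Mann theorem together with Theorem~\ref{thm:reduction-connected-center-case}, whose hypotheses hold for $\GL_n$ because $Z_{\GL_n}=\Gm$ is connected and $H^1(F,\Gm)=0$ by Hilbert 90. The coherent-side identification you single out as the main step is exactly Lemma~\ref{lem:reduction-on-the-coherent-side}, which the paper proves using Theorem~\ref{thm:relative-tensor-product} and the fact that singular support is detected after pullback to $\Par_G^{\square}$; you can cite it in place of your heuristic base-change sketch.
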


In \cite{Kaletha18} and \cite{Dillery24} (for $F$ of characteristic zero or $p$, respectively), a general reduction of the rigid refined local Langlands correspondence is done by first reducing the correspondence to groups with connected center. The only assumption required for this classical reduction is compatibility of the rigid refined LLC along morphisms of reductive groups with central kernel and abelian cokernel. In the categorical local Langlands context, the action of the center is much more subtle, as is already seen in \cite{Zou26}. We expect, in particular, that reducing Conjecture \ref{conj:intro-extended-cat-equiv} to groups with connected center first involves reducing Conjecture \ref{conj:geometrization-intro} to groups with connected center, a problem which is presently open.  

In the geometric Langlands situation \cite{GLCV}, by contrast, such a compatibility with the center is established using $2$-categorical Fourier--Mukai methods. The authors of \emph{loc. cit.}, in particular, prove a version of the geometric Langlands equivalence for inner forms of $G$. In Section \ref{sec:reduction-perspectives}, we compare the construction provided by Theorem \ref{thm:intro-extended-spectral-action} with a potential analogous construction of \cite{GLCV} in the Fargues--Scholze context. 

\subsection{Outline of the construction}

Let us outline the key steps in the proof of Theorem \ref{thm:intro-extended-spectral-action}, assuming now that the ring of coefficients $\Lambda$ is a $\Z_{\ell}[\sqrt{q}]$-algebra. First note that the stack $\Par_G^e$ fits in the following Cartesian diagram 
\[\begin{tikzcd}
	{\Par_G^e} & {\pt/\widehat{G}^e} \\
	{\Par_G} & {\pt/\widehat{G}}
	\arrow[from=1-1, to=1-2]
	\arrow[from=1-1, to=2-1]
	\arrow[from=1-2, to=2-2]
	\arrow[from=2-1, to=2-2]
\end{tikzcd}\]

\begin{thm}[Theorem \ref{thm:relative-tensor-product}]
    Assume $\ell$ is as in Theorem \ref{thm:intro-FS-spectral-action}.
    The above diagram induces an equivalence of categories 
    $$\Ind\Perf(\Par_G^e) = \Ind\Perf(\Par_G) \otimes_{\Ind\Perf(\pt/\widehat{G})} \Ind\Perf(\pt/\widehat{G}^e).$$
\end{thm}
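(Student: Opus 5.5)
The plan is to deduce the statement from a general base-change result for quasi-coherent sheaf categories along the Cartesian square above. The model is the tensor product theorem of Ben-Zvi--Francis--Nadler: for perfect stacks $X \to Z \leftarrow Y$ one has $\QCoh(X\times_Z Y) \simeq \QCoh(X)\otimes_{\QCoh(Z)}\QCoh(Y)$. For a perfect stack, $\QCoh = \Ind\Perf$, so the statement amounts to checking that the four stacks are perfect over $\Lambda$ and that the theorem applies. A caveat is that BZFN is stated in characteristic zero. For $\Lambda \in \{\Flb,\Zlb\}$ I would instead use the version valid over general bases, namely Lurie's tensor product theorem for quasi-compact stacks with affine diagonal whose $\QCoh$ is compactly generated by perfect complexes, or Hall--Rydh for perfectness.

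First I establish perfectness, i.e.\ that $\QCoh$ is compactly generated by perfect complexes.
\begin{itemize}
\item For $\Par_G = \Par_G^{\square}/\widehat{G}$ this is the case already used by Fargues--Scholze, and it is exactly where the hypothesis on $\ell$ enters. Under that hypothesis $\Perf(\Par_G)$ is well-behaved, and $\Par_G^{\square}$ is a disjoint union of affine schemes of finite type by \cite{DHKM}.
\item For $\pt/\widehat{G}$ and $\pt/\widehat{G}^e$ I need $\Rep(\widehat{G}^e)$ to be compactly generated by finite-dimensional representations.
\end{itemize}

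The main obstacle is that $\widehat{G}^e$ is a pro-algebraic group, an inverse limit of the $\widehat{G/Z}$ along isogenies. It is therefore not of finite type, and in modular characteristic its representation theory may fail to be compactly generated by perfect objects. I would handle this by writing $\pt/\widehat{G}^e = \varprojlim_Z \pt/\widehat{G/Z}$ and $\Par_G^e = \varprojlim_Z \Par_G^{\square}/\widehat{G/Z}$, and proving the statement at each finite level $Z$. The transition maps $\widehat{G/Z'} \to \widehat{G/Z}$ are central isogenies with finite kernel $\widehat{Z}$-dual, which are diagonalizable groups $\mu$. For these, $\Rep$ of the kernel is semisimple and decomposes by characters, so in every characteristic the pushforward along $\pt/\widehat{G/Z'}\to\pt/\widehat{G/Z}$ is affine and the relevant categories stay compactly generated. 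In fact $\QCoh(\pt/\widehat{G/Z'})$ is $\QCoh(\pt/\widehat{G/Z})$-modules in a grading by characters of the finite kernel. Passing to the colimit over $Z$, the compact objects of $\Ind\Perf(\pt/\widehat{G}^e)$ are $\varinjlim_Z \Perf(\pt/\widehat{G/Z})$, since every finite-dimensional representation factors through a finite level. Relative tensor products commute with these filtered colimits in $\mathrm{Pr}^L$. This reduces the statement to each finite level.

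At a finite level $Z$, I would argue directly. The map $\Par_G^{\square}/\widehat{G/Z} \to \Par_G$ is a gerbe banded by the diagonalizable group $K = \ker(\widehat{G/Z}\to\widehat{G})$. Because $K$ is diagonalizable, $\QCoh$ of such a gerbe decomposes by characters $\chi \in X^*(K)$ into twisted pieces. Each piece is the base change along $\Par_G\to\pt/\widehat{G}$ of the corresponding piece of $\QCoh(\pt/\widehat{G/Z})$, which is an invertible $\QCoh(\pt/\widehat{G})$-module. Verifying this piece by piece gives the equivalence. Alternatively, note that $\pt/\widehat{G/Z}\to\pt/\widehat{G}$ is affine relative to its cohomology algebra, so that $\QCoh(\pt/\widehat{G/Z})$ is modules over an algebra object $A$ in $\QCoh(\pt/\widehat{G})$, and then use $\QCoh(\Par_G)\otimes_{\QCoh(\pt/\widehat{G})} A\text{-mod} = (\text{pullback of }A)\text{-mod}$. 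The delicate step I expect is justifying this monadicity, i.e.\ the affineness of the pushforward along the $K$-gerbe, in modular coefficients. I would check it using the semisimplicity of $\Rep(K)$, which holds since $K$ is diagonalizable.
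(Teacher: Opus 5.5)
There is a genuine gap for $\Lambda\in\{\Flb,\Zlb\}$, and it is not where you locate it.

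\textbf{Perfectness fails in the modular case.} Your main route needs $\QCoh=\Ind\Perf$ on the stacks in the square. This already fails for $\pt/\widehat{G}$, independently of whether $\widehat{G}^e$ is pro-algebraic. Unless $\widehat{G}$ is a torus, it is not linearly reductive over $\Flb$ or $\Zlb$. Consequently finite-dimensional representations, even the trivial one, are not compact in $\QCoh(\pt/\widehat{G})$. This is precisely why Fargues--Scholze work with $\Ind\Perf$, and the same problem affects $\Par_G$. So no tensor product theorem for perfect stacks applies here, and the hypothesis on $\ell$ does not give you perfectness of $\Par_G$.

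\textbf{The finite-level fallback does not close the gap.} You claim each $\chi$-isotypic piece of $\Ind\Perf(\Par_G^{\square}/\widehat{G/Z})$ is the base change of the corresponding piece of $\Ind\Perf(\pt/\widehat{G/Z})$. That is just the theorem restricted to that piece, so it needs an input you have not supplied. The monadicity you propose for the gerbe $p\colon \pt/\widehat{G/Z}\to\pt/\widehat{G}$ is false. The functor $p_*$ is taking $K$-invariants, so it kills every object of nonzero $K$-weight and is not conservative; indeed $p_*\Ocal=\Ocal$. Semisimplicity of $\Rep(K)$ makes $p_*$ exact, not conservative. The same objection applies to your claim that the pushforward between finite levels is ``affine''.

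\textbf{The missing idea.} Put the algebra on the other leg of the square. By \cite[Theorem VIII.5.1]{Geometrization}, which is exactly where the hypothesis on $\ell$ enters, the map $\Par_G\to\pt/\widehat{G}$ (with fibre $\Par_G^{\square}$) gives
$\Ind\Perf(\Par_G)\simeq\Ocal(\Par_G^{\square})\text{-}\Mod(\Ind\Perf(\pt/\widehat{G}))$.
The formal identity $A\text{-}\Mod(\Ccal)\otimes_{\Ccal}\mathcal{M}\simeq A\text{-}\Mod(\mathcal{M})$ of \cite[Chapter 1, Corollary 8.5.7]{StudyInDerivedAlgGeomI} then applies, with $\Ccal=\Ind\Perf(\pt/\widehat{G})$ and $\mathcal{M}=\Ind\Perf(\pt/\widehat{G}^e)$. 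It identifies the relative tensor product with $\Ocal(\Par_G^{\square})$-modules in $\Ind\Perf(\pt/\widehat{G}^e)$, i.e.\ with $\Ind\Perf(\Par_G^e)$.

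Your closing formal step, that tensoring $A$-modules produces modules over the pulled-back $A$, is the right tool. You applied it on the gerbe leg, where the natural functor is not monadic, instead of on the affine leg $\Par_G\to\pt/\widehat{G}$. For $\Lambda=\Qlb$ your appeal to Ben-Zvi--Francis--Nadler is exactly what the paper does. Your reduction to finite $Z$ is harmless but not needed.
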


Consequently, the main piece of structure required to extend the action of $\Perf(\Par_G)$ is the data of an action of $\Perf(\pt/\widehat{G}^{e})$ extending the one of $\Perf(\pt/\widehat{G})$. 

\subsubsection{The Fargues--Scholze Hecke action}
Let us briefly recall the construction of the action of $\Perf(\pt/\widehat{G})$ from \cite{Geometrization}. We fix once and for all a geometric point 
$$\Spa(C) \xrightarrow{y} \Div^{1}:=\Div^1_F$$
where $C$ is an algebraically closed perfectoid field of characteristic $p > 0$. The action of $\Perf(\pt/\widehat{G})$ on $\mc{D}_{\lis}(\Bun_G)$ is given by Hecke functors, i.e. let $V \in \Rep_{\Lambda}\widehat{G}$ and let $\mc{S}_{V}$ denote the corresponding Satake sheaf on $\Hk^{\loc}_G$, the local Hecke stack of $G$ (over $\Spd(C)$). There is a diagram 
\[\begin{tikzcd}
	{\Hk^{\loc}_G} & {\Hk_G} & \\
	{\Bun_G} && {\Bun_G \times \Spd(C)}
	\arrow["\varepsilon"{description}, from=1-2, to=1-1]
	\arrow["{\overleftarrow{h}}"{description}, from=1-2, to=2-1]
	\arrow["{\overrightarrow{h}}"{description}, from=1-2, to=2-3]
\end{tikzcd}\]
where $\Hk_G$ is the moduli stack of modifications of $G$ torsors at the point $y$. The corresponding endofunctor of $\mc{D}_{\lis}(\Bun_G)$, when $\Lambda$ is a torsion ring is given by 
\begin{equation}\label{eq:hecke-operator-intro}
    A \mapsto \overrightarrow{h}_{!}(\overleftarrow{h}^*(A) \otimes \varepsilon^*(\mc{S}_V)),
\end{equation}
upon identifying $\mc{D}_{\lis}(\Bun_G) \cong \mc{D}_{\lis}(\Bun_G \times \Spd(C))$. In general, for $\ell$-adic coefficients, one needs to use the formalism of solid sheaves as in \cite[Section IX]{Geometrization}.

\subsubsection{The extended Hecke action}
Passing to the extended setup, the main geometric construction we provide is that of an extended Hecke stack $\Hk^e_G$ and its local version $\Hk^{e,\loc}_{G}$ fitting into a diagram 
\[\begin{tikzcd}
	{\Hk^{e,\loc}_G} & {\Hk_G^e} & \\
	{\Bun_G^e} && {\Bun_G^e \times \Spd(C).}
	\arrow["\varepsilon"{description}, from=1-2, to=1-1]
	\arrow["{\overleftarrow{h}}"{description}, from=1-2, to=2-1]
	\arrow["{\overrightarrow{h}}"{description}, from=1-2, to=2-3]
\end{tikzcd}\]
These stacks are defined in Section \ref{sec:extendedheckedef}. 
There are two key subtleties in the definition and construction of these Hecke stacks.

The first one is that the stack $\Hk_{G}^e$ is not naively the stack of tuples $(x, \mathcal{P}_0, \mathcal{P}_1, \phi)$ where $x : S \to \Div^1$ is a divisor on $X_S$, $\mathcal{P}_i$ are $G$-torsors on $\mathfrak{X}_{S}$ and $\phi$ is an isomorphism $\mathcal{P}_0 \cong \mathcal{P}_1$ away from the preimage of the Cartier divisor corresponding to $x$ in $\mathfrak{X}_{S}$. This definition yields a stack which we call $\Hk_{G}^{e, \tn{naive}}$ (resp. $\Hk_{G}^{e,\loc, \tn{naive}}$ for the local variant) defined in Section \ref{sec:hecke-stacks}. There is a natural inclusion 
$$\Hk_{G}^{e, \tn{naive}} \subset \Hk_{G}^{e},$$
which (after base-changing to $\Spd(C)$) is a union of connected components of $\Hk_{G}^e$. From the point of view of geometric Satake, the stack $\Hk_{G}^{e, \tn{naive}}$ only allows for modifications of torsors indexed by $\Rep(\widehat{G})$ and does not witness $\Rep(\widehat{G}^e)$. Correspondingly, the ``naive'' Hecke operators that one obtains in this way are the ones that yield the action of $\Perf(\Par_G)$ on $\mc{D}_{\lis}(\Bun_G^e)$. 

Roughly speaking, the central character of $V^{e} \in \Rep(\widehat{G}^{e})$ should capture how the corresponding Hecke operator $T^{e}_{V^{e}}$ permutes the connected components of $\Bun_{G}^{e}$. One motivation for this is \cite[Lemma 5.3.2]{Zou24}, which shows that for $\chi$-isotypic $V$, where $\chi$ is a character of $Z_{\widehat{G}}^{\Gal_{F}}$, the operator $T_{V}$ sends $\mc{D}_{\tn{lis}}(\Bun_{G}^{\kappa = x})$ to  $\mc{D}_{\tn{lis}}(\Bun_{G}^{\kappa = x-\chi})$, where we use the isomorphism $X^{*}(Z_{\widehat{G}}^{\Gal_{F}}) \xrightarrow{\sim} \pi_{1}(G)_{\Gal_{F}}$. 

In our setting, we replace $Z_{\widehat{G}}^{\Gal_{F}}$ with the covering 
\begin{equation*}
\widehat{Z} \hookrightarrow Z_{\widehat{G}}^{\Gal_{F},+} \twoheadrightarrow Z_{\widehat{G}}^{\Gal_{F}},
\end{equation*}
given by taking its preimage in $\widehat{G}^{e}$. Since $X^{*}(\widehat{Z})$ is canonically identified with $\Hom_{F}(u,Z_{G})$, one wants, for $\nu \in \Hom_{F}(u,Z_{G})$-isotypic $V^{e} \in \Rep(\widehat{G}^{e})$, a Hecke operator $T_{V^{e}}^{e}$ sending $\mc{D}_{\tn{lis}}(\Bun_{G}^{e,\lambda })$ to  $\mc{D}_{\tn{lis}}(\Bun_{G}^{e,\lambda-\nu})$. In other words, the restriction of a $\widehat{G}^{e}$-representation to $\widehat{Z} = \tn{ker}[\widehat{G}^{e} \to \widehat{G}]$ measures how the corresponding Hecke operator permutes the inertial decomposition \eqref{eq:decomp-bun-g-e-intro}; for example, the action of $\Perf(\Par_{G})$ preserves this decomposition.

It is thus critical in this extended setting to define a Hecke stack which allows the inertial morphism $\lambda$ to change. To achieve this, we introduce (Definitions \ref{defi:elochck} and \ref{defi:eglobhck}) \textit{twisted modifications} between $G$-torsors on $\mathfrak{X}_{S}$, defined by twisting by certain $Z_G$-torsors which exist only on the punctured curve (or punctured disk in the local case). 

The resulting effect of introducing these twists is that one, \emph{a priori}, loses the convolution structure of the local Hecke stacks. This is the second subtlety of the construction: We have to also redefine a convolution product to equip the category of sheaves on $\Hk_{G}^{e, \loc}$ with a monoidal structure. This is done in Section \ref{sec:Hckconv}. Doing convolution in this way depends on the isomorphism $\hat{\ov{F}} \xrightarrow{\sim} C^{\sharp}$ and thus does not glue over $\Div^1$. As such, the corresponding Hecke operator only exists over $\Spd(C)$---it turns out that this is an expected feature of the construction, see Remark \ref{rque:non-gluing-to-div-1}. 

Once the convolution is suitably constructed, we can move on to proving the relevant Geometric Satake equivalence. 
\begin{defi}[Definition \ref{defi:satake-category}]
    Let $\Sat_G^e \subset \mc{D}(\Hk^{e, \loc}_G)$ denote the full subcategory of perverse sheaves on $\Hk^{e, \loc}_G$ which are $\tn{ULA}$ over $\Spd(C)$ and $\Lambda$-flat. 
\end{defi}

\begin{thm}[Extended Geometric Satake, Theorem \ref{thm:satake}]
    There is an equivalence of monoidal categories
    $$\Sat_G^e \cong \Coh^{\heartsuit, \Lambda-\tn{flat}}(\pt/\widehat{G}^e \times_{\pt/\widehat{G}} \pt/\widehat{G}^e)$$
    where the right-hand side is equipped with the convolution monoidal structure. 
\end{thm}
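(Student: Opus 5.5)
The plan is to deduce the extended Satake equivalence from the Fargues--Scholze geometric Satake equivalence $\Sat_G \cong \Rep_\Lambda(\widehat{G})$ (more precisely its $\Lambda$-flat version) by decomposing $\Hk^{e,\loc}_G$ into pieces indexed by the inertial data. After base change to $\Spd(C)$, I would first show that $\Hk^{e,\loc}_G$ decomposes as a disjoint union over pairs $(\lambda,\lambda')$ of inertial morphisms, or equivalently over $\lambda$ and a ``twist'' $\nu = \lambda-\lambda' \in \Hom_F(u,Z_G) = X^*(\widehat{Z})$. Each piece $\Hk^{e,\loc,\lambda,\nu}_G$ should be identified, via the twisting isomorphisms used for \eqref{eq:decomp-bun-g-e-intro} and the $Z_G$-torsors on the punctured disk that define twisted modifications, with a union of connected components of the ordinary local Hecke stack $\Hk^{\loc}_{G}$ (or that of an inner form, which becomes isomorphic over $\Spd(C)$). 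Concretely, the $\nu$-twisted piece should correspond to the components of $\Hk^\loc_{G/Z}$ for $Z$ a large enough finite central subgroup, lying over the coset of $\pi_1(G)$ in $\pi_1(G/Z)$ determined by $\nu$. In this way, perverse ULA flat sheaves on the $\nu$-piece match, by Fargues--Scholze Satake for $G/Z$, with representations of $\widehat{G/Z}$ on which the finite group $\ker(\widehat{G/Z}\to\widehat{G})$ acts via the character determined by $\nu$.

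Next, I would identify the right-hand side. The category $\Coh^{\heartsuit}(\pt/\widehat{G}^e \times_{\pt/\widehat{G}} \pt/\widehat{G}^e)$ is the category of $\widehat{G}^e\times_{\widehat{G}}\widehat{G}^e$-representations, or, writing the fiber product as $(\widehat{G}^e \times \widehat{Z})/\ldots$, of $\widehat{G}^e$-representations graded by $X^*(\widehat{Z})$ in a compatible way. More precisely, there is the quotient $\pt/\widehat{Z}$-gerbe structure, and its sheaves decompose by pairs of $\widehat{Z}$-characters. Passing to the limit over $Z$ (and using that any finite-dimensional representation factors through some $\widehat{G/Z}$, since $\widehat{Z}$ is pro-finite), I would match both sides summand by summand, as abelian categories compatible with the fiber functors, to total cohomology on each side.

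The key remaining step is monoidality: the convolution of Section \ref{sec:Hckconv} must correspond to the convolution monoidal structure on the fiber product, which is composition of correspondences $\pt/\widehat{G}^e \leftarrow \cdot \rightarrow \pt/\widehat{G}^e$. I would run the Fargues--Scholze argument, which builds a fusion-based symmetric monoidal fiber functor and applies Tannakian reconstruction. Here the convolution is only defined over $\Spd(C)$ and depends on $\hat{\ov F}\cong C^\sharp$, so fusion over $(\Div^1)^I$ is not directly available. Instead, I would transport the monoidal structure through the identification with $\Hk^\loc_{G/Z}$, checking that the twisted convolution agrees with the usual convolution for $G/Z$ on the relevant components. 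The twisting torsors chosen via the isomorphism $\hat{\ov F}\cong C^\sharp$ should compose compatibly, which amounts to a cocycle condition in $\Hom_F(u,Z_G)$.

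I expect the main obstacle to be this compatibility: showing that the redefined convolution matches the $G/Z$ convolution coherently, including associativity constraints and independence of $Z$ in the limit. Once it is established, the monoidal equivalence follows from Fargues--Scholze Satake for $G/Z$ together with the Tannakian description of the groupoid $\widehat{G}^e\times_{\widehat{G}}\widehat{G}^e$.
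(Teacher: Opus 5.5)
Your architecture matches the paper's. You split by $(\lambda_0,\lambda_1)$, push the $G$-torsors out to $G/Z$ and descend them along the gerbe (the $Z$-torsors $\Q_{\lambda_1/\lambda_0,S}$ die under pushout), and apply Fargues--Scholze Satake for $G/Z$ on the $\lambda_0/\lambda_1$-component. You then transport convolution rather than using fusion, with the compatibility given by the $2$-cocycle identity for $z$ (Proposition \ref{prop:Hckconv}). Mismatched components convolve to zero, giving $\bigoplus_{(\lambda_0,\lambda_1)}\Rep_\Lambda(\widehat{G/Z})^{\lambda_0/\lambda_1}$, which is the coherent side by Remark \ref{rque:altmonoidal}.

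The gap is the step where perverse ULA flat sheaves on a piece ``match'' those on a component of $\Hk^{\loc}_{G/Z}$: the identification you invoke does not exist.

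For $\lambda_0\neq\lambda_1$, the piece is not a union of components of $\Hk^{\loc}_G$ or of an inner form. Its Schubert cells are indexed by $\nu\in X_*(T/Z)^+$ with nontrivial image in $\Hom_{\ov F}(\mu,Z)$ (Remark \ref{rque:strat}). Nor is it isomorphic to a component of $\Hk^{\loc}_{G/Z}$.

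What is true is that the Grassmannians agree: $\tn{Gr}^{e,(\lambda_0,\lambda_1)}_{Z\subset G}\cong\tn{Gr}^{\lambda_0/\lambda_1}_{G/Z,\Spd(C)}$ (Proposition \ref{prop:mainGrisom}). Even seeing which coset of $\pi_1(G)$ is hit needs Proposition \ref{prop:Quotslope}, which uses that $\psi$ maps to $\tn{Sh}$ in $\tn{Gr}_t(C)$. The Hecke piece, however, is the quotient by $\mc{A}_{\lambda_1}\cong L^+G$, because its objects are $G$-torsors. So the comparison map is $L^+G\backslash\tn{Gr}\to L^+(G/Z)\backslash\tn{Gr}$. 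This is a $Z$-gerbe in mixed characteristic. In equal characteristic $L^+G\to L^+(G/Z)$ is not surjective in general, since $H^1_{\tn{fppf}}(\bbD_x,Z)$ can be nonzero.

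Consequently nothing in your argument rules out extra perverse sheaves on the extended side. In equal characteristic the $L^+G$-orbits could a priori be finer than the $L^+(G/Z)$-orbits. In general there could be equivariant objects on which $Z\subset L^+G$ acts nontrivially. Since $\ell\mid |Z|$ is allowed, pullback along a $BZ$-gerbe is not an equivalence for formal reasons.

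The missing ingredient is Lemma \ref{lem:Grgerbe}, which has two parts:
\begin{enumerate}
\item $L^+G$ and $L^+(G/Z)$ have the same orbits on $\tn{Gr}_{G/Z,\Spd(C)}$. This uses the Cartan decomposition, plus writing geometric points of $L^+(G/Z)$ as $g't'$ with $g'\in L^+G$, $t'\in L^+(T/Z)$.
\item The $L^+G$-stabilizers are connected and contain $Z$ in their identity component, since at $[t^{\bar\nu}]$ they contain $L^+T$.
\end{enumerate}
This forces $Z$ to act trivially and yields Corollary \ref{prop:perveequiv1}; only then does Fargues--Scholze Satake for $G/Z$ apply componentwise.

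With this in hand, your transport of the monoidal structure works as in the paper. The two-legged extended Hecke stack arises from $\Hck^{\loc,2}_{G/Z,\Spd(C)}$ by the same change of equivariance group (Lemma \ref{lem:Grcompconv}). So the relevant square is Cartesian, and convolution of pullbacks is the pullback of convolution (Proposition \ref{prop:pervconv}).
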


Consider the relative diagonal 
$$\Delta : \pt/\widehat{G}^e \to \pt/\widehat{G}^e \times_{\pt/\widehat{G}} \pt/\widehat{G}^e.$$
The functor $\Delta_*$ provides a monoidal functor $\Rep_{\Lambda}\widehat{G}^e \to \Sat_G^e$ which, in turn, yields the desired Hecke operators using the same formula as \eqref{eq:hecke-operator-intro}. We refer to Section \ref{sec:Extended-Hecke-Action} for the details. 

\subsection{The rigid refined local Langlands correspondence}

One immediate consequence of the extended spectral action constructed in this paper is a statement of the categorical local Langlands conjecture for arbitrary connected reductive groups. It is also likely that adapting the methods of \cite{GLCV} to the Fargues--Scholze setting, as explained in Section \ref{sec:reduction-perspectives}, gives another approach for producing this maximally general formulation of the CLLC. 

Returning temporarily to the non-categorical setting, we want to emphasize that the purpose of the rigid refined local Langlands conjecture is not just to formulate a parametrization of the $L$-packets for arbitrary connected reductive groups. Indeed, the first completely general version\footnote{By ``refined local Langlands conjectures'' in this context, we just mean a parametrization of each $L$-packet $\Pi_{\varphi}$ using some datum related to the centralizer of $\varphi$ in $\widehat{G}$.} of the refined local Langlands conjectures is already given in \cite{Arthur06} by taking the preimage of the $\widehat{G}_{\tn{ad}}$--image of $S_{\varphi} := Z_{\widehat{G}}(\varphi)$ in $\widehat{G}_{\tn{sc}}$ in order to parametrize the $L$-packets for all inner forms of a fixed quasi-split group. However, in the same paper, Arthur observes that this approach cannot yield normalized absolute transfer factors in general. 

By formulating a version of the refined local Langlands correspondence parametrized by rigid inner twists, Kaletha proves that, given a tempered $L$-parameter $\varphi$ for a connected reductive group $G$ with endoscopic datum $\mathfrak{e}$, one can always construct a normalized absolute transfer factor $\Delta$ for $(G,\varphi,\mathfrak{e})$. Defining such transfer factors is the primary motivation for the aforementioned rigid formulation \cite{Kal16} of the refined local Langlands correspondence, which thus both encompasses all connected reductive groups and allows for a general statement of the endoscopic character identities.

One of the eventual aims of the categorical local Langlands project writ large is to bring the notion of endoscopy for $p$-adic groups under the umbrella of Fargues--Scholze. Since rigid inner twists are required to state the endoscopic character identities for groups which are not pure inner forms of a quasi-split group, it is reasonable to believe that one needs to use the extended version of the CLLC given in this paper in order to achieve this goal in full generality.

Furthermore, we hope that in the future one can use the extended geometrization to prove a version of the refined local Langlands correspondence (inspired by an earlier version of \cite{Fargues22}), which, among other desiderata, predicts:

\begin{conj}[\protect{\cite[Conjecture 5.1]{Dillery26}}] For a fixed $L$-parameter $\varphi$ for quasi-split $G$ with Whittaker datum $\mathfrak{w}$, there is a bijection
\begin{equation}\label{eq:rigLLC}
 \Pi_{\varphi}^{e}  \xrightarrow{\iota_{\mathfrak{w}}}  \tn{Irr}(S_{\varphi}^{+,\natural}),
\end{equation}
where $\Pi_{\varphi}^{e}$ is a $|\Bun_{G,\tn{basic}}^{e}|$-indexed union of the $L$-packets for $\varphi$ and the corresponding inner forms of $G$ and (denoting by $S_{\varphi}^{+}$ the preimage of $S_{\varphi}$ in $\widehat{G}^{e}$)
\begin{equation*}
S_{\varphi}^{+,\natural} := \frac{S_{\varphi}^{+}}{(S_{\varphi}^{+} \cap \widehat{G}_{\tn{sc}})^{\circ}}.
\end{equation*}
\end{conj}

\subsection{Global Langlands}
We conclude by explaining one potential global application. As explained in the introduction to \cite{Kaletha18b}, rigid inner twists and (a variant of) the correspondence \eqref{eq:rigLLC} are required to state the conjectural multiplicity formula for discrete automorphic representations of a connected reductive group $G$, even when $Z_{G}$ is connected (more precisely, one needs $G$ to satisfy the Hasse principle and have connected center in order to use extended pure inner forms instead for this purpose). In the setting of a global function field one has the recent work \cite{DLH26} of Li-Huerta which reformulates the global Langlands conjecture in terms of the CLLC. We hope that synthesizing this paper and its techniques with \cite{DLH26} can make progress towards proving cases of the aforementioned multiplicity formula (cf. \cite[Conjecture 5.6]{Dillery26}) over global function fields.

\subsection{Notation and conventions}

\subsubsection{Fields}

We let $F$ be a non-archimedean local field with residue field $\Fq$ of characteristic $p$. We denote by $\breve{F}, \overline{F}$, respectively, the completion of the maximal unramified extension and an algebraic closure of $F$. We let $C$ be a fixed algebraically closed perfectoid field over $\Fqb$ together with an isomorphism $C^{\sharp} \xrightarrow{\sim} \hat{\ov{F}}$. 

We denote by $\Ocal_?$ the ring of integers of $? \in \{F, \breve{F}, \overline{F}, C\}$.

We shall denote by $\Gal_F, \Inert_F, P_F$ the absolute Galois group of $F$, its inertia subgroup and wild inertia respectively. We write $\Weil_F$ for the Weil group of $F$. 

\subsubsection{Geometry}

We denote by $\Perfd$ the category of perfectoid spaces over $\Fqb$, as in \cite[Section 7.1]{SW20} and by $\Perfd_C$ the category of perfectoid spaces over $\Spa(C)$. For $S \in \Perfd$ we denote by $X_{S}$ the associated relative Fargues--Fontaine curve, as in \cite[Definition II.1.15]{Geometrization}, and denote by $\Div^{1}$ the moduli stack over $\Perfd$ parametrizing degree $1$ relative Cartier divisors on $X_{S}$ (as in \cite[Definition II.1.19]{Geometrization}). For $C$ an algebraically closed perfectoid field, denote by $\Spd(C)$ the diamond associated to $\Spa(C,C^{+})$ (as in \cite[Section 8.3]{SW20}).

The datum of the isomorphism $C^{\sharp} \xrightarrow{\sim} \hat{\ov{F}}$ gives a point 
$$y : \Spa(C,C^{+}) \to \Div^1.$$
When the context is clear, we drop the $y$ and just write $\Spa(C) \to \Div^1$. The isomorphism $C^{\sharp} \xrightarrow{\sim} \hat{\ov{F}}$ also determines points $y^{(E)} \colon \Spa(C) \to \Div^{1}_{E}$ for all finite Galois $E/F$ which are compatible via the maps $X^{(E')}_{S} \to X_{S}^{(E)}$.

\subsubsection{Categories of sheaves}

We let $\ell \neq p$ be a prime and we let $\Lambda$ be a coefficient ring. When $\Lambda$ is torsion of residue characteristic $\ell$, for a diamond or $v$-stack $X$, we denote by 
$$\mc{D}(X, \Lambda)$$ 
the category of étale sheaves of $\Lambda$-modules on $X$. We shall write $\mc{D}(X)$ if $\Lambda$ is clear from the context. For $\ell$-adic coefficients (i.e. when $\Lambda = \Zlb, \Qlb$), we shall, as in \cite{Geometrization}, use the formalism of solid and lisse sheaves, see Section \ref{sec:Extended-Hecke-Action}. 

\subsubsection{Reductive groups}

We let $G$ be a quasi-split reductive group over $F$ together with a Borel pair $(B,T)$. We denote by $(X^*(T), X_*(T), \Phi, \Phi^{\vee}, \Delta, \Delta^{\vee})$, the based root datum of $(B,T)$ and by $W$ the Weyl group of $(B,T)$. We write $Z_G$ for the center of $G$ and $\mathbf{T}$ for the abstract Cartan of $G$.
We denote by $\widehat{G}$ the Langlands dual group, a canonically defined pinned reductive group over $\Lambda$ and by ${^L}G = \widehat{G} \rtimes \Gal_F$ with Galois action normalized so that for $\alpha \in \Delta^{\vee}$ (a simple root of $\widehat{G}$), we have a $\Gal_F$-equivariant isomorphism 
$$\widehat{G}_{\alpha} \cong \Ga(-1),$$
see \cite[Section VI.11]{Geometrization} and \cite[Section 6.2.2]{IntegralSatake} for the normalization of the Tate twist.

To work in the extended setting, we denote by $\widehat{G}^e$ the extended dual group as defined in \cite[Section 12.1]{Fargues22}. This is the (pro)-reductive group over $\Lambda$ given as
$$\widehat{G}^e = \varprojlim_Z \widehat{G/Z}$$
where the limit runs through the set of all finite central subgroups $Z \subset Z_{G}$.

\subsubsection{$\infty$-categories}

We shall write $\Pr$ for the category of presentable, stable, $\Lambda$-linear categories, equipped with its usual Lurie tensor product, we write $\Alg(\Pr)$ (resp. $\CAlg(\Pr)$) to denote the category of algebras in $\Pr$, this is the category of monoidal (resp. symmetric monoidal) categories in $\Pr$ whose tensor product commutes with colimits in both variables.

\subsubsection{Miscellaneous}
For $X$ a topological space and $A$ a topological abelian group, denote by $\mathcal{C}(X,A)$ the group of continuous functions from $X$ to $A$. When $A$ is an abstract abelian group and we do not specify the topology, then it is assumed that $A$ has the discrete topology.

\subsection{Acknowledgements}
The second author thanks the Max Planck Institute for Mathematics for its hospitality during the preparation of this paper. We thank Dori Bejleri, Patrick Brosnan, Drimik Roy Chowdhury, Tom Haines, David Hansen, Tasho Kaletha, Alex Youcis, and Konrad Zou for comments and discussions during the preparation of this paper and its subsequent drafts. We thank Thibaud van den Hove for his very helpful clarifications about Hecke stacks. We also thank Yifei Zhao for explaining to us the content of \cite{GLCV} and discussions during the preparation of this paper.

\section{Rigid inner forms and $\Bun_{G}^e$}\label{sec:rigid-inner-forms}

In this section we recall some fundamental geometric objects used in the paper.

\subsection{The gerbe $\Kal$}
We first recall the reinterpretation of $\Kal$ from \cite{Fargues22} as a gerbe over the Fargues--Fontaine curve.

\begin{defi}
\begin{enumerate}
\item{Define the pro-multiplicative group scheme $\mathbb{D} := \varprojlim_{n} \mathbb{G}_{m}$.}
    \item{We denote by $\Kott_{F}$ the Kottwitz gerbe (this is sometimes denoted by $\mc{E}^{\tn{iso}}$ and called the ``isocrystal gerbe'' in the literature but we choose $\Kott_{F}$ in order to be consistent with \cite{Fargues22}), which is the gerbe over $\Spec(F)$ corresponding to the Tannakian category $\Isoc_{F}$ of $F$-isocrystals, which is banded by $\mathbb{D}$.}

\end{enumerate}
    We will frequently just write $\mathrm{Kott}$ for $\mathrm{Kott}_{F}$ and reserve the notation $\mathrm{Kott}_{E}$ for the analogous object for a finite Galois extension $E/F$. We also write $\mathrm{Kott}$ to denote the corresponding \'{e}tale gerbe over $\mathrm{Spa}(F)$.
\end{defi}

\begin{lem}\label{lem:maptoKott}
    There is a canonical morphism over $\Spa(F)$
    \begin{equation*}
        X_{S} \to \Kott
    \end{equation*}
    for any $S \in \Perfd$.
\end{lem}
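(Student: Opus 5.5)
Two facts do the work here: $\Kott$ is the gerbe attached to the Tannakian category $\Isoc_F$, and every isocrystal produces a vector bundle on the Fargues--Fontaine curve. So a morphism $X_S \to \Kott$ over $\Spa(F)$ should amount to an exact tensor functor from $\Isoc_F$ to vector bundles on $X_S$, compatible with the fiber-functor structure over $F$. We take that functor to be the standard one, $\mathcal{E}(-)$, which sends an isocrystal $(D,\varphi)$ over $\breve{F}$ to the descent of $D \otimes_{\breve F} \mathcal{O}_{Y_S}$ along $Y_S \to X_S = Y_S/\varphi^{\mathbb{Z}}$, with descent datum $\varphi \otimes \varphi$. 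The morphism $X_S \to \Spa(F)$ is the structure map, coming from $F$ acting on $\mathcal{O}_{Y_S}$ through $W_{\mathcal{O}_F}(\mathcal{O}_S^+)$.

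The key steps are as follows.

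\textbf{(1) Tannakian description of maps into $\Kott$.} For an analytic adic space $T$ over $\Spa(F)$, maps $T \to \Kott$ over $\Spa(F)$ are equivalent to exact $F$-linear tensor functors $\Isoc_F \to \mathrm{Bun}(T)$. This is the Tannakian formalism for gerbes banded by the affine group $\mathbb{D}$, transported to the étale/analytic setting. We apply it to $T = X_S$ (an adic space over $\Spa F$, not a diamond); for non-affine $X_S$ it follows from the affine case by analytic descent, since both sides are stacks for the analytic topology.

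\textbf{(2) Properties of $\mathcal{E}(-)$.} We check that $\mathcal{E}(-)$ is $F$-linear, exact, and symmetric monoidal. Tensor compatibility holds because $\mathcal{E}(-)$ is base change followed by descent.

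\textbf{(3) Local triviality of the resulting gerbe map.} The functor must be a fiber functor locally. Analytic-locally on $X_S$ (possibly after a finite étale cover), it should become isomorphic to the forgetful functor composed with pullback.

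To show this, we use the Dieudonné--Manin decomposition $D = \bigoplus_{\lambda} D_{\lambda}$, which reduces the question to the bundles $\mathcal{O}(\lambda)$. The isoclinic part of slope $\lambda = d/h$ is defined over $F_h$, the degree-$h$ unramified extension of $F$. Over the finite étale cover $X_{S,F_h} \to X_S$, the bundle $\mathcal{O}(\lambda)$ becomes a direct sum of line bundles $\mathcal{O}(d)$. These line bundles are locally trivial on the curve, and the trivializations can be chosen compatibly with tensor products. The latter is the standard fact that $\mathbb{D}$-torsors, i.e.\ $\mathbb{Z}$-gradings twisted by slopes, are locally trivial.

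\textbf{(4) Canonicity.} The construction uses only $S$ and the Frobenius, so it is functorial in $S$ and requires no choices.

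The main obstacle is step (3), equivalently the justification of step (1) in the adic setting: showing that the functor is locally a fiber functor, so that it really defines a map to the gerbe rather than only to the stack of tensor functors. I would first try to reduce to an affinoid cover of $X_S$ coming from an affinoid cover of $Y_S$, and check trivialization there slope by slope using the explicit description of $\mathcal{O}(d)$.
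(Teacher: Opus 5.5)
This is essentially the paper's proof, which takes the exact faithful tensor functor $\Isoc_F \to \mathrm{VB}(X_S)$ of \cite[Section III.2.1]{Geometrization} and uses that $\Kott$ is the Tannakian gerbe of $\Isoc_F$; that is your steps (1), (2) and (4). Your step (3) is not a real obstacle: by Deligne's theorem the gerbe attached to $\Isoc_F$ is the stack of fiber functors, so the functor already defines a point. If you want explicit local triviality anyway, take local sections of $Y_S \to X_S$, over which $\mathcal{E}(D,\varphi)$ is just $D \otimes_{\breve{F}} \mathcal{O}_{Y_S}$, the forgetful fiber functor; this is cleaner than your slope-by-slope trivialization, which as sketched is not obviously natural in morphisms of isocrystals.
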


\begin{proof}
For any $S \in \Perfd$ there is an exact, faithful tensor functor
\begin{equation*}
\tn{Isoc}_{F} \to \tn{VB}(X_{S}),
\end{equation*}
where $\tn{VB}(X_{S})$ denotes the tensor category of vector bundles on $X_{S}$ (by \cite[Section III.2.1]{Geometrization}). We are then done because $\Kott$ is the gerbe associated to the Tannakian category $\Isoc_{F}$. 
\end{proof}

Let $E/F$ be a finite Galois extension. As explained in \cite[Section 4.1]{Fargues22}, we can define an isocrystal of slope $-1$ over $E$ as follows. First, one chooses an arbitrary slope-one isocrystal $(D,\varphi)$ over $E$, and then defines the $\breve{E}$-vector space
\begin{equation*}
V(D,\varphi) = \{f \colon D \to B_{\tn{cris}}^{(E)} | f \circ \tn{Frob}_{E} = \tn{Frob}_{E} \circ f, \hspace{1mm} f(D) \subseteq \tn{Fil}^{1} B_{\tn{cris}}^{(E)}\}
\end{equation*}
where $B_{\tn{cris}}^{(E)}$ is as defined in \cite[Section 1.2]{FF} (along with its natural filtration and Frobenius action). The desired isocrystal is defined as
\begin{equation*}
    \mathbb{L}_{E} := (D,\varphi)^{\vee} \otimes_{\breve{E}} V(D,\varphi)^{\vee}. 
\end{equation*}
The key property of $\mathbb{L}_{E}$ for our purposes is that if one replaces $(D,\varphi)$ with another slope-one isocrystal which produces $\mathbb{L}_{E}'$ via the above recipe, then there is a canonical isomorphism from $\mathbb{L}_{E}$ to $\mathbb{L}_{E}'$ (cf. \cite[Section 4.1]{Fargues22} for more details). Further, it is shown in Lemme 4.3 loc. cit. that, for $E'/E/F$ two such extensions, there is an identification between 
\begin{equation*}
    N_{E'/E} \mathbb{L}_{E'} := \bigotimes_{\sigma \in \Gal_{E'/E}} \sigma^{*}(\mathbb{L}_{E'}) \cong \mathbb{L}_{E}.
\end{equation*}

\begin{defi}\label{def:LE/F}
The datum of $\mathbb{L}_{E}$ defines a $\mathbb{G}_{m}$-torsor on $\mathrm{Kott}_{E}$. Recalling that $X_{S}^{(E)} \to X_{S}$ denotes the corresponding finite \'{e}tale cover of $X_{S}$, we can pull this torsor back via the map from Lemma \ref{lem:maptoKott} to a $\mathbb{G}_{m}$-torsor over $X_{S}^{(E)}$, denoted by $\mc{T}_{E/F,S}$. We can also pull the line bundle $\mathbb{L}_{E}$ back to $X^{(E)}_{S}$, which we denote by $\mc{L}_{E/F,S}$. Note that $\mc{T}_{E/F,S}$ is the sheaf of trivializations of $\mc{L}_{E/F,S}$.  
\end{defi}

We define (as in \cite[Section 3.1]{Kal16}, where we are using the version without quotients, see \cite[Remark 2.12]{BMD26} for a detailed explanation of the difference)
\begin{equation*}
u = \varprojlim_{E/F,n} \Res_{E/F}(\mu_{n}),
\end{equation*}
where the transition maps are given by field norms and $n/m$-power maps (for $m \mid n$). Similarly, set $\tilde{t} = \varprojlim_{E/F,n} \Res_{E/F}(\mathbb{G}_{m})$ and $t = \varprojlim_{E/F} \Res_{E/F}(\mathbb{G}_{m})$, which fit into the short exact sequence of multiplicative group schemes over $F$
\begin{equation*}
1 \to u \to \tilde{t} \to t \to 1.
\end{equation*}

The sequence of $\mathbb{G}_{m}$-torsors on each $\mathrm{Kott}_{E}$ obtained from $\{\mathbb{L}_{E}\}_{E/F}$ defines a $t$-torsor $\mathbb{T}$ on $\mathrm{Kott}_{F} =: \mathrm{Kott}$, and we can form the $u$-gerbe 
\begin{equation*}
[\mathbb{T}/\tilde{t}] \to \mathrm{Kott},
\end{equation*}
which is also a gerbe over $\Spec(F)$.

\begin{rque}
    The above construction of the system $\{\mathbb{L}_{E}\}_{E/F}$ is a nontrivial part of \cite{Fargues22} relying on ideas from $p$-adic Hodge theory, and we do not recall any details beyond the above summary and refer the reader to \cite[Section 4.1]{Fargues22} for a detailed discussion. The only properties of the system $\{\mathbb{L}_{E}\}_{E/F}$ that we use here are that it consists of isocrystals of slope $-1$, is unique up to canonical isomorphism, and is norm-compatible. In fact, for the purpose of understanding this paper, it is essentially harmless to replace the system $\{\mathbb{L}_{E}\}_{E/F}$ with the system $\{(D_{E}, \varphi_{E})\}$ of isocrystals determined by picking a norm-compatible system of uniformizers $\{\varpi_{E}\}_{E/F}$ for all finite Galois $E/F$. Using this system instead will produce the desired spectral action---the subtle point is that the resulting action will only be canonical up to acting on $\Bun_{G}^{e}$ (cf. Definition \ref{defi:BunGe}) via pulling back by the automorphism of $[\mathbb{T}/\tilde{t}]$ determined by the differential of a $0$-cochain valued in $u(\ov{F})$.
    
    Furthermore, one could just as well take the system of isocrystals to be the one obtained by taking the fiber of each $\mc{O}_{X_{S}^{(E)}}(y^{(E)})$ at the closed point of $X_{S}^{(E)}$ corresponding to $y^{(E)}$. In this way, one obtains a norm-compatible system of isocrystals of slope $-1$ without changing the number of choices made in this paper. 
    \end{rque}

\begin{defi}\label{defi:Tdef}
We denote by $\mc{T}_{S}$ the pro-\'{e}tale $t_{S}$-torsor over $X_{S}$ corresponding to the norm-compatible system of $\mathbb{G}_{m}$-torsors $\{\mc{L}_{E/F,S}\}_{E/F}$.
\end{defi}

We then obtain a $u$-gerbe over $X_{S}$ by taking the quotient stack
\begin{equation*}
    [\mc{T}_{S}/\tilde{t}_{S}] \to X_{S}.
\end{equation*}
When $F$ has mixed characteristic, this is a pro-\'{e}tale gerbe. When $F$ has equal characteristic, it is a gerbe for the v-topology. By \cite[Corollaire 7.4]{Fargues22}, there is a canonical identification
\begin{equation*}
    [\mc{T}_{S}/\tilde{t}_{S}] \xrightarrow{\sim} X_{S} \times_{\Kott} [\mathbb{T}/\tilde{t}].
\end{equation*}

We conclude this subsection by explaining how the gerbes $[\mc{T}_{S}/\tilde{t}_{S}]$ and $[\mathbb{T}/\tilde{t}]$ are related to Kaletha's original construction in \cite{Kal16}. 

\begin{prop}\label{prop:ucohom}
   We have the following canonical identifications:
    \begin{enumerate}
       \item $H^{1}_{\tn{fppf}}(F, u) = 0$;
     \item $H^{2}_{\tn{fppf}}(F,u) = \widehat{\Z}$.
   \end{enumerate}
\end{prop}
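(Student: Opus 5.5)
The plan is to compute both groups by passing to the limit over the finite levels $u_{E,n} := \Res_{E/F}(\mu_n)$, so that $u = \varprojlim_{E,n} u_{E,n}$. For a profinite-type multiplicative group scheme that is an inverse limit of finite multiplicative groups with surjective transition maps, fppf cohomology satisfies a Milnor sequence
\begin{equation*}
0 \to \varprojlim\nolimits^{1} H^{i-1}_{\tn{fppf}}(F,u_{E,n}) \to H^{i}_{\tn{fppf}}(F,u) \to \varprojlim H^{i}_{\tn{fppf}}(F,u_{E,n}) \to 0.
\end{equation*}
Each term at finite level is computed by Shapiro's lemma, $H^{i}_{\tn{fppf}}(F,\Res_{E/F}\mu_n) = H^{i}_{\tn{fppf}}(E,\mu_n)$. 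Kummer theory (fppf, so this holds also in characteristic $p$ with $p \mid n$) then gives $H^{1}(E,\mu_n) = E^{\times}/E^{\times n}$ and $H^{2}(E,\mu_n) = \mathrm{Br}(E)[n] = \tfrac{1}{n}\Z/\Z$ by local class field theory. Under these identifications the transition maps become: for the $n/m$-power map on $\mu_n \to \mu_m$, the natural projection $E^{\times}/E^{\times n} \to E^{\times}/E^{\times m}$ on $H^1$ and multiplication by $n/m$, $\tfrac1n\Z/\Z \to \tfrac1m\Z/\Z$, on $H^2$; for the norm $\Res_{E'/E}$, the corestriction, i.e. the norm $N_{E'/E}$ on $H^1$ and the invariant-preserving map on Brauer groups (corestriction commutes with $\mathrm{inv}$) on $H^2$.

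For (2), the inverse system of $\tfrac1n\Z/\Z$ under multiplication by $n/m$ and identity in the $E$-direction has limit $\varprojlim_n \Z/n\Z = \widehat{\Z}$, after identifying $\tfrac1n\Z/\Z \cong \Z/n$ via multiplication by $n$. It remains to show $\varprojlim^{1}$ of the $H^{1}$-system vanishes; this will fall out of the $H^1$ analysis below, since the groups $E^{\times}/E^{\times n}$ are finite (for $F$ of characteristic zero; in characteristic $p$ with $p\mid n$ they are profinite, compact Hausdorff), so Mittag-Leffler holds and $\varprojlim^1=0$. Canonicity comes from the canonical invariant maps of local class field theory.

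For (1), the $\varprojlim^{1}$ term of $H^0 = \mu_n(E)$ vanishes since these are finite. So the task is to show $\varprojlim_{E,n} E^{\times}/E^{\times n}$, with norm and projection maps, is zero. By local class field theory $E^{\times}/E^{\times n} \cong \Gal(E^{ab}/E)/n$ for the profinite completion, with norms corresponding to the maps $\Gal(E'^{ab}/E') \to \Gal(E^{ab}/E)$ induced by inclusion $W_{E'} \subset W_{E}$, i.e. the transfer-compatible restriction. An element of the limit thus gives a compatible family in $\varprojlim_n \Gal(E^{ab}/E)/n = \widehat{E^{\times}}$. Compatibility under norms from all $E'$ forces it to lie in $\bigcap_{E'} N_{E'/E}\widehat{E'^{\times}}$, which is trivial by the existence theorem: every open finite-index subgroup of $\widehat{E^\times}$ is a norm group, and their intersection is $1$.

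The main obstacle is this last step, together with justifying the Milnor sequence for fppf cohomology of a pro-finite multiplicative group scheme, which is where characteristic $p$ (non-étale $\mu_p$) and the topology on $E^{\times}/E^{\times p}$ need care. For that I would cite the continuity results of \cite{Kal16} and \cite{Dillery24}, namely fppf cohomology of such limits commuting with $\varprojlim$ via \v{C}ech arguments. In fact the cleanest route would be to quote Kaletha's computation \cite[Theorem 3.1]{Kal16} in characteristic zero and its characteristic-$p$ analogue in \cite{Dillery24} directly, and only check that their version (with quotients) agrees with the one without quotients used here, per \cite[Remark 2.12]{BMD26}.
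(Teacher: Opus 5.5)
Your proposal is correct, but its main body takes a different route from the paper. The paper's proof is only a citation: \cite[Theorem 3.1]{Kal16} in mixed characteristic and \cite[Theorem 3.4]{Dillery23} in equal characteristic.

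Your direct computation is essentially the argument behind those theorems. It uses Shapiro's lemma, fppf Kummer theory and local class field theory at finite level, and then the existence theorem to show $\varprojlim_{E,n} E^{\times}/E^{\times n}=0$ under norms and projections. It also has the merit of working directly with the quotient-free system $\Res_{E/F}(\mu_n)$ used in this paper, whereas Kaletha works with $\Res_{E/F}(\mu_n)/\mu_n$. That discrepancy is harmless anyway. The norm $\Res_{E'/F}(\mu_n)\to\Res_{E/F}(\mu_n)$ restricts to $z\mapsto z^{[E':E]}$ on the diagonal $\mu_n$, so it kills it once $n\mid[E':E]$. Hence the two inverse systems are pro-isomorphic.

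What the citation buys is exactly the foundational input you defer: the Milnor sequence for fppf (\v{C}ech) cohomology of the pro-group $u$.

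Three corrections to your sketch:

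\begin{enumerate}
\item The characteristic-$p$ computation is \cite[Theorem 3.4]{Dillery23}, not \cite{Dillery24}; in this paper the latter is the reduction-to-connected-center reference.
\item In characteristic $p$ with $p\mid n$, the system $\{E^{\times}/E^{\times n}\}$ does \emph{not} satisfy Mittag-Leffler. The images of the norms are finite-index open subgroups that shrink to the trivial subgroup without stabilizing, and the trivial subgroup is not open because $E^{\times}/E^{\times n}$ is infinite. The vanishing of $\varprojlim^{1}$ (needed for $H^{2}$) must instead come from the compactness argument for countable systems of compact Hausdorff groups with continuous transition maps. That argument does apply here.
\item Under the Artin map, $N_{E'/E}$ corresponds to the map $\Gal_{E'}^{\mathrm{ab}}\to\Gal_{E}^{\mathrm{ab}}$ induced by $\Gal_{E'}\subset\Gal_{E}$. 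The transfer corresponds to $E^{\times}\hookrightarrow E'^{\times}$ instead, so the word ``transfer'' should be dropped; the map you actually use is the right one.
\end{enumerate}
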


\begin{proof}
    This is \cite[Theorem 3.1]{Kal16} in the mixed characteristic case and \cite[Theorem 3.4]{Dillery23} in the equal characteristic case.
\end{proof}

The Kaletha class is defined as the class $[\Kal] \in H^{2}_{\tn{fppf}}(F, u)$ corresponding to $-1$ under the above identification. 

\begin{prop}
\begin{enumerate}
\item{The stack 
\begin{equation*}
[\mathbb{T}/\tilde{t}] \to \mathrm{Kott} \to \Spec(F)
\end{equation*}
is a $\mathbb{D} \times u$-gerbe and represents the class $[\mathrm{Kott}] \times [\Kal] \in H_{\tn{fppf}}^{2}(F, \mathbb{D} \times u)$. In particular, the $u$-gerbe 
\begin{equation*}
[\mathbb{T}/\tilde{t}] \times^{B(\mathbb{D} \times u)} Bu\to \Spec(F)
\end{equation*}
represents $[\Kal]$; denote this gerbe by $\Kal$.}

    \item{There is a canonical isomorphism of stacks over $X_{S}$
    \begin{equation*}
        [\mc{T}_{S}/\tilde{t}_{S}] \xrightarrow{\sim} X_{S} \times_{\tn{Spa}(F)} \tn{Kal}.
    \end{equation*}}
    \end{enumerate}
\end{prop}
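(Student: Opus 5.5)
For part (1), I would first show that $[\mathbb{T}/\tilde{t}] \to \Spec(F)$ is a gerbe with band $\mathbb{D} \times u$. It is a $u$-gerbe over $\Kott$, because $\tilde{t}$ acts on the $t$-torsor $\mathbb{T}$ through $\tilde{t} \to t$ with kernel $u$. Also, $\Kott$ is a $\mathbb{D}$-gerbe over $\Spec(F)$. So the composite is a gerbe whose automorphism sheaf $\mathcal{A}$ is an extension
\begin{equation*}
1 \to u \to \mathcal{A} \to \mathbb{D} \to 1.
\end{equation*}
The plan is to split this extension canonically. Automorphisms of a point of $[\mathbb{T}/\tilde{t}]$ over $\Kott$-automorphisms are determined by how $\mathbb{D}$ acts on the fibres of $\mathbb{T}$. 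Since each $\mathbb{L}_{E}$ is an isocrystal of slope $-1$, the band $\mathbb{D}$ acts on the fibre functor of $\mathbb{L}_{E}$ through the character of weight $-1$. This is a character $\mathbb{D} \to \Res_{E/F}\mathbb{G}_m$, and these characters are compatible under norms, so they assemble into a homomorphism $\mathbb{D} \to t$. The key point is that this homomorphism lifts to $\tilde{t}$: the slope is integral and $\mathbb{D} = \varprojlim \mathbb{G}_m$ already carries the compatible $n$-th roots needed for the $n/m$-power transition maps. The lift gives the section $\mathbb{D} \to \mathcal{A}$, and hence a canonical product band $\mathbb{D} \times u$.

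Next I would compute the class of this gerbe in $H^2(F,\mathbb{D}\times u) = H^2(F,\mathbb{D}) \times H^2(F,u)$.
\begin{enumerate}
\item The $\mathbb{D}$-component is obtained by pushing out along the projection to $\mathbb{D}$, which recovers $\Kott$; so it is $[\Kott]$.
\item The $u$-component is the class of the $u$-gerbe of liftings of the $t$-torsor $\mathbb{T}$ to $\tilde{t}$, transported to $F$. Equivalently, it is the image of $[\mathbb{T}] \in H^1(\Kott, t)$ under the connecting map $\delta: H^1(-,t) \to H^2(-,u)$.
\end{enumerate}
To compute the second component, I would use the Hochschild--Serre/Leray sequence for $\Kott \to \Spec(F)$. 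The torsor $\mathbb{T}$ is not pulled back from $F$; its failure to descend is measured by the weight $-1$ character $\mathbb{D} \to t$. Via $\delta$ and Proposition~\ref{prop:ucohom}, this should produce the element $-1 \in \widehat{\Z} = H^2(F,u)$. I would check this at each finite level $(E,n)$. There the class should be the image of the fundamental class $-1 \in H^2(E,\mathbb{G}_m) \to H^2(E,\mu_n)$, since a slope $-1$ isocrystal is the standard cocycle representative of $-u_{E/F}$ in Kottwitz's description. I would then take the inverse limit, using that $H^1(F,u)=0$ kills the $\varprojlim^1$ issues. The statement that $[\mathbb{T}/\tilde{t}] \times^{B(\mathbb{D}\times u)} Bu$ represents $[\Kal]$ then follows from pushing out along the projection to $u$.

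For part (2), I would combine the canonical identification $[\mc{T}_S/\tilde{t}_S] \cong X_S \times_{\Kott} [\mathbb{T}/\tilde{t}]$ (\cite[Corollaire 7.4]{Fargues22}) with the product decomposition from part (1). The splitting $[\mathbb{T}/\tilde{t}] \cong \Kott \times_{\Spec F} \Kal$ as gerbes over $\Kott$ follows from the band being a product with the $\mathbb{D}$-part equal to $\Kott$. Base changing along $X_S \to \Kott$ (Lemma~\ref{lem:maptoKott}) then gives $X_S \times_{\Spa F} \Kal$.

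The main obstacle is the sign and normalization check that the $u$-component equals $-1$ rather than $+1$. This depends on the convention for slopes and on Kaletha's identification in Proposition~\ref{prop:ucohom}, so I would verify it explicitly at finite level, using Kottwitz's $B(\mathbb{G}_m)$ normalization.
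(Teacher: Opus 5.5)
Your proposal is correct in outline. For part (1) it takes a genuinely different route from the paper, which gives no argument: it imports (1) from \cite[Th\'{e}or\`{e}me 5.6]{Fargues22}, and obtains (2) by combining that theorem with \cite[Corollaire 7.4]{Fargues22}. You instead reprove Fargues's theorem directly.

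The automorphism sheaf of the composite gerbe is $\mathbb{D}\times_{\chi,t}\tilde{t}$, where $\chi\colon\mathbb{D}\to t$ is the norm-compatible character through which the band of $\Kott$ acts on $\mathbb{T}$. This sheaf is abelian, and it splits canonically as $\mathbb{D}\times u$. The reason is that the $n$-th power map acts bijectively on $\Hom_{F}(\mathbb{D},\Res_{E/F}\Gm)\cong\mathbb{Q}$, so $\chi$ lifts uniquely to $\tilde{t}$; equivalently, uniqueness is $\Hom(\mathbb{D},u)=0$. What matters is this divisibility, not the integrality of the slope. Since a $\mathbb{D}\times u$-gerbe is the product of its two pushouts, you get $[\mathbb{T}/\tilde{t}]\simeq\Kott\times_{F}\Kal$ over $\Kott$ already in part (1). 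From there your part (2) is exactly the paper's.

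Your route makes visible that only the slope $-1$ and the norm-compatibility of $\{\mathbb{L}_{E}\}$ are used, consistent with the paper's remark that any such system works. The citation, by contrast, supplies the normalization: that the $u$-component is $-1$ rather than $+1$, which your sketch leaves open.

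That remaining step needs to be stated correctly, because as written it does not parse: there is no map $H^{2}(E,\Gm)\to H^{2}(E,\mu_{n})$, and $-1$ is not an element of $H^{2}(E,\Gm)=\mathbb{Q}/\Z$. Here is a clean way to do it.
\begin{enumerate}
\item Let $\mathcal{R}$ be the stack of $n$-th roots of $\mathbb{L}_{E}$ over $\Kott_{E}$; this is the level-$(E,n)$ pushout of $[\mathbb{T}/\tilde{t}]$ under Shapiro's lemma.
\item As a gerbe over $E$, $\mathcal{R}$ has band $\{(\alpha,\beta)\colon\beta^{n}=\chi_{E}(\alpha)\}\cong\mathbb{D}\times\mu_{n}$, via $(\alpha,\beta)\mapsto(\alpha,\beta\,\chi_{E}^{1/n}(\alpha)^{-1})$. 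Here $\chi_{E}$ is the weight of $\mathbb{L}_{E}$ and $\chi_{E}^{1/n}$ is its unique $n$-th root in $X^{*}(\mathbb{D})=\mathbb{Q}$.
\item The functor $(k,M,M^{\otimes n}\simeq\mathbb{L}_{E,k})\mapsto M$ to $B\Gm$ lies over $(\alpha,\epsilon)\mapsto\chi_{E}^{1/n}(\alpha)\epsilon$, so that pushout is trivial.
\item By additivity, the $\mu_{n}$-component maps to $-(\chi_{E}^{1/n})_{*}[\Kott_{E}]$ in $\mathrm{Br}(E)[n]\cong H^{2}(E,\mu_{n})$. This has invariant $\pm 1/n$, and these classes assemble to $\pm 1\in\widehat{\Z}$.
\end{enumerate}
This computes the $u$-component directly, without passing through $\delta[\mathbb{T}]\in H^{2}(\Kott,u)$. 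Your route through $\delta[\mathbb{T}]$ would additionally need injectivity of $H^{2}(F,u)\to H^{2}(\Kott,u)$, which does hold by Leray since $\underline{\Hom}(\mathbb{D},u)=0$. The computation also isolates the sign as a comparison between Kottwitz's normalization of $[\Kott]$ and of slopes, and Kaletha's identification in Proposition~\ref{prop:ucohom}.

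Finally, $H^{1}(F,u)=0$ does not kill $\varprojlim^{1}H^{1}(F,u_{E,n})$. What you need is that $H^{2}(F,u)\to\varprojlim H^{2}(F,u_{E,n})$ is injective, and that is a separate input.
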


\begin{proof}
    This follows from \cite[Th\'{e}or\`{e}me 5.6]{Fargues22} and \cite[Corollaire 7.4]{Fargues22}.
\end{proof}

\begin{rque}
    One can also directly define a $\tilde{t}$-gerbe $\mc{E}$ over $\Spec(F)$ by considering the direct limit (over all $E/F$ finite Galois and $n \in \mathbb{N}$) of the Tannakian categories $\mc{I}_{E/F,n}$ defined in \cite[Section 2.5]{BMD26}. It has the property that $\mc{E} = \Kal \times^{Bu} B\tilde{t}$.
\end{rque}

\subsection{The stack $\tn{Bun}_{G}^{e}$}

Let $G$ be an affine group scheme over $F$; we denote by $G_{S}$ the corresponding sheaf of groups on $X_{S}$ for $S \in \Perfd$. We equip the gerbe $[\mc{T}_{S}/\tilde{t}_{S}] \to X_{S}$, which is a v-stack over $S$, with its natural étale topology, and denote by $G_{S}$ the pullback of $G_{S}$ to a group sheaf on  $[\mc{T}_{S}/\tilde{t}_{S}]$ for this topology. When we talk about $G_{S}$-torsors on $[\mc{T}_{S}/\tilde{t}_{S}]$, we always mean with this \'{e}tale topology. 

\begin{rque} Occasionally we will talk about $Z_{S}$-torsors over $[\mc{T}_{S}/\tilde{t}_{S}]$ for $Z$ a finite central subgroup of $G$, and in such cases will use the v-topology on $[\mc{T}_{S}/\tilde{t}_{S}]$ when $F$ has equal characteristic. This situation will only arise when a $Z_{S}$-torsor for the v-topology acts on an \'{e}tale $G_{S}$-torsor by taking contracted products, which yields another \'{e}tale $G_{S}$-torsor and is thus completely harmless for our purposes. 
\end{rque}

Given a $G_{S}$-torsor $\cP$ on  $[\mc{T}_{S}/\tilde{t}_{S}]$, the action of the band determines a morphism of sheaves over $[\mc{T}_{S}/\tilde{t}_{S}]$
\begin{equation}\label{eq:bandmap}
    u_{S} \to \underline{\tn{Aut}}_{[\mc{T}_{S}/\tilde{t}_{S}]}(\cP)/G_{\tn{ad},S},
\end{equation}
see \cite[Lemma 2.27]{Dillery23} for more details. 

\begin{defi}\label{defi:bandmap}
    We say that a $G_{S}$-torsor $\cP$ on $[\mc{T}_{S}/\tilde{t}_{S}]$ is \textbf{Kal-basic} if the homomorphism \eqref{eq:bandmap} is obtained by composing a (uniquely-determined) homomorphism 
    \begin{equation*}
    \lambda_{\cP} \colon u_{S} \to Z_{G,S}
    \end{equation*}
    with the action of $Z_{G,S}$ on $\cP$ induced by its structure as a $G_{S}$-torsor.

    A priori, the map $\lambda_{\cP}$ is a morphism of group schemes over $[\mc{T}_{S}/\tilde{t}_{S}]$, but by \cite[Lemma 2.42]{Dillery23}, this canonically descends to a morphism of group schemes over $X_{S}$, which we also denote by $\lambda_{\cP}$.
\end{defi}

For $S \in \Perfd$ and a $G$-torsor $\cP \in \Bun_{G}^{e}(S)$ we obtain, by construction, a morphism $\lambda_{\cP} \in \Hom_{X_{S}}(u_{S}, Z_{G,S})$, which we call the \textbf{inertial homomorphism} of $\cP$. 

\begin{defi}\label{defi:BunGe}
\begin{enumerate}
   \item{For $G$ an affine group scheme over $F$ (usually taken to be a connected reductive group), we define $\Bun_{G}^{e}$ to be the functor on $\Perfd$ sending $S$ to the groupoid of all $\Kal$-basic $G_{S}$-torsors on $[\mc{T}_{S}/\tilde{t}_{S}]$.}

\item{For $Z$ a finite central subgroup of $G$, we define $\Bun_{Z \subset G}^{e}$ to be the subfunctor of $\Bun_{G}^{e}$ of $G$-torsors such that $\lambda_{\cP}$ factors through $Z_{S}$ for $\cP \in \Bun_{G}^{e}(S)$.}
   \end{enumerate}
\end{defi}

\begin{rque}
   It is clear that $\Bun_{G}^{e} = \varinjlim_{Z} \Bun_{Z \subset G}^{e}$. 
\end{rque}

\begin{defi}[\protect{\cite[D\'{e}finition 9.1]{Fargues22}}]\label{defi:BeGdef}
    For any affine group scheme $G$ over $F$, define the extended Kottwitz set, denoted by $B_{e}(G)$, to be the pointed set of isomorphism classes of all $\Kal$-basic $G$-torsors on $\mathrm{Kott} \times_{F} \Kal$ (where $\Kal$-basic has the analogous meaning as above). For a finite central subgroup $Z \subset G$ we define $B_{e}(Z \subset G)$ analogously. Define $B_{e}(G)_{\tn{basic}}$ to be the subset of torsors whose restriction to $\mathbb{D} \times u$ factors through $Z_{G}$.
\end{defi}

We now summarize a few key results about $\Bun_{G}^{e}$:

\begin{prop}[{\cite[Proposition 12.5, Exemple 12.6]{Fargues22}}]\label{prop:BunGestructure}
Let $G$ be a connected reductive group over $F$.
    \begin{enumerate}
       \item{The functors  $\Bun_{G}^{e}$ and  $\Bun_{Z \subset G}^{e}$ are small $v$-stacks.}
       \item{There are canonical identifications
       \begin{equation*}
           |\Bun_{G}^{e}| \xrightarrow{\sim} B_{e}(G)
       \end{equation*}
       and
       \begin{equation*}
            |\Bun_{Z \subset G}^{e}| \xrightarrow{\sim} B_{e}(Z \subset G).
       \end{equation*}       
       }
       \item{For a fixed $\lambda \in \Hom_{F}(u, Z_{G})$, define the substack $\Bun_{G}^{e,\lambda}$ to consist of all $\cP$ such that $\lambda_{\cP_{\bar{s}}} = \lambda$ for all geometric points $\bar{s}$ of $S$. We have a decomposition (as stacks)
       \begin{equation*}
           \Bun_{G}^{e} = \bigsqcup_{\lambda \in \Hom_{F}(u,Z_{G})} \Bun_{G}^{e,\lambda}.
       \end{equation*}
       The same decomposition holds for $\Bun_{Z \subset G}^{e}$ if we replace $Z_{G}$ by $Z$.

       Moreover, if $b \in B_{e}(G)$ is an element whose restriction to $u$ is $\lambda$ (which always exists), then twisting by $b$ gives an isomorphism of stacks
       \begin{equation*}
           \Bun_{G}^{e,\lambda} \xrightarrow{\sim} \Bun_{G_{b}}.
       \end{equation*}
       }
\item{For a fixed $b \in B_{e}(G)_{\tn{basic}}$, define the substack $\Bun_{G}^{e,b}$ to consist of all $\cP$ such that $\cP_{\bar{s}} \cong b$ (using $\tn{(2)}$) for all geometric points $\bar{s}$ of $S$. Then the semistable locus of $\Bun_{G}^{e}$ equals (as a stack) $\bigsqcup_{b \in B_{e}(G)_{\tn{basic}}}\Bun_{G}^{e,b}$.}

    \end{enumerate}
\end{prop}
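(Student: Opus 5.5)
The plan is to reduce each assertion to known facts about $\Bun_{H}$ for reductive $H$ over $F$, via the comparison $[\mc{T}_{S}/\tilde{t}_{S}] \cong X_{S} \times_{\Spa(F)} \Kal$ and a presentation of $\Kal$ through finite levels. Since $\Bun_{G}^{e} = \varinjlim_{Z} \Bun_{Z \subset G}^{e}$, it suffices to treat a fixed finite central $Z$; here the band map of any $\cP$ lands in $Z$. We will work with the quotient $u \to u_{Z} := \Res_{E/F}\mu_{n}$ through which every $\lambda \colon u \to Z$ factors. We then replace $\Kal$ by its pushforward $\Kal_{E,n}$, a gerbe banded by the finite multiplicative group $u_{E,n}$. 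The claim is that Kal-basic torsors with $\lambda_{\cP}$ factoring through $u_{E,n}$ are the same as torsors on $X_{S}\times \Kal_{E,n}$, by \cite[Lemma 2.42]{Dillery23}.

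\textbf{Step (3) first.} The inertial homomorphism $\lambda_{\cP}$ is a section of the sheaf $\underline{\Hom}(u_{S}, Z_{S})$ on $X_{S}$, and this sheaf is locally constant with geometric fibres $\Hom_{\ov F}(u,Z)$. The Galois-invariants of that set equal $\Hom_F(u,Z)$. Because the map $\cP\mapsto \lambda_{\cP}$ is determined at geometric points and is locally constant on $S$ (pulled back from a map of discrete sheaves), the loci $\Bun^{e,\lambda}$ are open and closed and exhaust the stack. This gives the decomposition. Existence of $b$ restricting to $\lambda$ follows from the $H^{2}$ computation (Proposition \ref{prop:ucohom}), applied via the boundary of $1\to Z\to G\to G/Z\to 1$: one needs $\lambda_*[\Kal]$ to lie in the image of $H^1(F,G/Z)$ on basic classes, which is Kaletha's surjectivity result \cite{Kal16}. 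For the twisting isomorphism, fix $\cP_{b}$ a Kal-basic torsor with invariant $\lambda$. Then $\cP\mapsto \underline{\mathrm{Isom}}(\cP_{b},\cP)$ sends $\Bun^{e,\lambda}_{G}$ to torsors under the inner form $G_b=\underline{\Aut}(\cP_b)$ on which $u$ acts trivially. Such torsors descend along the gerbe $X_S\times\Kal\to X_S$ (triviality of the band action is exactly the descent datum), which identifies the target with $\Bun_{G_b}$.

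\textbf{Steps (1), (2), (4).} Given (3), $\Bun_G^e$ is a disjoint union of $\Bun_{G_b}$'s. Each of these is a small $v$-stack by \cite{Geometrization}, which gives (1). For (2), $|\Bun_{G_b}|\cong B(G_b)$ by Fargues--Scholze. The twist bijection $B(G_b)\cong\{x\in B_e(G): x|_u=\lambda\}$ holds by the same Isom-twisting argument carried out on $\Kott\times\Kal$ instead of $X_S\times \Kal$, and it is compatible with Lemma \ref{lem:maptoKott}. We must check that these bijections are independent of $b$ and compatible across $Z$; we will argue that independence follows because both sides are defined functorially without choices. For (4), the semistable locus of $\Bun_{G_b}$ is the union of basic strata of $B(G_b)$, and under twisting, basicness for $G_b$ matches the condition in Definition \ref{defi:BeGdef} that the $\mathbb{D}\times u$-restriction factor through $Z_G$.

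\textbf{Main obstacle.} The hardest point is the descent step in (3): showing rigorously that a $G_b$-torsor on the pro-\'etale (or $v$-) gerbe with trivial band action descends to $X_S$, uniformly in families over $S$ and compatibly with the \'etale topology convention. I would first reduce to a finite level $\Kal_{E,n}$. There, descent along a gerbe banded by a finite étale (or finite multiplicative) group is standard \cite[\S2]{Dillery23}. I would then pass to the limit using that every $\lambda$ factors through a finite level.
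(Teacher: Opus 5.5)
Your overall route (decompose by the inertial homomorphism, twist each component by a fixed torsor $\cP_b$ to identify it with $\Bun_{G_b}$, then import (1), (2), (4) from Fargues--Scholze) is the natural one, and it is how the paper uses twisting (cf.\ the proof of Lemma~\ref{lem:naiveHck}). The paper itself just cites \cite[Proposition 12.5, Exemple 12.6]{Fargues22}.

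\textbf{The step that fails.} Your proofs of (1) and (2) need a $b$ over every $\lambda$, and your argument for its existence is wrong. As written, you reduce existence to finding a $G$-torsor on $\Kal$ alone with band action $\lambda$, i.e.\ to $\lambda_*[\Kal]\in\delta(H^1(F,G/Z))\subseteq H^2(F,Z)$, and then invoke Kaletha. In this paper's normalization, $u=\varprojlim\Res_{E/F}\mu_n$ \emph{without} Kaletha's quotients by the diagonal $\mu_n$, this already fails for $G=\mathbb{G}_m$:
\begin{itemize}
\item Take $Z=\mu_n$ with $n>1$, and let $\lambda$ be the projection of $u$ onto its level $(E,n)=(F,n)$.
\item We have $H^2(F,\Res_{E/F}\mu_n)=H^2(E,\mu_n)=\Z/n\Z$, and the norm transition maps induce corestriction, which is the identity on invariants. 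Hence $H^2(F,u)=\widehat{\Z}\to H^2(F,\mu_n)$ is reduction mod $n$.
\item So $\lambda_*[\Kal]=-1\neq 0$ in $H^2(F,\mu_n)=\mathrm{Br}(F)[n]$. Therefore no $\mathbb{G}_m$-torsor on $\Kal$ has band action $\lambda$; indeed $H^1(F,\mathbb{G}_m/\mu_n)=0$.
\end{itemize}
The same happens for $\GL_n$ with $Z=\mu_m$ and $m\nmid n$. Whatever surjectivity statement from \cite{Kal16} you have in mind is formulated for Kaletha's $u$, built from $\Res_{E/F}\mu_n/\mu_n$, and it cannot hold for the $u$ used here. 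The phrase ``on basic classes'' does not repair this unless you allow a nontrivial $\mathbb{D}$-part, which is exactly the missing idea.

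\textbf{The fix.} A suitable $b$ does exist in $B_e(G)_{\tn{basic}}$, but in general it must carry a nontrivial central $\mathbb{D}$-component. You need $\nu\colon\mathbb{D}\to Z_G$ such that $\nu_*[\Kott]+\lambda_*[\Kal]\in H^2(F,Z_G)$ lies in the image of $\delta$ from $H^1(F,G/Z_G)$. In the example, $\nu$ has slope $\pm 1/n$ and cancels $\lambda_*[\Kal]$ in $\mathrm{Br}(F)$. This is precisely Fargues's bijection $B_e(G)_{\tn{basic}}\cong\pi_1(G)^e_{\Gal_F}$ together with the surjection in \eqref{eq:pi1eSES}. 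That is the input you must cite; the paper uses it the same way in the proof of Proposition~\ref{prop:quotient-groupoid-by-center}.

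\textbf{A related point.} You should state explicitly that $b$ must be basic (i.e.\ $\mathbb{D}\times u$ acts through $Z_G$), not merely Kal-basic. Only then does $\underline{\mathrm{Aut}}(\cP_b)$ descend from $\Kott\times\Kal$ to an inner form $G_b$ over $F$. That is what makes the descended $\underline{\mathrm{Isom}}(\cP_b,\cP)$ a torsor under the constant group $G_b$ on $X_S$. For non-basic $b$ you would get torsors under a non-constant group scheme on $X_S$, not points of $\Bun_{G_b}$.

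\textbf{What then goes through.} With this input, the rest of your argument works:
\begin{itemize}
\item local constancy of $\lambda_\cP$;
\item descent along the gerbe via trivial band action, as in Lemma~\ref{lem:descendtors};
\item transport of $|\Bun_{G_b}|\cong B(G_b)$ and of the basic strata.
\end{itemize}
For (2), define the map canonically by pullback along $X_C\to\Kott$ and use twisting only to prove bijectivity. Since $\cP_b$ is itself pulled back from $\Kott\times\Kal$, the two commute.
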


\begin{proof}
    The first two statements are \cite[Proposition 12.5]{Fargues22} and the third and fourth follow immediately from Exemple 12.6 loc. cit.
\end{proof}

\begin{lem}\label{lem:inertialhom}
    We have a canonical identification
    \begin{equation*}
        \Hom_{X_{S}}(u_{S}, Z_{G,S}) = \prod_{\pi_{0}(|X_{S}|)} \Hom_{F}(u,Z_{G}).
    \end{equation*}
\end{lem}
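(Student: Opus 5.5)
The plan is to reduce to finite levels and use Cartier duality.

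\textbf{Reduction to a fixed finite level.} Since $Z_{G}$ is of finite type, any homomorphism from the pro-group $u = \varprojlim_{E/F,n} \Res_{E/F}(\mu_n)$ to $Z_G$ factors through a finite stage. I expect this to hold over $X_S$ as well, locally on $X_S$; quasicompactness of $X_S$ or of its connected components should then let one fix a single stage $\Res_{E/F}(\mu_n)$. Thus
\begin{equation*}
\Hom_{X_S}(u_S, Z_{G,S}) = \varinjlim_{E,n} \Hom_{X_S}(\Res_{E/F}(\mu_n)_S, Z_{G,S}),
\end{equation*}
and the same holds over $F$. Commuting the colimit with the product is harmless, because $\pi_0(|X_S|) = \pi_0(|S|)$ is profinite and the identification is really with locally constant functions. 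The product should therefore be read as $\mathcal{C}(\pi_0(|X_S|), \Hom_F(u,Z_G))$, and I would prove the statement in that form.

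\textbf{Cartier duality at a fixed level.} Both source and target are of multiplicative type over $F$: $Z_G$ is diagonalizable over a finite extension. Cartier duality gives
\begin{equation*}
\Hom_{X_S}(A_S, Z_{G,S}) = \Hom_{\Gal_F\text{-mod}}\bigl(X^*(Z_G), X^*(A)\bigr)
\end{equation*}
for $A = \Res_{E/F}\mu_n$, but with Galois modules replaced by sheaves on $X_S$. That is, it equals the global sections over $X_S$ of the sheaf $\underline{\Hom}(X^*(Z_G), X^*(A))$ obtained by descending the constant sheaf along $X_S \times_F F' \to X_S$. Since $X^*(Z_G)$ and $X^*(A)$ are finitely generated, this Hom-sheaf is the étale sheaf attached to the finite discrete $\Gal_F$-set $\Hom(X^*(Z_G), X^*(A))$.

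\textbf{Main step: global sections.} The key point is that for a finite discrete $\Gal_F$-module $M$, sections of the associated sheaf over $X_S$ are continuous functions $\pi_0(|X_S|) \to M^{\Gal_F}$. I expect this to be the main obstacle. It uses that the structure map $X_S \to \Spa(F)$ is geometrically connected in the relevant sense. Concretely, for connected $S$ (or $S$ a geometric point), $X_S \times_F F'$ stays connected, since $X_S^{(F')}$ is the curve for $F'$ and is connected when $S$ is. Then $H^0(X_S\times_F F', M) = M$ with $\Gal(F'/F)$ acting through its action on $M$, so taking invariants gives $M^{\Gal_F}$. For general $S$, I would use $\pi_0(|X_S|) = \pi_0(|S|)$ (via $|X_S| \to |S|$ having connected fibers and being open/closed appropriately, as in Fargues--Scholze) and conclude that sections are locally constant in $\pi_0$.

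\textbf{Conclusion.} Taking invariants yields $\Hom_F(A, Z_G)$. Passing to the colimit over $(E,n)$ gives the claim, and canonicity follows because every step is functorial.
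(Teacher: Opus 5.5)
Your argument is correct, but it is organized differently from the paper's. The paper takes the local constancy of homomorphisms $u_S \to Z_{G,S}$ over $|X_S|$ and over $|S|$ from the proof of \cite[Proposition 12.5]{Fargues22}, uses it to reduce to a geometric point $S=\Spa(C,C^+)$, and then concludes from $\pi_1^{\et}(X_C)=\Gal_F$.

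You instead obtain local constancy yourself via Cartier duality: the Hom-sheaf is the étale sheaf attached to the discrete $\Gal_F$-module $\Hom(X^*(Z_G),X^*(-))$. You then compute its sections over $X_S$ directly in families, so no reduction to points is needed. You do not even need to fix a finite stage. Since $X^*(Z_G)$ is finitely generated, $\underline{\Hom}(u,Z_G)$ is already the étale sheaf of the discrete torsion module $\Hom(X^*(Z_G),X^*(u))$, so your hedged quasicompactness step can be dropped.

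Both routes rest on the same geometric input: the curve is geometrically connected over $F$. Only the surjectivity of $\pi_1^{\et}(X_C)\to\Gal_F$ is used, but your route needs this fact in families. For that main step, the connected-fibers/openness argument you sketch can be replaced by something cleaner: the identity $H^0(X_S,\mathcal{O}_{X_S})=\mathcal{C}(|S|,F)$ from \cite{Geometrization}. An $F$-morphism $X_S\to\bigsqcup_j\Spa(F_j)$ cuts $X_S$ into clopens. These are pulled back from clopens $S_j\subseteq S$, because idempotents of $\mathcal{C}(|S|,F)$ are indicator functions of clopens of $|S|$. An $F$-algebra map $F_j\to\mathcal{C}(|S_j|,F)$ then forces $F_j=F$ whenever $S_j\neq\emptyset$. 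This gives locally constant maps $|S|\to M^{\Gal_F}$ with no connectedness hypothesis at all.

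Your version is more self-contained and more precise. You correctly note that the right-hand side must mean locally constant functions on $\pi_0(|X_S|)$: a literal product is too large once $\pi_0(|X_S|)$ is infinite. The paper's version is shorter because it outsources local constancy to \cite{Fargues22}.
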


\begin{proof}
First, we observe that any morphism 
 \begin{equation*}
u_{S} \times_{\Spa(F)} X_{S} \to Z_{G,S} \times_{\Spa(F)} X_{S}
 \end{equation*}
 is locally constant over $|X_{S}|$, as in the proof of \cite[Proposition 12.5]{Fargues22}, which allows us to assume that $|X_{S}|$ is connected. The same proof shows that any such morphism is locally constant over $|S|$, which reduces us further to the case $S = \Spa(C,C^{+})$, where it follows from the fact that the étale fundamental group of $X_{C}$ is $\Gal_{F}$, by \cite[Th\'{e}or\`{e}me 9.5.1]{FF}.
\end{proof}

\begin{rque}\label{rque:fingerbe}
\begin{enumerate} \item{Observe that, for any $S \in \Perfd_{C}$, the fact that $G$ is finite type implies that any $G_{S}$-torsor on $[\mc{T}_{S}/\tilde{t}_{S}]$, $[\mc{T}_{S}^{\circ}/\tilde{t}_{S}]$, $[\mc{T}_{S}^{\loc}/\tilde{t}_{S}]$, or $[\mc{T}_{S}^{\loc,\circ}/\tilde{t}_{S}]$ is pulled back from a finite-level version of $[\mc{T}_{S}/\tilde{t}_{S}]$ (respectively its punctured and/or local analogues) obtained by replacing $\mc{T}_{S}$ by $\mc{T}_{E/F,S}$ (as in Definition \ref{def:LE/F}) and $\tilde{t}_{S}$ by $\tilde{t}_{E/F,n,S}$, where $\tilde{t}_{E/F,n} = \Res_{E/F}(\mathbb{G}_{m}) \to  \Res_{E/F}(\mathbb{G}_{m}) = t_{E/F}$ is the covering by the $n$-power map with kernel $u_{E/F,n}$.}

\item{It will be useful for some calculations to choose, for once and for all, an $\ov{F}$-trivialization of the gerbe  $[\mathbb{T}/\tilde{t}] \to \Kott$, which determines a \v{C}ech $2$-cocycle $\xi \in u(\ov{F}^{\bigotimes_{F}3})$ and which we can pull back to a $2$-cocycle for the gerbe $[\mc{T}_{S}/\tilde{t}_{S}]$, also denoted by $\xi$. This determines analogous $2$-cocycles $\xi_{E/F,n}$ for each finite level $u_{E/F,n}$-gerbe $[\mc{T}_{E/F,n,S}/\tilde{t}_{E/F,n,S}]$ in (1)---this cocycle is a priori valued in $u_{E/F,n}(\ov{F}^{\bigotimes_{F}3})$, but in fact takes values in $u_{E/F,n}(E')$ for finite Galois $E'/F$ trivializing $[\mc{T}_{E/F,n,S}/\tilde{t}_{E/F,n,S}]$ (for example, choose $E \subset E'$ such that $n \mid [E' \colon E]$).}

\end{enumerate}
\end{rque}

Since $\Bun_{G}^{e}$ is a v-stack, it makes sense to define, for $\Lambda$ a $\Z_{\ell}$-algebra, the category $\mc{D}_{\tn{lis}}(\Bun_{G}^{e}, \Lambda)$ as in \cite[Definition VII.6.1]{Geometrization} in this context.

\begin{corol}\label{cor:Dlise}
\begin{enumerate}
\item{For $b \in B_{e}(G)$, we have a canonical morphism of v-stacks $\Bun_{G}^{e,b} \to [\pt/G_{b}(F)]$. Moreover, the pullback functors
\begin{equation*}
    \mc{D}_{\tn{lis}}(\Bun_{G}^{e,b}, \Lambda) \to \mc{D}_{\tn{lis}}([\pt/G_{b}(F)], \Lambda) \to \mc{D}_{\tn{lis}}([\Spa(C)/G_{b}(F)], \Lambda)
\end{equation*}
are equivalences, and all of these categories are naturally equivalent to the category of smooth representations of $G_{b}(F)$ on discrete $\Lambda$-modules.}
\item{Pullback induces an equivalence
\begin{equation*}
\mc{D}_{\tn{lis}}(\Bun_{G}^{e}, \Lambda) \to \mc{D}_{\tn{lis}}(\Bun_{G}^{e} \times \Spd(C), \Lambda),
\end{equation*}
similarly with $\Bun_{G}^{e}$ replaced by $\Bun_{G}^{e,\lambda}$ for $\lambda \in \Hom_{F}(u,Z)$ or $\Bun_{G}^{e,b}$ for $b \in B_{e}(G)_{\tn{basic}}$.
}
\end{enumerate}
\end{corol}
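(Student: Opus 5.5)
The plan is to reduce both statements to the corresponding results of Fargues--Scholze for the ordinary stacks $\Bun_{G_b}$. The bridge is the twisting isomorphism of Proposition \ref{prop:BunGestructure}(3).

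For (1), fix $b \in B_{e}(G)$ with restriction $\lambda$ to $u$. I would first observe that $\Bun_{G}^{e,b} \subset \Bun_{G}^{e,\lambda}$: the inertial homomorphism is locally constant by Lemma \ref{lem:inertialhom}, and it is determined on geometric points by the class in $B_e(G)$. Twisting by $b$ gives an isomorphism $\Bun_{G}^{e,\lambda} \xrightarrow{\sim} \Bun_{G_b}$. This isomorphism sends $b$ to the trivial torsor, so it carries $\Bun_{G}^{e,b}$ onto the stratum $\Bun_{G_b}^{1}$ of torsors that are trivial at all geometric points. By \cite[Proposition III.5.3]{Geometrization}, $\Bun_{G_b}^{1} \cong [\pt/\underline{G_b(F)}]$. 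Here $G_b(F)$ is the automorphism group of the Kal-basic torsor $b$; for basic $b$ this is the rigid inner form, and in general it is the $F$-points of the inner form $G_b$ in the sense of \cite{Fargues22}.

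To make the map canonical, I would not use the chosen twist but define $\Bun_{G}^{e,b} \to [\pt/G_b(F)]$ intrinsically. The argument is that $\Bun_{G}^{e,b}$ is a gerbe over $\pt$ with a canonical section-class given by $b$, banded by the pro-étale sheaf $\underline{\Aut}(b)$. The twisting identification is then only used to check that this sheaf is $\underline{G_b(F)}$ and that the map is an isomorphism. With that in place, the equivalences of categories follow from \cite[Proposition VII.7.1 / Theorem V.1.1]{Geometrization}, which identify $\mc{D}_{\lis}([\pt/\underline{H}])$ for a locally pro-$p$ group $H$ with smooth $H$-representations, and from the corresponding statement for $[\Spa(C)/H]$.

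For (2), the decomposition of Proposition \ref{prop:BunGestructure}(3) splits $\mc{D}_{\lis}(\Bun_G^e)$ as a product over $\lambda$. It is compatible with base change to $\Spd(C)$, so it suffices to treat each $\Bun_{G}^{e,\lambda}$. Transporting along the twisting isomorphism $\Bun_{G}^{e,\lambda}\cong \Bun_{G_b}$, which commutes with $\times\Spd(C)$, reduces the claim to \cite[Theorem V.1.1 / Proposition VII.7.?]{Geometrization}, i.e. the statement that pullback $\mc{D}_{\lis}(\Bun_{G_b}) \to \mc{D}_{\lis}(\Bun_{G_b}\times\Spd(C))$ is an equivalence. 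Fargues--Scholze prove that statement for any connected reductive group, and $G_b$ is one.

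I expect the main obstacle to be in (1): verifying that the automorphism sheaf of the Kal-basic torsor $b$ on $[\mc{T}_S/\tilde{t}_S]$ is $\underline{G_b(F)}$ with its correct topology. My approach is to compute it as invariants on $\mathrm{Kott}\times\Kal$ via Remark \ref{rque:fingerbe}, reducing to a finite-level gerbe, and then to follow the argument of \cite[Proposition III.5.1]{Geometrization} for $\underline{\Aut}$ of torsors on $X_S$. The remaining steps are formal once the twisting isomorphism is known to be an isomorphism of v-stacks.
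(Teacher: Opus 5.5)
Your overall route is the same as the paper's one-line proof: transport along the twisting isomorphism of Proposition \ref{prop:BunGestructure}(3), then quote Fargues--Scholze, namely their Proposition VII.7.1 for the strata and the $\Spd(C)$ base-change equivalence for (2). Your treatment of (2), and of (1) for basic $b$, is fine. But (1) is stated for \emph{all} $b \in B_e(G)$, and for non-basic $b$ your argument fails in two places.

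\emph{Twisting by $b$ itself.} For non-basic $b$, the group $G_b$ is an inner form of a proper Levi of $G$, not of $G$. Twisting by $b$ therefore does not identify $\Bun_G^{e,\lambda}$ with $\Bun_{G_b}$; it identifies it with torsors under the non-constant inner form $\underline{\mathrm{Aut}}(b)$ of $G$ over the curve. The twisting isomorphism is only available for a \emph{basic} element lifting $\lambda$, as in Lemma \ref{lem:naiveHck}.

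\emph{The step you flag as the main obstacle.} You want to prove $\underline{\mathrm{Aut}}(b) \cong \underline{G_b(F)}$ and hence $\Bun_G^{e,b} \cong [\pt/\underline{G_b(F)}]$. This is false for non-basic $b$. The automorphism sheaf is $\widetilde{G}_b \cong \widetilde{G}_b^{>0} \rtimes \underline{G_b(F)}$, where $\widetilde{G}_b^{>0}$ is a successive extension of positive Banach--Colmez spaces. For example, $\mathcal{O}\oplus\mathcal{O}(1)$ has the automorphisms $H^0(\mathcal{O}(1))$, which do not come from $F$-points. So $\Bun_G^{e,b} \to [\pt/\underline{G_b(F)}]$ is not an isomorphism. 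Your plan also never produces the canonical map, which comes from the projection $\widetilde{G}_b \to \underline{G_b(F)}$, and it never proves the equivalence on $\mc{D}_{\lis}$ in that case.

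The repair proceeds as follows.
\begin{enumerate}
\item Choose a basic $b_\lambda$ whose restriction to $u$ is $\lambda$.
\item Twisting by $b_\lambda$ carries $\Bun_G^{e,b}$ isomorphically onto a possibly non-basic Harder--Narasimhan stratum $\Bun^{b'}_{G_{b_\lambda}}$, for the image $b' \in B(G_{b_\lambda})$ of $b$. This is compatible with automorphism sheaves, and $(G_{b_\lambda})_{b'} \cong G_b$.
\item By \cite[Proposition III.5.3]{Geometrization}, this stratum is $[\pt/\widetilde{G}_b]$.
\item The equivalences in (1) are then \cite[Proposition VII.7.1]{Geometrization}. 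Its essential extra input is that $\widetilde{G}_b^{>0}$ is cohomologically smooth and cohomologically trivial, so pullback along $[\pt/\widetilde{G}_b] \to [\pt/\underline{G_b(F)}]$ is an equivalence on lisse sheaves.
\end{enumerate}
This also makes your proposed finite-level computation of automorphisms on $\Kott \times \Kal$ unnecessary, since the twisting isomorphism already transports automorphism sheaves.
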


\begin{proof}
    This result comes from combining Proposition \ref{prop:BunGestructure} with \cite[Proposition VII.7.1]{Geometrization} and \cite[Proposition VIII.7.3]{Geometrization}.
\end{proof}

\subsection{Connected components}
Recall from \cite[D\'{e}finition 9.10]{Fargues22} that one has an ``extended'' version of the Galois coinvariants of the fundamental group, defined as follows. Recall that $\Phi$ denotes the (absolute) root system of $T$ (with coroots $\Phi^{\vee}$) and set $\langle \Phi \rangle^{*} := \{\nu \in X_{*}(T)_{\mathbb{Q}} | \langle \alpha, \nu \rangle \in \Z \hspace{1mm} \forall \alpha \in \Phi\}$. The group in question is given by
\begin{equation*}
\pi_{1}(G)^{e}_{\Gal_{F}} := \frac{\langle \Phi \rangle^{*}}{\langle \Phi^{\vee} \rangle + I_{\Gal_{F}} \cdot X_{*}(T)},
\end{equation*}
where $I_{\Gal_{F}}$ denotes the augmentation ideal for $\Gal_{F}$.

Recall from \cite[Section 9.5]{Fargues22} that we have an exact sequence
\begin{equation}\label{eq:pi1eSES}
    0 \to \pi_{1}(G)_{\Gal_{F}} \to \pi_{1}(G)^{e}_{\Gal_{F}}  \to \Hom_{F}(u,Z_{G}) \to 0,
\end{equation}
and also that there is an ``extended Kottwitz map'' (recalling Definition \ref{defi:BeGdef})
\begin{equation*}
    B_{e}(G) \xrightarrow{\kappa}  \pi_{1}(G)^{e}_{\Gal_{F}}
\end{equation*}
whose restriction to $B_{e}(G)_{\tn{basic}}$ is a bijection (thus giving the left-hand side the structure of an abelian group) and whose restriction to $B(G)$ recovers the usual Kottwitz map (see \cite[Section 9.5.3]{Fargues22} for these claims).

It follows from Proposition \ref{prop:BunGestructure} that there is a map
\begin{equation*}
    |\Bun_{G}^{e}| \xrightarrow{\kappa} \pi_{1}(G)^{e}_{\Gal_{F}}
\end{equation*}
which Fargues proves (\cite[Proposition 12.5]{Fargues22}) is locally constant. 

By combining Proposition \ref{prop:BunGestructure} with the proof of \cite[Proposition VII.7.3]{Geometrization} and grouping together the fibers of $\kappa$ by their image along the surjection $\pi_{1}(G)^{e}_{\Gal_{F}}  \to \Hom_{F}(u,Z_{G})$, we obtain:
\begin{prop} \begin{enumerate}
   \item{There is an infinite semi-orthogonal decomposition of  $\mc{D}_{\tn{lis}}(\Bun_{G}^{e},\Lambda)$ into $\mc{D}_{\tn{lis}}(\Bun_{G}^{e,b},\Lambda)$ as $b$ ranges over all $B_{e}(G)$. In particular, we have  
    \begin{equation*}
        \mc{D}_{\tn{lis}}(\Bun_{G}^{e},\Lambda) \xrightarrow{\sim} \prod_{x \in \pi_{1}(G)^{e}_{\Gal_{F}}}  \mc{D}_{\tn{lis}}(\Bun_{G}^{e,\kappa = x},\Lambda).
    \end{equation*}}
    \item{For a fixed $\lambda \in  \Hom_{F}(u,Z_{G})$, there is a semi-orthogonal decomposition \begin{equation*}
\mc{D}_{\tn{lis}}(\Bun_{G}^{e,\lambda},\Lambda) =  \prod_{x \in \pi_{1}(G)^{e}_{\Gal_{F}}, x|_{u} = \lambda}  \mc{D}_{\tn{lis}}(\Bun_{G}^{e,\kappa = x},\Lambda) \xrightarrow{\sim} 
 \prod_{x \in \pi_{1}(G_{b_{\lambda}})_{\Gal_{F}}}  \mc{D}_{\tn{lis}}(\Bun_{G_{b_{\lambda}}}^{\kappa = x},\Lambda),
\end{equation*}
   where $b_{\lambda} \in B_{e}(G)$ lifts $\lambda$ and the second isomorphism is obtained via twisting (cf. Proposition \ref{prop:BunGestructure}).}
   \end{enumerate}
\end{prop}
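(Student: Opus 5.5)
The plan is to transport the known statements for $\Bun_G$ (and for the $\Bun_{G_{b_\lambda}}$) across the structural results in Proposition \ref{prop:BunGestructure}, and then regroup them by $\lambda$.

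\emph{Step 1: the $\lambda$-decomposition and the general semi-orthogonal decomposition.} By Proposition \ref{prop:BunGestructure}(3), $\Bun_G^e = \bigsqcup_\lambda \Bun_G^{e,\lambda}$ as stacks. Since $\mc{D}_{\lis}$ turns disjoint unions into products, this gives $\mc{D}_{\lis}(\Bun_G^e,\Lambda) \simeq \prod_\lambda \mc{D}_{\lis}(\Bun_G^{e,\lambda},\Lambda)$. For each $\lambda$ choose $b_\lambda \in B_e(G)$ restricting to $\lambda$. The twisting isomorphism $\Bun_G^{e,\lambda} \xrightarrow{\sim} \Bun_{G_{b_\lambda}}$ identifies the stratification of the source by $\Bun_G^{e,b}$ (with $b \in B_e(G)$ restricting to $\lambda$) with the Harder--Narasimhan/Newton stratification of $\Bun_{G_{b_\lambda}}$ by $\Bun_{G_{b_\lambda}}^{b'}$, $b' \in B(G_{b_\lambda})$. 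This matching of strata comes from the bijection on points in Proposition \ref{prop:BunGestructure}(2) together with the compatibility of twisting with isomorphism classes. It is the step where I expect the most care to be needed: one must check that the twisting map sends the locally closed substack $\Bun_G^{e,b}$ exactly onto a Newton stratum, and in particular that closure relations, i.e. the partial order used for the semi-orthogonal decomposition, are preserved. I would verify this fiberwise on geometric points, where twisting is a bijection $B_e(G)^{\lambda} \to B(G_{b_\lambda})$ (a torsor-twisting bijection as in Kottwitz/Fargues), and use that locally closed substacks of v-stacks are determined by their underlying subsets of $|\cdot|$. The proof of \cite[Proposition VII.7.3]{Geometrization} applied to $G_{b_\lambda}$ then gives the semi-orthogonal decomposition of each factor. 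Taking the product over $\lambda$ yields the infinite semi-orthogonal decomposition in (1) indexed by all of $B_e(G)$.

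\emph{Step 2: the $\kappa$-product decomposition.} The extended Kottwitz map $\kappa\colon |\Bun_G^e| \to \pi_1(G)^e_{\Gal_F}$ is locally constant by \cite[Proposition 12.5]{Fargues22}. Its fibers are therefore open and closed, so $\Bun_G^e = \bigsqcup_x \Bun_G^{e,\kappa=x}$, and this gives the product decomposition in (1). Composing $\kappa$ with the surjection in \eqref{eq:pi1eSES} recovers $\lambda$, since the map $\pi_1(G)^e_{\Gal_F} \to \Hom_F(u,Z_G)$ is restriction to $u$. Hence $\Bun_G^{e,\lambda} = \bigsqcup_{x|_u=\lambda} \Bun_G^{e,\kappa=x}$, which is the first equality in (2).

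\emph{Step 3: compatibility with the usual Kottwitz map.} For the second isomorphism in (2), I would check that twisting by $b_\lambda$ intertwines $\kappa$ on $\Bun_G^{e,\lambda}$ with $\kappa_{G_{b_\lambda}}$ up to translation by $\kappa(b_\lambda)$. This uses the identification of the fiber $\{x : x|_u=\lambda\}$, a torsor under $\pi_1(G)_{\Gal_F} = \pi_1(G_{b_\lambda})_{\Gal_F}$ (inner forms have the same $\pi_1$ with the same Galois action), with $\pi_1(G_{b_\lambda})_{\Gal_F}$ via $x \mapsto x - \kappa(b_\lambda)$. Additivity of $\kappa$ under twisting by basic elements holds on $B_e(G)_{\tn{basic}}$, where $\kappa$ is a group isomorphism, and extends to all points by the usual argument: reduce to basic elements using locally constantness of $\kappa$ and the fact that every connected component contains a basic point. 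With this matching in hand, the twisting isomorphism sends fibers to fibers, and pulling back $\mc{D}_{\lis}$ along it gives the stated equivalence.
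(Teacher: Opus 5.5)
Your proposal is correct and follows essentially the same route as the paper, which combines the twisting isomorphisms and point-identifications of Proposition \ref{prop:BunGestructure} with the argument of \cite[Proposition VII.7.3]{Geometrization}, then groups the fibers of $\kappa$ according to their image under $\pi_{1}(G)^{e}_{\Gal_{F}} \to \Hom_{F}(u,Z_{G})$. Your Steps 1--3 make explicit what the paper leaves implicit, namely the matching of strata and of Kottwitz maps under twisting, with $b_{\lambda}$ tacitly basic; note also that closure relations transfer automatically, since twisting is an isomorphism of v-stacks and hence a homeomorphism on underlying spaces.
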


\subsection{An affine Grassmannian calculation}\label{sec:affGrasscalc}
The purpose of this subsection is to provide some calculations which will be used in \S \ref{sec:extendedheckedef}.

Consider the affine Grassmannian 
\begin{equation*}
\mathrm{Gr}_{t} := \varinjlim_{E/F} \tn{Gr}_{\tn{Res}_{E/F}(\mathbb{G}_{m})}
\end{equation*}
for $t$, as defined in \cite[Section 19]{SW20} (this direct limit is harmless because sheafification commutes with filtered colimits). A compatible system of untilts $\{\Spa(C^{\sharp},C^{\sharp,+}) \xrightarrow{{y^{(E)}}} X_{C}^{(E)}\}_{E/F}$, which we have fixed (cf. \S \ref{sec:choiceoflifts} for a discussion of this choice), provides an identification

\begin{equation*}
    \mathrm{Gr}_{t}(C) \xrightarrow{\sim} X_{*}(t),
\end{equation*}
and we consider the homomorphism of abelian groups $X_{*}(t) \ni \tn{Sh}\colon X^{*}(t) = \mathcal{C}(\Gal_{F},\Z) \to \Z$ sending a function $f$ to $f(e)$ (here $\tn{Sh}$ stands for ``Shapiro'', which we use because the isomorphism $\Hom_{F}(t,\mathbf{T}) \xrightarrow{\sim} X_{*}(\mathbf{T})$ sends $f$ to $f \circ \tn{Sh}$). 

Denote by $\bbD_{C}$ the formal disk $\Spec(B_{\tn{dR}}^{+}(C^{\sharp}))$ corresponding to $y$ and $\bbD_{C}^{\circ}$ the associated punctured disk. 

\begin{lem}\label{lem:Grt} There is a unique trivialization $\psi \colon \mc{T}_{C}^{\circ} \xrightarrow{\sim} \underline{t_{C}}$ whose image in $\tn{Gr}_{t}(C)$ (by pulling back to $\bbD_{C}$) is $\tn{Sh}$. 
\end{lem}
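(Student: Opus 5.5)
The plan is to work at finite level, where the statement is about line bundles on the Fargues--Fontaine curve, and then pass to the limit over $E/F$, using the norm-compatibility of the system $\{\mc{L}_{E/F,C}\}_{E/F}$ from Definition \ref{defi:Tdef}. Here $\mc{T}^{\circ}_{C}$ is the restriction of $\mc{T}_{C}$ to the punctured curve $X_{C}\setminus\{y\}$. For each finite Galois $E/F$, the $\Res_{E/F}(\Gm)$-component of $\mc{T}_{C}$ is the $\Gm$-torsor $\mc{T}_{E/F,C}$ on $X^{(E)}_{C}$. Over $X_{C}\setminus\{y\}$, this component becomes the torsor of trivializations of $\mc{L}_{E/F,C}$ over $X^{(E)}_{C}$ minus the preimage $\{\sigma(y^{(E)})\}_{\sigma\in\Gal(E/F)}$ of $y$.

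The identification $\mathrm{Gr}_{t}(C)\cong X_{*}(t)=\mathcal{C}(\Gal_{F},\Z)$ attaches to a trivialization on the punctured disk, at level $E$, the function $\sigma\mapsto$ the order of the corresponding meromorphic section at $\sigma(y^{(E)})$. So a trivialization $\psi$ has image $\tn{Sh}$ exactly when, at each level $E$, it is a meromorphic section of $\mc{L}_{E/F,C}$ whose divisor is the single point $y^{(E)}$ with multiplicity $1$ (or $-1$, depending on sign conventions) and which has no zero or pole at the other conjugates.

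For existence, I would use that $\mc{L}_{E/F,C}$ has degree one in the normalization where $\mathbb{L}_{E}\mapsto\mc{O}(1)$, and that on $X^{(E)}_{C}$, with $C$ algebraically closed, every degree-one line bundle is isomorphic to $\mc{O}(x)$ for any degree-one closed point $x$, in particular for $x=y^{(E)}$. Choosing such an isomorphism, the canonical section $1$ of $\mc{O}(y^{(E)})$ gives a trivialization away from $y^{(E)}$ with divisor exactly $y^{(E)}$. Two such choices differ by a global unit of $X^{(E)}_{C}$, which lies in $H^{0}(X^{(E)}_{C},\Ocal)^{\times}=E^{\times}$. I then need to choose these trivializations compatibly under the norm maps $N_{E'/E}$, using the compatibility $N_{E'/E}\mc{O}(y^{(E')})\cong\mc{O}(y^{(E)})$ coming from the compatible choice of the points $y^{(E)}$.

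For uniqueness, and also to pin down the compatible choice, I would argue as follows. Two trivializations with the same image in $\mathrm{Gr}_{t}(C)$ differ, at each level, by a function on the punctured curve that has no zeros or poles at the punctures, hence by a global unit in $E^{\times}$. So the ambiguity is an element of $\varprojlim_{E}E^{\times}$ with norm transition maps, which is $t(F)$. By local class field theory this limit is trivial: each component $a_{E}$ is a universal norm from all finite Galois extensions $E'/E$, and the universal norms in $E^{\times}$ are trivial because the open finite-index subgroups of $E^{\times}$ intersect to $1$. The same observation, run as a Mittag-Leffler-type argument, should also produce the compatible system in the existence step: the sets of valid choices at each level are $E^{\times}$-torsors with surjective-enough transition maps, and compactness arguments (modulo units) give a compatible element.

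I expect the main obstacle to be this existence of a norm-compatible choice. The ambiguity groups $E^{\times}$ are not compact, so a naive compactness argument fails. The first thing I would try is to pin down the isomorphisms canonically: take the $\mathbb{L}_{E}$ to be built from the fibers of $\mc{O}(y^{(E)})$, as allowed by the last paragraph of the preceding remark, so that $\mc{L}_{E/F,C}=\mc{O}(y^{(E)})$ tautologically and the canonical sections are automatically norm-compatible. I would then transport the result to Fargues' $\mathbb{L}_{E}$ via the canonical isomorphisms between different choices of the system.
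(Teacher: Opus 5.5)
Your uniqueness argument is correct, and it supplies the justification that the paper's proof leaves implicit. The paper asserts $t(X_C^{\circ})\cong\varprojlim_E\Z[\Gal_{E/F}]_0$; uniqueness only needs the injectivity of $t(X_C^\circ)\to X_*(t)$, and its kernel is exactly your $\varprojlim_E E^{\times}=1$, by triviality of universal norms. (A minor slip: $X_*(t)=\varprojlim_E\Z[\Gal_{E/F}]$ with $\tn{Sh}=([e])_E$, whereas $\mathcal{C}(\Gal_F,\Z)$ is $X^*(t)$.)

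The gap is in existence. Like the paper, you reduce to finding a norm-compatible family of identifications $\mc{L}_{E/F,C}\cong\mathcal{O}_{X^{(E)}_C}(y^{(E)})$, after which the canonical sections $1$ do the job. Neither route you offer produces this family. The Mittag-Leffler route fails for the very reason your uniqueness argument works: the images $N_{E'/E}((E')^{\times})\subseteq E^{\times}$ shrink to $1$. The obstruction to choosing compatibly is a class in $\varprojlim^{1}_{E}E^{\times}$ (with norm transition maps), and this group is nonzero. Indeed, $\varprojlim^1$ of the profinite groups $\mathcal{O}_E^{\times}$ vanishes, so the group equals $\varprojlim^1$ of the valuation system $\Z\xrightarrow{\cdot f(E'/E)}\Z$. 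That is nonzero because residue degrees are unbounded. So no formal limit argument can settle existence.

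The transport route is circular. A norm-compatible isomorphism $\{\mc{L}_{E/F,C}\}\cong\{\mathcal{O}(y^{(E)})\}$ is the same datum as a norm-compatible family of global sections of $\mc{L}_{E/F,C}$ with divisor exactly $y^{(E)}$, which is essentially the $\psi$ you are trying to construct. Redefining $\mathbb{L}_E$ via the remark proves the lemma for a different torsor than the one in Definition \ref{defi:Tdef}. The paper's canonical isomorphisms only compare the outputs of Fargues' recipe for different auxiliary slope-one $(D,\varphi)$. They do not relate Fargues' system to an arbitrary norm-compatible system of degree-one line bundles.

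The missing input is specific to Fargues' construction $\mathbb{L}_E=(D\otimes V(D,\varphi))^{\vee}$. Elements of $V(D,\varphi)$ are Frobenius-equivariant maps $D\to B^{(E)}_{\tn{cris}}$ with image in $\mathrm{Fil}^1$. Hence the evaluation pairing $D\otimes V(D,\varphi)\to B^{(E)}_{\tn{cris}}$ is a canonical Frobenius-invariant element of $\mathbb{L}_E\otimes B^{(E)}_{\tn{cris}}$, that is, a canonical global section $s_E$ of $\mc{L}_{E/F,C}$. This section lies in $\mathrm{Fil}^1$ at the point cut out by $\ov{F}\subset\hat{\ov{F}}\cong C^{\sharp}$, so it vanishes exactly, and simply by degree, at $y^{(E)}$. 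Once one checks that the $s_E$ are matched by Fargues' norm identifications, this is in effect the isomorphism of projective systems the paper invokes. The desired $\psi$ is then just $\{s_E\}$ restricted to the punctured curves, and no limit argument is needed.
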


\begin{proof}
    For a fixed finite Galois $E/F$, the torsor $\mc{T}_{E,C}$ is the sheaf of trivializations of $\mathcal{L}_{E/F,C}$ (as in Definition \ref{def:LE/F}), viewed as a sheaf of $\mathcal{O}_{X_{C}}$-modules. The system $\{y^{(E)}\}_{E/F}$ of untilts determines an isomorphism of projective systems of line bundles 
    \begin{equation*}
    \{\mc{L}_{E/F,C}\}_{E/F} \xrightarrow{\sim} \{\mc{O}_{X^{(E)}_{C}}(y^{(E)})\}.
    \end{equation*}
    
    Therefore, to trivialize $\mc{T}_{E,C}$ it suffices to find a norm-compatible system of trivializations of each line bundle $\mc{O}_{X^{(E)}_{C}}(y^{(E)})$ and, in turn, a norm-compatible system of uniformizing parameters $\xi_{E}$ for each $y^{(E)}$. Note that each $\xi_{E}$ can be chosen to lie in $\mathcal{O}_{X_{C}^{(E)}}(y^{(E)})(X^{(E)}_{C} \setminus \Gamma_{y^{(E)}})$, and so the resulting trivialization $\psi$ of $\mc{T}_{C}^{\circ}$ is defined on $X_{C}^{\circ}$. 

    Now $\psi$ has the correct image in $\tn{Gr}_{t}(C)$, because, by definition of the isomorphism 
    \begin{equation*}
    \tn{Gr}_{\Res_{E/F}(\mathbb{G}_{m})}(C)\xrightarrow{\sim} \Hom_{\Z}(\Z[\Gal_{E/F}],\Z),
    \end{equation*}
    the element $\tn{Sh}$ corresponds to the automorphism of the trivial torsor on $\bbD_{C}^{\circ}$ sending $e$ to $\xi_{E}^{-1} \in \Res_{E/F}(\mathbb{G}_{m})(\bbD_{C}^{\circ})$, which is what our map $\psi$ yields (after picking an auxiliary trivialization of $\mc{T}_{C}$ on $\bbD_{C}$ by using some closed point of $X_{C}$ different from the image of $y$). Taking the projective limit over $E/F$ gives the result.

   For uniqueness, we observe that the collection of $X_{C}^{\circ}$-trivializations of $\mc{T}_{C}^{\circ}$ is a $t(X_{C}^{\circ})$-torsor, and our choice of untilts $\{y^{(E)}\}$ determines an isomorphism 
   \begin{equation*}
       t(X_{C}^{\circ}) \xrightarrow{\sim} \varprojlim_{E/F} \Z[\Gal_{E/F}]_{0},
   \end{equation*}
   where the subscript ``$0$'' denotes the kernel of the augmentation map. Note that $X_{*}(t) = \varprojlim \Z[\Gal_{E/F}]$ and, via the identification $\tn{Gr}_{t}(C) \xrightarrow{\sim} X_{*}(t)$ fixed above, replacing $\psi$ with an $x\in t(X_{C}^{\circ})$-translate sends it to $\tn{Sh} \cdot f_{x} \in X_{*}(t)$, where $f_{x} \in X_{*}(t)$ is the cocharacter corresponding to $x$.
\end{proof}

\begin{lem}\label{lem:cohomvanish}
    For any algebraically closed perfectoid field $C'$, the cohomology groups $H^{1}(\bbD_{C'}, t)$ and $H^{1}(\bbD_{C'}^{\circ}, t)$ are trivial. In particular, the torsor $\mc{T}_{C'}$ has trivializations on $\bbD_{C'}$.
\end{lem}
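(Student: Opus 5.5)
Since $t = \varprojlim_{E/F} \Res_{E/F}(\Gm)$, the plan is to reduce to finite level, then to $\Gm$ over a finite étale cover, and finally to the vanishing of the Picard group of the (punctured) disk.

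First, I would reduce to a fixed finite Galois $E/F$. A $t$-torsor on $\bbD_{C'}$ (or $\bbD_{C'}^{\circ}$) is a compatible system of $\Res_{E/F}(\Gm)$-torsors. So $H^{1}(-,t)$ sits in a Milnor-type sequence
\[
0 \to \varprojlim{}^{1}_{E} H^{0}(-,\Res_{E/F}\Gm) \to H^{1}(-,t) \to \varprojlim_{E} H^{1}(-,\Res_{E/F}\Gm) \to 0 .
\]
I will show the $\varprojlim$ term vanishes levelwise. For the $\varprojlim^{1}$ term, the transition maps are norms, and the points decompose as products of units of the relevant rings indexed by embeddings; I expect these norm maps to be surjective, or at least Mittag-Leffler. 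Alternatively, one could interpret $H^{1}$ via compatible systems of torsors and show that trivializations can be chosen compatibly: the set of trivializations at level $E$ is a torsor under $\Res_{E/F}\Gm$, and norm maps on units of $B_{\tn{dR}}^{+}(C')$ and $B_{\tn{dR}}(C')$ are surjective since $C'$ is algebraically closed.

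Second, at finite level I would compute $H^{1}(\bbD_{C'},\Res_{E/F}\Gm)$. Since $C'^{\sharp}$ is algebraically closed, it contains $E$, so $B_{\tn{dR}}^{+}(C')\otimes_{F} E \cong \prod_{\Hom_F(E,C'^\sharp)} B_{\tn{dR}}^{+}(C')$, and the same holds for $B_{\tn{dR}}$. Therefore $\Res_{E/F}\Gm$ restricted to $\bbD_{C'}$ is just $\Gm^{[E:F]}$, and it suffices to treat $\Gm$. Now $B_{\tn{dR}}^{+}(C')$ is a complete DVR and $B_{\tn{dR}}(C')$ is a field, so $\tn{Pic}$ of both is trivial. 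By Hilbert 90, the étale/fppf $H^{1}$ of $\Gm$ equals $\tn{Pic}$, which gives the vanishing.

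Finally, trivializability of $\mc{T}_{C'}$ on $\bbD_{C'}$ follows immediately: its class in $H^{1}(\bbD_{C'},t)$ is zero.

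I expect the main obstacle to be the $\varprojlim^{1}$ step, together with clarifying which topology $H^{1}$ of the pro-group $t$ is taken in. I would handle this by showing that the norm maps on $(B_{\tn{dR}}^{+}(C')\otimes_F E)^{\times}$ are surjective, using a filtration argument: surjectivity on the residue field $C'^{\sharp}$ is clear since it is algebraically closed, and surjectivity on principal units follows by successive approximation in the complete ring.
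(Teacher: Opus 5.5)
Your proposal is correct and follows the paper's route: the $\varprojlim/\varprojlim^{1}$ exact sequence for $t=\varprojlim_E \Res_{E/F}(\Gm)$, vanishing of $\varprojlim^{1}$ because the norm transition maps are surjective, and levelwise vanishing of $H^{1}$. The difference is only in how these inputs are justified. The paper invokes Serre's vanishing of $H^{1}$ of tori over the complete discretely valued field $\Bdr(C')$ with algebraically closed residue field, applied also to the kernel torus of $N_{E'/E}$ to get surjectivity. You instead observe that $E$ embeds into $\Bdr^{+}(C')$, so all these tori split on $\bbD_{C'}$ and everything reduces to Hilbert 90; note that this embedding comes from Hensel's lemma and the separability of $E/F$, not merely from $E \subset C'^{\sharp}$. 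Once the tori are split, the norm maps are coordinatewise products and hence obviously surjective, so your filtration and successive-approximation argument is unnecessary.
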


\begin{proof}
    We only give the proof for $H^{1}(\bbD_{C'}^{\circ}, t)$; the other case is trivial by Henselianity.
    
    Note that the map $\Res_{E'/F}(\mathbb{G}_{m})(\bbD_{C'}^{\circ}) \to \Res_{E/F}(\mathbb{G}_{m})(\bbD_{C'}^{\circ})$ is surjective, because the kernel $K$ of this map is a torus and $H^{1}(\bbD_{C'}^{\circ}, K) =0$, due to the fact that $\Bdr(C')$ is a complete discrete valuation field with algebraically closed residue field (by, e.g., \cite[Section III.2.3]{Serre94}). 

    In particular, we have $\varprojlim^{(1)} H^{0}(\bbD_{C'}^{\circ}, \Res_{E/F}(\mathbb{G}_{m})) =0$, and thus we have an isomorphism
    \begin{equation*}
        H^{1}(\bbD_{C'}^{\circ}, t) \xrightarrow{\sim} \varprojlim H^{1}(\bbD_{C'}^{\circ}, \Res_{E/F}(\mathbb{G}_{m})) = 0,
    \end{equation*}
    where the last equality is again from the fact that $\Bdr(C')$ is a complete discrete valuation field with algebraically closed residue field. 
\end{proof}

Fix also $\tilde{\psi}$ an auxiliary trivialization of $\mc{T}_{C}^{\loc}$ on $\bbD_{C}$, which exists by the above lemma.

\section{The Extended Hecke Action}\label{sec:extendedheckestacks}
Continue to fix $G$ a quasi-split connected reductive group over $F$, with a fixed finite central subgroup $Z$.
\subsection{Hecke stacks}\label{sec:hecke-stacks}
Let $S = \Spa(R,R^+) \in \Perfd$. Given a map $x : S \to \Div^1$ corresponding to an untilt $\Spa(R^{\#}, R^{\#,+})$, we denote by 
$$\D_x = \Spec(\Bdr^{+}(R^{\sharp}))$$
and by 
$$\D_x^{\circ} = \Spec(\Bdr(R^{\sharp})).$$
More generally for $\underline{x} : S \to (\Div^1)^d$, we denote by $D_{\underline{x}} \xrightarrow{\iota_{\underline{x}}} X_{S}$ the corresponding (degree $d$) closed Cartier divisor on $X_S$,  by $\mathbb{D}_{\underline{x}}$ the spectrum of the ring $B_{(\Div^{1})^{I}}^{+}(R^{\sharp})$ and by $\D_{\underline{x}}^{\circ} = \D_{\underline{x}} - \underline{x}$ and $D_{\underline{x}}^{\circ} \xrightarrow{\iota_{\underline{x}}^{\circ}} X_{S}$ the corresponding punctured objects, see \cite[Section VI.1]{Geometrization} for more details.

We first recall the local and global Hecke stacks used in \cite{Geometrization}.

\begin{defi}[Hecke stacks with $I$ legs]
Fix a finite set $I$ equipped with an ordered partition $I = \bigsqcup_{i=0}^{k-1} I_{i}$.
Define the functor $\tn{Hck}_{G}^{\tn{loc},(I_{0}, \dots, I_{k})}$, resp. $\tn{Hck}_{G}^{(I_{0}, \dots, I_{k})}$ on $\Perfd$ by sending $S$ to the following data:
\begin{enumerate}
\item{A map $S \xrightarrow{\underline{x} = (\underline{x}_{i})_{i=0}^{k-1}}(\Div^{1})^{I}$;}
\item{A $k+1$-tuple $(\cP_{i})_{i=0}^{k}$ of $G$-torsors on $\bbD_{\underline{x}}$, resp. on $X_{S}$;}
\item{A sequence $(f_{i})_{i=0}^{k-1}$ of isomorphisms of $G$-torsors
\begin{equation*}
    \cP_{i}|_{\bbD_{\underline{x}} \setminus \underline{x}_{i}} \xrightarrow{\sim} \cP_{i+1}|_{\bbD_{\underline{x}} \setminus \underline{x}_{i}}, 
\end{equation*}
resp. of isomorphisms of $G$-torsors 
    \begin{equation*}
    \cP_{i}|_{ X_{S} \setminus \Gamma_{\underline{x}_{i}}} \xrightarrow{\sim} \cP_{i+1}|_{X_{S} \setminus \Gamma_{\underline{x}_{i}}}.
\end{equation*}
}
\end{enumerate}
\end{defi}

 \begin{rque}
     We note that for any $S = \Spa(R,R^{+}) \in \Perfd$ and $\underline{x} \in (\tn{Div}^{1})^{I}(S)$ there is an \' {e}tale $t_{S}$-torsor over $\Spec(B^{+}_{(\Div^{1})^{I}}(R^{\sharp}))$, abusively denoted by $\iota_{\underline{x}}^{*}\mc{T}_{S}$, uniquely determined by the pullback of $\mc{T}_{S}$ to the completion of $X_{S}$ along $D_{\underline{x}}$, as in \cite[Section VI.1]{Geometrization}. 
 \end{rque}

The analogue of Lemma \ref{lem:inertialhom} does not hold for $\bbD_{\underline{x}}$, so we need to make the following definition:

\begin{defi}\label{defi:bandloc}
    For $S = \Spa(R,R^{+})$ in $\Perfd$ and $\underline{x} \in (\Div^{1})^{I}(S)$, we say that $\cP$ a $G_{S}$-torsor on $[\iota_{\underline{x}}^{*}\mc{T}_{S}^{\loc}/\tilde{t}_{S}]$ is \textbf{band-localized} if it is Kal-basic and $\lambda_{\cP} \in \Hom_{\bbD_{\underline{x}}}(u_{S}, Z_{G,S})$ extends (uniquely) to a morphism in $\Hom_{X_{S}}(u_{S}, Z_{G,S})$. Note that $\lambda_{\cP}$ factors through a fixed finite subgroup $Z \subseteq Z_{G}$ if and only if this global extension does.
\end{defi}

We now give the first ``extended'' variant of these objects. It will turn out that, although this is the most natural definition of an ``extended local Hecke stack,'' it is not quite enough for our full purposes.

\begin{defi}\label{defi:hcknaive}
    Fix a finite set $I$ equipped with an ordered partition $I = \bigsqcup_{i=0}^{k-1} I_{i}$.
Define the functor $\tn{Hck}_{G}^{e,(I_{i}),\tn{naive}}$, resp. $\tn{Hck}_{G}^{e,\tn{loc},(I_{i}),\tn{naive}}$ on $\Perfd$ by sending $S$ to the following data:
\begin{enumerate}
\item{A map $S \xrightarrow{\underline{x} = (\underline{x}_{i})_{i=0}^{k-1}}(\Div^{1})^{I}$;}
\item{A $k+1$-tuple $(\cP_{i})_{i=0}^{k}$ of Kal-basic $G$-torsors on $[\mc{T}_{S}/\tilde{t}_{S}]$, resp. band-localized $G$-torsors on $[\iota_{\underline{x}}^{*}\mc{T}_{S}/\tilde{t}_{S}]$;}
\item{A sequence $(f_{i})_{i=0}^{k-1}$ of isomorphisms of $G$-torsors
\begin{equation*}
    \cP_{i}|_{[\mc{T}_{S}/\tilde{t}_{S}] \times_{X_{S}} X_{S} \setminus \Gamma_{\underline{x}_{i}}} \xrightarrow{\sim} \cP_{i+1}|_{[\mc{T}_{S}/\tilde{t}_{S}] \times_{X_{S}} X_{S} \setminus \Gamma_{\underline{x}_{i}}},
\end{equation*}
resp. of isomorphisms of $G$-torsors
\begin{equation*}
    \cP_{i}|_{[\iota_{\underline{x}}^{*}\mc{T}_{S}/\tilde{t}_{S}] \times_{\bbD_{\underline{x}}} \bbD_{\underline{x}} \setminus \underline{x}_{i}} \xrightarrow{\sim} \cP_{i+1}|_{[\iota_{\underline{x}}^{*}\mc{T}_{S}/\tilde{t}_{S}] \times_{\bbD_{\underline{x}}} \bbD_{\underline{x}} \setminus \underline{x}_{i}}.
\end{equation*}
}
\end{enumerate}
One can also define $\tn{Hck}_{Z \subset G}^{e,(I_{i}),\tn{naive}}$ and $\tn{Hck}_{Z \subset G}^{e,\tn{loc},(I_{i}),\tn{naive}}$ by taking the above definitions but insisting that all $\lambda_{\cP}$ factor through the fixed subgroup $Z$.
\end{defi}
All of the above four stacks are equipped with natural convolution operations as defined in \cite[Section VI.8]{Geometrization}. 

\begin{defi}\label{defi:hom} Denote by $\underline{\Hom}_{X}(u,Z)$ the sheaf on $\Perfd$ sending $S$ to $\Hom_{X_{S}}(u_{S},Z_{S})$. 
\end{defi}
We deduce that there are locally constant (by the proof of Lemma \ref{lem:inertialhom}) morphisms
\begin{equation*}
    \Hck_{G}^{e,(I_{i}),\tn{naive}}, \Hck_{G}^{e,\loc,(I_{i}),\tn{naive}}\to \underline{\Hom}_{X}(u,Z),
\end{equation*}
and define, for a fixed $\lambda \in \Hom_{F}(u,Z_{G})$, the substacks $\Hck_{G}^{e,(I_{i}),\tn{naive},\lambda}, \Hck_{G}^{e,\loc,(I_{i}),\tn{naive},\lambda}$ to be the preimages of $\lambda$ under these morphisms (one could rephrase this condition to match the statement of Proposition \ref{prop:BunGestructure} by insisting that $\lambda_{\cP_{i,\bar{s}}} = \lambda$ for all geometric points $\bar{s}$ of $S$). 

\begin{lem}\label{lem:naiveHck}
    For a fixed $\lambda \in \Hom_{F}(u,Z_{G})$ and a fixed $b \in B_{e}(G)_{\tn{basic}}$ whose restriction to $u$ is $\lambda$, we can twist by $b$ to obtain isomorphisms
    \begin{equation*}
        \Hck_{G}^{e,(I_{i}),\tn{naive},\lambda} \xrightarrow{\sim} \Hck_{G_{b}}^{{(I_{i})}}
    \end{equation*}
    and 
    \begin{equation*}
        \Hck_{G}^{e,\loc,(I_{i}),\tn{naive},\lambda} \xrightarrow{\sim} \Hck_{G_{b}}^{\loc,{(I_{i})}}.
    \end{equation*}
\end{lem}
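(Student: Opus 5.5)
We reduce to the twisting isomorphism $\Bun_{G}^{e,\lambda} \xrightarrow{\sim} \Bun_{G_{b}}$ of Proposition \ref{prop:BunGestructure}(3) and check that it is compatible with modifications. First we recall how that twist is built. The element $b \in B_{e}(G)_{\tn{basic}}$ is represented by a Kal-basic $G$-torsor $\mc{E}_{b}$ on $\Kott \times_{F} \Kal$ with $\lambda_{\mc{E}_{b}} = \lambda$. Pulling back along $[\mc{T}_{S}/\tilde{t}_{S}] \cong X_{S} \times_{\Kott} [\mathbb{T}/\tilde{t}]$ (Lemma \ref{lem:maptoKott} and the identifications of \S2.1) gives a torsor $\mc{E}_{b,S}$ on $[\mc{T}_{S}/\tilde{t}_{S}]$, functorial in $S$. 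Its inner form $\underline{\tn{Aut}}(\mc{E}_{b,S})$ is a form of $G$ on which the band acts trivially, since $\lambda$ lands in $Z_{G}$. It therefore descends to $X_{S}$, using \cite[Lemma 2.42]{Dillery23} as in Definition \ref{defi:bandmap}, and there it is $(G_{b})_{X_{S}}$. For $\cP$ Kal-basic with $\lambda_{\cP} = \lambda$, the sheaf $\underline{\tn{Isom}}(\mc{E}_{b,S}, \cP)$ is a right $\underline{\tn{Aut}}(\mc{E}_{b,S})$-torsor. The band acts on it through $\lambda_{\cP}\lambda^{-1} = 1$, so it descends to a $G_{b}$-torsor on $X_{S}$. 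This is the twisting map, with inverse $\mc{Q} \mapsto \mc{E}_{b,S} \times^{G_{b}} \mc{Q}$.

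\textbf{Global case.} Since $\underline{\tn{Isom}}(\mc{E}_{b,S}, -)$ is a functor on torsors, it is compatible with restriction to any open substack. In particular it commutes with restriction to $[\mc{T}_{S}/\tilde{t}_{S}] \times_{X_{S}} (X_{S} \setminus \Gamma_{\underline{x}_{i}})$, and descent along the gerbe commutes with this base change because the gerbe is pulled back from $X_{S}$. Hence each isomorphism $f_{i}$ induces an isomorphism of the twisted $G_{b}$-torsors on $X_{S} \setminus \Gamma_{\underline{x}_{i}}$. Conversely, untwisting by $\mc{E}_{b,S} \times^{G_{b}} (-)$ sends modifications back to modifications. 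The two constructions are mutually inverse and functorial in $S$, so we obtain the first isomorphism. It respects the leg map to $(\Div^{1})^{I}$, and on the $\cP_{i}$ it restricts to the Proposition \ref{prop:BunGestructure} isomorphism.

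\textbf{Local case.} This is the main point, because Lemma \ref{lem:inertialhom} fails on $\bbD_{\underline{x}}$. We use Definition \ref{defi:bandloc}: each $\cP_{i}$ is band-localized, so $\lambda_{\cP_{i}}$ is the restriction of the global constant $\lambda$. We then pull $\mc{E}_{b,S}$ back to $[\iota_{\underline{x}}^{*}\mc{T}_{S}/\tilde{t}_{S}]$, via the completion of $X_{S}$ along $D_{\underline{x}}$. The identity $\lambda_{\cP_{i}} = \lambda|_{\bbD_{\underline{x}}}$ is exactly what makes $\underline{\tn{Isom}}$ carry the trivial band action, and so descend to a $G_{b}$-torsor on $\bbD_{\underline{x}}$. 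Conversely, $\mc{E}_{b} \times^{G_{b}} \mc{Q}$ is band-localized with band $\lambda$. The twisted form is $G_{b}$ base-changed to $\bbD_{\underline{x}}$, which matches $\Hck^{\loc}_{G_{b}}$. The argument for the $f_{i}$ on the punctured disks $\bbD_{\underline{x}} \setminus \underline{x}_{i}$ is then the same as in the global case.

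The step needing the most care is the descent of $\underline{\tn{Isom}}$ along the gerbe over the non-noetherian bases $\bbD_{\underline{x}}$ and $\bbD_{\underline{x}} \setminus \underline{x}_{i}$. We handle it as in Remark \ref{rque:fingerbe}(1) by passing to a finite-level gerbe $[\mc{T}_{E/F,n}/\tilde{t}_{E/F,n}]$. At that level the gerbe is banded by the finite group $u_{E/F,n}$, and it is trivialized over a finite étale cover, using the cocycle $\xi_{E/F,n}$ of Remark \ref{rque:fingerbe}(2). Descent then reduces to descent along that finite étale cover.
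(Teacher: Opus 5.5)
Your proposal is correct and follows essentially the paper's argument. The paper observes that the twisting equivalence of Proposition \ref{prop:BunGestructure}(3), namely $-\times^{G_{b}}\cP_{b}$ for the $(G_{b},G)$-bitorsor $\cP_{b}$ attached to $b$, is functorial on isomorphisms and works equally on the disks $\bbD_{\underline{x}}$; your $\underline{\tn{Isom}}(\mc{E}_{b,S},-)$ is just its inverse, and your extra descent detail is fine, though the finite-level reduction is unnecessary since Lemma \ref{lem:descendtors} already handles descent along the gerbe.
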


\begin{proof}
    We have already seen in Proposition \ref{prop:BunGestructure}.(3) that twisting gives an isomorphism $\Bun_{G_{b}} \to \Bun_{G}^{e,\lambda}$, and this twisting bijection holds on the local disks $\bbD_{\underline{x}}$ as well and is functorial on isomorphisms (it is given by applying the functor $- \times^{G_{b}} \cP_{b}$ for the fixed $G$-torsor $\cP_{b}$ over $[\mc{T}_{C}/\tilde{t}_{C}]$ corresponding to $b$, where $\cP_{b}$ is viewed as a $(G_{b},G)$-bitorsor on $[\mc{T}_{C}/\tilde{t}_{C}]$).
\end{proof}

We deduce from combining Lemma \ref{lem:naiveHck} with \cite[Section IX]{Geometrization} that the stacks $\Hck_{G}^{e,(I_{i}),\tn{naive}}$ and $\Hck_{G}^{e,\loc,(I_{i}),\tn{naive}}$ give, for any finite set $I$ and $V \in \tn{Rep}((\widehat{G} \rtimes Q)^{I})$ (where $Q$ is a finite quotient of $\Weil_{F}$ through which the action on $\widehat{G}$ factors), a Hecke operator
\begin{equation*}
    T^{e}_{V} \colon \mc{D}_{\tn{lis}}(\Bun_{G}^{e},\Lambda) \to \mc{D}_{\tn{lis}}(\Bun_{G}^{e},\Lambda)^{B\Weil_{F}^{I}},
\end{equation*}
as in \cite[Theorem IX.0.1]{Geometrization}, which satisfies all of the properties as in the operators $T_{V}$ \emph{loc. cit.} 

We can therefore deduce immediately from \cite[Corollary X.1.3]{Geometrization}:

\begin{prop}\label{prop:naiveSA}
    There is a compactly supported $\Lambda$-linear action of $\tn{Perf}(Z^{1}(\Weil_{F},\widehat{G})_{\Lambda}/\widehat{G})$ on $\mc{D}_{\tn{lis}}(\Bun_{G}^{e},\Lambda)^{\omega}$ which is compatible with the Hecke action and gives, functorially in finite sets $I$, exact $\tn{Rep}_{\Lambda}(Q^{I})$-linear functors
    \begin{equation*}
        \tn{Rep}_{\Lambda}((\widehat{G} \rtimes Q)^{I}) \to \End_{\Lambda}(\mc{D}_{\tn{lis}}(\Bun_{G}^{e},\Lambda)^{\omega})^{B\Weil_{F}^{I}},
    \end{equation*}
    where $Q$ denotes a finite quotient of $\Weil_{F}$ through which the action on $\widehat{G}$ factors and the superscript $\omega$ denotes $\mc{D}_{\tn{lis}}(\Bun_{G}^{e},\Lambda)^{\omega} \subseteq \mc{D}_{\tn{lis}}(\Bun_{G}^{e},\Lambda)$ the subcategory of compact objects.
\end{prop}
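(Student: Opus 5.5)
The strategy is to reduce Proposition \ref{prop:naiveSA} to \cite[Corollary X.1.3]{Geometrization}, applied component by component along the decomposition of Proposition \ref{prop:BunGestructure}(3). For each $\lambda \in \Hom_{F}(u,Z_{G})$, we choose $b_{\lambda} \in B_{e}(G)_{\tn{basic}}$ restricting to $\lambda$. Proposition \ref{prop:BunGestructure} then gives an isomorphism $\Bun_{G}^{e,\lambda} \cong \Bun_{G_{b_{\lambda}}}$. Since $\Bun_G^e$ is a disjoint union of the $\Bun_G^{e,\lambda}$, we get
\begin{equation*}
\mc{D}_{\tn{lis}}(\Bun_{G}^{e},\Lambda) \simeq \prod_{\lambda} \mc{D}_{\tn{lis}}(\Bun_{G_{b_{\lambda}}},\Lambda),
\end{equation*}
and compact objects are finite sums of compact objects on finitely many factors.

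Next, each $G_{b_\lambda}$ is an inner form of $G$, so it has the same $L$-group ${}^{L}G$, canonically up to $\widehat{G}$-conjugation. Its stack of parameters is therefore the same: $Z^1(\Weil_F,\widehat{G})_\Lambda/\widehat{G}$. By \cite[Corollary X.1.3]{Geometrization} (which applies to non-quasi-split groups as well), each factor carries a compactly supported $\Lambda$-linear action of $\Perf(Z^1(\Weil_F,\widehat G)_\Lambda/\widehat G)$. This action is compatible with the Hecke operators $T_V$ for $V\in\Rep_\Lambda((\widehat G\rtimes Q)^I)$ and is functorial in $I$. Taking the product over $\lambda$ gives the action on $\mc{D}_{\tn{lis}}(\Bun_G^e,\Lambda)^\omega$.

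It remains to see that this agrees with the Hecke operators $T^e_V$ defined from $\Hck_G^{e,(I_i),\tn{naive}}$. By Lemma \ref{lem:naiveHck}, $\Hck_G^{e,(I_i),\tn{naive}}$ decomposes as $\bigsqcup_\lambda \Hck_G^{e,(I_i),\tn{naive},\lambda}$. Indeed, the inertial homomorphism is constant along a modification, because the $f_i$ are isomorphisms generically and $\lambda$ is locally constant. Twisting by $b_\lambda$ identifies each piece, together with the maps $\overleftarrow h,\overrightarrow h$ and the local Hecke stacks with their convolution structures, with the corresponding diagram for $G_{b_\lambda}$. The Satake sheaves must also match under this identification. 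Since the twisting is by a central-type basic torsor pulled back along the curve, the local Hecke stack $\Hck^{e,\loc,\tn{naive},\lambda}_G$ is identified with $\Hck^{\loc}_{G_{b_\lambda}}$. Over the disc, $G_{b_\lambda}$ is an inner twist of $G$ by a torsor trivial on $\bbD_{\underline x}$, and the geometric Satake equivalence of \cite[Section VI]{Geometrization} is compatible with such an identification. Hence $T^e_V$ restricts to $T_V$ on each factor.

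The main point to check, and the likely obstacle, is the canonicity claim hidden in "compatible with the Hecke action." The identification ${}^{L}G_{b_\lambda}\cong{}^{L}G$ and the twisting isomorphism depend on the choice of $b_\lambda$. A different choice changes the twisting isomorphism by an automorphism of $\Bun_{G_{b_\lambda}}$ induced by an inner automorphism or a basic twist. One must check that Hecke operators commute with this automorphism. They do, because such twists are equivariant for the Hecke correspondences and the Satake isomorphism is insensitive to inner twisting up to the canonical $\widehat G$-conjugacy. Once this is in place, the excursion-operator construction underlying \cite[Corollary X.1.3]{Geometrization} applies verbatim to the collection $(T^e_V)_{V,I}$. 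It produces the action directly from these functors, with compact support following from that on each factor.
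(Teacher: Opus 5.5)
Your proposal is correct and follows essentially the paper's route: decompose $\Bun_G^e$ and the naive Hecke stacks via Lemma \ref{lem:naiveHck} into the pieces $\Bun_{G_{b_\lambda}}$ and $\Hck_{G_{b_\lambda}}^{(I_i)}$, transport the Hecke operators of \cite[Section IX]{Geometrization}, and invoke \cite[Corollary X.1.3]{Geometrization}. The canonicity issue you flag as the main obstacle is not needed for the statement: the paper fixes the $b_\lambda$ and defines the naive operators $T^e_V$ by transport along the chosen twisting isomorphisms, so their agreement with $T_V$ on each factor holds by construction.
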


To summarize the situation, the above proposition and the resulting spectral action are obtained from the theory of \cite{Geometrization} by decomposing
\begin{equation}\label{eq:naivedecomp}
 \Hck_{G}^{e,(I_{i}),\tn{naive}} = \bigsqcup_{\lambda \in \Hom_{F}(u,Z)}  \Hck_{G}^{e,(I_{i}),\tn{naive},\lambda} \xrightarrow{\sim}  \bigsqcup_{\lambda \in \Hom_{F}(u,Z)}  \Hck_{G_{b_{\lambda}}}^{(I_{i})}
\end{equation}
(and its local analogue) using Lemma \ref{lem:naiveHck}.

In order to construct the full ``extended spectral action'' from \cite[Conjecture 12.8]{Fargues22}, one needs a version of the Hecke stacks $\Hck_{G}^{e,(I_{i}),\tn{naive}}$, $\Hck_{G}^{e,\loc,(I_{i}),\tn{naive}}$ which allows ``intertwining'' between different $\Hom_{F}(u,Z)$--components of $\Bun_{G}^{e}$.

We conclude with a technical lemma which will be needed later:

\begin{lem}\label{lem:Grgerbe}
    The action of $L^{+}G$ on $\tn{Gr}_{G/Z,\Spd(C)}$ has the same orbits as that of $L^{+}(G/Z)$ and connected stabilizers.
\end{lem}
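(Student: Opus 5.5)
The plan is to compare the two actions through the isogeny $G \to G/Z$, with kernel the finite central subgroup $Z$. First, $L^{+}G$ acts on $\tn{Gr}_{G/Z,\Spd(C)}$ through $L^{+}G \to L^{+}(G/Z)$. So the lemma reduces to two claims about this map. The first is that the image of $L^{+}G(R)$ in $L^{+}(G/Z)(R)$ meets every $L^{+}(G/Z)$-orbit in the same way, i.e. the $L^{+}G$-orbits are the full $L^{+}(G/Z)$-orbits. The second concerns stabilizers.

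For the orbit statement, consider the exact sequence $1 \to Z \to G \to G/Z \to 1$ of fppf sheaves on $\bbD_{x} = \Spec(\Bdr^{+}(R^{\sharp}))$. The cokernel of $L^{+}G \to L^{+}(G/Z)$ is controlled by $H^{1}(\bbD_{x}, Z)$. By Henselianity, $H^{1}(\bbD_x, Z) \cong H^{1}(\Spec R^{\sharp}, Z)$, and this group is v-locally trivial on $S$ (after extracting roots of the relevant units, since $Z$ is finite multiplicative up to étale twist). This is the same mechanism as in Lemma \ref{lem:cohomvanish} for $t$. Hence $L^{+}G \to L^{+}(G/Z)$ is surjective as a map of v-sheaves, and orbits agree. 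To get the statement at the level of $|\tn{Gr}|$ I would rather check it on geometric points $\Spa(C',C'^{+})$. There, $\Bdr^{+}(C')$ is a complete DVR with algebraically closed residue field, so $H^{1}(\Bdr^{+}(C'), Z) = 0$, and $G(\Bdr^{+}(C')) \to (G/Z)(\Bdr^{+}(C'))$ is surjective. Both sets of orbits are then indexed by dominant coweights of $G/Z$, via the Cartan decomposition for $G/Z$.

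For the stabilizers, fix a point $g \in \tn{Gr}_{G/Z}$ with stabilizer $\tn{Stab}_{L^{+}(G/Z)}(g) = L^{+}(G/Z) \cap g L^{+}(G/Z) g^{-1}$. Its preimage in $L^{+}G$ is $\tn{Stab}_{L^{+}G}(g)$, and there is an exact sequence
\begin{equation*}
1 \to L^{+}Z \to \tn{Stab}_{L^{+}G}(g) \to \tn{Stab}_{L^{+}(G/Z)}(g) \to 1,
\end{equation*}
where surjectivity holds by the previous step. Since $\bbD_x$ is connected and $Z$ is finite, we have $L^{+}Z = \underline{Z(\Bdr^{+})}$, which is just the constant group $Z(C)$ and not connected. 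So "connected" must be read as in \cite[Section VI]{Geometrization}, namely for the relevant pro-unipotent-by-reductive groups: the stabilizer $\tn{Stab}_{L^{+}(G/Z)}(g)$ is an extension of a connected reductive group (a parabolic-type group attached to $\mu$) by a connected pro-unipotent group. I would check that the preimage in $L^{+}G$ of the pro-unipotent part is the same group, since $G \to G/Z$ is an isomorphism on unipotent radicals. The reductive quotient of the preimage is then the preimage in $G$ of a Levi $M_{\mu} \subset G/Z$, which is connected because preimages of connected Levis under central isogenies of connected groups are Levis of $G$. The constant $Z$ sits inside that Levi.

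The main obstacle is this last point, namely making precise in which sense the stabilizer is connected. I expect to handle it by writing $\tn{Stab}_{L^{+}G}(g) = L^{+}G \cap gL^{+}Gg^{-1}$, which is legitimate once $g$ is lifted to $LG$ v-locally. That lift exists by the vanishing of $H^{1}(\bbD^{\circ}_{C'}, Z)$, the analogue of Lemma \ref{lem:cohomvanish}. After lifting, connectedness reduces to the known case for $\tn{Gr}_{G}$ in \cite[Proposition VI.2.?]{Geometrization}: the stabilizer there is the same group, with the point translated by $g$.
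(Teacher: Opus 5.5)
Your argument depends on $L^{+}G\to L^{+}(G/Z)$ being surjective on geometric points, via $H^{1}(\bbD_{x},Z)=0$. In mixed characteristic that step is correct, since $Z$ is étale and Henselianity applies. There it is exactly the paper's argument, and placing $Z$ inside a connected Levi is a harmless variant of the paper's use of $L^{+}T$.

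The paper, however, also allows $F$ of equal characteristic, where $Z$ can be infinitesimal. A typical example is $Z=\mu_{p}\subset\SL_{p}$ over $\mathbb{F}_{q}((\varpi))$, which is precisely the kind of group the extended formalism is built for. In that case $\Bdr^{+}(C'^{\sharp})$ is a power series ring in one variable $\xi$ over a perfect field of characteristic $p$. The $p$-th powers of its units are power series in $\xi^{p}$, so $H^{1}_{\mathrm{fppf}}(\bbD_{x},\mu_{p})=\Bdr^{+}(C'^{\sharp})^{\times}/(\Bdr^{+}(C'^{\sharp})^{\times})^{p}$ is nonzero; for instance, $1+\xi$ gives a nontrivial class. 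This persists after any v-cover. Hence $L^{+}\SL_{p}\to L^{+}\mathrm{PGL}_{p}$ is not surjective even v-locally. Both your orbit argument and your short exact sequence of stabilizers (whose surjectivity you import from the orbit step) therefore fail. Note also that $L^{+}\mu_{p}$ is then trivial, not the constant group $Z(C)$.

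The missing idea is to absorb the obstruction into a maximal torus $T\supset Z$.
\begin{itemize}
\item \emph{Factorization.} Since $H^{1}_{\mathrm{fppf}}(\bbD_{x},T)=0$, the boundary map $L^{+}(T/Z)(C')\to H^{1}_{\mathrm{fppf}}(\bbD_{x},Z)$ is onto and compatible with the one for $G$. Hence $L^{+}(G/Z)(C')=L^{+}G(C')\cdot L^{+}(T/Z)(C')$.
\item \emph{Orbits.} By the Cartan decomposition, every point lies in the $L^{+}(G/Z)$-orbit of some $[t^{\bar\nu}]$ with $t^{\bar\nu}\in L(T/Z)$. Since $L(T/Z)$ is commutative, $[g't't^{\bar\nu}]=[g't^{\bar\nu}t']=[g't^{\bar\nu}]$. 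So the orbits agree with no surjectivity needed.
\item \emph{Stabilizers.} Let $X=[t^{\bar\nu}]$. The connected group $L^{+}T$ lies in $(L^{+}G)_{X}$ and contains $L^{+}Z$, so the kernel sits in the identity component. ``Connected'' is meant literally here; no reinterpretation is needed.
\item \emph{Image of the stabilizer.} The image $I$ of $(L^{+}G)_{X}$ is in general strictly smaller than $(L^{+}(G/Z))_{X}$. It is, however, an extension of $I/L^{+}T\cong(L^{+}(G/Z))_{X}/L^{+}(T/Z)$ by $L^{+}T/L^{+}Z$, and both pieces are connected. The isomorphism follows from the factorization above together with the fact that the preimage of $T/Z$ in $G$ is $T$.
\end{itemize}

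Separately, your plan to lift $g$ to $LG$ fails even in mixed characteristic. The group $H^{1}(\bbD^{\circ}_{C'},\mu_{n})$ surjects onto $\Z/n$ via the valuation. This is exactly what produces the extra components $\pi_{1}(G/Z)/\pi_{1}(G)$ of $\tn{Gr}_{G/Z}$, and points in those components never lift to $LG$. Lemma \ref{lem:cohomvanish} concerns a (pro-)torus, not a finite group. Working with torus-fixed representatives $t^{\bar\nu}$ makes any lift unnecessary.
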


\begin{proof}
    In mixed characteristic the orbit claim follows from the fact that $L^{+}G \to L^{+}(G/Z)$ is surjective on geometric points $\Spa(C')$, since if $x \colon \Spd(C') \to \Spd(C) \xrightarrow{y} \Div^{1}$ is the point induced by $y$ then $H^{1}_{\tn{\'{e}t}}(\bbD_{x}, Z) = 0$ because $\Bdr^{+}(C^{',\sharp})$ is a strictly Henselian local ring with algebraically closed residue field.

    In the general case, observe that for any geometric point $\Spa(C')$ we have the commutative diagram
    \[ 
\begin{tikzcd}
    L^{+}T(C') \arrow{r} \arrow{d} & L^{+}(T/Z)(C') \arrow{r} \arrow{d} & H^{1}_{\tn{fppf}}(\bbD_{x}, Z) \arrow["\tn{id}"]{d} \\
     L^{+}G(C') \arrow{r} & L^{+}(G/Z)(C') \arrow{r} & H^{1}_{\tn{fppf}}(\bbD_{x}, Z),
\end{tikzcd}
    \]
    which implies that the morphism $L^{+}T \backslash L^{+}(T/Z) \to L^{+}G \backslash L^{+}(G/Z)$ is surjective. 

   Recall that, by the Cartan decomposition (e.g. \cite[Proposition 19.2.1]{SW20}), any point $[g] \in \tn{Gr}_{G/Z}(C')$ lies in the same $L^{+}(G/Z)(C')$-orbit as some $t \in LT(C')$. We then obtain the result by writing any $g \in L^{+}(G/Z)$ as $g = g't'$ for $g' \in L^{+}G(C')$ and $t' \in L^{+}(T/Z)(C')$ and observing that
   \begin{equation*}
       [gt] = [g't't] = [g'tt'] = [g't],
   \end{equation*}
giving the equality of the orbits.

We move to the stabilizer claim. Note that for all points $X$ of $\tn{Gr}_{G/Z}(C')$ the subgroup $Z=L^{+}Z$ is contained in $(L^{+}G)_{X}^{\circ}$. Indeed, $(L^{+}G)_{X}$ is the preimage of $(L^{+}(G/Z))_{X}$ in $L^{+}G$; as above, we can reduce to the case where $X = [t^{\bar{\nu}}]$ with $t^{\bar{\nu}} \in L(T/Z)$ and $\bar{\nu} \in X_{*}(T/Z)$, so that $(L^{+}G)^{\circ}_{X}$ contains $L^{+}T$ and thus also $Z = L^{+}Z$. This reduces us to showing that the image $I$ of $(L^{+}G)_{X}$ in $L^{+}(G/Z)$, again assuming $X = [t^{\bar{\nu}}]$, is connected. In mixed characteristic we are now done by surjectivity, since $(L^{+}(G/Z))_{X}$ is connected.

This image $I$ fits into the following sequence of v-sheaves
\begin{equation*}
    L^{+}T/L^{+}Z   \hookrightarrow I \twoheadrightarrow I/(L^{+}T),
\end{equation*}
and, by the above argument, the natural map $I/(L^{+}T) \to (L^{+}(G/Z))_{X}/(L^{+}(T/Z))$ is an isomorphism. Since both $L^{+}T/L^{+}Z$ and $(L^{+}(G/Z))_{X}/(L^{+}(T/Z))$ are connected, so is $I$.
\end{proof}

\subsection{Extended Hecke stacks}\label{sec:extendedheckedef}

We now define local and global Hecke stacks adapted to the extended moduli of bundles $\Bun_{G}^{e}$. Recall that we denote by $\mathbf{T}$ the abstract Cartan of $G$. 

    Applying the functor $\Hom_{F}(-,\mathbf{T})$ to the sequence 
    \begin{equation*}
1 \to u \to \tilde{t} \to t \to 1
    \end{equation*}
    and taking a fibered product yields the short exact sequence
    \begin{equation}\label{eq:HomZSES}
        1 \to \Hom_{F}(t, \mathbf{T}) \to \Hom_{F}(\tilde{t}, \mathbf{T}) \times_{\Hom_{F}(u,\mathbf{T})} \Hom_{F}(u,Z) \to \Hom_{F}(u,Z) \to 1,
    \end{equation}
    where the surjectivity comes from applying Shapiro's Lemma and using that $\tn{Ext}^{1}_{\Z}(X^{*}(\mathbf{T}), X^{*}(t)) = 0$.
    We fix, for once and for all, a normalized (set-theoretic) section 
\begin{equation*}
    s \colon \Hom_{F}(u,Z_{G}) \to \Hom_{F}(\tilde{t}, \mathbf{T}) \times_{\Hom_{F}(u,\mathbf{T})} \Hom_{F}(u,Z_{G}),
\end{equation*}
determining a section 
\begin{equation*}
s \colon \Hom_{F}(u,Z) \to \Hom_{F}(\tilde{t}, \mathbf{T}) \times_{\Hom_{F}(u,\mathbf{T})} \Hom_{F}(u,Z)
\end{equation*}
of the right-hand map in \eqref{eq:HomZSES} with corresponding normalized $2$-cocycle 
\begin{equation*}
z \in Z^{2}(\Hom_{F}(u,Z),\Hom_{F}(t, \mathbf{T})).
\end{equation*}
By Lemma \ref{lem:inertialhom} we get a section $\Hom_{X_{S}}(u_{S},Z_{G,S}) \to \Hom_{X_{S}}(\tilde{t}_{S},\mathbf{T}_{S})$ for all $S \in \Perfd$ as well, which we also denote by $s$.

As a notational convention, we set $\bbD_{y} =: \bbD_{C}$, and for a fixed $S = \Spa(R,R^{+}) \to \Spa(C)$ in $\Perfd_{C}$ with corresponding morphism $\Spa(R^{\sharp},R^{\sharp,+}) \xrightarrow{x} \Spa(C^{\sharp},C^{\sharp,+})$, we set $\bbD_{S} = \bbD_{y \circ x}$ and $\bbD_{S}^{\circ} = \bbD_{y \circ x}^{\circ}$. We also set $X_{S}^{\circ}:= X_{S} \setminus \Gamma_{y \circ x}$ and (recall the $t_{S}$-torsor $\mc{T}_{S}$ over $X_{S}$ from Definition \ref{defi:Tdef}) $\mc{T}_{S}^{\tn{loc}}$, $\mc{T}_{S}^{\tn{loc},\circ}$, or $\mc{T}_{S}^{\circ}$  the pullback of $\mc{T}_{S}$ to $\bbD_{S}$, $\bbD_{S} ^{\circ}$, or $X_{S}^{\circ}$, respectively.

Note that, for any affinoid $\Spa(R,R^{+}) = S$ in $\Perfd_{C}$, there is a map 
\begin{equation}\label{eq:gerbesm}
[\mc{T}^{\tn{loc}}_{S}/\tilde{t}_{S}] \to [\bbD_{S}/\tilde{t}_{S}]
\end{equation}
induced by the structure map $\mc{T}_{S}^{\tn{loc}} \to \bbD_{S}$, similarly for $[\mc{T}_{S}/\tilde{t}_{S}] \to [X_{S}/\tilde{t}_{S}]$.

\begin{lem}\label{lem:equivtors} Let $X$ be a site, $A$ an abelian sheaf of groups over $X$, and $H$ a sheaf of groups over $X$. Defining an $H$-torsor on $[X/A]$ is equivalent to defining an $A$-equivariant $H$-torsor on $X$. 
\end{lem}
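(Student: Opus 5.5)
The plan is to prove the equivalence by descent along the quotient map $\pi \colon X \to [X/A]$, which is an $A$-torsor and in particular a covering for the topology in question. Here $H$ is regarded on $[X/A]$ as the descended sheaf of groups. The implicit hypothesis is that $H$ is $A$-equivariant; in our applications the $A$-action on $H$ is trivial, since $H = G_S$ is pulled back from the base. Write $m, \mathrm{pr}_2 \colon A \times X \to X$ for the action and projection maps. The first observation is that the Čech nerve of $\pi$ is identified with the bar construction of the action groupoid $A \times X \rightrightarrows X$. Concretely, $X \times_{[X/A]} X \simeq A \times X$ via $(a,x) \mapsto (ax, x)$, and $X \times_{[X/A]} X \times_{[X/A]} X \simeq A \times A \times X$, with the face maps corresponding to $m$, $\mathrm{pr}_2$, multiplication in $A$, and so on.

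First, I would construct the functor in one direction. Given an $H$-torsor $\mathcal{P}$ on $[X/A]$, its pullback $\pi^*\mathcal{P}$ is an $H$-torsor on $X$. The canonical 2-isomorphism $\pi \circ m \simeq \pi \circ \mathrm{pr}_2$, which is part of the data of the quotient stack, gives an isomorphism $\phi \colon m^*\pi^*\mathcal{P} \xrightarrow{\sim} \mathrm{pr}_2^*\pi^*\mathcal{P}$. The compatibility of that 2-isomorphism over $A \times A \times X$ gives the cocycle condition for $\phi$, and its restriction along the identity section is the identity. These are exactly the axioms of an $A$-equivariant structure on $\pi^*\mathcal{P}$.

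Second, I would construct the inverse using the fact that torsors for a sheaf of groups form a stack. This is where I would invoke effective descent: since $H$-torsors satisfy descent along coverings, the groupoid of $H$-torsors on $[X/A]$ is equivalent to the groupoid of descent data along $\pi$. By the identification of the Čech nerve above, a descent datum is an $H$-torsor on $X$ together with an isomorphism over $A \times X$ satisfying the cocycle condition over $A \times A \times X$, which is precisely an equivariant structure. Morphisms match in the same way: a morphism of descent data is a morphism of torsors over $X$ that commutes with $\phi$, i.e. an $A$-equivariant morphism.

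The main point needing care is the identification of the Čech nerve of $\pi$ with the action groupoid, together with checking that $\pi$ is a covering, so that descent applies. The identification is formal from the definition of $[X/A]$ as the stackification of the prestack quotient. For the covering claim, note that any $T \to [X/A]$ corresponds to an $A$-torsor $Q \to T$ with an equivariant map to $X$, and $T \times_{[X/A]} X = Q$, which is locally trivial on $T$. A cleaner alternative is to define $H$-torsors on $[X/A]$ directly as compatible families over all $T \to [X/A]$ and evaluate on these $Q$. Commutativity of $A$ plays no real role beyond ensuring the gerbe-theoretic conventions in the paper are consistent, so I would not expect obstacles there.
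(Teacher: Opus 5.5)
Your proof is correct: descent along the $A$-torsor $X \to [X/A]$, with its \v{C}ech nerve identified with the action groupoid and using that $H$-torsors form a stack, is the standard argument for this statement, whereas the paper gives no argument of its own and only cites Shin's Remark~2.6. You are also right that $H$ must carry an $A$-equivariant structure for the statement to make sense; in every use in the paper $A$ moreover acts trivially on the base (e.g.\ $[\mathbb{D}_S/\tilde{t}_S]$), so an equivariant structure there is just a homomorphism $A \to \underline{\mathrm{Aut}}_{H}(\mathcal{P})$.
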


\begin{proof}
    See \cite[Remark 2.6]{Shin21}.
    \end{proof}

    For an $H$-torsor $\mc{U}$ on a site $X$ and $\Q$ a $Z \subset Z_{H}$-torsor on $X$ we denote by $\Q \cdot \mc{U}$ the $H$-torsor $\Q \times^{Z} \mc{U}$.

 \begin{lem}\label{lem:curlyT}
 Let $T$ be any $F$-rational torus. For any $S = \Spa(R,R^{+}) \in \Perfd_{C}$ and $f \in \Hom_{X_{S}}(t_{S}, T_{S})$, the pullback along \eqref{eq:gerbesm} of the $T_{S}$-torsor on $[\bbD_{S}/\tilde{t}_{S}]$ determined (in the sense of Lemma \ref{lem:equivtors}) by the trivial $T_{S}$-torsor with $\tilde{t}_{S}$-action defined by $f$ is canonically isomorphic to 
 \begin{equation*}
 f_{*}(\mc{T}_{S}^{\tn{loc}}) := \mc{T}_{S}^{\loc} \times^{t_{S},f} T_{S}.
 \end{equation*}
 The above object is viewed as a $T_{S}$-torsor on  $[\mc{T}_{S}^{\loc}/\tilde{t}_{S}]$ via the pullback along  $[\mc{T}_{S}^{\loc}/\tilde{t}_{S}] \to \bbD_{S}$. Moreover, the analogous statement holds with $\bbD_{S}$ replaced with $X_{S}$.
 \end{lem}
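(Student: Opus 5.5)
We verify the claim through the equivariant description of Lemma \ref{lem:equivtors}. The map \eqref{eq:gerbesm} is induced by the $\tilde{t}_{S}$-equivariant structure map $\pi \colon \mc{T}_{S}^{\loc} \to \bbD_{S}$, where $\tilde{t}_{S}$ acts on $\mc{T}_{S}^{\loc}$ through $\tilde{t}_{S} \to t_{S}$ and trivially on $\bbD_{S}$. So a $T_{S}$-torsor on $[\mc{T}^{\loc}_{S}/\tilde{t}_{S}]$ is the same as a $\tilde{t}_{S}$-equivariant $T_{S}$-torsor on $\mc{T}_{S}^{\loc}$. The pullback of the given torsor is then the trivial $T_{S}$-torsor $\mc{T}_{S}^{\loc} \times T_{S}$, on which $\tilde{t}_{S}$ acts by $\tilde{a} \cdot (p, g) = (p, f(\bar{a}) g)$, where $\bar{a}$ is the image of $\tilde{a}$ in $t_{S}$.

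For the other side, we likewise regard $f_{*}(\mc{T}_{S}^{\loc})$, which a priori is pulled back along $[\mc{T}_{S}^{\loc}/\tilde{t}_{S}] \to \bbD_{S}$, as an equivariant torsor. Its pullback to $\mc{T}_{S}^{\loc}$ is $\mc{T}_{S}^{\loc} \times_{\bbD_{S}} (\mc{T}_{S}^{\loc} \times^{t_{S},f} T_{S})$, with $\tilde{t}_{S}$ acting only on the first factor. This object carries a tautological trivialization, namely the section $p \mapsto (p, [p,1])$. Define
\begin{equation*}
\Phi \colon \mc{T}_{S}^{\loc} \times T_{S} \to \mc{T}_{S}^{\loc} \times_{\bbD_{S}} f_{*}(\mc{T}_{S}^{\loc}), \qquad (p,g) \mapsto (p, [p, g]).
\end{equation*}
This is a $T_{S}$-equivariant isomorphism because $T_{S}$ is commutative. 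Because it commutes with the projections to $\mc{T}_{S}^{\loc}$, it suffices to check that it intertwines the two $\tilde{t}_{S}$-actions covering $p \mapsto \bar{a}p$:
\begin{equation*}
\Phi(\bar{a}p, f(\bar{a})g) = (\bar{a}p, [\bar{a}p, f(\bar{a})^{-1}\cdot f(\bar{a}) g])
\end{equation*}
up to the sign convention in the contracted product. So with the convention $[pa, g] = [p, f(a) g]$, the identity $[\bar{a}p, f(\bar{a})g]$ has to equal $[p, g]$ after the action. The one real point to settle is matching sign and left/right conventions. Depending on the convention for how $t_{S}$ acts on $\mc{T}_{S}$, one may need to replace $f$ by $f^{-1}$ in the description of the equivariant structure, or use $\tilde{a}$ acting by $f(\bar{a})^{-1}$. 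We fix the convention so that the map is equivariant. This is where we expect any real care to be needed; everything else is formal. Because the construction involves no choices, the resulting isomorphism is canonical and functorial in $S$.

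All steps are compatible with étale (respectively v-) descent and with the pro-structure: one may first work at the finite levels $t_{E/F}$, $\tilde{t}_{E/F,n}$ of Remark \ref{rque:fingerbe}, since $f$ factors through some finite level, and then pass to the limit. The same argument applies verbatim with $\bbD_{S}$ replaced by $X_{S}$ and $\mc{T}_{S}^{\loc}$ by $\mc{T}_{S}$, because only the equivariant structure map $\mc{T}_{S} \to X_{S}$ was used.
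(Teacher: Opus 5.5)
Your argument is correct in substance, but it takes a different route from the paper.

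The paper first reduces by functoriality to the universal case $T=t$, $f=\mathrm{id}$, and then argues only with classifying maps. Using the canonical identification $[\mc{T}_{S}^{\loc}/t_{S}]\cong\bbD_{S}$, it compares two composites:
\begin{itemize}
\item $[\mc{T}_{S}^{\loc}/\tilde{t}_{S}]\to[\bbD_{S}/\tilde{t}_{S}]\to[\bbD_{S}/t_{S}]$, whose second arrow is induced by $\tilde{t}_{S}\to t_{S}$ and classifies the torsor ``defined by $\mathrm{id}$'';
\item $[\mc{T}_{S}^{\loc}/\tilde{t}_{S}]\to[\mc{T}_{S}^{\loc}/t_{S}]\cong\bbD_{S}\to[\bbD_{S}/t_{S}]$, whose last arrow classifies $\mc{T}_{S}^{\loc}$.
\end{itemize}
These agree, so both torsors are pullbacks of one universal torsor.

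You instead pass to the equivariant cover $\mc{T}_{S}^{\loc}$ via Lemma~\ref{lem:equivtors}, treat arbitrary $f$ at once, and write the isomorphism down explicitly through the tautological section $p\mapsto(p,[p,1])$. Your version gives an explicit formula for the canonical isomorphism. That formula is what gets manipulated later, when trivializations such as $\psi$ are transported along $f_{*}$. The paper's version needs no sign bookkeeping, because the normalization of ``the action defined by $f$'' is absorbed into functoriality of quotient stacks. Your detour through finite levels is unnecessary.

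What you should repair is the sign paragraph, which as written contains an incorrect computation followed by a hedge. Your first formula for the action should also read $\tilde{a}\cdot(p,g)=(\bar{a}p,\ldots)$, not $(p,\ldots)$.

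With $[pa,g]=[p,f(a)g]$ and $\tilde{a}\cdot(p,g)=(\bar{a}p,f(\bar{a})g)$, one gets $\Phi(\bar{a}p,f(\bar{a})g)=(\bar{a}p,[p,f(\bar{a})^{2}g])$, so $\Phi$ is not equivariant. What works is $\tilde{a}\cdot(p,g)=(\bar{a}p,f(\bar{a})^{-1}g)$. Then $\Phi(\bar{a}p,f(\bar{a})^{-1}g)=(\bar{a}p,[p,g])=\tilde{a}\cdot\Phi(p,g)$, as required.

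This sign is not a convention you are free to choose inside the proof. With the opposite sign, the corresponding torsor on $[\mc{T}_{S}/\tilde{t}_{S}]$ is the pullback of $(f_{*}\mc{T}_{S})^{-1}$. In the global case this is in general not isomorphic to the pullback of $f_{*}\mc{T}_{S}$ at all. For example, take $f_{*}\mc{T}_{S}$ to be the torsor of a line bundle of nonzero degree; pullback along the gerbe is injective on $H^{1}$, so the two pullbacks differ. The sign therefore belongs to the reading of the statement. You should state it explicitly as the meaning of ``the $\tilde{t}_{S}$-action defined by $f$'' rather than saying you fix whichever convention makes $\Phi$ equivariant.
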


 \begin{proof}
     We only prove the $\bbD_{S}$-case (they are identical). By functoriality, it suffices to prove the result for $T = t$ (this is only a pro-torus, but for the purposes of this proof it does not matter) and $f = \tn{id}$. This case follows from the fact that one has a canonical identification $[\mc{T}_{S}^{\loc}/t_{S}] \xrightarrow{\sim} \bbD_{S}$ fitting into the commutative diagram
     \[
     \begin{tikzcd}
         \lb \mc{T}_{S}^{\loc}/\tilde{t}_{S} \rb \arrow{r} \arrow{d} & \lb \bbD_{S}/\tilde{t}_{S} \rb \arrow{d} \arrow{r} & \lb \bbD_{S}/t_{S} \rb \\
         \lb \mc{T}_{S}^{\loc}/t_{S} \rb \arrow{r} &\bbD_{S},\arrow{ru} & 
     \end{tikzcd}
     \]
     where the map $\bbD_{S} \to \lb \bbD_{S}/t_{S} \rb$ is the one canonically associated to $\mc{T}_{S}^{\loc}$, the map $\lb \bbD_{S}/\tilde{t}_{S} \rb \to \lb \bbD_{S}/t_{S} \rb$ is associated to the $t_{S}$-torsor on $\lb \bbD_{S}/\tilde{t}_{S} \rb$ from the statement of the lemma for $T = t$ and $f = \tn{id}$, and the map $[\mc{T}_{S}^{\loc}/\tilde{t}_{S}] \to \bbD_{S}$ induced by the diagram equals the structure map of the gerbe $[\mc{T}_{S}^{\loc}/\tilde{t}_{S}]$.
 \end{proof}

\begin{defi}\label{defi:Qdefs}
For a fixed $S \in \Perfd_{C}$ and $\lambda \in \Hom_{X_{S}}(u_{S}, Z_{G,S})$, we introduce the following objects:
\begin{enumerate}
    \item{Take $\mathfrak{Q}_{\lambda,S}$ to be the $\mathbf{T}_{S}$-torsor on $[\bbD_{S}/\tilde{t}_{S}]$ or $[X_{S}/\tilde{t}_{S}]$ given, via Lemma \ref{lem:equivtors}, by the trivial $\mathbf{T}_{S}$-torsor and the $\tilde{t}_{S}$-action determined by the homomorphism $s(\lambda) \in \Hom_{X_{S}}(\tilde{t}_{S},\mathbf{T}_{S})$. This is where we use the chosen section $s$.}
    \item{Take $\Q_{\lambda,S}'$ to be the pullback of $\mathfrak{Q}_{\lambda,S}$ to $[\mc{T}_{S}^{\loc}/\tilde{t}_{S}]$ or $[\mc{T}_{S}/\tilde{t}_{S}]$ via \eqref{eq:gerbesm} or its global analogue.}
    \item{Take $\widetilde{\Q}_{\lambda,S}$ to be the $G_{S}$-torsor $Q_{\lambda,S}' \times^{\mathbf{T}_{S}} G_{S}$ corresponding to the embedding $\mathbf{T} \to G$ coming from the choice of pinning.}
\end{enumerate}
\end{defi}

Set $\overline{\mathbf{T}}_{S}:= \mathbf{T}_{S}/Z_{S}$, and, for a $\mathbf{T}_{S}$-torsor $\cP$, denote by $\ov{\cP}$ the $\ov{\mathbf{T}}_{S}$-torsor $\mc{P} \times^{\mathbf{T}_{S}} \ov{\mathbf{T}}_{S}$, similarly with $G_{S}$ and $\ov{G}_{S}$. We alert the reader that we will still frequently use the notation $T/Z$, $G/Z$ despite introducing this notational shortcut---its primary purpose is to lighten the notational load of certain computations.

\begin{lem}\label{lem:Zred}
The trivialization $\psi$ of $\mc{T}_{C}^{\circ}$ from Lemma \ref{lem:Grt} gives a reduction of each $\mathbf{T}_{S}$-torsor $\Q_{\lambda,S}'|_{[\mc{T}_{S}^{\circ}/\tilde{t}_{S}]}$ and $\Q_{\lambda,S}'|_{[\mc{T}_{S}^{\loc,\circ}/\tilde{t}_{S}]}$ to a $Z_{S}$-torsor, which we denote by $\mc{Q}_{\lambda,S}$.
\end{lem}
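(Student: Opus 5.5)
By Lemma \ref{lem:curlyT}, applied with $T=\mathbf{T}$ and a homomorphism $f$ obtained from $s(\lambda)$, I will reduce the statement to an explicit description of $\Q'_{\lambda,S}$. The point is that $\Q'_{\lambda,S}$ is the pullback of a torsor whose $\tilde{t}_S$-equivariant structure is given by $s(\lambda)\colon \tilde{t}_S\to\mathbf{T}_S$. On $[\mc{T}^{\circ}_S/\tilde{t}_S]$, the trivialization $\psi$ of Lemma \ref{lem:Grt} (pulled back along $S\to\Spa(C)$) identifies $\mc{T}^{\circ}_S$ with the trivial $t_S$-torsor. Hence
\[
[\mc{T}^{\circ}_S/\tilde{t}_S]\xrightarrow{\sim} X^{\circ}_S\times B u_S,
\]
with the same statement on the punctured disk.

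Under this identification, a $\mathbf{T}_S$-torsor on $[\mc{T}^{\circ}_S/\tilde{t}_S]$ is, by Lemma \ref{lem:equivtors}, a $\mathbf{T}_S$-torsor on $\mc{T}^{\circ}_S\cong t_S\times X^{\circ}_S$ equivariant for $\tilde{t}_S$. I will write down $\Q'_{\lambda,S}$ concretely. It is the trivial $\mathbf{T}_S$-torsor on $\mc{T}^{\circ}_S$, with $\tilde{t}_S$ acting by translation on the base (through $\tilde{t}_S\to t_S$) and by $s(\lambda)$ on the fiber. Equivalently, using $\psi$, its descent to $X^\circ_S\times Bu_S$ is the torsor obtained by restricting the equivariance along the section of the quotient given by $\psi$.

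After trivializing the base via $\psi$, the residual equivariance is only for the stabilizer $u_S=\ker(\tilde{t}_S\to t_S)$, and it acts through $s(\lambda)|_{u_S}=\lambda$, which lands in $Z_S$. I then take $\mc{Q}_{\lambda,S}$ to be the trivial $Z_S$-torsor on $X^\circ_S$ with $u_S$-action through $\lambda$. This defines a $Z_S$-torsor on $X^\circ_S\times Bu_S\cong[\mc{T}^\circ_S/\tilde{t}_S]$, again by Lemma \ref{lem:equivtors}. There is a tautological map $\mc{Q}_{\lambda,S}\times^{Z_S}\mathbf{T}_S\to\Q'_{\lambda,S}|_{[\mc{T}^\circ_S/\tilde{t}_S]}$. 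It is equivariant precisely because $s(\lambda)$ lies in the fiber product over $\Hom(u,\mathbf{T})$ with $\Hom(u,Z)$. To see that it is an isomorphism, I will check it after pulling back to the atlas $\mc{T}^{\circ}_S$, where both sides are trivial.

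The main obstacle is making the identification $[\mc{T}^\circ_S/\tilde{t}_S]\cong X^\circ_S\times Bu_S$ precise while keeping the full $\tilde{t}_S$-equivariance. Concretely, the descent data must match: a $\tilde{t}_S$-equivariant torsor on $t_S\times X^\circ_S$, with $\tilde{t}_S$ acting freely-mod-$u_S$, must correspond to a $u_S$-equivariant torsor on the slice $\{1\}\times X^\circ_S$. I would prove this as a general induction statement, $[(\tilde{t}/u)\times Y/\tilde{t}]\simeq[Y/u]$, for group sheaves in the v- or pro-étale topology. The pro-group issues would be handled by Remark \ref{rque:fingerbe}(1), which reduces to finite level $\tilde{t}_{E/F,n}$, where everything is representable. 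The local punctured case is verbatim, with $\bbD_S^\circ$ in place of $X_S^\circ$.
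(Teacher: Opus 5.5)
Your proposal is correct, but it takes a different route from the paper.

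\textbf{The paper's argument.} It never trivializes the gerbe itself. It pushes out along $\mathbf{T}_S\to\ov{\mathbf{T}}_S=\mathbf{T}_S/Z_S$ and notes that $\ov{s(\lambda)}$ kills $u_S$, because $s(\lambda)|_{u}=\lambda$ lands in $Z$; so $\ov{s(\lambda)}$ is a homomorphism out of $t_S$. It then applies Lemma \ref{lem:curlyT} to the pair $(\ov{\mathbf{T}},\ov{s(\lambda)})$ to identify $\ov{\Q'_{\lambda,S}}$ with $\ov{s(\lambda)}_{*}(\mc{T}_S)$. On the punctured curve or disk this is trivialized by $\ov{s(\lambda)}_{*}(\psi)$. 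Finally $\mc{Q}_{\lambda,S}$ is defined as the fiber over $e$ of $\Q'_{\lambda,S}\to\ov{\Q'_{\lambda,S}}\to\underline{\ov{\mathbf{T}}_S}$. This uses that a reduction of structure group to $Z$ is the same as a trivialization of the pushout to $\mathbf{T}/Z$.

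\textbf{Your opening sentence.} The above is the correct form of it. Lemma \ref{lem:curlyT} cannot be applied with $T=\mathbf{T}$ and $f$ built from $s(\lambda)$, since $s(\lambda)$ does not factor through $t$ unless $\lambda=1$. This is harmless for you, because you then compute $\Q'_{\lambda,S}$ directly from its definition as a pullback.

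\textbf{What the paper's route buys.} It is shorter, and it produces the reduction directly as a subsheaf of $\Q'_{\lambda,S}$ on the whole atlas. The induction equivalence $[(\tilde{t}/u)\times Y/\tilde{t}]\simeq[Y/u]$, which you flag as the main obstacle, never arises. That equivalence is in any case a formal fact about quotient stacks once $\tilde{t}\to t$ is an epimorphism of sheaves, so your reduction to finite level via Remark \ref{rque:fingerbe} is unnecessary.

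\textbf{What your route buys.} It gives a concrete model. Under $[\mc{T}^{\circ}_S/\tilde{t}_S]\simeq X^{\circ}_S\times Bu_S$, the torsor $\mc{Q}_{\lambda,S}$ is simply the trivial $Z_S$-torsor with $u_S$ acting through $\lambda$. This makes it visible that, as a $Z_S$-torsor, it depends only on $\lambda$ and $\psi$, not on $s$. It also shows that $\lambda\mapsto\mc{Q}_{\lambda,S}$ is multiplicative up to canonical isomorphism; the paper later writes these isomorphisms as $z(\cdot,\cdot)_{*}(\psi)$.

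The two constructions agree: restricting the paper's fiber over $e$ to your slice $\psi^{-1}(1)\subset\mc{T}^{\circ}_S$ gives exactly your torsor.
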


\begin{proof}
We focus on the $\bbD_{S}$-case. Denoting by $\ov{s(\lambda)} \in \Hom_{X_{S}}(t_{S},\ov{\mathbf{T}}_{S})$ the composition $\tilde{t}_{S} \xrightarrow{s(\lambda)} \mathbf{T}_{S} \to \ov{\mathbf{T}}_{S}$, we observe by Lemma \ref{lem:curlyT} that there is a canonical identification
\begin{equation*}
    \ov{\Q_{\lambda,S}'}  \xrightarrow{\sim} \ov{s(\lambda)}_{*}(\mc{T}_{S}^{\loc}).
\end{equation*}
It follows that, via the pullback of our fixed $X_{C}^{\circ}$-trivialization $\psi$ to $\bbD_{S}^{\circ}$, which we abusively denote simply by $\psi$, we obtain an isomorphism of $\ov{\mathbf{T}}_{S}$-torsors over $\bbD_{S}^{\circ}$
\begin{equation*}
    \ov{\Q_{\lambda,S}'} \xrightarrow{\ov{s(\lambda)}_{*}(\psi)} \underline{\ov{\mathbf{T}}_{S}}.
\end{equation*}
We take our desired reduction $\Q_{\lambda,S}$ to be the fiber over $e \in \underline{\ov{\mathbf{T}}_{S}}(\bbD_{S}^{\circ})$ via the composition
\begin{equation*}
     \Q_{\lambda,S}' \to \ov{\Q_{\lambda,S}'} \xrightarrow{\ov{s(\lambda)}{*}(\psi)} \underline{\ov{\mathbf{T}}_{S}}.
\end{equation*}
The analogue of this construction with $\bbD_{S}$ replaced with $X_{S}$ goes through in an identical manner (since $\psi$ is defined on $X_{C}^{\circ}$).
\end{proof}

\begin{rque}
\begin{enumerate}
    \item{Our notation does not distinguish between the local and global $\Q_{\lambda,S}$-objects because the difference between them causes no risk of confusion---one checks easily that the localization of the global version is canonically isomorphic to the local version.}
    \item{Our above construction gives a map
    \begin{equation*}
        \Hom_{F}(u,Z) \to Z^{1}_{\tn{v}}([\mc{T}_{S}^{\loc,\circ}/\tilde{t}_{S}], Z_{S})
    \end{equation*}
    (one can take \'{e}tale $Z_{S}$-torsors when $F$ has mixed characteristic), which we warn the reader is not multiplicative. This is also true for $\bbD_{S}$ replaced with $X_{S}$.
    }
\end{enumerate}    
\end{rque}

We can finally define the extended Hecke stacks:
\begin{defi}[Extended local Hecke stack with $n$ legs]\label{defi:elochck}
Fix $n \in \mathbb{N}$. Define the functor $\tn{Hck}_{Z \subset G}^{e,\loc,n}$ from $\Perfd_{C}$ to groupoids by sending $S$ to the following data:
\begin{enumerate}
    
    \item{An $n+1$-tuple $(\cP_{0}, \cP_{1}, \dots, \cP_{n})$ of band-localized (as in Definition \ref{defi:bandloc}) $G_{S}$-torsors on the gerbe $[\mc{T}_{S}^{\loc}/\tilde{t}_{S}]$ such that each $\lambda_{\cP_{i}}$ factors through $Z_{S}$;}
    
    \item{A sequence $\{f_{i}\}_{i=0}^{n-1}$, where $f_{i}$ is an isomorphism of $G_{S}$-torsors
    \begin{equation*}
        \Q_{\lambda_{i+1}/\lambda_{i},S} \cdot \cP_{i}|_{[\mc{T}_{S}^{\loc,\circ}/\tilde{t}_{S}]} \xrightarrow{\sim} \cP_{i+1}|_{[\mc{T}_{S}^{\loc,\circ}/\tilde{t}_{S}]},
    \end{equation*}
    where $\lambda_{i}:= \lambda_{\cP_{i}}$.
    }

    \item{A morphism from $((\cP_{i}),(f_{i}))$ to $((\cP_{i}'),(f'_{i}))$ is a sequence $(h_{i})_{i=0}^{n}$ of isomorphisms $\cP_{i} \xrightarrow{h_{i}} \cP_{i}'$ of $G_{S}$-torsors over $[\mc{T}_{S}^{\loc}/\tilde{t}_{S}]$---note that this forces $\lambda_{i} = \lambda_{i}'$---such that $h_{i+1}^{-1} \circ f'_{i} \circ (\Q_{\lambda_{i+1}/\lambda_{i},S} \cdot h_{i}) = f_{i}$ for all $i$.}

\end{enumerate}
    
\end{defi}
The global version is completely analogous, but we write it out fully for completeness.

\begin{defi}[Extended global Hecke stack with $n$ legs]\label{defi:eglobhck}
Fix $n \in \mathbb{N}$. Define the functor $\tn{Hck}_{Z \subset G}^{e,n}$ by sending $S$ in $\Perfd_{C}$ to the following data:
\begin{enumerate}
    \item{An $n+1$-tuple $(\cP_{0}, \cP_{1}, \dots, \cP_{n})$ of $G_{S}$-torsors on the gerbe $[\mc{T}_{S}/\tilde{t}_{S}]$ such that each $\lambda_{\cP_{i}}$ factors through $Z_{S}$;}
    
    \item{A sequence $\{f_{i}\}_{i=0}^{n-1}$, where $f_{i}$ is an isomorphism of $G_{S}$-torsors
    \begin{equation*}
        \Q_{\lambda_{i+1}/\lambda_{i},S} \cdot \cP_{i}|_{[\mc{T}_{S}^{\circ}/\tilde{t}_{S}]} \xrightarrow{\sim} \cP_{i+1}|_{[\mc{T}_{S}^{\circ}/\tilde{t}_{S}]} .
    \end{equation*}
    }

 \item{Morphisms in $\tn{Hck}_{Z \subset G}^{e,n}(S)$ are defined in an identical way as for $\tn{Hck}_{Z \subset G}^{e,\loc,n}(S)$.}
\end{enumerate}
\end{defi}

The following is immediate:
\begin{lem}
For $Z \subseteq Z' \subseteq Z_{G}$ there are transition morphisms
\begin{equation}\label{eq:Ztrans}
\tn{Hck}_{Z \subset G}^{e,\loc,n} \to \tn{Hck}_{Z' \subset G}^{e,\loc,n}, \hspace{1mm} \tn{Hck}_{Z \subset G}^{e,n} \to \tn{Hck}_{Z' \subset G}^{e,n}.
\end{equation}
\end{lem}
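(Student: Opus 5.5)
The plan is to define the transition morphisms directly on $S$-points by sending a datum $((\cP_{i}),(f_{i}))$ for $Z$ to the same torsors viewed as data for $Z'$, and to check that the twisting torsors $\Q_{\lambda,S}$ for the two subgroups are compatible. Functoriality in $S$ and compatibility with morphisms should then be formal.

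The first step is to check the torsors. If each $\lambda_{\cP_{i}}$ factors through $Z_{S}$, it factors through $Z'_{S}$ via $Z \hookrightarrow Z'$. So the $(n+1)$-tuple $(\cP_{i})$ of Kal-basic (resp. band-localized) $G_{S}$-torsors is a valid datum for $Z'$. Band-localization is a condition on $\lambda_{\cP}$ alone and does not refer to $Z$, so it is unaffected. The quotients $\lambda_{i+1}/\lambda_{i}$ are the same elements whether regarded in $\Hom_{X_{S}}(u_{S},Z_{S})$ or in $\Hom_{X_{S}}(u_{S},Z'_{S})$.

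The key step is to produce, for $\lambda \in \Hom_{X_{S}}(u_{S},Z_{S})$, a canonical isomorphism of $Z'_{S}$-torsors
\begin{equation*}
\Q^{Z}_{\lambda,S} \times^{Z_{S}} Z'_{S} \xrightarrow{\sim} \Q^{Z'}_{\lambda,S},
\end{equation*}
where the superscript records which reduction from Lemma \ref{lem:Zred} is used. By construction the section $s$ for $Z$ is the restriction of the fixed section for $Z_{G}$. Hence $s(\lambda) \in \Hom_{X_{S}}(\tilde{t}_{S},\mathbf{T}_{S})$ is the same element in both cases, and the $\mathbf{T}_{S}$-torsor $\Q'_{\lambda,S}$ of Definition \ref{defi:Qdefs} is literally the same. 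The reductions are defined as fibres over $e$ of the maps
\begin{equation*}
\Q'_{\lambda,S} \to \underline{\mathbf{T}_{S}/Z_{S}} \quad \text{and} \quad \Q'_{\lambda,S} \to \underline{\mathbf{T}_{S}/Z'_{S}}
\end{equation*}
obtained from $\psi$. The second map is the composite of the first with the projection $\mathbf{T}/Z \to \mathbf{T}/Z'$, because $\ov{s(\lambda)}$ for $Z'$ is the composite of $\ov{s(\lambda)}$ for $Z$ with this projection, and pushforward of $\psi$ is functorial. Therefore $\Q^{Z}_{\lambda,S} \subseteq \Q^{Z'}_{\lambda,S}$ is a $Z_{S}$-stable subsheaf, and the induced map from the contracted product is an isomorphism of $Z'$-torsors. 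This isomorphism is canonical and compatible with localization from $X_{S}$ to $\bbD_{S}$.

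With this in hand, each $f_{i}$ transports to an isomorphism
\begin{equation*}
\Q^{Z'}_{\lambda_{i+1}/\lambda_{i},S}\cdot \cP_{i} \cong \Q^{Z}_{\lambda_{i+1}/\lambda_{i},S}\cdot \cP_{i} \xrightarrow{f_{i}} \cP_{i+1},
\end{equation*}
using $\Q \times^{Z} \cP = (\Q \times^{Z} Z') \times^{Z'} \cP$. Morphisms $(h_{i})$ go to themselves: the defining identity $h_{i+1}^{-1} \circ f'_{i} \circ (\Q\cdot h_{i}) = f_{i}$ is preserved because the identification above is natural in $\cP_{i}$. Compatibility with pullback in $S$ holds because $\psi$, $s$ and the reductions are all pulled back from $\Spa(C)$.

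I expect the only real obstacle to be the compatibility of the reductions $\Q_{\lambda,S}$, which rests on the section $s$ being chosen once for $Z_{G}$ rather than independently for each $Z$. Everything else is formal.
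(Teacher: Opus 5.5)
Your proposal is correct, and it is exactly the verification the paper leaves implicit when it calls the lemma ``immediate''. The torsors and morphisms are unchanged. The only point with content is the canonical identification $\mathcal{Q}^{Z}_{\lambda,S}\times^{Z_S} Z'_S \cong \mathcal{Q}^{Z'}_{\lambda,S}$, which holds because $s$ is fixed once on $\Hom_F(u,Z_G)$ and the $Z'$-reduction map factors through the $Z$-reduction map via $\mathbf{T}/Z \to \mathbf{T}/Z'$.
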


See Remark \ref{rque:naivereform} for the relationship between the above definitions and Definition \ref{defi:hcknaive} (their ``naive'' analogues).

\begin{rque}\label{rque:non-gluing-to-div-1}
    A notable feature of these extended Hecke stacks is that, for $S = \Spa(R,R^{+}) \in \Perfd_{C}$, every coordinate in $(\Div^{1})^{n}(S)$ is a relative Cartier divisor for $S$ which factors through the map $\Spa(R^{\sharp},R^{\sharp,+}) \xrightarrow{x} \Spa(C^{\sharp},C^{\sharp,+}) \xrightarrow{y} X_{C}$. 
    In rough terms, our constructions of $\tn{Hck}_{Z \subset G}^{e,\loc,n}$ and $\tn{Hck}_{Z \subset G}^{e,n}$ do not ``globalize to all of $X_{C}$''. 
    
    This failure to globalize can be expected, since, as we will see in Section \ref{sec:extended-spectral}, if this construction did globalize (as in the case of the ``naive'' analogues in Definition \ref{defi:hcknaive}), we would obtain a $\tn{Rep}_{\Lambda}(Q^{I})$-linear functor
    \begin{equation*}
        \tn{Rep}_{\Lambda}((\widehat{G/Z} \rtimes Q)^{I}) \to \End_{\Lambda}(\mc{D}_{\tn{lis}}(\Bun_{G}^{e},\Lambda)^{\omega})^{B\Weil_{F}^{I}},
    \end{equation*}
which is unreasonable. In particular, it would imply that every (Fargues--Scholze) $L$-parameter for $\widehat{G}$ can be lifted to an $L$-parameter for $\widehat{G/Z}$.
\end{rque}

\subsection{Convolution on extended Hecke stacks}\label{sec:Hckconv}
We now define a convolution operation on these extended Hecke stacks. The local and global cases are identical, so we construct this in full detail only in the local case. This will be a morphism
\begin{equation*}
    \tn{Hck}_{Z \subset G}^{e,\loc,2} \xrightarrow{\mu} \tn{Hck}_{Z \subset G}^{e,\loc,1}.
\end{equation*}

Let $((\cP_{0},\cP_{1},\cP_{2}), (f_{0},f_{1})) \in \tn{Hck}_{Z \subset G}^{e,\loc,2}(S)$; the goal is to define an isomorphism of $G_{S}$-torsors on $[\mc{T}_{S}^{\loc,\circ}/\tilde{t}_{S}]$
\begin{equation*}
\Q_{\lambda_{2}/\lambda_{0},S} \cdot \cP_{0} \xrightarrow{g} \cP_{2}.
\end{equation*}

We first form the composition
 \begin{equation*}
 \Q_{\lambda_{2}/\lambda_{1},S}\cdot(\Q_{\lambda_{1}/\lambda_{0},S}  \cdot \cP_{0}) \xrightarrow{\Q_{\lambda_{2}/\lambda_{1},S} \cdot f_{0}} \Q_{\lambda_{2}/\lambda_{1},S} \cdot \cP_{1} \xrightarrow{f_{1}} \cP_{2},
 \end{equation*}
 and observe that 
 \begin{equation*}
 \Q_{\lambda_{2}/\lambda_{1},S}\cdot \Q_{\lambda_{1}/\lambda_{0},S} = (\Q_{\lambda_{2}/\lambda_{0},S} \cdot \Q_{\lambda_{2}/\lambda_{0},S}^{-1}) \cdot  \Q_{\lambda_{2}/\lambda_{1},S}\cdot \Q_{\lambda_{1}/\lambda_{0},S} = \Q_{\lambda_{2}/\lambda_{0},S} \cdot [\Q_{\lambda_{2}/\lambda_{0},S}^{-1} \cdot  \Q_{\lambda_{2}/\lambda_{1},S}\cdot \Q_{\lambda_{1}/\lambda_{0},S}]
 \end{equation*}

  To lighten the notational load, temporarily set $a:=z(\lambda_{2}/\lambda_{1},\lambda_{1}/\lambda_{0})$. We note that the right-hand bracketed factor is, by Lemma \ref{lem:curlyT}, identified with the $Z_{S}$-torsor $a_{*}(\mc{T}_{S}^{\loc,\circ})$ on $[\mc{T}_{S}^{\loc,\circ}/\tilde{t}_{S}]$ (viewed as the pullback of the corresponding torsor on $\bbD_{S}^{\circ}$).

    We thus have an isomorphism of $Z_{S}$-torsors
\begin{equation}\label{eq:Arnaudisom}
    \Q_{\lambda_{2}/\lambda_{1},S} \cdot \Q_{\lambda_{1}/\lambda_{0},S} = \Q_{a,S} \cdot \Q_{\lambda_{2}/\lambda_{0},S} = a_{*}(\mc{T}_{S}^{\loc,\circ}) \cdot \Q_{\lambda_{2}/\lambda_{0},S} \xrightarrow{a_{*}(\psi)} \Q_{\lambda_{2}/\lambda_{0},S},
\end{equation}
which we denote by $a_{*}(\psi)$.

We then define the desired isomorphism $g$ to be the composition (over $[\mc{T}_{S}^{\loc,\circ}/\tilde{t}_{S}]$)
 \begin{equation}\label{eq:gconv}
 \Q_{\lambda_{2}/\lambda_{0},S}  \cdot \cP_{0} \xrightarrow{a_{*}(\psi^{-1})}   \Q_{\lambda_{2}/\lambda_{1},S} \cdot (\Q_{\lambda_{1}/\lambda_{0},S} \cdot \cP_{0}) \xrightarrow{\Q_{\lambda_{2}/\lambda_{1},S} \cdot f_{0}} \Q_{\lambda_{2}/\lambda_{1},S} \cdot \cP_{1} \xrightarrow{f_{1}} \cP_{2},
\end{equation}
where we abusively use $a_{*}(\psi^{-1})$ above to denote the isomorphism obtained from the inverse of \eqref{eq:Arnaudisom} by acting on $\cP_{0}$.

 Everything done above works the same with $\bbD_{S}$ and $\bbD_{S}^{\circ}$ replaced with $X_{S}$ and $X_{S}^{\circ}$, defining a convolution morphism
 \begin{equation*}
    \tn{Hck}_{Z \subset G}^{e,2} \xrightarrow{\mu} \tn{Hck}_{Z \subset G}^{e,1}.
\end{equation*}

It follows that for any $1 \leq i \leq n-1$, there are well-defined maps
\begin{equation*}
\mu_i : \Hck_{Z \subset G}^{e,\loc, n} \to \Hck_{Z \subset G}^{e,\loc, n-1}, \hspace{1mm} \Hck_{Z \subset G}^{e, n} \to \Hck_{Z \subset G}^{e, n-1}
\end{equation*}
that compose $f_{i-1}$ and $f_{i}$ in the sense defined above. In the case $n=2$, we have $\mu_{1} = \mu$.

\begin{prop}
The above convolution map equips $\Hck_{Z \subset G}^{e,\loc,1}$ with the structure of an algebra in correspondences. More precisely, we have a well-defined functor $\Delta^{\tn{op}} \to \tn{Corr}$ that sends $[1]$ to $\Hck_{Z \subset G}^{e,\loc,1}$ and the map $[1] \to [2]$ to the above multiplication.
\end{prop}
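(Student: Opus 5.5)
The plan is to build the simplicial object in correspondences by the standard bar/Segal construction. For each $n$ the stack $\Hck_{Z \subset G}^{e,\loc,n}$ plays the role of $[n]$, the face maps are the maps $\mu_i$ composing $f_{i-1}$ and $f_i$, and the outer faces forget $(\cP_0,f_0)$ or $(\cP_n,f_{n-1})$. Degeneracies insert an identity modification $\cP_i \xrightarrow{\mathrm{id}} \cP_i$, which uses $\Q_{1,S}$ being canonically trivial; this holds because $s$ is normalized. One then checks the Segal condition. The restriction map
\[
\Hck_{Z \subset G}^{e,\loc,n} \to \Hck_{Z \subset G}^{e,\loc,1} \times_{\Bun^{\loc}} \cdots \times_{\Bun^{\loc}} \Hck_{Z \subset G}^{e,\loc,1}
\]
is an isomorphism, with fibre products taken over the stack of band-localized torsors on $[\mc{T}^{\loc}_S/\tilde t_S]$. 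This is immediate from Definition \ref{defi:elochck}, since the data for $n$ legs is exactly a chain of $n$ one-leg twisted modifications. The correspondence attached to $[1]\to[n]$ is then $\Hck^{e,\loc,1} \leftarrow \Hck^{e,\loc,n} \to (\Hck^{e,\loc,1})^{\times_{\Bun^{\loc}} n}$, where the left arrow is the iterated $\mu$.

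The substantive step is functoriality: the simplicial identities among the $\mu_i$, coherently up to higher homotopy. Since all objects are $1$-stacks (groupoid-valued functors), the higher coherences reduce to checking equalities of $1$-morphisms together with a single $2$-cell condition, namely associativity. Concretely, $\mu_1\circ\mu_2=\mu_1\circ\mu_1$ as maps $\Hck^{e,\loc,3}\to\Hck^{e,\loc,1}$. Unwinding \eqref{eq:gconv}, both composites send $(f_0,f_1,f_2)$ to $f_2\circ(\Q\cdot f_1)\circ(\Q\cdot\Q\cdot f_0)$ precomposed with an isomorphism
\[
\Q_{\lambda_3/\lambda_0,S}\cdot\cP_0 \to \Q_{\lambda_3/\lambda_2,S}\cdot\Q_{\lambda_2/\lambda_1,S}\cdot\Q_{\lambda_1/\lambda_0,S}\cdot\cP_0 .
\]
One composite builds this isomorphism from $a_*(\psi^{-1})$ and $b_*(\psi^{-1})$ with $a=z(\lambda_3/\lambda_2,\lambda_2/\lambda_0)$ and $b=z(\lambda_2/\lambda_1,\lambda_1/\lambda_0)$. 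The other uses $z(\lambda_3/\lambda_1,\lambda_1/\lambda_0)$ and $z(\lambda_3/\lambda_2,\lambda_2/\lambda_1)$.

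To compare them I use Lemma \ref{lem:curlyT} and the multiplicativity $(a+b)_*(\mc{T}) \cong a_*(\mc{T})\cdot b_*(\mc{T})$. This is canonical because $c\mapsto c_*(\mc{T}^{\loc,\circ}_S)$ is a symmetric monoidal functor from $\Hom(t,\mathbf T)$, or rather its $Z$-valued part, to $Z$-torsors. Under it, $c_*(\psi)$ is multiplicative in $c$, since $\psi$ is one fixed trivialization of the $t$-torsor and $c_*(\psi)$ is its pushforward. Both composites therefore become $(c)_*(\psi^{-1})$ for the sums
\[
z(\lambda_3/\lambda_2,\lambda_2/\lambda_0)+z(\lambda_2/\lambda_1,\lambda_1/\lambda_0)
\quad\text{and}\quad
z(\lambda_3/\lambda_1,\lambda_1/\lambda_0)+z(\lambda_3/\lambda_2,\lambda_2/\lambda_1).
\]
These two sums agree by the $2$-cocycle identity for $z$. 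One must also check that the identifications $\Q_{a}\cdot\Q_{\mu}=a_*(\mc T)\cdot\Q_{\mu}$ used in \eqref{eq:Arnaudisom} are compatible with this monoidal structure. This follows since all $\Q$'s are reductions, via $\psi$ as in Lemma \ref{lem:Zred}, of pushforwards of the single torsor $\mc{T}^{\loc}_S$ along homomorphisms $\tilde t\to\mathbf T$, and both sides are the fibres over $e$ of the same trivialization.

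Unitality is the normalization $z(0,\lambda)=z(\lambda,0)=0$. Morphisms of Hecke data as in Definition \ref{defi:elochck}(3) are visibly respected, because $\mu$ is defined by postcomposition with fixed isomorphisms. With the Segal condition and these identities in hand, the general machinery (e.g. \cite[Section VI.8]{Geometrization} or the Gaitsgory--Rozenblyum formalism of correspondences) yields the functor $\Delta^{\mathrm{op}}\to\mathrm{Corr}$. The main obstacle I expect is the bookkeeping in the associativity step. One must verify that the torsor-level identifications, and not merely the isomorphism classes, match. This amounts to the canonical multiplicativity of $c\mapsto c_*(\psi)$ and, in equal characteristic, to working with v-torsors for $Z$.
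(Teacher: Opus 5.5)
Your proposal is correct and follows essentially the same route as the paper. In both, unitality comes from the trivial modification (using that $s$ is normalized), and the associativity $\mu\circ\mu_1=\mu\circ\mu_2$ on $\Hck^{e,\loc,3}_{Z\subset G}$ is reduced, after using naturality to move the $f_0$-term past the $\psi$-isomorphisms, to the $2$-cocycle identity for $z$. Your explicit Segal condition, degeneracies and multiplicativity of $c\mapsto c_*(\psi)$ make precise what the paper's two commutative squares leave implicit. One small slip: $z$ takes values in $\Hom_F(t,\mathbf{T})$, so $c_*(\mc{T}^{\loc,\circ}_S)$ is a $\mathbf{T}$-torsor rather than a $Z$-torsor, but this does not affect the argument.
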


\begin{proof}
This functor is defined by sending $[n]$ to $\Hck^{e,\loc,n}_{Z \subset G}$, and a morphism $[m] \to [n]$ to the corresponding map $\Hck_{Z \subset G}^{e,\loc,n} \to \Hck_{Z \subset G}^{e,\loc,m}$ determined by the convolution defined above. 

The fact that this assignment is unital is clear (by taking the trivial modification), and so it suffices to show that the following associativity diagram commutes
\[
\begin{tikzcd}
    \Hck^{e,\loc,3}_{Z \subset G} \arrow["\mu_{2}"]{d} \arrow["\mu_{1}"]{r} & \Hck^{e,\loc,2}_{Z \subset G} \arrow["\mu"]{d} \\
    \Hck^{e,\loc,2}_{Z \subset G} \arrow["\mu"]{r} & \Hck^{e,\loc,1}_{Z \subset G}.
\end{tikzcd}
\]

For notational brevity, we set $z(\lambda_{i_{1}}/\lambda_{i_{2}},\lambda_{i_{2}}/\lambda_{i_{3}})_{*}(\psi^{-1}) = \psi^{-1}_{i_{1},i_{2},i_{3}}$. First, we observe that the square
\[
\begin{tikzcd}
\Q_{\lambda_{3}/\lambda_{0},S} \cdot \cP_{0} \arrow["\psi^{-1}_{3,2,0}"]{r} \arrow["\psi^{-1}_{3,1,0}"]{d} & \Q_{\lambda_{3}/\lambda_{2},S} \cdot \Q_{\lambda_{2}/\lambda_{0},S} \cdot \cP_{0} \arrow["\Q_{\lambda_{3}/\lambda_{2},S} \cdot \psi^{-1}_{2,1,0}"]{d} \\
\Q_{\lambda_{3}/\lambda_{1},S} \cdot \Q_{\lambda_{1}/\lambda_{0},S} \cdot \cP_{0} \arrow["\psi^{-1}_{3,2,1}"]{r} & \Q_{\lambda_{3}/\lambda_{2},S} \cdot \Q_{\lambda_{2}/\lambda_{1},S} \cdot \Q_{\lambda_{1}/\lambda_{0},S} \cdot \cP_{0}
\end{tikzcd}
\]
commutes, by the two-cocycle condition.

The following square also commutes for elementary reasons:
\[
\begin{tikzcd}[column sep=7em]
    \Q_{\lambda_{3}/\lambda_{1},S} \cdot \Q_{\lambda_{1}/\lambda_{0},S} \cdot \cP_{0} \arrow["\Q_{\lambda_{3}/\lambda_{1},S} \cdot f_{0}"]{r} \arrow["\psi^{-1}_{3,2,1}"]{d} & \Q_{\lambda_{3}/\lambda_{1},S} \cdot \cP_{1} \arrow["\psi^{-1}_{3,2,1}"]{d}\\
    \Q_{\lambda_{3}/\lambda_{2},S} \cdot \Q_{\lambda_{2}/\lambda_{1},S} \cdot \Q_{\lambda_{1}/\lambda_{0},S} \cdot \cP_{0} \arrow["\Q_{\lambda_{3}/\lambda_{2},S} \cdot \Q_{\lambda_{2}/\lambda_{1},S} \cdot f_{0}"]{r} & \Q_{\lambda_{3}/\lambda_{2},S} \cdot \Q_{\lambda_{2}/\lambda_{1},S} \cdot \cP_{1}.
\end{tikzcd}
\]

Starting with $(\cP_{0}, \cP_{1}, \cP_{2}, \cP_{3}, (f_{0},f_{1},f_{2}))$, going right and then going down yields the modification isomorphism
\begin{equation}\label{eq:assoc1}
     f_{2} \circ \Q_{\lambda_{3}/\lambda_{2},S} \cdot [f_{1} \circ (\Q_{\lambda_{2}/\lambda_{1},S} \cdot f_{0}) \circ z(\lambda_{2}/\lambda_{1}, \lambda_{1}/\lambda_{0})_{*}(\psi^{-1})] \circ z(\lambda_{3}/\lambda_{2},\lambda_{2}/\lambda_{0})_{*}(\psi^{-1})
\end{equation}
from $\cP_{0}$ to $\cP_{3}$ (on $[\mathcal{T}_{S}^{\loc,\circ}/\tilde{t}_{S}]$).

Similarly, going the other direction in the diagram yields the modification
\begin{equation}\label{eq:assoc2}
[f_{2} \circ (\Q_{\lambda_{3}/\lambda_{2},S} \cdot f_{1}) \circ z(\lambda_{3}/\lambda_{2},\lambda_{2}/\lambda_{1})_{*}(\psi^{-1})] \circ (\Q_{\lambda_{3}/\lambda_{1},S} \cdot f_{0}) \circ z(\lambda_{3}/\lambda_{1},\lambda_{1}/\lambda_{0})_{*}(\psi^{-1}).
\end{equation}

Noting that the last two maps in both of the above modifications are equal, we are reduced to proving that the corresponding first parts of the compositions are equal (remove the ``$f_{2} \circ \Q_{\lambda_{3}/\lambda_{2},S} \cdot f_{1}$'' terms from the end of both compositions). 

We then deduce the result from the fact that the first part of the modification \eqref{eq:assoc1} is obtained from going right and then down in the first diagram, followed by taking the bottom map in the second diagram, and the first part of the modification \eqref{eq:assoc2} is obtained from going down and then right in the first diagram, followed by taking the long path (up and around) between the bottom two terms in the second diagram. 
\end{proof}

We conclude this subsection by recording some standard compatibilities. 

\begin{prop}\label{prop:hckloc}
    There is a ``global-to-local'' map
    \begin{equation*}
        \tn{Hck}_{Z \subset G}^{e,n} \to \tn{Hck}_{Z \subset G}^{e,\loc,n}
    \end{equation*}
    which for any $i$ makes the following diagram strictly commutative
    \begin{equation}\label{eq:CartConv}
    \begin{tikzcd}
        \tn{Hck}_{Z \subset G}^{e,n} \arrow["\mu_{i}"]{r} \arrow{d} & \tn{Hck}_{Z \subset G}^{e,n-1} \arrow{d} \\
        \tn{Hck}_{Z \subset G}^{e,\loc,n}  \arrow["\mu_{i}"]{r}  & \tn{Hck}_{Z \subset G}^{
    e,\loc,n-1}.
    \end{tikzcd}
\end{equation}
\end{prop}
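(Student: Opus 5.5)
The plan is to define the global-to-local map by restriction along the formal neighbourhood of the fixed divisor, and then to check strict commutativity of \eqref{eq:CartConv} by showing that every ingredient of the convolution \eqref{eq:gconv} is compatible with this restriction.

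\textbf{The map.} Given $S \in \Perfd_{C}$ and $((\cP_{i}),(f_{i})) \in \Hck_{Z \subset G}^{e,n}(S)$, pull everything back along the morphism of gerbes $[\mc{T}_{S}^{\loc}/\tilde{t}_{S}] \to [\mc{T}_{S}/\tilde{t}_{S}]$. This morphism is induced by $\bbD_{S} \to X_{S}$ (completion along $D_{y\circ x}$), and it restricts to the punctured versions $[\mc{T}_{S}^{\loc,\circ}/\tilde{t}_{S}] \to [\mc{T}_{S}^{\circ}/\tilde{t}_{S}]$.

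\begin{itemize}
\item \emph{Torsors.} Each $\cP_{i}$ is Kal-basic with $\lambda_{\cP_{i}}$ factoring through $Z_{S}$. Its pullback is Kal-basic, with inertial homomorphism the restriction of $\lambda_{\cP_{i}}$. That homomorphism is by construction the restriction of a global one, so the pullback is band-localized in the sense of Definition \ref{defi:bandloc}, still factoring through $Z$.
\item \emph{Twisting torsors.} The remark after Lemma \ref{lem:Zred} says the localization of the global $\Q_{\lambda,S}$ is canonically the local one. Both are built from the trivial $\mathbf{T}_{S}$-torsor with $\tilde{t}_{S}$-action via $s(\lambda)$, reduced to $Z_{S}$ using the trivialization $\psi$. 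Since $\psi$ is defined on $X_{C}^{\circ}$ and the local version uses its pullback to $\bbD_{S}^{\circ}$, this identification is canonical.
\item \emph{Modifications.} Through these identifications, the restrictions of the $f_{i}$ become isomorphisms $\Q_{\lambda_{i+1}/\lambda_{i},S}\cdot \cP_{i} \to \cP_{i+1}$ over $[\mc{T}_{S}^{\loc,\circ}/\tilde{t}_{S}]$.
\item \emph{Morphisms.} A morphism $(h_{i})$ restricts to a morphism. Everything is functorial in $S$, since pullback commutes with base change.
\end{itemize}

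\textbf{Commutativity.} I will compare the two composites on objects and on morphisms. Both $\mu_{i}$ keep the torsors and the other $f_{j}$, and replace $(f_{i-1},f_{i})$ with the composite \eqref{eq:gconv}. That composite uses only three operations:
\begin{enumerate}
\item contracted product with the $Z_{S}$-torsors $\Q_{\lambda,S}$;
\item the identification $\Q^{-1}_{\lambda_{i+1}/\lambda_{i-1}}\cdot \Q_{\lambda_{i+1}/\lambda_{i}}\cdot\Q_{\lambda_{i}/\lambda_{i-1}} \cong a_{*}(\mc{T}^{\circ})$ coming from Lemma \ref{lem:curlyT};
\item the trivialization $a_{*}(\psi)$ of \eqref{eq:Arnaudisom}.
\end{enumerate}
Pullback is a tensor functor, so it commutes with (1). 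Lemma \ref{lem:curlyT} is stated compatibly for $\bbD_{S}$ and $X_{S}$, and the local identification is the pullback of the global one, because $\mc{T}_{S}^{\loc}$ is by definition the pullback of $\mc{T}_{S}$ and the isomorphism $[\mc{T}/t]\cong$ base is compatible with base change; this handles (2). For (3), the local $\psi$ is literally the pullback of the global one, so localizing the global $a_{*}(\psi)$ gives the local one. Hence the two composites agree on the nose, which is why the diagram is strictly commutative rather than commutative up to 2-morphism.

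\textbf{Main obstacle.} The delicate point is \emph{strictness}: the canonical isomorphisms between localized global $\Q_{\lambda,S}$ and local $\Q_{\lambda,S}$ must be chosen once and shown compatible with the identifications of Lemma \ref{lem:curlyT} and with the cocycle $z$. I would handle this by \emph{defining} the local objects $\Q'_{\lambda,S}$, $\Q_{\lambda,S}$ and $a_{*}(\mc{T}^{\loc,\circ}_{S})$ as pullbacks of the global ones, which is legitimate by the remark after Lemma \ref{lem:Zred}. All compatibilities then reduce to the pseudofunctoriality of pullback of torsors along a single morphism, and the square commutes strictly.
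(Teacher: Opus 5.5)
Your proposal is correct and takes the same approach as the paper. The paper's entire proof is the observation that the local trivializations $\iota_{\underline{x}}^{*}\psi$ are pulled back from the fixed global trivialization $\psi$, so restriction commutes with every step of the convolution; you spell out what that leaves implicit, namely the definition of the restriction map, the band-localized condition, the identification of local and global $\mathcal{Q}_{\lambda,S}$, and the compatibility of each ingredient of \eqref{eq:gconv} with pullback.
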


\begin{proof}
   This is a straightforward consequence of the fact that our local trivializations $\iota_{\underline{x}}^{*}\psi$ are pulled back from the fixed global trivialization $\psi$.
\end{proof}

\begin{rque}
    We will show in Corollary \ref{corol:CartConv} that the diagram \eqref{eq:CartConv} is Cartesian.
\end{rque}

\begin{lem}\label{lem:changeinZconv}
   For $Z \subseteq Z' \subseteq Z_{G}$, convolution maps on the local and global extended Hecke stacks are compatible with the transition morphisms \eqref{eq:Ztrans} in the obvious sense.
\end{lem}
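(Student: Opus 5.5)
The plan is to write down explicitly what ``compatible in the obvious sense'' means and then check it on $S$-points. For $Z \subseteq Z'$ the transition morphism \eqref{eq:Ztrans} sends $((\cP_{i}),(f_{i}))$ to the same tuple of torsors, now regarded as having $\lambda_{\cP_{i}}$ valued in $Z'_{S}$. Each $f_{i}$ is transported along an identification of the $Z'_{S}$-torsor $\Q^{Z'}_{\lambda_{i+1}/\lambda_{i},S}$ with the pushforward $Z'_{S} \times^{Z_{S}} \Q^{Z}_{\lambda_{i+1}/\lambda_{i},S}$. The claim to prove is that the squares formed by the transition maps and the convolution maps $\mu_{i}$ on $n$- and $(n-1)$-leg stacks commute, locally and globally. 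By Proposition \ref{prop:hckloc}, the local and global cases are handled identically, so I will treat only the local case.

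The first step is to fix compatible choices of sections. We fixed $s$ on $\Hom_{F}(u,Z_{G})$ once and for all, and the section on $\Hom_{F}(u,Z)$ is its restriction. So for $\lambda$ valued in $Z$, the homomorphism $s(\lambda) \in \Hom(\tilde{t},\mathbf{T})$ is the same whether $\lambda$ is viewed in $Z$ or in $Z'$. Hence $\Q'_{\lambda,S}$ (Definition \ref{defi:Qdefs}) is literally the same $\mathbf{T}_{S}$-torsor in both settings. Moreover, the 2-cocycles $z$ for $Z$ and $Z'$ agree on pairs coming from $\Hom_{F}(u,Z)$.

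Second, I will compare the reductions from Lemma \ref{lem:Zred}. The $Z$-reduction is the fiber over $e$ of $\Q' \to \Q'/Z \cong \ov{\mathbf{T}}$, and the $Z'$-reduction is the fiber over $e$ of $\Q' \to \Q'/Z'$. Since the trivialization $\psi$ is the same, the map $\mathbf{T}/Z \to \mathbf{T}/Z'$ intertwines $\ov{s(\lambda)}_{*}(\psi)$ for the two quotients. This gives a canonical inclusion $\Q^{Z}_{\lambda,S} \subset \Q^{Z'}_{\lambda,S}$ inside $\Q'_{\lambda,S}$, and hence the identification $Z' \times^{Z} \Q^{Z}_{\lambda,S} \cong \Q^{Z'}_{\lambda,S}$ that defines the transition map.

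Third, I will check that the convolution isomorphism \eqref{eq:gconv} is carried to its $Z'$-counterpart. The map $f_{1}\circ(\Q\cdot f_{0})$ is clearly compatible. The remaining point, and the main thing to verify, is that $a_{*}(\psi)$ in \eqref{eq:Arnaudisom} is compatible. Both versions are induced from the same $a \in \Hom(t,\mathbf{T})$ via Lemma \ref{lem:curlyT} and the same $\psi$. Since the reductions are nested as in the second step, pushing the $Z$-version forward to $Z'$ gives the $Z'$-version. Functoriality of Lemma \ref{lem:curlyT} in the target torus, applied to $\mathbf{T} \to \mathbf{T}$ and then to the quotients by $Z$ and $Z'$, handles this. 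Morphisms in the Hecke groupoids are visibly preserved, so the squares commute strictly.
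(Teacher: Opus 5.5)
Your argument is correct, and it is exactly the unpacking of what the paper dismisses with ``This is clear.'' The section $s$ on $\Hom_{F}(u,Z)$ is restricted from $\Hom_{F}(u,Z_{G})$, so the torsors $\Q'_{\lambda,S}$ and the cocycle $z$ do not depend on $Z$; the $Z$-reduction of Lemma \ref{lem:Zred} sits inside the $Z'$-reduction, so \eqref{eq:Arnaudisom} for $Z$ pushes forward to its $Z'$-version. One small quibble: Proposition \ref{prop:hckloc} does not by itself carry the local statement over to the global one, but the global case follows from the same computation verbatim, since $\psi$ is already defined on $X_{C}^{\circ}$.
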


\begin{proof}
This is clear.
\end{proof}

\begin{defi}
    We set $\tn{Hck}_{G}^{e,\loc,n} = \varinjlim_{Z} \tn{Hck}_{Z \subset G}^{e,\loc,n}$ and $\tn{Hck}_{G}^{e,n} = \varinjlim_{Z} \tn{Hck}_{Z \subset G}^{e,n}$. 
\end{defi}

By the above two results, we have a localization map $\tn{Hck}_{G}^{e,n} \to \tn{Hck}_{G}^{e,\loc,n} $ and convolution maps
    \begin{equation*}
 \tn{Hck}_{G}^{e,\loc,n} \to  \tn{Hck}_{G}^{e,\loc,n-1},  \hspace{1mm} \tn{Hck}_{G}^{e,n} \to  \tn{Hck}_{G}^{e,n-1}
    \end{equation*}
    which are compatible with localization.

\subsection{Quotient maps}\label{sec:quotmap}
As we will see shortly, a crucial property of the extended Hecke stacks defined above is that they admit maps 
\begin{equation*}
\tn{Hck}^{e,\loc,n}_{Z \subset G} \xrightarrow{\pi} \tn{Hck}^{\loc,n}_{G/Z}, \hspace{1mm} \tn{Hck}^{e,n}_{Z \subset G} \xrightarrow{\pi} \tn{Hck}^{n}_{G/Z}.
\end{equation*}

We explain the construction of the local version of $\pi$; the construction of its global analogue is essentially the same. Recall that for $S \in \Perfd$ we use $\ov{G}_{S}$ to denote $G_{S}/Z_{S}$.

The following general result about descending torsors on gerbes will be used repeatedly:

\begin{lem}\label{lem:descendtors}
    Let $\mathfrak{X} \xrightarrow{f} X$ be a gerbe banded by an abelian group $A$. Let $H$ be a sheaf of groups on $X$, and suppose that $\mc{U}$ is an $f^{*}H$-torsor on $\mathfrak{X}$ such that the inertial action \eqref{eq:bandmap} is trivial. Then $f_{*}\mc{U}$ is an $H$-torsor on $X$ and the adjunction map $f^{*}f_{*}\mc{U} \to \mc{U}$ is an isomorphism of $f^{*}H$-torsors on $\mathfrak{X}$.
\end{lem}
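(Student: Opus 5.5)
The argument is local on $X$, so the plan is to reduce to the case where the gerbe is trivial, and then use the fact that a torsor with trivial inertia on $BA$ is just a torsor on the base. Because $f$ is a gerbe, it admits sections locally on $X$. Both claims in the lemma are local on $X$: being an $H$-torsor is a local condition, and an isomorphism of sheaves can be checked locally. Formation of $f_*$ commutes with restriction to opens, and, more generally, with base change along $X' \to X$ in the site. So I will pass to a cover $\{X_j \to X\}$ on which $\mathfrak{X}_j := \mathfrak{X}\times_X X_j$ has a section. On such an $X_j$ we get an identification $\mathfrak{X}_j \simeq X_j \times BA$; since $A$ is abelian, this identification is compatible with the band.

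Next I identify torsors on $X_j\times BA$. By Lemma \ref{lem:equivtors}, an $f^*H$-torsor on $X_j \times BA = [X_j/A]$ (trivial $A$-action) is the same as an $H$-torsor $\mc{V}$ on $X_j$ together with an $A$-action on $\mc{V}$ by $H$-torsor automorphisms. That is, it is a homomorphism $A \to \underline{\tn{Aut}}_H(\mc{V})$, and I expect this to be exactly the inertial action \eqref{eq:bandmap}, before quotienting by inner automorphisms. Here one must be a bit careful: \eqref{eq:bandmap} is only well defined modulo $H_{\tn{ad}}$. In the situation of the lemma, I read ``inertial action trivial'' as saying that the band acts trivially on $\mc{U}$ itself. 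So the equivariant structure is the trivial one, and $\mc{U}|_{\mathfrak{X}_j} \cong f^*\mc{V}$.

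With that in hand, I compute $f_*f^*\mc{V} = \mc{V}$. Sections of $f^*\mc{V}$ over $BA\times X'$ are $A$-invariant sections of $\mc{V}$ over $X'$. Since the $A$-action is trivial, these are all sections of $\mc{V}$ over $X'$. Equivalently, $f_*\mc{O}$-type statements of the form $f_*f^* = \tn{id}$ hold for a gerbe, because $f$ is locally $X'\times BA\to X'$ and global sections on $BA$ are invariants. Therefore $f_*\mc{U}|_{X_j}\cong \mc{V}$ is an $H$-torsor, and the counit $f^*f_*\mc{U}\to\mc{U}$ is locally the identity $f^*\mc{V}\to f^*\mc{V}$, hence an isomorphism. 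Since the counit map is defined globally, checking that it is an isomorphism locally is enough.

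The main obstacle is the identification of the inertial action with the $A$-equivariant structure after trivializing the gerbe. In particular, I need to check that the choice of section, which is ambiguous up to $A$-torsors on $X_j$, does not affect triviality. Changing the section twists the equivariant structure by an $A$-torsor, and because $A$ is central and abelian this does not alter the homomorphism $A\to\underline{\tn{Aut}}(\mc{U})$. I would cite \cite[Lemma 2.27]{Dillery23} or \cite[Remark 2.6]{Shin21} for this compatibility, and handle the rest as above.
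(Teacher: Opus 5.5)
Your proposal is correct and follows essentially the same route as the paper: work locally on $X$ so that the gerbe becomes $[X/A]$, use Lemma \ref{lem:equivtors} to view the torsor as an $A$-equivariant $H$-torsor, and observe that $f_*$ takes $A$-invariants, which is everything when the inertial action is trivial. The only difference is in how the torsor property is checked. The paper reduces it to the map $f_*\mathcal{U}\times_X H \to f_*\mathcal{U}\times_X f_*\mathcal{U}$ being an isomorphism, using $f_*f^*H\cong H$. You instead exhibit the local identification $f_*\mathcal{U}|_{X_j}\cong\mathcal{V}$ directly, which also gives local non-emptiness and the counit statement that the paper leaves implicit.
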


\begin{proof}
    First, observe that we have an identification $f_{*}f^{*}H \xrightarrow{\sim} H$, and so there is an action morphism $f_{*}\mc{U} \times_{X} H \to f_{*}\mc{U}$. It suffices to show that the map $f_{*}\mc{U} \times_{X} H \to f_{*}\mc{U} \times_{X} f_{*}\mc{U}$ is an isomorphism. This may be checked after passing to a cover which trivializes $\mathfrak{X}$, so we can assume that $\mathfrak{X} = [X/A]$.

    For the map $[X/A] \to X$, if we use Lemma \ref{lem:equivtors} to describe $H$-torsors on $[X/A]$, then pushforward sends an $A$-equivariant $H$-torsor $V$ on $[X/A]$ to $V^{A}$. When the $A$-action is trivial, we get the claimed result.
\end{proof}

We will now explain how to map a point $(\cP_{i},(f_{i})) \in \tn{Hck}^{e,\loc,n}_{Z \subset G}(S)$ for $S \in \Perfd_{C}$ to a point of $\Hck_{G/Z}^{\loc,n}(S)$. We define $\pi$ on tuples of torsors by sending $(\cP_{0}, \dots, \cP_{n})$ to $(\ov{\cP_{0}}, \dots, \ov{\cP_{n}})$ (recall also that $\ov{\cP_{i}} := \cP_{i} \times^{G_{S}} \ov{G}_{S}$), which descends canonically (by pushing forward along $[\mc{T}_{S}^{\loc}/\tilde{t}_{S}] \to \bbD_{S}$, using Lemma \ref{lem:descendtors}) to a $\ov{G}_{S}$-torsor on $\bbD_{S}$, also denoted by $\ov{\cP_{i}}$.

Observe that there is a canonical isomorphism 
\begin{equation}\label{eq:varphi}
   \varphi_{\lambda_{i+1}/\lambda_{i},\cP_{i}} \colon \ov{\Q_{\lambda_{i+1}/\lambda_{i},S} \cdot \cP_{i}} \xrightarrow{\sim} \ov{\cP_{i}}
\end{equation}
because, by construction, $\Q_{\lambda_{i+1}/\lambda_{i},S}$ is a $Z_{S}$-torsor.

To define the modifications $(g_{i})$ using the modifications $(f_{i})$ from $\tn{Hck}^{e,\loc,n}_{Z \subset G}(S)$, we take the composition 
\begin{equation*}
g_{i}:= \ov{\cP_{i}} \xrightarrow{\varphi_{\lambda_{i+1}/\lambda_{i},\cP_{i}}^{-1}} \ov{\Q_{\lambda_{i+1}/\lambda_{i},S} \cdot \cP_{i}}  \xrightarrow{\bar{f}_{i}} \ov{\cP_{i+1}},
\end{equation*}
where $\bar{f}_{i} := (-\times^{G_{S}} \ov{G}_{S})(f_{i})$.

Since all of the above constructions are evidently functorial in $S \in \Perfd_{C}$ and globalize, we obtain:
\begin{prop}\label{prop:Hckquot}
    The assignment
    \begin{equation*}
        (\cP_{i},(f_{i})) \mapsto (\ov{\cP_{i}}, (g_{i}))
    \end{equation*}
    defined above gives morphisms of functors over $\Perfd_{C}$
    \begin{equation*}
        \Hck_{Z \subset G}^{e,\loc,n} \xrightarrow{\pi} \Hck_{G/Z}^{\loc,n}, \hspace{1mm} \Hck_{Z \subset G}^{e,n} \xrightarrow{\pi} \Hck_{G/Z}^{n}
    \end{equation*}
    which are compatible with the transition maps \eqref{eq:Ztrans} and the global-to-local maps from Proposition \ref{prop:hckloc}.
\end{prop}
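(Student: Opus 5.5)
To prove Proposition \ref{prop:Hckquot}, I would check three things in turn: that the assignment gives a well-defined object, that it is functorial on morphisms and in $S$, and that it is compatible with the transition maps and localization.

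\textbf{Objects.} Fix $S \in \Perfd_{C}$ and a point $(\cP_{i},(f_{i}))$ of $\Hck^{e,\loc,n}_{Z \subset G}(S)$. Each $\cP_{i}$ is band-localized and $\lambda_{\cP_{i}}$ factors through $Z_{S}$. Hence the inertial action \eqref{eq:bandmap} on $\ov{\cP_{i}} = \cP_{i} \times^{G_{S}} \ov{G}_{S}$ factors through $Z_{S} \to \ov{G}_{S}$, which is trivial. Lemma \ref{lem:descendtors}, applied to the $u$-gerbe $[\mc{T}_{S}^{\loc}/\tilde{t}_{S}] \to \bbD_{S}$ with $H = \ov{G}_{S}$, then shows that $\ov{\cP_{i}}$ descends canonically to a $\ov{G}_{S}$-torsor on $\bbD_{S}$, with pullback recovering the original.

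For the modifications, $\Q_{\lambda_{i+1}/\lambda_{i},S}$ is a $Z_{S}$-torsor by Lemma \ref{lem:Zred}. Pushing $\Q \cdot \cP_{i}$ out along $G_{S} \to \ov{G}_{S}$ kills the $Z_{S}$-twist, which gives the canonical isomorphism $\varphi$ of \eqref{eq:varphi}. The isomorphism $g_{i} = \bar f_{i} \circ \varphi^{-1}$ lives on $[\mc{T}_{S}^{\loc,\circ}/\tilde{t}_{S}]$, and both source and target have trivial inertia. By full faithfulness of pullback along the gerbe (again Lemma \ref{lem:descendtors}, via the adjunction isomorphism), $g_{i}$ descends uniquely to an isomorphism of $\ov{G}_{S}$-torsors over $\bbD_{S}^{\circ}$. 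This yields a point of $\Hck^{\loc,n}_{G/Z}(S)$.

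\textbf{Morphisms and functoriality.} Let $(h_{i})$ be a morphism of such points, satisfying $h_{i+1}^{-1}\circ f'_{i}\circ(\Q\cdot h_{i}) = f_{i}$. The pushouts $\bar h_{i}$ descend, and naturality of $\varphi$ in $\cP_{i}$ gives $\varphi \circ \ov{\Q\cdot h_{i}} = \bar h_{i}\circ\varphi$. Hence $\bar h_{i+1}^{-1}\circ g'_{i}\circ \bar h_{i} = g_{i}$, so $(\bar h_{i})$ is a morphism in $\Hck^{\loc,n}_{G/Z}(S)$. Compatibility with composition and identities is clear. Compatibility with base change in $S$ holds because pushforward along gerbes, pushout of torsors, $\Q_{\lambda,S}$ and $\varphi$ all commute with pullback: $\Q_{\lambda,S}$ is built from the fixed $\psi$ and $s$, so $\Q_{\lambda,S'}$ is the pullback of $\Q_{\lambda,S}$. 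This gives a morphism of functors on $\Perfd_{C}$. The global case is verbatim, with $X_{S}$, $X_{S}^{\circ}$ and $\mc{T}_{S}$ in place of the local objects.

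\textbf{Compatibilities.} For $Z \subseteq Z'$, the transition map \eqref{eq:Ztrans} leaves the data unchanged, and the $Z$-torsor $\Q_{\lambda,S}$ pushes out to the corresponding $Z'$-torsor. The composite with $G/Z \to G/Z'$ then matches, since $\varphi$ for $Z'$ is the pushout of $\varphi$ for $Z$. For global-to-local compatibility, note that by the remark after Lemma \ref{lem:Zred} the localization of the global $\Q_{\lambda,S}$ is canonically the local one, and descent commutes with restriction to $\bbD_{S}$.

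\textbf{Main obstacle.} The only subtle point is the descent of the $g_{i}$ and the compatibility of $\varphi$ with the $Z_{S}$-twist on the gerbe. This requires checking that the canonical trivialization of $\ov{\Q}$ is $\tilde{t}_{S}$-equivariant, i.e.\ that it descends, which I would verify using the explicit construction of $\Q_{\lambda,S}$ in Lemma \ref{lem:Zred} as the fiber over $e$ of $\ov{s(\lambda)}_{*}(\psi)$.
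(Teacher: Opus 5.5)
Your proposal is correct and follows the paper's own route. Like you, the paper descends $\ov{\cP_{i}}$ via Lemma \ref{lem:descendtors} and sets $g_{i} = \bar f_{i}\circ\varphi^{-1}$ using the canonical isomorphism \eqref{eq:varphi}; it then simply asserts functoriality in $S$ and globalization, which you spell out in more detail.

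The ``main obstacle'' you flag is not actually one. First, $\varphi$ is the canonical map killing the $Z_{S}$-twist and does not involve $\psi$ at all. Second, the $\tilde{t}_{S}$-compatibility of $\Q_{\lambda,S}$ is already the content of Lemma \ref{lem:Zred}. Third, the descent of $g_{i}$ follows from the full faithfulness of pullback on trivial-inertia torsors, which you already invoked.
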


\begin{rque}
    \begin{enumerate}
        \item We will not use the global map $\pi$, but we include it above for completeness.
        \item It is clear that the above map factors canonically as a composition
        \begin{equation*}
             \Hck_{Z \subset G}^{e,\loc,n} \to \Hck_{G/Z,\Spd(C)}^{\loc,n} := \Hck_{G/Z}^{\loc,n} \times_{\Div^{1}} \Spd(C) \to \Hck_{G/Z}^{\loc,n}, 
        \end{equation*}
        and by abuse of notation we will sometimes also use $\pi$ to denote the first map in the composition (this is harmless).
    \end{enumerate}
\end{rque}

The next result is essentially a formal consequence of our construction, but we write out the proof for completeness (the reader is advised to skip it on the first reading).

\begin{prop}\label{prop:Hckconv}
    The following diagram is strictly commutative for any $n$ and $1 \leq i \leq n-1$,
    \[
    \begin{tikzcd}
    \Hck^{e,\loc,n}_{Z \subset G} \arrow["\mu_{i}"]{r} \arrow["\pi"]{d} & \Hck^{e,\loc,n-1}_{Z \subset G}  \arrow["\pi"]{d} \\
    \Hck^{\loc,n}_{G/Z} \arrow["\mu_{i}"]{r} & \Hck^{\loc,n-1}_{G/Z}.
    \end{tikzcd}
    \]
     The same statement holds for the global analogues.
\end{prop}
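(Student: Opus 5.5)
The plan is to reduce to the case $n=2$, $i=1$: the maps $\mu_{i}$ only touch the two consecutive modifications $f_{i-1},f_{i}$, and $\pi$ acts coordinatewise on the torsors and modifications. So take $((\cP_{0},\cP_{1},\cP_{2}),(f_{0},f_{1})) \in \Hck^{e,\loc,2}_{Z \subset G}(S)$. Going right then down gives the $\ov{G}_{S}$-torsors $\ov{\cP_{0}},\ov{\cP_{2}}$ with modification $\bar{g} \circ \varphi^{-1}_{\lambda_{2}/\lambda_{0},\cP_{0}}$, where $g$ is the composite \eqref{eq:gconv}. Going down then right gives the same torsors with modification $g_{1}\circ g_{0} = \bar{f}_{1}\circ \varphi^{-1}_{\lambda_{2}/\lambda_{1},\cP_{1}} \circ \bar{f}_{0} \circ \varphi^{-1}_{\lambda_{1}/\lambda_{0},\cP_{0}}$. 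Both are isomorphisms over $\bbD_{S}^{\circ}$, after descending along $[\mc{T}^{\loc,\circ}_{S}/\tilde{t}_{S}] \to \bbD^{\circ}_{S}$ via Lemma \ref{lem:descendtors}. Since this descent is an equivalence on torsors with trivial inertial action, it suffices to compare them upstairs on the gerbe. There both sides are given by the same natural formulas, so no choices intervene.

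Next I apply the functor $-\times^{G_{S}}\ov{G}_{S}$ to \eqref{eq:gconv}, so that $\bar{g} = \bar{f}_{1} \circ \ov{(\Q_{\lambda_{2}/\lambda_{1},S}\cdot f_{0})} \circ \ov{a_{*}(\psi^{-1})}$. The first key step is naturality of $\varphi$. For any $Z_{S}$-torsor $\Q$ and any isomorphism $h\colon \mc{U}\to\mc{U}'$ of $G_{S}$-torsors, we have $\varphi_{\mc{U}'}\circ \ov{\Q\cdot h} = \bar{h}\circ\varphi_{\mc{U}}$. This is because $\varphi$ is simply the canonical identification $\ov{\Q\times^{Z}\mc{U}} = \ov{\mc{U}}$, which comes from $Z$ acting trivially on $\ov{G}_{S}$. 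Applying it with $h=f_{0}$ moves $\bar{f}_{0}$ past $\varphi^{-1}_{\lambda_{2}/\lambda_{1},\cdot}$. The comparison then reduces to an identity purely among the canonical maps $\varphi$ and $\ov{a_{*}(\psi^{-1})}$ on $\ov{\cP_{0}}$.

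The second key step, which I expect to be the only real point, is to show that $\ov{a_{*}(\psi)}$ becomes the identity under these identifications. More precisely, the composite of $\varphi$ for $\Q_{\lambda_{2}/\lambda_{1},S}\cdot\Q_{\lambda_{1}/\lambda_{0},S}$ should equal $\varphi_{\lambda_{2}/\lambda_{0},\cP_{0}} \circ \ov{a_{*}(\psi)}$. The reason is that \eqref{eq:Arnaudisom} is an isomorphism of $Z_{S}$-torsors, namely a trivialization of the $Z_{S}$-torsor $a_{*}(\mc{T}^{\loc,\circ}_{S})$ obtained from $\psi$. For any $Z_{S}$-torsors $\Q,\Q'$ and any isomorphism $\alpha\colon \Q\to\Q'$, pushing $\alpha\cdot\mathrm{id}_{\cP_{0}}$ to $\ov{G}_{S}$ is compatible with the two canonical maps $\varphi$, since $Z$ maps to the identity in $\ov{G}_{S}$. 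I would also check that $\varphi$ is multiplicative in the torsor, so that $\varphi_{\Q\Q'}$ is the composite of $\varphi_{\Q}$ and $\varphi_{\Q'}$. Both claims reduce, locally where the torsors are trivialized, to the statement that $Z_{S}\to\ov{G}_{S}$ is trivial. Finally, strictness holds because every identification used is canonical, so the two functors agree on objects and on morphisms. The global case is word-for-word the same, with $\bbD_{S}$ replaced by $X_{S}$.
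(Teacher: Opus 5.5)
Your proposal is correct and follows essentially the same route as the paper. Both reduce to $n=2$, use naturality of the canonical isomorphisms $\varphi$ to move $\bar f_0$ past, and then finish by observing that $\ov{z(\lambda_2/\lambda_1,\lambda_1/\lambda_0)_*(\psi^{-1})}$ is itself a $\varphi$-type identification, because \eqref{eq:Arnaudisom} is an isomorphism of $Z_S$-torsors; multiplicativity of $\varphi$ then closes the argument. Your explicit justifications (descent along the gerbe, triviality of $Z_S \to \ov{G}_S$, and agreement on morphisms) simply spell out what the paper treats as ``by definition.''
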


\begin{proof}
    We prove the local case when $n=2$---the general case follows immediately from this one. 

   For $S \in \Perfd_{C}$, applying the convolution first on a pair  $(\cP_{0}, \cP_{1}, f_{0})$ and $(\cP_{1}, \cP_{2}, f_{1})$ and then applying $\pi$ gives $(\ov{\cP_{0}}, \ov{\cP_{2}}, g)$, where $g$ is the composition 
\begin{equation*}
\ov{\cP_{0}} \xrightarrow{\varphi_{\lambda_{2}/\lambda_{0},\cP_{0}}^{-1}} \ov{\Q_{\lambda_{2}/\lambda_{0},S} \cdot \cP_{0}} \xrightarrow{\ov{z(\lambda_{2}/\lambda_{1},\lambda_{1}/\lambda_{0})_{*}(\psi^{-1})}}  \ov{\Q_{\lambda_{2}/\lambda_{1},S}\cdot(\Q_{\lambda_{1}/\lambda_{0},S}  \cdot \cP_{0})} 
\end{equation*}
followed by 
\begin{equation*}
\ov{\Q_{\lambda_{2}/\lambda_{1},S}\cdot(\Q_{\lambda_{1}/\lambda_{0},S}  \cdot \cP_{0})}  \xrightarrow{\ov{\Q_{\lambda_{2}/\lambda_{1},S} \cdot f_{0}}} \ov{\Q_{\lambda_{2}/\lambda_{1},S} \cdot \cP_{1}} \xrightarrow{\bar{f_{1}}} \ov{\cP_{2}}.
\end{equation*}

On the other hand, applying $\pi$ and then convolving gives the composition
\begin{equation*}
\ov{\cP_{0}} \xrightarrow{\varphi_{\lambda_{1}/\lambda_{0},\cP_{0}}^{-1}} \ov{\Q_{\lambda_{1}/\lambda_{0},S}\cdot \cP_{0}}  \xrightarrow{\bar{f}_{0}} \ov{\cP_{1}} \xrightarrow{\varphi_{\lambda_{2}/\lambda_{1},\cP_{1}}^{-1}} 
\ov{\Q_{\lambda_{2}/\lambda_{1},S}\cdot \cP_{1}}  \xrightarrow{\bar{f}_{1}} \ov{\cP_{2}}.
\end{equation*}
Equality of the two compositions then reduces to proving the equality
\begin{equation*}
\ov{\Q_{\lambda_{2}/\lambda_{1},S} \cdot f_{0}} \circ  \ov{z(\lambda_{2}/\lambda_{1},\lambda_{1}/\lambda_{0})_{*}(\psi^{-1})} \circ \varphi_{\lambda_{2}/\lambda_{0},\cP_{0}}^{-1} = \varphi_{\lambda_{2}/\lambda_{1},\cP_{1}}^{-1} \circ \bar{f}_{0} \circ \varphi_{\lambda_{1}/\lambda_{0},\cP_{0}}^{-1},
\end{equation*}
and therefore further, using $\varphi_{\lambda_{2}/\lambda_{1},\cP_{1}}^{-1} \circ \bar{f}_{0} = \ov{\Q_{\lambda_{2}/\lambda_{1},S} \cdot f_{0}} \circ \varphi_{\lambda_{2}/\lambda_{1},\cP_{1}}^{-1}$, to the equality 
\begin{equation}\label{eq:varphieq1}
    \ov{z(\lambda_{2}/\lambda_{1},\lambda_{1}/\lambda_{0})_{*}(\psi^{-1})} \circ \varphi_{\lambda_{2}/\lambda_{0},\cP_{0}}^{-1} =  \varphi_{\lambda_{2}/\lambda_{1},\cP_{1}}^{-1} \circ \varphi_{\lambda_{1}/\lambda_{0},\cP_{0}}^{-1}.
\end{equation}
We note that, by definition,
\begin{equation*}
\ov{z(\lambda_{2}/\lambda_{1},\lambda_{1}/\lambda_{0})_{*}(\psi^{-1})} = \varphi^{-1}_{z(\lambda_{2}/\lambda_{1},\lambda_{1}/\lambda_{0}),\cP_{0}}
\end{equation*}
and that, as $Z_{S}$-torsors,
\begin{equation*}
    \Q_{z(\lambda_{2}/\lambda_{1},\lambda_{1}/\lambda_{0}),S} \cdot \Q_{\lambda_{2}/\lambda_{0},S}= \Q_{\lambda_{2}/\lambda_{1},S} \cdot \Q_{\lambda_{1}/\lambda_{0},S} ,
\end{equation*}
and so we deduce the equality \eqref{eq:varphieq1}.
\end{proof}

Recall that we have a locally constant morphism
\begin{equation*}
    \Hck^{e,\loc,1}_{Z \subset G} \xrightarrow{\nu^{e}} \underline{\Hom}_{X}(u,Z)^{2}
\end{equation*}
(where $\underline{\Hom}_{X}(u,Z)$ is from Definition \ref{defi:hom}) given by sending $(\cP_{0}, \cP_{1}, f)$ to $(\lambda_{0},\lambda_{1})$, which induces a decomposition (as v-stacks)
\begin{equation}\label{eq:HckeDecomp}
\Hck^{e,\loc,1}_{Z \subset G} = \bigsqcup_{(\lambda_{0},\lambda_{1}) \in \Hom_{F}(u,Z)^{2}} \Hck^{e,\loc,1,(\lambda_{0},\lambda_{1})}_{Z \subset G}.
\end{equation}

\begin{rque}\label{rque:naivereform}
It is easy to see that 
\begin{equation*}
\Hck^{e,\loc,\{0,1\},\tn{naive}}_{Z \subset G} \times_{\Div^{1}} \Spd(C) = \bigsqcup_{\lambda \in \Hom_{F}(u,Z)} \Hck_{Z \subset G}^{e,\loc,1,(\lambda,\lambda)},
\end{equation*}
similarly with the global versions.
\end{rque}

There is an analogous decomposition for $\Hck_{G/Z}^{\loc,1}$. Recall that, by \cite[Proposition 21.1.4]{SW20}, $\pi_{0}(\Hck^{\loc,1}_{G/Z})$ is canonically identified with $\pi_{1}(G/Z)$, which fits into an exact sequence
\begin{equation*}
    0 \to \pi_{1}(G) \to \pi_{1}(G/Z) \to \Hom_{\ov{F}}(\mu,Z) \to 0.
\end{equation*}
We thus obtain a locally constant map
\begin{equation*}
\Hck^{\loc,1}_{G/Z} \xrightarrow{\nu} \Hom_{\ov{F}}(\mu, Z)
\end{equation*}
by taking the composition
\begin{equation*}
\Hck^{\loc,1}_{G/Z} \to \pi_{0}(\Hck^{\loc,1}_{G/Z}) \to \Hom_{\ov{F}}(\mu,Z),
\end{equation*}
as well as a decomposition (as v-stacks)
\begin{equation*}
\Hck^{\loc,1}_{G/Z} = \bigsqcup_{\lambda \in \Hom_{\ov{F}}(\mu, Z)} \Hck^{\loc,1,\lambda}_{G/Z}.
\end{equation*}
One defines $\Hck^{\loc,2,(\nu_{0},\nu_{1})}_{G/Z}$ for $\nu_{i} \in \Hom_{\ov{F}}(\mu,Z)$ by mapping $\Hck^{\loc,2}_{G/Z}$ to $\Hck^{\loc,1}_{G/Z} \times \Hck^{\loc,1}_{G/Z}$ via $(\mc{P}_{0},\mc{P}_{1},\mc{P}_{2},(f_{i})) \mapsto (\mc{P}_{0},\mc{P}_{1},f_{0}) \times (\mc{P}_{1},\mc{P}_{2},f_{1})$ and then applying $\nu \times \nu$. 

We make the elementary but important observation that we have an isomorphism
\begin{equation}\label{eq:utomu}
    \Hom_{F}(u,Z) \xrightarrow{\sim} \Hom_{\ov{F}}(\mu,Z)
\end{equation}
given by $\lambda \mapsto \lambda \circ \widetilde{\tn{Sh}}$, where $\widetilde{\tn{Sh}}$ is the map $\mu_{\ov{F}} \to u_{\ov{F}}$ given by the morphism on character modules 
\begin{equation*}
    \mathcal{C}(\Gal_{F}, \mathbb{Q}/\Z) \to \mathbb{Q}/\Z
\end{equation*}
sending $f$ to $f(e)$.

\begin{prop}\label{prop:Quotslope}
  For $\lambda_{0}, \lambda_{1} \in \Hom_{F}(u,Z)$, the substack $\Hck_{Z \subset G}^{e,\loc,1,(\lambda_{0},\lambda_{1})}$ maps via $\pi$ into $\Hck_{G/Z}^{\loc,1,(\lambda_{0}/\lambda_{1}) \circ \widetilde{\tn{Sh}}}$.  
\end{prop}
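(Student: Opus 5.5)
Since $\nu$ is locally constant on $\Hck^{\loc,1}_{G/Z}$ and $\nu^{e}$ is locally constant on $\Hck^{e,\loc,1}_{Z\subset G}$, it suffices to compute $\nu(\pi(x))$ for a single geometric point $x = (\cP_{0},\cP_{1},f) \in \Hck^{e,\loc,1,(\lambda_{0},\lambda_{1})}_{Z\subset G}(C')$, with $C'$ algebraically closed over $C$. The plan is to reduce to a computation with tori. By the Cartan decomposition and Lemma \ref{lem:cohomvanish}, the component of $\pi(x)$ in $\pi_{1}(G/Z)$ is detected by the relative position of $\ov{\cP_{0}}$ and $\ov{\cP_{1}}$. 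Its image in $\Hom_{\ov{F}}(\mu,Z)=\pi_{1}(G/Z)/\pi_{1}(G)$ is the obstruction to lifting the $\ov{G}$-modification $g = \bar f\circ\varphi^{-1}$ to a $G$-modification. Concretely, this is the class of the $Z$-torsor on $\bbD^{\circ}_{C'}$ measuring the failure of $g$ to lift, viewed in $H^{1}(\bbD^{\circ}_{C'},Z)\cong \Hom(\mu,Z)$ through the Kummer identification $\Bdr(C')^{\times}/(\cdot)^{n}\cong \Z/n$ given by the valuation.

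\textbf{Step 1.} By definition $f$ is a genuine $G$-isomorphism $\Q_{\lambda_{1}/\lambda_{0}}\cdot\cP_{0}\xrightarrow{\sim}\cP_{1}$. Hence the obstruction to lifting $g$ is exactly the class of the $Z$-torsor $\Q_{\lambda_{1}/\lambda_{0},C'}$, or its inverse depending on conventions. This class should be computed after descending to $\bbD^{\circ}_{C'}$. Here one must be careful: $\Q_{\lambda}$ lives on the gerbe, whereas the $\cP_{i}$ have nontrivial inertia. What matters is the class of the induced $\ov{\mathbf{T}}$-level trivialization. Following Lemma \ref{lem:Zred}, $\Q_{\lambda}$ is the fiber over $e$ of $\Q'_{\lambda}\to \ov{\Q'_{\lambda}}\cong \ov{s(\lambda)}_{*}\mc{T}^{\loc}$, trivialized by $\ov{s(\lambda)}_{*}\psi$. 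So the obstruction class is the image, under the coboundary $\ov{\mathbf{T}}(\bbD^{\circ})/\mathbf{T}(\bbD^{\circ})\to H^{1}(\bbD^{\circ},Z)$, of the difference between $\ov{s(\lambda)}_{*}\psi$ and a trivialization extending over $\bbD_{C'}$, for instance $\ov{s(\lambda)}_{*}\tilde\psi$.

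\textbf{Step 2.} By Lemma \ref{lem:Grt}, $\psi\tilde\psi^{-1}$ represents $\tn{Sh}\in\tn{Gr}_{t}(C')=X_{*}(t)$. Therefore the relevant element of $\tn{Gr}_{\ov{\mathbf{T}}}(C')=X_{*}(\ov{\mathbf{T}})$ is $\ov{s(\lambda)}\circ\tn{Sh}$, where $\ov{s(\lambda)}$ factors through $t$ because $s(\lambda)|_{u}=\lambda$ lands in $Z$. Its image in $X_{*}(\ov{\mathbf{T}})/X_{*}(\mathbf{T})\cong\Hom(\mu,Z)$ is computed by the snake lemma applied to $u\to\tilde t\to t$ mapping to $Z\to\mathbf{T}\to\ov{\mathbf{T}}$. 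On cocharacter level, $\tn{Sh}\in X_{*}(t)$ lifts to $\tfrac1n\tn{Sh}\in X_{*}(\tilde t)\otimes\Q$, and the resulting element of $\Hom(\mu,Z)$ is $\lambda\circ\widetilde{\tn{Sh}}$. This is precisely the compatibility between $\tn{Sh}$ and $\widetilde{\tn{Sh}}$ via \eqref{eq:utomu}. Applying this with $\lambda=\lambda_{1}/\lambda_{0}$ and tracking the inverse from $\varphi^{-1}$ yields $\nu(\pi(x))=(\lambda_{0}/\lambda_{1})\circ\widetilde{\tn{Sh}}$.

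\textbf{Main obstacle.} The hardest part is bookkeeping. One must pin down sign conventions (valuation versus inverse uniformizer in Lemma \ref{lem:Grt}, and the direction of the boundary map) in order to obtain $\lambda_{0}/\lambda_{1}$ rather than its inverse. One must also justify that the component map $\nu$ agrees with the torus-level computation. For the latter I would use Lemma \ref{lem:Grgerbe} together with functoriality of $\pi_{0}$ along $\mathbf{T}/Z\to G/Z$ (via $\widetilde{\Q}_{\lambda}=\Q'_{\lambda}\times^{\mathbf{T}}G$), so that everything reduces to the torus $\ov{\mathbf{T}}$, where $\pi_{0}(\tn{Gr})=X_{*}$ is explicit.
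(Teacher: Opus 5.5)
Your argument is correct in substance, but it is organized differently from the paper's.

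The paper normalizes the point. Over a geometric point it writes $X=(\widetilde{\Q}_{\lambda_0,S},\widetilde{\Q}_{\lambda_1,S},f)$ and translates by a $Z_S$-torsor $\mc{U}$ to reach $\lambda_0=1$. It then argues that precomposing $f$ with $\Q_{\lambda,S}\cdot a$ does not change the class in $\pi_1(G/Z)/\pi_1(G)$, replaces $f$ by the canonical $h$, pushes along $\mathbf{T}\to G$, and reads off $\ov{s(\lambda)}\circ\tn{Sh}$ in $\tn{Gr}_{\mathbf{T}/Z}$ from Lemma \ref{lem:Grt}.

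You instead use an intrinsic invariant of a $\ov{G}$-modification: its coboundary for the central extension $Z\to G\to\ov{G}$. This is a homomorphism on $\ov{G}(\Bdr(C'))$ sending $\ov{G}(\Bdr^{+}(C'))$ into classes coming from $\bbD_{C'}$, so it is constant on double cosets. By the Cartan decomposition it agrees with $\pi_1(G/Z)\to\Hom_{\ov F}(\mu,Z)$ once checked on torus points. This makes the paper's normalization steps unnecessary, since changing $f$ by automorphisms or changing lifts visibly does not affect the coboundary, and it treats arbitrary $\lambda_0$ at once. The price is that you must identify the coboundary with the component map, whereas the paper never needs a cohomological description of that map. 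The decisive computation, Lemma \ref{lem:Grt} followed by $\ov{s(\lambda)}\circ\tn{Sh}\mapsto\lambda\circ\widetilde{\tn{Sh}}$, is the same in both.

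A few points need tightening.

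\emph{Step~1.} Taken literally, Step~1 does not make sense: $\Q_{\lambda_1/\lambda_0,C'}$ has inertia $\lambda_1/\lambda_0$, so it has no class in $H^1(\bbD^\circ_{C'},Z)$. What is true is that, via $f$, the sheaf of local lifts $\cP_0\to\cP_1$ of $g$ is canonically $\Q_{\lambda_1/\lambda_0}$.
\begin{itemize}
\item Let $P_i^\flat$ be $G$-torsors on $\bbD_{C'}$ lifting $\ov{\cP_i}$, and write $\cP_i\cong\mc{R}_i\cdot P_i^\flat$ with $\mc{R}_i$ a $Z$-torsor of inertia $\lambda_i$ on the unpunctured gerbe.
\item The obstruction for $(P_0^\flat,P_1^\flat)$ is then $\Q_{\lambda_1/\lambda_0}\cdot(\mc{R}_1\mc{R}_0^{-1})^{-1}$. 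It has trivial inertia and descends by Lemma \ref{lem:descendtors}.
\item Two $Z$-torsors with the same inertia on the unpunctured gerbe differ by a torsor from $\bbD_{C'}$. So you may replace $\mc{R}_1\mc{R}_0^{-1}$ by the reduction of $\Q'_{\lambda}$ given by $\ov{s(\lambda)}_*(\tilde\psi)$, which yields your formula.
\end{itemize}
This is precisely what replaces the paper's $\mc{U}$-translation, and it should be written out.

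\emph{Equal characteristic.} When $p\mid |Z|$, $H^1_{\tn{fppf}}(\bbD^\circ_{C'},Z)$ is not $\Hom(\mu,Z)$, since not every unit of $\Bdr(C')$ is a $p$-th power. Your invariant lives in the cokernel of $H^1(\bbD_{C'},Z)\to H^1(\bbD^\circ_{C'},Z)$ (as in Lemma \ref{lem:sigmaaut}), and it is this cokernel that the valuation identifies with $\Hom(\mu,Z)$.

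\emph{The sign.} The inversion does not come from $\varphi^{-1}$: $\varphi$ just identifies $\ov{q\cdot p}$ with $\ov{p}$, and the lifts of $g$ are $p\mapsto f(q\cdot p)$. The sign is fixed by which torsor is the base point in $\pi_0(\Hck^{\loc,1}_{G/Z})\cong\pi_1(G/Z)$; the paper gets it by switching the two torsors. It also depends on the sign convention for the coboundary and on the convention $\tn{Sh}\leftrightarrow\xi_E^{-1}$ in Lemma \ref{lem:Grt}.

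\emph{The snake-lemma step.} Since $X_*(\tilde t)=0$, phrase this step through $\Hom_F(\tilde t,\mathbf{T})\cong X_*(\mathbf{T})_{\mathbb{Q}}$ rather than $X_*(\tilde t)\otimes\mathbb{Q}$.
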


\begin{proof}
The fact that $\nu^{e}$ is locally constant implies that it suffices to show that $\Hck_{Z \subset G}^{e,\loc,1,(\lambda_{0},\lambda_{1})}(S)$ maps into $\Hck_{G/Z}^{\loc,1,(\lambda_{0}/\lambda_{1}) \circ \widetilde{\tn{Sh}}}(S)$ for $S = \Spa(R,R^{+}) \in \Perfd_{C}$ with $R$ an algebraically closed field. By the proof of Lemma \ref{lem:cohomvanish} (with $t$ replaced by $G$), we may assume that the fixed $X \in \Hck_{Z \subset G}^{e,\loc,1,(\lambda_{0},\lambda_{1})}(S)$ is of the form $(\widetilde{\Q}_{\lambda_{0},S}, \widetilde{\Q}_{\lambda_{1},S},f)$ (see Definition \ref{defi:Qdefs} to recall the definition of $\widetilde{\Q}_{\lambda_{i},S}$). 

Moreover, for a $Z_{S}$-torsor $\mc{U}$ on  $[\mc{T}_{S}^{\loc}/\tilde{t}_{S}]$, the tuple  $(\mc{U} \cdot \widetilde{\Q}_{\lambda_{0},S}, \mc{U} \cdot \widetilde{\Q}_{\lambda_{1},S},\mc{U} \cdot f)$ also lies in $\Hck_{Z \subset G}^{e,\loc,1}(S)$ and has the same image in $\Hck_{G/Z}^{\loc,1}(S)$ as $(\widetilde{\Q}_{\lambda_{0},S}, \widetilde{\Q}_{\lambda_{1},S},f)$. This lets us replace $X$ by a $\mc{U}$-translate to further assume that $X = (\underline{G_{S}},\widetilde{\Q}_{\lambda,S},f)$ for some $\lambda \in \Hom_{F}(u,Z)$. Finally, it is straightforward to check that replacing $f$ in $X$ with $f \circ (\Q_{\lambda,S} \cdot a)$ for any automorphism of torsors $\underline{G_{S}}|_{\bbD_{S}} \xrightarrow{a} \underline{G_{S}}|_{\bbD_{S}}$ does not affect the image of $\pi(X)$ in
$\Hom_{F}(u,Z) = \pi_{1}(G/Z)/\pi_{1}(G)$. 

Recall the isomorphism 
\begin{equation*}
    \Q_{\lambda,S} \cdot \underline{G_{S}} \xrightarrow{h} \widetilde{\Q}_{\lambda,S},
\end{equation*}
where $h$ is the canonical map 
\begin{equation*}
     \Q_{\lambda,S} \times^{Z_{S}} \underline{G_{S}} \xrightarrow{h'} \mathcal{Q}'_{\lambda,S} \times^{\mathbf{T}_{S}} \underline{G_{S}} = \widetilde{\Q}_{\lambda,S}
\end{equation*}
(again, see Definition \ref{defi:Qdefs} to recall $\mathcal{Q}'_{\lambda,S}$---here $h'$ is obtained via the isomorphism from $\Q_{\lambda,S} \times^{Z_{S}} \mathbf{T}_{S}$ to $\mathcal{Q}'_{\lambda,S}$ implicit in the definition of $\Q_{\lambda,S}$). Since any given isomorphism
\begin{equation*}
    \Q_{\lambda,S} \cdot \underline{G_{S}} \xrightarrow{f} \widetilde{\Q}_{\lambda,S}
\end{equation*}
differs from the map $h$ by such an automorphism $a$, it is enough to prove the result for the object $X =(\underline{G_{S}},\widetilde{\Q}_{\lambda,S},h) \in \Hck_{Z \subset G}^{e,\loc,1,(1,\lambda)}(S)$. By construction, $X$ is the image of the element $(\underline{\mathbf{T}_{S}}, \mathcal{Q}'_{\lambda,S},h') \in \Hck_{Z \subset \mathbf{T}}^{e,\loc,1,(1,\lambda)}(S)$. Since we have a commutative diagram
\[
\begin{tikzcd}
\Hck^{e,\loc,1}_{Z \subset \mathbf{T}}(S) \arrow{r} \arrow{d} & \Hck_{\mathbf{T}/Z}^{\loc,1}(S) \arrow{r} \arrow{d} & X_{*}(\mathbf{T}/Z) \arrow{r} \arrow{d} & \Hom_{F}(u,Z) \arrow["\tn{id}"]{d} \\
\Hck_{Z \subset G}^{e,\loc,1}(S) \arrow{r} & \Hck_{G/Z}^{\loc,1}(S) \arrow{r}  &  \pi_{1}(G/Z)\arrow{r} & \Hom_{F}(u,Z),
\end{tikzcd}
\]
we can reduce further to the case where $G=\mathbf{T}$ is a torus.

By definition, the image $X'$ of $X$ in $\Hck_{\mathbf{T}/Z}^{\loc,1}(S)$ is $(\underline{\ov{\mathbf{T}}_{S}},\ov{\mathcal{Q}'_{\lambda,S}},\bar{h} \circ \varphi_{\lambda,\underline{\ov{\mathbf{T}}_{S}}}^{-1})$---after switching the two torsors, which negates the corresponding point of $\pi_{0}(\Hck_{\mathbf{T}/Z}^{\loc,1})$ by \cite[Section I.6]{Geometrization}, we can and do view this as a point in the affine Grassmannian $\tn{Gr}_{\mathbf{T}/Z}(S)$. By the way we have set things up, this resulting point in $\tn{Gr}_{\mathbf{T}/Z}(S)$ is the image of the point $(\underline{\ov{\mathbf{T}}_{S}},\mc{T}_{S}^{\loc},\psi^{-1}) \in \tn{Gr}_{t}(S)$ via the map $t \xrightarrow{\ov{s(\lambda)}} \mathbf{T}/Z$. We deduce from the commutativity of  
\[
\begin{tikzcd}
\tn{Gr}_{t}(S) \arrow{d} \arrow["\ov{s(\lambda)}"]{r} & \tn{Gr}_{\mathbf{T}/Z}(S) \arrow{d} \\
X_{*}(t) \arrow["\ov{s(\lambda)} \circ -"]{r} & X_{*}(\mathbf{T}/Z)
\end{tikzcd}
\]
(and the definition of $\psi$ from Lemma \ref{lem:Grt}) that the image of $X'$ in $X_{*}(\mathbf{T}/Z)$ is $\ov{s(\lambda)} \circ \tn{Sh}$, whose image in $\Hom_{\ov{F}}(\mu,Z)$ is exactly $\lambda \circ \widetilde{\tn{Sh}}$. Applying the inversion from switching the torsors yields $(1/\lambda) \circ \widetilde{\tn{Sh}}$, as claimed.
\end{proof}

Henceforth, we will implicitly use the isomorphism \eqref{eq:utomu} to identify $\Hom_{F}(u,Z)$ with $\Hom_{\ov{F}}(\mu,Z)$ rather than writing the map explicitly. For example, we write $\Hck_{G/Z}^{\loc,1,\lambda_{0}/\lambda_{1}}$ to denote the stack $\Hck_{G/Z}^{\loc,1,(\lambda_{0}/\lambda_{1}) \circ \widetilde{\tn{Sh}}}$.

\begin{lem}\label{lem:outer-faces}
Fix $\lambda_0,\lambda_1,\lambda_2 \in \Hom_F(u,Z)$. The diagram
\[
\begin{tikzcd}[column sep=huge, row sep=large]
\Hck^{e,\loc,1,(\lambda_0,\lambda_1)}_{Z \subset G}
  \arrow[d,"\pi"']
& \Hck^{e,\loc,2,(\lambda_0,\lambda_1,\lambda_2)}_{Z \subset G}
  \arrow[l]
  \arrow[r]
  \arrow[d,"\pi"]
& \Hck^{e,\loc,1,(\lambda_1,\lambda_2)}_{Z\subset G}
  \arrow[d,"\pi"]
\\
\Hck^{\loc,1,\lambda_0/\lambda_1}_{G/Z}
& \Hck^{\loc,2,(\lambda_0/\lambda_1,\,\lambda_1/\lambda_2)}_{G/Z}
  \arrow[l]
  \arrow[r]
& \Hck^{\loc,1,\lambda_1/\lambda_2}_{G/Z}
\end{tikzcd}
\]
is strictly commutative, where the vertical maps are those of Proposition \ref{prop:Hckquot}. In particular, $\pi$ carries $\Hck^{e,\mathrm{loc},2,(\lambda_0,\lambda_1,\lambda_2)}_{Z\subset G}$ into $\Hck^{\mathrm{loc},2,(\lambda_0/\lambda_1,\,\lambda_1/\lambda_2)}_{G/Z}$.
\end{lem}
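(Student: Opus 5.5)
The plan is to identify the horizontal maps precisely and then check commutativity of the two squares directly from the definitions. On the top row, the left and right arrows send $((\cP_0,\cP_1,\cP_2),(f_0,f_1))$ to $((\cP_0,\cP_1),f_0)$ and $((\cP_1,\cP_2),f_1)$, respectively. These are the ``outer face'' projections, not the convolution $\mu$, which is treated separately in Proposition \ref{prop:Hckconv}. On the bottom row, the arrows are the analogous projections $\Hck^{\loc,2}_{G/Z}\to \Hck^{\loc,1}_{G/Z}$. Using exactly these projections, we defined the substacks $\Hck^{\loc,2,(\nu_0,\nu_1)}_{G/Z}$ as preimages under $\nu\times\nu$.

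The first step is strict commutativity of the two squares. For the left square, write $\pi((\cP_i),(f_i)) = ((\ov{\cP_i}),(g_i))$ with $g_i = \bar f_i\circ\varphi^{-1}_{\lambda_{i+1}/\lambda_i,\cP_i}$, as in Proposition \ref{prop:Hckquot}. Projecting to the first factor gives $((\ov{\cP_0},\ov{\cP_1}),g_0)$. Applying $\pi$ to $((\cP_0,\cP_1),f_0)$ gives the same datum, because $g_0$ depends only on $\cP_0$, $f_0$ and $\lambda_1/\lambda_0$. The descent along $[\mc{T}^{\loc}_S/\tilde t_S]\to\bbD_S$ supplied by Lemma \ref{lem:descendtors} is likewise computed torsor by torsor. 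The right square is identical with indices shifted. On morphisms $(h_i)$, both composites send $(h_0,h_1,h_2)$ to $(\bar h_0,\bar h_1)$ (respectively $(\bar h_1,\bar h_2)$), so the diagram commutes as a diagram of functors, with no $2$-morphism needed.

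The second step is to check that the vertical maps land in the indicated components. For the outer vertical arrows, this is exactly Proposition \ref{prop:Quotslope}, applied to $(\lambda_0,\lambda_1)$ and to $(\lambda_1,\lambda_2)$, together with the identification \eqref{eq:utomu}. For the middle arrow, I would use the commutativity just established. If $X\in\Hck^{e,\loc,2,(\lambda_0,\lambda_1,\lambda_2)}_{Z\subset G}(S)$, then the two projections of $\pi(X)$ equal $\pi$ of the two projections of $X$. These lie in $\Hck^{\loc,1,\lambda_0/\lambda_1}_{G/Z}$ and $\Hck^{\loc,1,\lambda_1/\lambda_2}_{G/Z}$ respectively, so $(\nu\times\nu)(\pi(X)) = (\lambda_0/\lambda_1,\lambda_1/\lambda_2)$. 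By the definition of $\Hck^{\loc,2,(\nu_0,\nu_1)}_{G/Z}$, this is precisely the final claim of the lemma. Here $\Hck^{e,\loc,2,(\lambda_0,\lambda_1,\lambda_2)}_{Z\subset G}$ is understood as the locus where $\lambda_{\cP_i}=\lambda_i$, which is open and closed by the local constancy of $\nu^e$.

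The main point needing care, though not a genuine obstacle, is making sure that no hidden $\psi$-dependent correction appears in the outer faces. Unlike the convolution, which inserts $z(\cdot,\cdot)_*(\psi^{-1})$, the projections simply forget data. So the only potential subtlety is that $\varphi_{\lambda_{i+1}/\lambda_i,\cP_i}$ is canonical and defined identically in the $1$-leg and $2$-leg settings, which holds by \eqref{eq:varphi}.
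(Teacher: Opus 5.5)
Your proposal is correct and follows the paper's approach: the paper's proof is just ``This is immediate,'' and your argument is the natural unpacking of that remark. Since $\pi$ is defined leg by leg via $g_i=\bar f_i\circ\varphi^{-1}_{\lambda_{i+1}/\lambda_i,\cP_i}$, it commutes strictly with the outer face projections. The component statement then follows from Proposition \ref{prop:Quotslope} applied to each face, together with the definition of $\Hck^{\loc,2,(\nu_0,\nu_1)}_{G/Z}$ via $\nu\times\nu$.
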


\begin{proof}
    This is immediate.
\end{proof}

\subsection{Extended affine Grassmannians}
To understand perverse sheaves on $\Hck^{e,\loc,1}_{Z \subset G}$ we need to introduce and study a version of the affine Grassmannian which is adapted to the extended Hecke stack $\Hck^{e,\loc,1}_{Z \subset G}$.

First, denote the local Hecke correspondence by
\[
\begin{tikzcd}
    & \Hck_{Z \subset G}^{e,\loc,1} \arrow["\overleftarrow{h}"]{ld} \arrow["\overrightarrow{h}"]{rd}& \\
    B^{e}L^{+}G & & B^{e}L^{+}G,
\end{tikzcd}
\]
where $B^{e}L^{+}G$ denotes the functor on $\Perfd_{C}$ sending $S$ to the groupoid of $G_{S}$-torsors on $[\mc{T}_{S}^{\loc}/\tilde{t}_{S}]$.

\begin{defi}\label{defi:Gre} \begin{enumerate}
   \item{ Define the functor $\tn{Gr}_{Z \subset G}^{e,(\lambda_{0},\lambda_{1})}$ from $\Perfd_{C}$ to groupoids as the fiber of $\widetilde{\Q}_{\lambda_{1},-}$ under $\overrightarrow{h}|_{\Hck_{Z \subset G}^{e,\loc,1,(\lambda_{0},\lambda_{1})}}$. 
   
   Explicitly, this is the functor sending $S$ to all pairs $(\cP, f)$, where $\cP$ is a band-localized $G_{S}$-torsor on $[\mc{T}_{S}^{\loc}/\tilde{t}_{S}]$ whose inertial morphism is $\lambda_{0}$ and $f$ is an isomorphism of  $G_{S}$-torsors over $[\mc{T}_{S}^{\loc,\circ}/\tilde{t}_{S}]$
     \begin{equation*}
        \Q_{\lambda_{1}/\lambda_{0},S} \cdot \cP \xrightarrow{f} \widetilde{\Q}_{\lambda_{1},S}.
     \end{equation*}
     Morphisms between $(\cP,f)$ and $(\cP',f')$ are isomorphisms of $G_{S}$-torsors over $[\mc{T}_{S}^{\loc}/\tilde{t}_{S}]$ which are compatible with $f$ and $f'$. }

    \item{More generally, we can define $\tn{Gr}_{Z \subset G}^{e,\lambda}$ as the fiber of $\widetilde{\Q}_{\lambda,-}$ under $\overrightarrow{h}$.
         It is easy to see that we have a decomposition
     \begin{equation*}
         \tn{Gr}_{Z \subset G}^{e,\lambda} = \bigsqcup_{\lambda' \in \Hom_{F}(u,Z)} \tn{Gr}_{Z \subset G}^{e,(\lambda',\lambda)}.
     \end{equation*}}

\item{We set $\tn{Gr}_{Z \subset G}^{e} = \bigsqcup_{\lambda \in \Hom_{F}(u,Z)} \tn{Gr}_{Z \subset G}^{e,\lambda}$.}
     \end{enumerate}
     
\end{defi}

By analogy to the usual affine Grassmannian, we have:
\begin{lem}\label{lem:Grrig}
    The automorphism group of a fixed point $X \in \tn{Gr}_{Z \subset G}^{e,\lambda}(S)$ for $S \in \Perfd_{C}$ is trivial.
\end{lem}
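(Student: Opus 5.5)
An automorphism of $X=(\cP,f)\in \tn{Gr}_{Z \subset G}^{e,(\lambda',\lambda)}(S)$ is, by Definition \ref{defi:Gre}, an automorphism $h$ of the $G_{S}$-torsor $\cP$ over $[\mc{T}_{S}^{\loc}/\tilde{t}_{S}]$ with $f\circ(\Q_{\lambda/\lambda',S}\cdot h)=f$. Since $f$ is an isomorphism, this forces $\Q_{\lambda/\lambda',S}\cdot h=\tn{id}$ over the punctured gerbe $[\mc{T}_{S}^{\loc,\circ}/\tilde{t}_{S}]$. Contracted product with the $Z_{S}$-torsor $\Q_{\lambda/\lambda',S}$ is an equivalence: the inverse is contraction with $\Q_{\lambda/\lambda',S}^{-1}$, and $Z$ is central. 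It follows that $h|_{[\mc{T}_{S}^{\loc,\circ}/\tilde{t}_{S}]}=\tn{id}$. The plan is therefore to reduce to the statement that an automorphism of a $G_{S}$-torsor on $[\mc{T}_{S}^{\loc}/\tilde{t}_{S}]$ which is trivial on the punctured gerbe is trivial. This is the gerby analogue of the injectivity $L^{+}G\hookrightarrow LG$, which is exactly what makes the ordinary affine Grassmannian rigid.

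To prove this reduction, I would view $h$ as a section of the sheaf $\underline{\tn{Aut}}(\cP)$, an inner form of $G_{S}$, over the gerbe $[\mc{T}_{S}^{\loc}/\tilde{t}_{S}]$. Equality of two sections of a separated sheaf can be checked on a v-cover, and by Remark \ref{rque:fingerbe} everything is pulled back from a finite level gerbe.

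\textbf{Step 1.} Pass to a cover of $\bbD_{S}$ (or of $S$) trivializing both the gerbe and $\cP$. By Lemma \ref{lem:cohomvanish}, $\mc{T}_{S}^{\loc}$ is trivial locally, and the Henselianity argument there extends to trivialize $\cP$. After this, $[\mc{T}_{S}^{\loc}/\tilde{t}_{S}]$ becomes $\bbD_{S}\times Bu$, or more concretely the cover $\bbD_{S}\to[\mc{T}_{S}^{\loc}/\tilde{t}_{S}]$ obtained from a trivialization of $\mc{T}_{S}^{\loc}$. It is enough to check $h=\tn{id}$ after pulling back to this cover, since it is an epimorphism in the relevant topology.

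\textbf{Step 2.} After pulling back, $h$ becomes an element of $G(B^{+}_{\tn{dR}}(R^{\sharp}))$ whose image in $G(B_{\tn{dR}}(R^{\sharp}))$ is trivial. Now $G$ is affine and $B^{+}_{\tn{dR}}(R^{\sharp})\to B_{\tn{dR}}(R^{\sharp})$ is injective, because $\xi$ is a nonzerodivisor. Hence $G(B^{+}_{\tn{dR}})\to G(B_{\tn{dR}})$ is injective, and $h=\tn{id}$.

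The main obstacle is Step 1. One must make sure that descending along the gerbe does not lose information. Concretely, an automorphism of $\cP$ on the gerbe is a $\tilde{t}$-equivariant automorphism of the pullback to $\mc{T}_{S}^{\loc}$, in the sense of Lemma \ref{lem:equivtors}. So it suffices to work on $\mc{T}_{S}^{\loc}$ and on its punctured version $\mc{T}_{S}^{\loc,\circ}=\mc{T}_{S}^{\loc}\times_{\bbD_{S}}\bbD_{S}^{\circ}$. Since $\mc{T}_{S}^{\loc}\to\bbD_{S}$ is a pro-(finite étale) torsor, and in equal characteristic a v-torsor under a pro-torus, the restriction map $\mathcal{O}(\mc{T}_{S}^{\loc})\to\mathcal{O}(\mc{T}_{S}^{\loc,\circ})$ remains injective. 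This is flat base change of the injection $B^{+}_{\tn{dR}}\hookrightarrow B_{\tn{dR}}$ along the finite-level torsors, followed by a filtered colimit. With this in hand, the argument of Step 2 applies directly on $\mc{T}_{S}^{\loc}$ without trivializing the gerbe, and this is the route I would try first.
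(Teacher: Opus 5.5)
Your proposal is correct and follows the same route as the paper. An automorphism $h$ of $(\cP,f)$ must satisfy $\Q_{\lambda/\lambda',S}\cdot h=\tn{id}$, so $h$ is trivial on the punctured gerbe and hence trivial; the paper asserts this last extension without comment, and your final paragraph justifies it via the injectivity of $\mathcal{O}(\mc{T}_{S}^{\loc})\to\mathcal{O}(\mc{T}_{S}^{\loc,\circ})$, applied to the affine group scheme $\underline{\tn{Aut}}(\cP)$ (or to $G$ after an étale trivialization of $\cP$). One small correction: $\mc{T}_{S}^{\loc}\to\bbD_{S}$ is a torsor under the pro-torus $t$, not a pro-finite-étale torsor, but its finite-level pieces are smooth and affine over $\bbD_{S}$, so your flat base change argument is unaffected.
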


\begin{proof}
    Let $a$ be an automorphism of the $G_{S}$-torsor $\cP$ over $[\mc{T}_{S}^{\loc}/\tilde{t}_{S}]$ such that $f \circ (\Q_{\lambda,S} \cdot a)  = f$. It follows that  $\Q_{\lambda,S} \cdot a$ is trivial as an automorphism of $\Q_{\lambda,S} \cdot \cP$ over $[\mc{T}_{S}^{\loc,\circ}/\tilde{t}_{S}]$, and is therefore trivial on $[\mc{T}_{S}^{\loc}/\tilde{t}_{S}]$.
\end{proof}

Since $\pi_{0}(\tn{Gr}_{G}) = \pi_{0}(\Hck_{G}^{\loc,1}) = \pi_{1}(G)$ for any connected reductive $G$, we obtain, as in the previous subsection, a locally constant map
\begin{equation*}
\nu \colon \tn{Gr}_{G/Z,\Spd(C)} \to \pi_{1}(G/Z) \to \Hom_{\ov{F}}(\mu,Z) = \Hom_{F}(u,Z),
\end{equation*}
and a decomposition 
\begin{equation*}
  \tn{Gr}_{G/Z,\Spd(C)}   = \bigsqcup_{\lambda \in \Hom_{F}(u,Z)} \tn{Gr}_{G/Z,\Spd(C)}^{\lambda}.
\end{equation*}

\begin{prop}\label{prop:Grquot}
    There is a functor between groupoids over $\Perfd_{C}$
    \begin{equation*}
\tn{Gr}_{Z \subset G}^{e,(\lambda_{0},\lambda_{1})} \xrightarrow{\pi^{(\lambda_{0},\lambda_{1})}} \tn{Gr}_{G/Z,\Spd(C)}^{\lambda_{0}/\lambda_{1}}
    \end{equation*}
    making the following diagram commute up to $2$-isomorphism:
\[
\begin{tikzcd}
    \Hck_{Z \subset G}^{e,\loc,1,(\lambda_{0},\lambda_{1})} \arrow["\pi"]{r} & \Hck_{G/Z}^{\loc,1,\lambda_{0}/\lambda_{1}} \\
    \tn{Gr}_{Z \subset G}^{e,(\lambda_{0},\lambda_{1})} \arrow{u} \arrow["\pi^{(\lambda_{0},\lambda_{1})}"]{r} & \tn{Gr}_{G/Z,\Spd(C)}^{\lambda_{0}/\lambda_{1}} \arrow{u},
\end{tikzcd}
    \]
    where the top map is from Proposition \ref{prop:Hckquot} (and we are also implicitly using Proposition \ref{prop:Quotslope}).
\end{prop}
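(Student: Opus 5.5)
The plan is to construct $\pi^{(\lambda_{0},\lambda_{1})}$ by restricting the map $\pi$ of Proposition \ref{prop:Hckquot} to the fiber defining $\tn{Gr}_{Z \subset G}^{e,(\lambda_{0},\lambda_{1})}$, and then to rigidify the target. Recall that $\tn{Gr}_{G/Z,\Spd(C)}$ is the fiber of $\overrightarrow{h} \colon \Hck^{\loc,1}_{G/Z,\Spd(C)} \to BL^{+}(G/Z)$ over the trivial torsor. A point $(\cP,f)$ of $\tn{Gr}_{Z \subset G}^{e,(\lambda_{0},\lambda_{1})}(S)$ is sent by $\pi$ to $(\ov{\cP}, \ov{\widetilde{\Q}_{\lambda_{1},S}}, g)$, where $g = \bar f \circ \varphi^{-1}_{\lambda_{1}/\lambda_{0},\cP}$. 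To land in the affine Grassmannian we need a trivialization of the descended $\ov{G}_{S}$-torsor $\ov{\widetilde{\Q}_{\lambda_{1},S}}$ on $\bbD_{S}$, chosen functorially in $S$.

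Such a trivialization comes from Lemma \ref{lem:curlyT}. By construction $\ov{\widetilde{\Q}_{\lambda_{1},S}} = \ov{\Q'_{\lambda_{1},S}} \times^{\ov{\mathbf{T}}_{S}} \ov{G}_{S}$. By Lemma \ref{lem:curlyT}, $\ov{\Q'_{\lambda_{1},S}}$ is canonically $\ov{s(\lambda_{1})}_{*}(\mc{T}^{\loc}_{S})$, and this descends to $\bbD_{S}$ by Lemma \ref{lem:descendtors}. We pull back the fixed auxiliary trivialization $\tilde{\psi}$ of $\mc{T}^{\loc}_{C}$ on $\bbD_{C}$, which exists by Lemma \ref{lem:cohomvanish}, to $\bbD_{S}$. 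Pushing it forward along $\ov{s(\lambda_{1})}$ gives a trivialization $\tau_{\lambda_{1},S}$ of $\ov{\widetilde{\Q}_{\lambda_{1},S}}$. This trivialization is compatible with base change in $S$ because $\tilde{\psi}$ is pulled back from $C$.

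We then define
\[
\pi^{(\lambda_{0},\lambda_{1})}(\cP,f) := \bigl(\ov{\cP},\ \tau_{\lambda_{1},S} \circ \bar f \circ \varphi^{-1}_{\lambda_{1}/\lambda_{0},\cP}\bigr).
\]
Morphisms of $(\cP,f)$ are carried to morphisms on the target. Since both sides are rigid (Lemma \ref{lem:Grrig} and the classical fact for $\tn{Gr}_{G/Z}$), this assignment is functorial. The square commutes up to the $2$-isomorphism $\tau_{\lambda_{1},S}$. Going around via $\Hck$, the upper path gives $(\ov{\cP},\ov{\widetilde{\Q}_{\lambda_{1},S}},g)$. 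The lower path gives $(\ov{\cP},\ov{G}_{S},\tau\circ g)$, and the isomorphism $\mathrm{id}\times\tau$ identifies the two.

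The remaining point, which I expect to be the only real content, is that the image lies in the component $\tn{Gr}^{\lambda_{0}/\lambda_{1}}_{G/Z,\Spd(C)}$. The inclusion $\tn{Gr}_{G/Z,\Spd(C)} \to \Hck^{\loc,1}_{G/Z}$ is compatible with the maps $\nu$ to $\Hom_{F}(u,Z)$, since both maps are defined through $\pi_{1}(G/Z)$ and $\pi_{0}(\tn{Gr}) = \pi_{0}(\Hck^{\loc,1})$. The $2$-commutativity of the square therefore reduces this to Proposition \ref{prop:Quotslope}. One should double-check that the sign convention identifying $\pi_{0}$ of the Grassmannian with that of the Hecke stack (via $\overrightarrow{h}$-fibers) matches the one used in the proof of Proposition \ref{prop:Quotslope}, where the torsors were switched. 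If a sign discrepancy appears, it would be absorbed by that same switching, yielding $\lambda_{0}/\lambda_{1}$ as stated.
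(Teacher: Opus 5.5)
Your proposal is correct and matches the paper's proof: the paper likewise post-composes $\bar f\circ\varphi^{-1}_{\lambda_1/\lambda_0,\cP}$ with the trivialization $\ov{s(\lambda_1)}_*(\tilde\psi)$ of $\ov{\widetilde{\Q}_{\lambda_1,S}}$ and sends isomorphisms $h$ to $\bar h$. It also treats the $2$-commutativity and the landing in the $\lambda_0/\lambda_1$ component as immediate, and your sign worry is unnecessary, since $\nu$ on $\tn{Gr}_{G/Z,\Spd(C)}$ is defined through $\pi_0(\tn{Gr}) = \pi_0(\Hck^{\loc,1})$ and is therefore compatible with the inclusion into the Hecke stack by definition.
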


\begin{proof}
Recall the fixed auxiliary $\bbD_{C}$-trivialization $\tilde{\psi}$ of $\mc{T}_{C}^{\loc}$ from Section \ref{sec:affGrasscalc}. Given an $S$-point $(\cP, f)$ on the left-hand side, we post-compose the map 
   \begin{equation*}
   \ov{\cP} \xrightarrow{\varphi_{\lambda_{1}/\lambda_{0},\cP}} \ov{\Q_{\lambda_{1}/\lambda_{0},S} \cdot \cP} \xrightarrow{\bar{f}} \ov{\widetilde{\Q}_{\lambda_{1},S}}
   \end{equation*}
   with the trivialization
   \begin{equation*}
  \ov{\widetilde{\Q}_{\lambda_{1},S}} = \ov{s(\lambda_{1})}_{*}(\mc{T}^{\loc}_{S}) \xrightarrow{\ov{s(\lambda_{1})}_{*}(\tilde{\psi})} \underline{\ov{G}_{S}}
   \end{equation*}
   to obtain the point $(\ov{\cP}, \ov{s(\lambda_{1})}_{*}(\tilde{\psi}) \circ \bar{f} \circ \varphi_{\lambda_{1}/\lambda_{0},\cP}) \in \tn{Gr}_{G/Z,\Spd(C)}(S)$, which we take to be the image of our object under $\pi^{(\lambda_{0},\lambda_{1})}$. It is straightforward to check that a different choice of $\tilde{\psi}$ as above gives an isomorphic object in $\tn{Gr}_{G/Z,\Spd(C)}(S)$ (in a way which is functorial in $S$). An isomorphism $h$ simply goes to $\bar{h}$, finishing the definition of $\pi^{(\lambda_{0},\lambda_{1})}$.
   
     The fact that the above map lands in $\tn{Gr}_{G/Z,\Spd(C)}^{\lambda_{0}/\lambda_{1}}$ and commutes up to $2$-isomorphism is immediate from the definition of the (extended and classical) affine Grassmannians and the compatibility of the two local Hecke correspondences.
\end{proof}

\begin{lem}\label{lem:Grtwist} There is an isomorphism 
\begin{equation*}
    \tn{Gr}^{e,(\lambda_{0},\lambda_{1})}_{Z \subset G} \xrightarrow{\sim} \tn{Gr}^{e,(1,\lambda_{1}/\lambda_{0})}_{Z \subset G}
\end{equation*}
over $\tn{Gr}_{G/Z,\Spd(C)}$, defined by sending $(\cP, f)$ to $(\Q_{\lambda_{0}^{-1},S} \cdot \cP, z(\lambda_{1},\lambda_{0}^{-1})_{*}(\psi^{-1}) \circ (\Q_{\lambda_{0}^{-1},S} \cdot f))$. 
\end{lem}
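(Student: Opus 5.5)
The plan is to check, in order, that the proposed assignment is well defined on objects, that it is functorial, that it has an explicit inverse, and finally that it commutes with the maps $\pi^{(\lambda_{0},\lambda_{1})}$ and $\pi^{(1,\lambda_{1}/\lambda_{0})}$ of Proposition \ref{prop:Grquot}. Throughout, $\lambda$'s are multiplied in the abelian group $\Hom_{F}(u,Z)$.

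\textbf{Well-definedness.} Start with $(\cP,f)\in \tn{Gr}^{e,(\lambda_{0},\lambda_{1})}_{Z \subset G}(S)$. First I check that $\cP':=\Q_{\lambda_{0}^{-1},S}\cdot\cP$ is a band-localized $G_{S}$-torsor on $[\mc{T}_{S}^{\loc}/\tilde{t}_{S}]$ with inertial morphism $1$.

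This requires care, because $\Q_{\lambda_{0}^{-1},S}$ is a priori only defined on the punctured gerbe (Lemma \ref{lem:Zred}). The remedy is to note that $\Q_{\lambda_{0}^{-1},S}$ is a reduction of $\Q'_{\lambda_{0}^{-1},S}$, and $\Q'_{\lambda_{0}^{-1},S}$ is defined over all of $[\mc{T}^{\loc}_{S}/\tilde{t}_{S}]$. The inertial action of $\Q'_{\lambda_{0}^{-1},S}$ is given by the restriction $s(\lambda_{0}^{-1})|_{u}=\lambda_{0}^{-1}$, since $s$ is a section of \eqref{eq:HomZSES}. So I would interpret $\Q_{\lambda_{0}^{-1},S}\cdot\cP$ as the torsor obtained by gluing: on the punctured part it is the contracted product; over all of the disk it is the torsor obtained from $\cP$ by twisting with $\widetilde{\Q}_{\lambda_{0}^{-1},S}$. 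Concretely, I expect the paper's convention to make $\Q_{\lambda,S}\cdot\cP$ extend over $\bbD_{S}$, using that contracted product with a $Z_{S}$-torsor on the gerbe shifts the inertial homomorphism by multiplication. With that in place, the inertial morphism of $\cP'$ is $\lambda_{0}^{-1}\lambda_{0}=1$. Band-localization is preserved because $\lambda_{0}^{-1}$ is already global.

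Next I check the modification. We have
\[
\Q_{\lambda_{1}/\lambda_{0},S}\cdot\cP' = \Q_{\lambda_{1}/\lambda_{0},S}\cdot\Q_{\lambda_{0}^{-1},S}\cdot\cP .
\]
The isomorphism \eqref{eq:Arnaudisom}, namely $z(\lambda_{1}/\lambda_{0},\lambda_{0}^{-1})_{*}(\psi)$, identifies this with $\Q_{\lambda_{1}\lambda_{0}^{-2}}\cdot\cP$. This does not match the target, so I will instead match the formula as written: apply $\Q_{\lambda_{0}^{-1},S}\cdot f$ to get
\[
\Q_{\lambda_{0}^{-1}}\cdot\Q_{\lambda_{1}/\lambda_{0}}\cdot\cP\to \Q_{\lambda_{0}^{-1}}\cdot\widetilde{\Q}_{\lambda_{1}},
\]
and then use $z(\lambda_{1},\lambda_{0}^{-1})_{*}(\psi^{-1})$ together with $\widetilde{\Q}_{\lambda}=\Q_{\lambda}\cdot \underline{G_{S}}$ (as in the proof of Proposition \ref{prop:Quotslope}) to rewrite the target as $\widetilde{\Q}_{\lambda_{1}/\lambda_{0}}$. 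This bookkeeping of cocycle torsors is the main obstacle. What must be verified is an identity of $Z_{S}$-torsors,
\[
\Q_{\lambda_{0}^{-1}}\cdot\Q_{\lambda_{1}/\lambda_{0}}\cong \Q_{\lambda_{1}/\lambda_{0}}\cdot\Q_{\lambda_{0}^{-1}},
\]
up to the appropriate $a_{*}(\psi)$ factors. This follows from commutativity of $Z$ and the $2$-cocycle relation for $z$, exactly as in the associativity proof.

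\textbf{Functoriality and inverse.} Functoriality is immediate: an isomorphism $h$ goes to $\Q_{\lambda_{0}^{-1},S}\cdot h$, and compatibility with the $f$'s is preserved. An inverse is given by the analogous formula with $\Q_{\lambda_{0},S}$, using the isomorphism $\Q_{\lambda_{0}}\cdot\Q_{\lambda_{0}^{-1}}\cong\Q_{z(\lambda_{0},\lambda_{0}^{-1})}$, trivialized via $\psi$. The composite of the two maps is then naturally isomorphic to the identity by the $2$-cocycle condition. Lemma \ref{lem:Grrig} ensures these natural isomorphisms are unique, so no higher coherence needs to be checked.

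\textbf{Compatibility over $\tn{Gr}_{G/Z,\Spd(C)}$.} Both sides map to $\tn{Gr}_{G/Z,\Spd(C)}$ by $\ov{(-)}$ composed with $\varphi$, and the trivialization $\ov{s(\lambda)}_{*}(\tilde\psi)$. Since $\Q_{\lambda_{0}^{-1},S}$ is a $Z_{S}$-torsor, we have $\ov{\cP'}=\ov{\cP}$ canonically via $\varphi$. The rest reduces, as in the proof of Proposition \ref{prop:Hckconv}, to the identity $\ov{z_{*}(\psi^{-1})}=\varphi^{-1}_{z,-}$ (the analogue of \eqref{eq:varphieq1}). There is one remaining subtlety: the trivializations $\ov{s(\lambda_{1})}_{*}(\tilde\psi)$ and $\ov{s(\lambda_{1}/\lambda_{0})}_{*}(\tilde\psi)$ must agree after passing to $\ov{\mathbf{T}}$ modulo these identifications. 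This holds because $s(\lambda_{1})$ and $s(\lambda_{1}/\lambda_{0})s(\lambda_{0}^{-1})$ differ by $z(\lambda_{1},\lambda_{0}^{-1})\in\Hom(t,\mathbf{T})$, which is precisely the factor inserted into the formula.
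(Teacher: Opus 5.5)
Your proposal has a genuine gap in the well-definedness step, which is where the content of the lemma lies. The first component of the image must be a band-localized $G_S$-torsor on the unpunctured gerbe $[\mc{T}_S^{\loc}/\tilde{t}_S]$. But $\Q_{\lambda_0^{-1},S}$ exists only on $[\mc{T}_S^{\loc,\circ}/\tilde{t}_S]$, and for $\lambda_0\neq 1$ it does not extend. To see this, pull back along the section of the gerbe furnished by $\tilde{\psi}$ (Remark \ref{rque:Alambda}). Its class in $H^1(\bbD_C^{\circ},Z)$ is then the coboundary of an element of $\ov{\mathbf{T}}(\bbD_C^{\circ})$ whose valuation $\pm\ov{s(\lambda_0^{-1})}\circ\tn{Sh}$ is not in the image of $X_*(\mathbf{T})$; its image in $\Hom_{\ov{F}}(\mu,Z)$ is $\lambda_0^{\mp 1}$, as in the proof of Proposition \ref{prop:Quotslope}. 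Classes coming from $H^1(\bbD_C,Z)$, by contrast, are coboundaries of elements of valuation $0$, and a coboundary changes only by elements of $\mathbf{T}(\bbD_C^{\circ})$, whose valuations lie in $X_*(\mathbf{T})$. So the formula in the statement cannot be read literally, and your remedy does not supply a replacement. A $G_S$-torsor cannot be twisted by the $G_S$-torsor $\widetilde{\Q}_{\lambda_0^{-1},S}$, nor by the non-central $\mathbf{T}_S$-torsor $\Q'_{\lambda_0^{-1},S}$. There is also no Beauville--Laszlo gluing on the local disk that turns a torsor on the punctured gerbe into one on the unpunctured gerbe. Your sentence ``I expect the paper's convention to make $\Q_{\lambda,S}\cdot\cP$ extend over $\bbD_S$'' is exactly the unproved step, and it is false for the $\psi$-based $\Q_{\lambda,S}$. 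The same defect affects your inverse. In your last paragraph, the identification $\ov{\cP'}\cong\ov{\cP}$ is only available over $\bbD_S^{\circ}$, so no points of $\tn{Gr}_{G/Z,\Spd(C)}$ are actually being compared.

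The missing idea is to twist by a $Z_S$-torsor defined on the whole local gerbe. Let $\mc{U}_{\mu,S}$ be the reduction of $\Q'_{\mu,S}$ to $Z_S$ built exactly as in Lemma \ref{lem:Zred}, but with the auxiliary trivialization $\tilde{\psi}$ of $\mc{T}_C^{\loc}$ over $\bbD_C$ (Lemma \ref{lem:cohomvanish}) in place of $\psi$. It lives on $[\mc{T}_S^{\loc}/\tilde{t}_S]$ and has inertial morphism $\mu$. Moreover, $z(\mu,\nu)_*(\tilde{\psi})$ furnishes isomorphisms $\mc{U}_{\mu,S}\cdot\mc{U}_{\nu,S}\cong\mc{U}_{\mu\nu,S}$ and $\mc{U}_{\mu,S}\cdot\widetilde{\Q}_{\nu,S}\cong\widetilde{\Q}_{\mu\nu,S}$ over the unpunctured gerbe. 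One then sends $(\cP,f)$ to $\mc{U}_{\lambda_0^{-1},S}\cdot\cP$, which has trivial inertia, together with $z(\lambda_1,\lambda_0^{-1})_*(\tilde{\psi})\circ(\mc{U}_{\lambda_0^{-1},S}\cdot f)$. The source is rewritten using only the symmetry of contracted products of $Z_S$-torsors; contrary to your suggestion, no cocycle factor is needed there. The inverse uses $\mc{U}_{\lambda_0,S}$, and Lemma \ref{lem:Grrig} disposes of coherences as you say. Using $\tilde{\psi}$ is also what makes the map lie over $\tn{Gr}_{G/Z,\Spd(C)}$ on the nose. The maps $\pi^{(\lambda_0,\lambda_1)}$ and $\pi^{(1,\lambda_1/\lambda_0)}$ are built from $\ov{s(-)}_*(\tilde{\psi})$. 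Once every trivialization in sight is induced by $\tilde{\psi}$, the required equality of trivializations of $\ov{\widetilde{\Q}_{\lambda_1,S}}$ is just the defining relation $s(\lambda_1)s(\lambda_0^{-1})=z(\lambda_1,\lambda_0^{-1})\,s(\lambda_1/\lambda_0)$. If $\psi$ enters either the twist or the $z$-correction, the two images differ in general by translation by $\alpha(\psi\tilde{\psi}^{-1})\in\ov{\mathbf{T}}(\bbD_S^{\circ})$ for some $\alpha\in\Hom(t,\ov{\mathbf{T}})$. So your closing claim, that the inserted $z$-factor is ``precisely'' the needed correction, only holds after replacing $\psi$ by $\tilde{\psi}$.
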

\begin{proof}
    This is a straightforward computation.
\end{proof}

\begin{prop}\label{prop:mainGrisom}
    The map $\pi^{(\lambda_{0},\lambda_{1})}$ is an equivalence of $\tn{v}$-stacks. 
\end{prop}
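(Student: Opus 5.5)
First, I would use Lemma \ref{lem:Grtwist} to reduce to the case $\lambda_{0}=1$. It identifies $\tn{Gr}^{e,(\lambda_{0},\lambda_{1})}_{Z \subset G}$ with $\tn{Gr}^{e,(1,\lambda_{1}/\lambda_{0})}_{Z \subset G}$ over $\tn{Gr}_{G/Z,\Spd(C)}$. After checking that this identification is compatible with the maps $\pi^{(\lambda_0,\lambda_1)}$ and $\pi^{(1,\lambda_1/\lambda_0)}$, it suffices to treat $(1,\lambda)$. By Lemma \ref{lem:Grrig}, the source is then v-locally a sheaf rather than a stack, and the target is a sheaf. So I must show that $\pi^{(1,\lambda)}$ is an isomorphism of v-sheaves. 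I would prove this by giving an inverse, or equivalently by showing it is injective and surjective v-locally on $S$.

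\textbf{Surjectivity.} Let $(\mc{E},\beta)$ be a point of $\tn{Gr}^{\lambda^{-1}}_{G/Z,\Spd(C)}(S)$, so $\mc{E}$ is a $\ov{G}_S$-torsor on $\bbD_S$ and $\beta$ trivializes it on $\bbD_S^{\circ}$. I need a $G_S$-torsor $\cP$ on $[\mc{T}^{\loc}_S/\tilde{t}_S]$ with trivial inertial morphism, together with a modification $f\colon \Q_{\lambda,S}\cdot\cP \xrightarrow{\sim} \widetilde{\Q}_{\lambda,S}$, such that $\ov{\cP}\cong\mc{E}$ compatibly with $\beta$. I would build $\cP$ by gluing. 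On the punctured part, take $\Q_{\lambda,S}^{-1}\cdot\widetilde{\Q}_{\lambda,S}$. Its image in $\ov{G}_S$ is identified with the trivial torsor by $\ov{s(\lambda)}_*(\tilde\psi)$ and $\psi$, and it has trivial inertia because $\Q_{\lambda,S}$ carries inertia $\lambda$. The task is then to lift $\mc{E}$ over $\bbD_S$ to a $G_S$-torsor agreeing with this on $\bbD_S^{\circ}$.

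The obstruction to this lift is a class in $Z$-gerbes on $\bbD_S$ relative to the punctured disk, so it is controlled by $\nu$. The condition that $(\mc{E},\beta)$ lies in the component $\lambda^{-1}$ should make the obstruction vanish exactly. This is the content of Proposition \ref{prop:Quotslope}, read in the reverse direction. Concretely, I would use the Cartan decomposition and Lemma \ref{lem:Grgerbe}: every point of $\tn{Gr}^{\lambda^{-1}}_{G/Z}$ is $L^+G$-translate to some $t^{\bar\nu}$ with $\bar\nu\in X_*(T/Z)$ of class $\lambda^{-1}$. Writing $\bar\nu=\ov{s(\lambda)}\circ\tn{Sh}\cdot(\text{element of }X_*(T))$ and using the torus computation from the proof of Proposition \ref{prop:Quotslope} exhibits a preimage of $t^{\bar\nu}$. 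Since $\pi$ is $L^+G$-equivariant, this gives surjectivity on geometric points.

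\textbf{Injectivity.} Suppose $(\cP,f)$ and $(\cP',f')$ have isomorphic images. The isomorphism $\ov{\cP}\cong\ov{\cP'}$ compatible with $\bar f,\bar f'$ lifts on the punctured part via $f'^{-1}\circ f$ to a genuine isomorphism $\cP\cong\cP'$ over $[\mc{T}^{\loc,\circ}_S/\tilde t_S]$. Its image in $\ov{G}_S$ extends over $\bbD_S$. The lift therefore differs from something extending by a $Z_S$-valued section on the punctured gerbe, and I would need to show that this section extends, which follows from a normality/Hartogs-type extension for the finite group $Z$ over $\bbD_S$.

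\textbf{Globalizing.} To pass from geometric points to an isomorphism of v-sheaves, I would show that $\pi^{(1,\lambda)}$ is a proper, qcqs bijection on points on each bounded piece $\tn{Gr}_{\le\mu}$, and that it is an isomorphism on these strata. The strata are $L^+G$-orbits; Lemma \ref{lem:Grgerbe} shows the $L^+G$-orbits coincide with the $L^+(G/Z)$-orbits and that the stabilizers are connected. The main obstacle is this final step, upgrading the bijection to an isomorphism of v-stacks, particularly in equal characteristic where $Z$-torsors are only v-torsors. There I expect to need the vanishing $H^1(\bbD_{C'},Z)=0$, in the spirit of Lemma \ref{lem:cohomvanish}, together with the connectedness of stabilizers to exclude nontrivial finite étale covers.
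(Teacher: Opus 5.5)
Your reduction to $(1,\lambda)$ via Lemma \ref{lem:Grtwist} is the same as the paper's. Your pointwise arguments (Cartan decomposition with Lemma \ref{lem:Grgerbe} for surjectivity, extension of sections for injectivity) are plausible on geometric points, after two repairs:
\begin{enumerate}
\item To reach every $t^{\bar\nu}$ in the component, not just the single torus point from Proposition \ref{prop:Quotslope}, you need more than $L^{+}G$-equivariance, e.g.\ an $LT$-action on the source compatible with $\pi$.
\item Injectivity should be phrased as ``the lifts of the extended $\ov{G}$-isomorphism form a $Z$-torsor over $\bbD_{S}$, and its section over the punctured disk extends,'' rather than presupposing a lift that extends.
\end{enumerate}

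The genuine gap is the step you flag yourself. Nothing in the proposal upgrades a bijection on points to an isomorphism of v-sheaves. That would need, for example, $\pi^{(1,\lambda)}$ to be qcqs on bounded pieces, so that bijectivity on all $\Spa(C',C'^{+})$-points forces an isomorphism. You have no independent description of $\tn{Gr}^{e,(1,\lambda)}_{Z\subset G}$ from which to get this. The tools you name do not supply it:
\begin{enumerate}
\item $H^{1}(\bbD_{C'},Z)=0$ fails for fppf cohomology in equal characteristic when $p$ divides $|Z|$. There $\bbD_{C'}=\Spec C'^{\sharp}[[\xi]]$ and $1+\xi$ is not a $p$-th power, so $H^{1}_{\mathrm{fppf}}(\bbD_{C'},\mu_{p})\neq 0$. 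This is exactly why Lemma \ref{lem:Grgerbe} needs a separate argument in that case.
\item Working stratum by stratum just re-poses the comparison of $L^{+}G$- and $L^{+}(G/Z)$-homogeneous spaces as v-sheaves.
\end{enumerate}

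The missing idea is one you nearly state in your surjectivity paragraph. Since $\Q_{\lambda,S}^{-1}\cdot\widetilde{\Q}_{\lambda,S}$ is the trivial torsor with trivial inertia on the punctured gerbe, the rule $(\cP,f)\mapsto(\cP,\Q_{\lambda,S}\cdot f)$ is an isomorphism of functors $\tn{Gr}_{G,\Spd(C)}\xrightarrow{\sim}\tn{Gr}^{e,(1,\lambda)}_{Z\subset G}$. Its inverse untwists by $\Q_{\lambda,S}^{-1}$ and descends the trivial-inertia torsor and map via Lemma \ref{lem:descendtors}, so no pointwise analysis of the source is needed. On the target, $(\mc{M},f)\mapsto(\mc{M},\ov{s(\lambda)}_{*}(\tilde{\psi})\circ\ov{s(\lambda)}_{*}(\psi^{-1})\circ f)$ is a translation isomorphism $\tn{Gr}^{1}_{G/Z,\Spd(C)}\xrightarrow{\sim}\tn{Gr}^{1/\lambda}_{G/Z,\Spd(C)}$. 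These two identifications carry $\pi^{(1,\lambda)}$ to the natural map $\tn{Gr}_{G,\Spd(C)}\to\tn{Gr}^{1}_{G/Z,\Spd(C)}$.

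This is how the paper argues: the extended objects disappear, and the proposition becomes the statement that this map of ordinary affine Grassmannians is an isomorphism. Your Cartan-decomposition and extension arguments amount to showing that this classical map is bijective on geometric points.
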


\begin{proof}
Lemma \ref{lem:Grtwist} reduces us to the case where $\lambda_{0}=1$ and we set $\lambda_{1}/\lambda_{0} = \lambda$, which simplifies computations. Next, observe that there is an isomorphism of functors (which is not $L^{+}G$-equivariant) over $\Perfd_{C}$
    \begin{equation}\label{eq:Grpfeq1}
        \tn{Gr}_{G,\Spd(C)} \xrightarrow{\sim} \tn{Gr}_{Z \subset G}^{e,(1,\lambda)}
    \end{equation}
    defined by sending $(\cP,f)$ to the torsor $\cP$ along with the isomorphism
\begin{equation*}
    \Q_{\lambda,S} \cdot \cP \xrightarrow{\Q_{\lambda,S}\cdot f} \Q_{\lambda,S} \cdot \underline{G_{S}} = \widetilde{\Q}_{\lambda,S}.
\end{equation*}

On the other hand, one has a (non-$L^{+}G$-equivariant) ``translation'' isomorphism 
 \begin{equation}\label{eq:Grpfeq2}
     \tn{Gr}_{G/Z,\Spd(C)}^{1} \xrightarrow{\sim} \tn{Gr}_{G/Z,\Spd(C)}^{1/\lambda}
 \end{equation}
 given by sending $(\mc{M},f)$ to $(\mc{M}, \ov{s(\lambda)}_{*}(\tilde{\psi}) \circ \ov{s(\lambda)}_{*}(\psi^{-1}) \circ f)$, cf. the proof of Proposition \ref{prop:Quotslope} (and $\tilde{\psi}$ is the auxiliary $\bbD_{C}$-trivialization from the end of Section \ref{sec:affGrasscalc}). The map $\tn{Gr}_{G,\Spd(C)} \to \tn{Gr}_{G/Z,\Spd(C)}$ factors as the composition
 \begin{equation*}
     \tn{Gr}_{G,\Spd(C)} \to \tn{Gr}_{G/Z,\Spd(C)}^{1} \hookrightarrow \tn{Gr}_{G/Z,\Spd(C)}.
 \end{equation*}

 Now, observing that the following diagram commutes
 \[
\begin{tikzcd}
\tn{Gr}_{G,\Spd(C)} \arrow["\eqref{eq:Grpfeq1}"]{r} \arrow["\tn{id}"]{d} & \tn{Gr}_{Z \subset G}^{e,(1,\lambda)} \arrow["\pi^{(1,\lambda)}"]{r} & \tn{Gr}_{G/Z,\Spd(C)}^{1/\lambda} \arrow["\tn{id}"]{d} \\
\tn{Gr}_{G,\Spd(C)} \arrow{r} & \tn{Gr}_{G/Z,\Spd(C)}^{1} \arrow["\eqref{eq:Grpfeq2}"]{r} & \tn{Gr}_{G/Z,\Spd(C)}^{1/\lambda},
\end{tikzcd}
 \]
 we deduce the result from the fact that the bottom line is an equivalence.
\end{proof}

\begin{corol}\label{rque:sheafequiv}
   Pullback along $\pi^{(\lambda_{0},\lambda_{1})}$ induces an equivalence between the categories of sheaves of abelian groups on $\tn{Gr}_{Z \subset G}^{e,(\lambda_{0},\lambda_{1})}$ and $\tn{Gr}_{G/Z,\Spd(C)}^{\lambda_{0}/\lambda_{1}}$.  
\end{corol}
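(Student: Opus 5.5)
This is a formal consequence of Proposition \ref{prop:mainGrisom}. The plan is to check that $\pi^{(\lambda_{0},\lambda_{1})}$ is an equivalence of v-stacks whose source is actually a sheaf, so that pullback is an equivalence of topoi.

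First, the source is a v-sheaf. By Lemma \ref{lem:Grrig}, every object of $\tn{Gr}_{Z \subset G}^{e,(\lambda_{0},\lambda_{1})}(S)$ has trivial automorphism group. Hence the groupoid-valued functor is equivalent to a set-valued functor on $\Perfd_{C}$, and this functor is a small v-sheaf because the source is a v-stack. The target $\tn{Gr}_{G/Z,\Spd(C)}^{\lambda_{0}/\lambda_{1}}$ is an open and closed subsheaf of the affine Grassmannian. It is therefore a small v-sheaf, in fact an ind-spatial diamond by \cite{SW20}.

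Second, by Proposition \ref{prop:mainGrisom}, $\pi^{(\lambda_{0},\lambda_{1})}$ is an equivalence of v-stacks. Between sheaves this is an isomorphism of v-sheaves. Concretely, via the diagram in that proof it is the composition of \eqref{eq:Grpfeq1}, \eqref{eq:Grpfeq2} and the isomorphism of Lemma \ref{lem:Grtwist}, each an isomorphism of functors on $\Perfd_{C}$. Any site-theoretic notion of sheaves of abelian groups on these objects — v-sheaves, étale sheaves, or sheaves on the underlying locally spatial diamonds — is intrinsic to the object. So an isomorphism $X \xrightarrow{\sim} Y$ induces an equivalence between the corresponding slice sites $\Perfd_{/X}$ and $\Perfd_{/Y}$, and likewise for the étale sites. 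Pullback then gives an equivalence of categories of abelian sheaves, with quasi-inverse given by pullback along the inverse isomorphism; $\pi_{*}$ is also an inverse.

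The only point needing care is the first step. One must confirm that the source carries the correct notion of sheaves: as a groupoid-valued functor, its étale site agrees with that of the set-valued sheaf it is equivalent to. This is exactly what rigidity (Lemma \ref{lem:Grrig}) ensures. One should also confirm that the isomorphism respects the étale-site structure, which holds since it is an isomorphism over $\Spd(C)$. The conclusion is insensitive to the auxiliary choice of $\tilde{\psi}$, since different choices give isomorphic functors.
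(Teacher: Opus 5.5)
Your proposal is correct and is the paper's argument: the paper proves this by combining Proposition \ref{prop:mainGrisom} (the equivalence of v-stacks) with the rigidity statement of Lemma \ref{lem:Grrig}, exactly as you do. One small imprecision that does not affect your argument: in the proof of Proposition \ref{prop:mainGrisom}, $\pi^{(1,\lambda)}$ factors as \eqref{eq:Grpfeq1}$^{-1}$, then the map $\tn{Gr}_{G,\Spd(C)} \to \tn{Gr}^{1}_{G/Z,\Spd(C)}$, then \eqref{eq:Grpfeq2}, with Lemma \ref{lem:Grtwist} reducing to that case. That middle map is the non-formal input, and it is missing from your list of factors.
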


\begin{proof}
     This comes from combining Proposition \ref{prop:mainGrisom} with Lemma \ref{lem:Grrig}.
\end{proof}

\begin{defi} We denote by $\mc{A}_{\lambda_{1}}$ the functor on $\Perfd_{C}$ sending $S$ to $\underline{\tn{Aut}}_{[\mc{T}_{S}^{\loc}/\tilde{t}_{S}]}(\widetilde{\Q}_{\lambda_{1},S})$. 
\end{defi}

\begin{rque}\label{rque:Alambda} \begin{enumerate}
    \item Using the homomorphism $\mc{A}_{\lambda_{1}} \to L^{+}(G/Z)$ induced by $\tilde{\psi}$, the map $\pi^{(\lambda_{0},\lambda_{1})}$ is $\mc{A}_{\lambda_{1}}$-equivariant. 

    \item Note that $\tilde{\psi}$ induces an isomorphism of gerbes $[\mc{T}_{C}^{\loc}/\tilde{t}_{C}] \xrightarrow{\sim} [\bbD_{C}/u_{C}]$ over $\bbD_{C}$ under which $\widetilde{\Q}_{\lambda_{1},C}$ corresponds to the trivial $G_{C}$-torsor with $u_{C}$-action given by $\lambda_{1}$ (using Lemma \ref{lem:equivtors}). Since $\lambda_{1}$ is valued in $Z_{G}$, every $\bbD_{C}$-automorphism of the trivial $G_{C}$-torsor is $u_{C}$-equivariant, giving an isomorphism $\mc{A}_{\lambda_{1}} \xrightarrow{\sim} L^{+}G$ which is compatible with the maps from both groups to $L^{+}(G/Z)$.

\end{enumerate}
\end{rque}

\begin{rque}\label{rque:strat}
    Proposition \ref{prop:mainGrisom} shows that, via the isomorphism $\pi^{(\lambda_{0},\lambda_{1})}$, the locally spatial diamond $\tn{Gr}_{Z \subset G}^{e,(\lambda_{0},\lambda_{1})}$ inherits a natural stratification from the Schubert stratification of $\tn{Gr}_{G/Z,\Spd(C)}^{\lambda_{0}/\lambda_{1}}$ which is indexed by all $\nu \in X_{*}(T/Z)^{+,\lambda_{0}/\lambda_{1}}$, where the superscript ``$\lambda_{0}/\lambda_{1}$'' means that the image of $\nu$ in $\Hom_{\ov{F}}(\mu, Z)$ is $\lambda_{0}/\lambda_{1}$. The same then holds for $\Hck_{Z \subset G}^{e,\loc,1}$. In particular, we can talk about the Schubert cells $\tn{Gr}_{Z \subset G,\nu}^{e,(\lambda_{0},\lambda_{1})}$ and $\Hck_{Z \subset G,\nu}^{e,\loc,1}$ for $\nu \in X_{*}(T/Z)^{+,\lambda_{0}/\lambda_{1}}$.
\end{rque}

By construction, there is an identification
\begin{equation*}
 \Hck_{Z \subset G}^{e,\loc,1,(\lambda_{0},\lambda_{1})} =   \mc{A}_{\lambda_{1}} \backslash \tn{Gr}_{Z \subset G}^{e,(\lambda_{0},\lambda_{1})}.
\end{equation*}

Finally, all of the constructions in this section are functorial in $Z \subseteq Z' \subseteq Z_{G}$, in the sense that there are canonical ``transition morphisms''
\begin{equation*}
\tn{Gr}_{Z \subset G}^{e} \to \tn{Gr}_{Z' \subset G}^{e}
\end{equation*}
which are compatible with the map from Proposition \ref{prop:Grquot} and the above identifications with $\Hck_{Z \subset G}^{e,\loc,1}$ and  $\Hck_{Z' \subset G}^{e,\loc,1}$.

\subsection{Geometric Satake}
 We assume the coefficient ring $\Lambda$  is killed by $n$ with $(n,p)=1$.
 
 \begin{defi}\label{defi:convGre} Given a triple $\lambda_{0},\lambda_{1}, \lambda_{2} \in \Hom_{F}(u,Z)$, define the \textbf{extended convolution affine Grassmannian} $\tn{Gr}_{Z \subset G}^{e,(\lambda_{0},\lambda_{1},\lambda_{2})}$ to be the fiber over $\widetilde{\Q}_{\lambda_{2},-}$ under $\overrightarrow{h}|_{\Hck_{Z \subset G}^{e,\loc,2,(\lambda_{0},\lambda_{1},\lambda_{2})}}$.
 \end{defi}

The convolution morphism from Section \ref{sec:Hckconv} induces a convolution morphism
\begin{equation*}
    \tn{Gr}_{Z \subset G}^{e,(\lambda_{0},\lambda_{1},\lambda_{2})} \xrightarrow{\mu} \tn{Gr}_{Z \subset G}^{e,(\lambda_{0},\lambda_{2})}.
\end{equation*}

Recall the classical convolution affine Grassmannian $\tn{Gr}_{G/Z,\Spd(C)} \tilde{\times} \tn{Gr}_{G/Z,\Spd(C)}$ for $G/Z$, which is equipped with a canonical isomorphism
\begin{equation}\label{eq:convGriso}
    \tn{Gr}_{G/Z,\Spd(C)} \tilde{\times} \tn{Gr}_{G/Z,\Spd(C)} \xrightarrow{\sim}  \tn{Gr}_{G/Z,\Spd(C)} \times \tn{Gr}_{G/Z,\Spd(C)}
\end{equation}
sending $(\cP_{0}, \cP_{1}, (f_{0},f_{1}))$ to $((\cP_{0}, f_{1} \circ f_{0}),(\cP_{1},f_{1})).$

There is a locally constant map
\begin{equation*}
    \tn{Gr}_{G/Z,\Spd(C)} \tilde{\times} \tn{Gr}_{G/Z,\Spd(C)} \to \Hck_{G/Z,\Spd(C)}^{\loc,2} \to \underline{\Hom}_{X}(u,Z)^{2},
 \end{equation*}
and for $\lambda, \lambda' \in \Hom_{F}(u,Z)$ we define $\tn{Gr}_{G/Z,\Spd(C)}^{(\lambda,\lambda')}$ as the preimage of $(\lambda,\lambda')$ under this map. Using this convention, we observe that convolution gives a map
\begin{equation*}
\tn{Gr}_{G/Z,\Spd(C)}^{(\lambda,\lambda')} \to \tn{Gr}_{G/Z,\Spd(C)}^{\lambda \lambda'}.
\end{equation*}

\begin{rque} The above displayed map is the same as applying the identification 
\begin{equation*}\tn{Gr}_{G/Z,\Spd(C)} \tilde{\times} \tn{Gr}_{G/Z,\Spd(C)} \xrightarrow{\eqref{eq:convGriso}} \tn{Gr}_{G/Z,\Spd(C)} \times \tn{Gr}_{G/Z,\Spd(C)}
\end{equation*}
and then, denoting the map $\tn{Gr}_{G/Z,\Spd(C)} \to \underline{\Hom}_{X}(u,Z)$ by $q$, taking the map $(q_{1}/q_{2}, q_{2})$ (where $q_{i}$ is $q$ composed with the $i$-projection).
\end{rque}

\begin{lem}\label{lem:Grcompconv} For fixed $\lambda_{0}, \lambda_{1},\lambda_{2} \in \Hom_{F}(u,Z)$, there is an $\mc{A}_{\lambda_{2}}$-equivariant equivalence 
\begin{equation*}
 \pi^{(\lambda_{0},\lambda_{1},\lambda_{2})}\colon \tn{Gr}_{Z \subset G}^{e,(\lambda_{0},\lambda_{1},\lambda_{2})} \to \tn{Gr}_{G/Z,\Spd(C)}^{(\lambda_{0}/\lambda_{1},\lambda_{1}/\lambda_{2})}
\end{equation*}
making the following diagram commute
    \begin{equation}\label{eq:Grconvdiag}
    \begin{tikzcd}
        \tn{Gr}_{Z \subset G}^{e,(\lambda_{0},\lambda_{1},\lambda_{2})} \arrow{r} \arrow["\pi^{(\lambda_{0},\lambda_{1},\lambda_{2})}"]{d} &   \tn{Gr}_{Z \subset G}^{e,(\lambda_{0},\lambda_{2})} \arrow{d} \\
       \tn{Gr}_{G/Z,\Spd(C)}^{(\lambda_{0}/\lambda_{1},\lambda_{1}/\lambda_{2})} \arrow{r} & \tn{Gr}_{G/Z,\Spd(C)}^{\lambda_{0}/
\lambda_{2}},
    \end{tikzcd}
    \end{equation}
    where the horizontal maps are convolution and the right-hand vertical map is from Proposition \ref{prop:mainGrisom}.
    \end{lem}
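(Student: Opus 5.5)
The plan is to define $\pi^{(\lambda_{0},\lambda_{1},\lambda_{2})}$ by the same recipe as $\pi^{(\lambda_{0},\lambda_{1})}$ in Proposition \ref{prop:Grquot}. We then reduce both equivariance and the equivalence property to Proposition \ref{prop:mainGrisom}, using the fibration structure of the convolution Grassmannians.

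\textbf{Construction.} Let $(\cP_{0},\cP_{1},(f_{0},f_{1}))$ be an $S$-point of $\tn{Gr}_{Z \subset G}^{e,(\lambda_{0},\lambda_{1},\lambda_{2})}$, so $\cP_{2}=\widetilde{\Q}_{\lambda_{2},S}$. Apply the Hecke-level quotient map $\pi$ of Proposition \ref{prop:Hckquot}. This gives a point $(\ov{\cP_{0}},\ov{\cP_{1}},\ov{\cP_{2}},(g_{0},g_{1}))$ of $\Hck^{\loc,2}_{G/Z}$, which lies in the $(\lambda_{0}/\lambda_{1},\lambda_{1}/\lambda_{2})$-component by Lemma \ref{lem:outer-faces}. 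Then post-compose $g_{1}$ with the trivialization $\ov{s(\lambda_{2})}_{*}(\tilde{\psi})\colon \ov{\widetilde{\Q}_{\lambda_{2},S}} \xrightarrow{\sim} \underline{\ov{G}_{S}}$, exactly as in Proposition \ref{prop:Grquot}, to land in $\tn{Gr}_{G/Z,\Spd(C)}\tilde{\times}\tn{Gr}_{G/Z,\Spd(C)}$.

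\textbf{Equivariance and commutativity.} $\mc{A}_{\lambda_{2}}$-equivariance follows from Remark \ref{rque:Alambda}(1): $\mc{A}_{\lambda_{2}}$ acts only through automorphisms of $\cP_{2}$, and the identification with $L^{+}(G/Z)$ is induced by the same $\tilde{\psi}$. Commutativity of \eqref{eq:Grconvdiag} is Proposition \ref{prop:Hckconv}, which says $\pi$ intertwines the extended convolution $\mu$ with the classical one. One only has to observe that post-composing with the fixed trivialization of $\ov{\cP_{2}}$ commutes with composing $g_{1}\circ g_{0}$.

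\textbf{Equivalence.} Rather than construct an inverse directly, I would use the twisted-product structure. Both sides are fibered over their last factor via $(\cP_{1},f_{1})$. On the extended side this projection lands in $\tn{Gr}_{Z \subset G}^{e,(\lambda_{1},\lambda_{2})}$; on the classical side it lands in $\tn{Gr}_{G/Z,\Spd(C)}^{\lambda_{1}/\lambda_{2}}$, after applying \eqref{eq:convGriso} and projecting. By construction, $\pi^{(\lambda_{0},\lambda_{1},\lambda_{2})}$ covers $\pi^{(\lambda_{1},\lambda_{2})}$, which is an equivalence by Proposition \ref{prop:mainGrisom}. So it suffices to show that $\pi^{(\lambda_{0},\lambda_{1},\lambda_{2})}$ induces an equivalence on fibers.

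Over a point $(\cP_{1},f_{1})$, the fiber on the extended side is the groupoid of pairs $(\cP_{0},f_{0})$ with $f_{0}\colon \Q_{\lambda_{1}/\lambda_{0},S}\cdot\cP_{0}\xrightarrow{\sim}\cP_{1}$. This is a twisted form of $\tn{Gr}_{Z \subset G}^{e,(\lambda_{0},\lambda_{1})}$. The classical fiber is the corresponding twisted form of $\tn{Gr}_{G/Z,\Spd(C)}^{\lambda_{0}/\lambda_{1}}$. To compare them, work v-locally on $S$ and trivialize $\cP_{1}\cong\widetilde{\Q}_{\lambda_{1},S}$. This is possible because, by the argument of Lemma \ref{lem:cohomvanish} (with $t$ replaced by $G$) and Remark \ref{rque:Alambda}(2), any band-localized torsor with inertial morphism $\lambda_{1}$ is v-locally isomorphic to $\widetilde{\Q}_{\lambda_{1},S}$. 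After this trivialization, the fiber map becomes $\pi^{(\lambda_{0},\lambda_{1})}$ twisted by an element of $\mc{A}_{\lambda_{1}}$, and it is an equivalence by Proposition \ref{prop:mainGrisom} together with the $\mc{A}_{\lambda_{1}}$-equivariance of Remark \ref{rque:Alambda}(1).

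The main obstacle is this fiberwise identification. One must check that the fiber map really agrees with $\pi^{(\lambda_{0},\lambda_{1})}$ once $\cP_{1}$ is trivialized. The subtlety is that $\pi$ uses the isomorphisms $\varphi$ of \eqref{eq:varphi} in the middle, and the choice of trivialization must be compatible with $\tilde{\psi}$. I expect this to come down to the cocycle bookkeeping already carried out in the proof of Proposition \ref{prop:Hckconv}. Finally, Lemma \ref{lem:Grrig} and rigidity of the fibers let us descend the fiberwise equivalence to a global equivalence of v-stacks.
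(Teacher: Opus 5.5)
Your proposal is correct. The construction, the $\mc{A}_{\lambda_{2}}$-equivariance and the commutativity of \eqref{eq:Grconvdiag} agree with the paper; the paper redoes the cocycle computation of Proposition \ref{prop:Hckconv} rather than citing it, which is cosmetic.

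The real difference is how you show that $\pi^{(\lambda_{0},\lambda_{1},\lambda_{2})}$ is an equivalence. You fiber over the second factor $(\cP_{1},f_{1})$, trivialize $\cP_{1}\cong\widetilde{\Q}_{\lambda_{1},S}$ v-locally, match the fibers with $\pi^{(\lambda_{0},\lambda_{1})}$, and then descend. The paper instead observes that the twisted product is not really twisted. Since $f_{1}$ identifies $\Q_{\lambda_{2}/\lambda_{1},S}\cdot\cP_{1}$ with $\widetilde{\Q}_{\lambda_{2},S}$ on the punctured disk, the fiber over $(\cP_{1},f_{1})$ is canonically $\tn{Gr}^{e,(\lambda_{0},\lambda_{2})}_{Z\subset G}$. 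Hence (convolution, second projection) is an isomorphism \eqref{eq:econvGriso} onto $\tn{Gr}^{e,(\lambda_{0},\lambda_{2})}_{Z\subset G}\times\tn{Gr}^{e,(\lambda_{1},\lambda_{2})}_{Z\subset G}$, compatible with \eqref{eq:convGriso}. The claim then follows from two applications of Proposition \ref{prop:mainGrisom}.

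What the paper's route buys: it needs no descent step and no local triviality of band-localized torsors over a general $S$. Your route does need that input, because Lemma \ref{lem:cohomvanish} only covers geometric points; you would supply it via Remark \ref{rque:Alambda}(2) together with v-local triviality of $G$-torsors on $\bbD_{S}$.

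What your route buys: in the paper, compatibility of the two product decompositions in the first factor is exactly the commutativity of \eqref{eq:Grconvdiag}, so its equivalence argument rests on the cocycle computation. Yours does not. After trivializing $\cP_{1}$ by some $\alpha$, the fiber map is literally $(\cP_{0},f_{0})\mapsto\pi^{(\lambda_{0},\lambda_{1})}(\cP_{0},\alpha\circ f_{0})$, with no cocycle $z$ appearing. So the bookkeeping you flagged as the main obstacle does not actually arise.
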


    \begin{proof}
We first define $\pi^{(\lambda_{0},\lambda_{1},\lambda_{2})}$ and prove it is an equivalence. It is given by sending the object $(\cP_{0},\cP_{1}, (f_{0}, f_{1})) \in \tn{Gr}_{Z \subset G}^{e,(\lambda_{0},\lambda_{1},\lambda_{2})} (S)$ to 
\begin{equation*}
(\ov{\cP_{0}},\ov{\cP_{1}},\bar{f}_{0} \circ \varphi_{\lambda_{1}/\lambda_{0},\cP_{0}},\ov{s(\lambda_{2})}_{*}(\tilde{\psi})\circ (\bar{f}_{1} \circ \varphi_{\lambda_{2}/\lambda_{1},\cP_{1}}))
\end{equation*}
where again we are using a fixed auxiliary $\bbD_{C}$-trivialization $\tilde{\psi}$ of $\mc{T}_{C}^{\loc}$. As is the case (the isomorphism \eqref{eq:convGriso}) for the classical convolution affine Grassmannian, combining convolution with the projection gives a canonical isomorphism
   \begin{equation}\label{eq:econvGriso}
        \tn{Gr}_{Z \subset G}^{e,(\lambda_{0},\lambda_{1},\lambda_{2})} \xrightarrow{\sim}  \tn{Gr}_{Z \subset G}^{e,(\lambda_{0},\lambda_{2})} \times \tn{Gr}_{Z \subset G}^{e,(\lambda_{1},\lambda_{2})};
   \end{equation}
   the proof of this isomorphism claim is straightforward and left to the reader.

   The fact that $\pi^{(\lambda_{0},\lambda_{1},\lambda_{2})}$ is an equivalence follows from the commutativity of the following diagram:
   \[
\begin{tikzcd}
     \tn{Gr}_{Z \subset G}^{e,(\lambda_{0},\lambda_{1},\lambda_{2})} \arrow["\eqref{eq:econvGriso}"]{r} \arrow["\pi^{(\lambda_{0},\lambda_{1},\lambda_{2})}"]{d} & \tn{Gr}_{Z \subset G}^{e,(\lambda_{0},\lambda_{2})} \times \tn{Gr}_{Z \subset G}^{e,(\lambda_{1},\lambda_{2})} \arrow{d} \\     \tn{Gr}_{G/Z,\Spd(C)}^{(\lambda_{0}/\lambda_{1},\lambda_{1}/\lambda_{2})} \arrow["\eqref{eq:convGriso}"]{r} & \tn{Gr}_{G/Z,\Spd(C)}^{\lambda_{0}/\lambda_{2}} \times \tn{Gr}_{G/Z,\Spd(C)}^{\lambda_{1}/\lambda_{2}},
\end{tikzcd}
   \]
   where all other maps are isomorphisms by the above arguments and Proposition \ref{prop:mainGrisom}.
   
   It remains to show that the diagram \eqref{eq:Grconvdiag} commutes. Going right and then down gives
   \begin{equation*}
       (\ov{\cP_{0}}, \ov{s(\lambda_{2})}_{*}(\tilde{\psi}) \circ \ov{z(\lambda_{1}/\lambda_{0},\lambda_{2}/\lambda_{1})_{*}(\psi^{-1}) \circ f_{1} \circ (\Q_{\lambda_{2}/\lambda_{1},S} \cdot f_{0})} \circ \varphi_{\lambda_{2}/\lambda_{0},\cP_{0}}).
   \end{equation*}
   Going the other direction yields
   \begin{equation*}
        (\ov{\cP_{0}},  \ov{s(\lambda_{2})}_{*}(\tilde{\psi}) \circ \bar{f}_{1} \circ \varphi_{\lambda_{2}/\lambda_{1},\cP_{1}} \circ \bar{f}_{0} \circ \varphi_{\lambda_{1}/\lambda_{0},\cP_{0}}),
   \end{equation*}
    and so again we are done by the definition of the cocycle $z$ in the same way as in the proof of Proposition \ref{prop:Hckconv} (the Hecke stack analogue of this statement).        
    \end{proof}

    \begin{rque}
        One can define $\tn{Gr}_{Z \subset G}^{e,(\lambda_{0}, \dots, \lambda_{n})}$ in a manner completely analogous to Definition \ref{defi:convGre}. The same proof as that of Lemma \ref{lem:Grcompconv} shows that there is an equivalence 
        \begin{equation*}
        \tn{Gr}_{Z \subset G}^{e,(\lambda_{0}, \dots, \lambda_{n})} \to \tn{Gr}_{G/Z,\Spd(C)}^{(\lambda_{0}/\lambda_{1}, \dots, \lambda_{n-1}/\lambda_{n})}
       \end{equation*}
        for any $n \geq 1$. Moreover, we have an identification 
        \begin{equation*}
      \mc{A}_{\lambda_{n}} \backslash \tn{Gr}_{Z \subset G}^{e,(\lambda_{0}, \dots, \lambda_{n})}  = \Hck_{Z \subset G}^{e,\loc,n,(\lambda_{0}, \dots, \lambda_{n})} 
        \end{equation*}
        which is compatible with any of the convolution maps for $1 \leq i \leq n-1$.
    \end{rque}

 For any small v-stack $S \to \Spd(C)$ we set $\Hck_{Z \subset G,S}^{e,\loc,1} := \Hck_{Z \subset G}^{e,\loc,1} \times_{\Spd(C)} S$, similarly with $\Hck_{Z \subset G,S}^{e,\loc,1,(\lambda_{0},\lambda_{1})}$ and $\tn{Gr}_{Z \subset G,S}^{e}$. One can define perverse sheaves on $\Hck_{Z \subset G,S}^{e,\loc,1}$ in a manner which is a verbatim analogue of \cite[Definition/Proposition VI.7.1]{Geometrization}, whose proof is identical because of the isomorphism of affine Grassmannians provided by Proposition \ref{prop:mainGrisom}:

 \begin{prop}
     Let $S \to \Spd(C)$ be a small v-stack. For any $\lambda_{0},\lambda_{1} \in \Hom_{F}(u,Z)$, there is a unique $t$-structure $({^{p}}D^{\leq0},{^{p}D}^{\geq 0})$ on $D_{\tn{\'{e}t}}(\Hck_{Z \subset G,S}^{e,\loc,1,(\lambda_{0},\lambda_{1})},\Lambda)$ such that 
     \begin{equation*}
     A \in {^{p}D}^{\leq 0}_{\tn{\'{e}t}}(\Hck_{Z \subset G,S}^{e,\loc,1,(\lambda_{0},\lambda_{1})},\Lambda)
     \end{equation*}
     if and only if for all geometric points $\bar{s}$ of $S$ and all $\nu \in X_{*}(T/Z)^{+,\lambda_{0}/\lambda_{1}}$ the pullback of $A$ to each open Schubert cell $\Hck_{Z \subset G,\bar{s},\nu}^{e,\loc,1,(\lambda_{0},\lambda_{1})}$ of $\Hck_{Z \subset G,\bar{s}}^{e,\loc,1,(\lambda_{0},\lambda_{1})}$ (as in Remark \ref{rque:strat}) sits in cohomological degrees $\leq - \langle 2\rho, \nu \rangle$, where $\rho$ is the half-sum of the positive roots of $G$.
 \end{prop}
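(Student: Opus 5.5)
The plan is to transport the statement to the classical one for $G/Z$ and then invoke \cite[Definition/Proposition VI.7.1]{Geometrization}. By Remark \ref{rque:Alambda}(2) we have $\mc{A}_{\lambda_{1}} \cong L^{+}G$, compatibly with the maps to $L^{+}(G/Z)$, and we have the identification $\Hck_{Z \subset G}^{e,\loc,1,(\lambda_{0},\lambda_{1})} = \mc{A}_{\lambda_{1}} \backslash \tn{Gr}_{Z \subset G}^{e,(\lambda_{0},\lambda_{1})}$. Combining these with Proposition \ref{prop:mainGrisom} and the equivariance of Remark \ref{rque:Alambda}(1), we get, after base change to $S$,
\begin{equation*}
\Hck_{Z \subset G,S}^{e,\loc,1,(\lambda_{0},\lambda_{1})} \cong L^{+}G \backslash \tn{Gr}_{G/Z,S}^{\lambda_{0}/\lambda_{1}}.
\end{equation*}
Here $L^{+}G$ acts through $L^{+}G \to L^{+}(G/Z)$. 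This isomorphism carries the Schubert cells of Remark \ref{rque:strat} to the classical Schubert cells $L^{+}G\backslash \tn{Gr}_{G/Z,\nu}$ for $\nu \in X_{*}(T/Z)^{+,\lambda_{0}/\lambda_{1}}$.

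Next I compare $L^{+}G\backslash \tn{Gr}_{G/Z}$ with $L^{+}(G/Z)\backslash\tn{Gr}_{G/Z}$, i.e. with the classical local Hecke stack $\Hck^{\loc,1}_{G/Z,S}$ restricted to the component $\lambda_{0}/\lambda_{1}$. Consider the natural map
\begin{equation*}
q \colon L^{+}G \backslash \tn{Gr}_{G/Z,S} \to L^{+}(G/Z)\backslash \tn{Gr}_{G/Z,S}.
\end{equation*}
By Lemma \ref{lem:Grgerbe}, the two actions have the same orbits and $L^{+}G$ has connected stabilizers. Hence $q$ is a bijection on strata. On each Schubert cell, $q$ is a map between classifying stacks of stabilizers $B(L^{+}G)_{x} \to B(L^{+}(G/Z))_{x}$, with pro-unipotent or connected fibers. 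Its fibers look like $L^{+}(G/Z)/L^{+}G$-type quotients, which in equal characteristic may involve $H^{1}_{\tn{fppf}}(\bbD,Z)$, but they are cohomologically trivial for $\Lambda$ killed by $n$ prime to $p$.

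The hardest step is to show that $q^{*}$ is fully faithful stratum by stratum, with the degree conventions preserved. I would try to prove this by reducing, as in \cite[Section VI.4]{Geometrization}, to the $L^{+}G$-action factoring through a finite-dimensional jet quotient. The kernel of $L^{+}G_{x}\to L^{+}(G/Z)_{x}$ is then $Z$ times something connected, and finite $Z$ of order prime to $p$ contributes no cohomology on $\Lambda$-sheaves after passing to invariants. Alternatively, one can argue directly that the statement only concerns the stratification and the dimensions $\langle 2\rho,\nu\rangle$, as the proof of VI.7.1 does.

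Given this, I define ${^{p}D}^{\leq 0}$ by the stated condition on stalks over geometric points of $S$ and each open cell, and ${^{p}D}^{\geq 0}$ as its right orthogonal. Existence and uniqueness then follow verbatim from the gluing argument of \cite[Definition/Proposition VI.7.1]{Geometrization}. The inputs there are: the stratification is locally finite on the quasicompact pieces; each stratum $\Hck_{\bar{s},\nu}$ is a classifying stack of a connected group over a point, so $D^{\leq 0}$ on strata is the standard $t$-structure shifted by $\langle 2\rho,\nu\rangle$; and recollement for open/closed inclusions. All of these transfer through the isomorphism above. Uniqueness is automatic, since ${^{p}D}^{\leq 0}$ is prescribed and determines the $t$-structure.
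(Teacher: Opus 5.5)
Your core argument is the paper's: transport the Schubert stratification through $\pi^{(\lambda_{0},\lambda_{1})}$, using $\Hck^{e,\loc,1,(\lambda_{0},\lambda_{1})}_{Z\subset G,S}\cong L^{+}G\backslash \tn{Gr}^{\lambda_{0}/\lambda_{1}}_{G/Z,S}$ (Proposition \ref{prop:mainGrisom} and Remark \ref{rque:Alambda}). Then glue the shifted standard $t$-structures on strata by recollement exactly as in \cite[Definition/Proposition VI.7.1]{Geometrization}. Uniqueness holds because the aisle determines the $t$-structure. The paper says no more than that this proof goes through verbatim thanks to Proposition \ref{prop:mainGrisom}.

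You should, however, delete the ``hardest step'' comparing with $L^{+}(G/Z)\backslash\tn{Gr}_{G/Z}$.

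\textbf{It is not used.} The gluing only needs three things:
\begin{itemize}
\item the locally closed strata and their closure relations;
\item finiteness on bounded objects;
\item the shifts $\langle 2\rho,\nu\rangle$.
\end{itemize}
Which group acts is irrelevant; even connectedness of stabilizers is not needed for the $t$-structure to exist.

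\textbf{As stated, it is false.} In mixed characteristic $L^{+}G\to L^{+}(G/Z)$ is surjective with kernel $Z$. So $q$, and its restriction to each stratum, is a $Z$-gerbe, and $q_{*}\Lambda$ has stalks $R\Gamma(BZ,\Lambda)$. For $Z=\mu_{\ell}$ and $\Lambda=\mathbb{F}_{\ell}$ this is nonzero in every degree. Hence $\Lambda\to q_{*}q^{*}\Lambda$ is not an isomorphism and $q^{*}$ is not fully faithful. The condition that would make your argument work is $|Z|\in\Lambda^{\times}$, not $(|Z|,p)=1$. The paper deliberately allows $\ell\mid |Z|$; see the remark in \S\ref{sec:Extended-Hecke-Action} noting that derived-level constructions need not be compatible with pullback along this gerbe.

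The comparison with $L^{+}(G/Z)$ is valid only on perverse hearts, where connected stabilizers (Lemma \ref{lem:Grgerbe}) suffice. That is Corollary \ref{prop:perveequiv1}, the next step in the paper, not an input to this proposition.
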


 The above proposition, along with the decomposition 
 \begin{equation*}
 \Hck_{Z \subset G,S}^{e,\loc,1} = \bigsqcup_{(\lambda_{0},\lambda_{1}) \in \Hom_{F}(u,Z)^{2}}\Hck_{Z \subset G,S}^{e,\loc,1,(\lambda_{0},\lambda_{1})}
 \end{equation*}
 lets us define (relative) perverse sheaves on  $\Hck_{Z \subset G,S}^{e,\loc,1}$ for $S$ as above.

     \begin{corol}\label{prop:perveequiv1}
For a fixed $\lambda_{0}, \lambda_{1} \in \Hom_{F}(u,Z)$, the pullback along
\begin{equation*}
\Hck^{e,\loc,1,(\lambda_{0},\lambda_{1})}_{Z \subset G} \xrightarrow{\pi} \Hck^{\loc,1,\lambda_{0}/
\lambda_{1}}_{G/Z,\Spd(C)}
\end{equation*}
induces an equivalence of categories
\begin{equation}\label{eq:pervequivbis}
    \tn{Perv}(\Hck^{\loc,1,\lambda_{0}/
\lambda_{1}}_{G/Z,\Spd(C)},\Lambda) \to \tn{Perv}(\Hck^{e,\loc,1,(\lambda_{0},\lambda_{1})}_{Z \subset G},\Lambda).
\end{equation}
\end{corol}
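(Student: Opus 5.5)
The plan is to reduce the statement to an equivalence on affine Grassmannians equivariant for a connected group, and then use connectedness of stabilizers to compare equivariant perverse sheaves.

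First I would write both sides as quotient stacks. By construction we have $\Hck_{Z \subset G}^{e,\loc,1,(\lambda_{0},\lambda_{1})} = \mc{A}_{\lambda_{1}} \backslash \tn{Gr}_{Z \subset G}^{e,(\lambda_{0},\lambda_{1})}$. Similarly, $\Hck^{\loc,1,\lambda_{0}/\lambda_{1}}_{G/Z,\Spd(C)} = L^{+}(G/Z) \backslash \tn{Gr}^{\lambda_{0}/\lambda_{1}}_{G/Z,\Spd(C)}$. By Proposition \ref{prop:Grquot}, $\pi$ is induced by the equivariant map $\pi^{(\lambda_{0},\lambda_{1})}$. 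This map is equivariant along the homomorphism $\mc{A}_{\lambda_{1}} \to L^{+}(G/Z)$ of Remark \ref{rque:Alambda}(1). By Remark \ref{rque:Alambda}(2), that homomorphism identifies with the natural map $L^{+}G \to L^{+}(G/Z)$. By Proposition \ref{prop:mainGrisom}, the underlying map of Grassmannians is an isomorphism. So $\pi$ becomes the natural map
\begin{equation*}
L^{+}G \backslash \tn{Gr}^{\lambda_{0}/\lambda_{1}}_{G/Z,\Spd(C)} \to L^{+}(G/Z) \backslash \tn{Gr}^{\lambda_{0}/\lambda_{1}}_{G/Z,\Spd(C)}.
\end{equation*}
Under this identification, the Schubert stratifications of Remark \ref{rque:strat} match. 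Hence the perversity conditions (degrees $\leq -\langle 2\rho,\nu\rangle$ on each cell) correspond, and pullback along $\pi$ is $t$-exact; it sends perverse sheaves to perverse sheaves.

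Next I would show full faithfulness and essential surjectivity by the standard approach of \cite[Section VI.7]{Geometrization}. Perverse sheaves on either quotient are equivalently perverse sheaves on the Grassmannian, with equivariance descent data. To make sense of this, I would pass to finite-dimensional quotients $L^{+}G \to L^{\leq m}G$ acting on a bounded piece $\tn{Gr}_{\leq \nu}$, exactly as in loc. cit. Lemma \ref{lem:Grgerbe} supplies the two key facts. First, $L^{+}G$ and $L^{+}(G/Z)$ have the same orbits, namely the Schubert cells. Second, $L^{+}G$ has connected stabilizers.

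From these facts, the argument is as follows. Perverse sheaves equivariant for a group with connected stabilizers form a full subcategory of all perverse sheaves, because equivariance is then a property and not extra structure; this is the usual argument via $\mathcal{H}^{0}$ of the action and projection maps, using connected fibers. This holds for both groups, since the stabilizers in $L^{+}(G/Z)$ are images of connected groups. Also, a perverse sheaf that is constructible for the Schubert stratification is automatically equivariant for any connected group whose orbits are these strata. Both equivariant categories are therefore identified with the same full subcategory of $\tn{Perv}(\tn{Gr}^{\lambda_{0}/\lambda_{1}}_{G/Z,\Spd(C)})$: the Schubert-constructible perverse sheaves. Pullback along $\pi$ is compatible with these inclusions.

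The main obstacle is making this "equivariance is a property" argument rigorous in the v-stack / pro-group setting of \cite{Geometrization}, over the base $\Spd(C)$. In that setting, $L^{+}G$ is not an honest group of finite type, and the relevant map $L^{+}G\backslash\tn{Gr} \to L^{+}(G/Z)\backslash\tn{Gr}$ is a gerbe-like map with fibers $B(\ker)$. I would try to handle this by noting that the kernel of $L^{+}G \to L^{+}(G/Z)$ is $Z$, a finite group. Thus $\pi$ is, v-locally on the target, a $BZ$-gerbe composed with a map whose fibers are quotients of $L^{+}(G/Z)$ by the connected image of $L^{+}G$. The finite-group gerbe acts trivially on sheaves coming from the connected stabilizers, since $Z \subset (L^{+}G)_{X}^{\circ}$ by the proof of Lemma \ref{lem:Grgerbe}. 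Combining this with the connected-fiber part, I would deduce that $\pi^{*}$ is fully faithful on perverse sheaves, via the argument of \cite[Proposition VI.7.1]{Geometrization} on truncated quotients.
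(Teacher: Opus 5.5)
Your proposal follows the paper's proof. Both identify $\Hck^{e,\loc,1,(\lambda_0,\lambda_1)}_{Z\subset G}$ with $\mc{A}_{\lambda_1}\backslash \mathrm{Gr}^{\lambda_0/\lambda_1}_{G/Z,\Spd(C)}$ via Proposition \ref{prop:mainGrisom}, replace $\mc{A}_{\lambda_1}$ by $L^+G$ using Remark \ref{rque:Alambda}, and conclude from Lemma \ref{lem:Grgerbe} (same orbits, connected stabilizers); your extra paragraphs spell out what the paper leaves to that lemma.

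Two of your side claims are stated too generally and should be tightened:
\begin{enumerate}
\item Forgetting equivariance is fully faithful because $L^+G$ and $L^+(G/Z)$ are themselves connected, not because their stabilizers are.
\item Orbit-constructible perverse sheaves are not automatically equivariant for an arbitrary connected group with those orbits; Kummer sheaves for $\mathbb{G}_m$ acting on itself are a counterexample. So essential surjectivity is better drawn from your final observation: $Z\subset (L^+G)_X^\circ$, hence $Z$ acts trivially, and the image of the stabilizers is connected.
\end{enumerate}
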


\begin{proof}
   Proposition \ref{prop:mainGrisom} implies (cf. Corollary \ref{rque:sheafequiv}) that pullback along $\pi$ gives an equivalence 
    \begin{equation*}
    \tn{Perv}(\Hck^{\loc,1,\lambda_{0}/\lambda_{1}}_{G/Z,\Spd(C)},\Lambda) \xrightarrow{\sim} \tn{Perv}(\mc{A}_{\lambda_{1}} \backslash \tn{Gr}_{G/Z,\Spd(C)}^{\lambda_{0}/\lambda_{1}},\Lambda).
\end{equation*}

We claim that the pullback map
\begin{equation*}
  \tn{Perv}(L^{+}(G/Z) \backslash \tn{Gr}_{G/Z,\Spd(C)}^{\lambda_{0}/\lambda_{1}},\Lambda)  \to \tn{Perv}(\mc{A}_{\lambda_{1}} \backslash \tn{Gr}_{G/Z,\Spd(C)}^{\lambda_{0}/\lambda_{1}},\Lambda)
\end{equation*}
is also an equivalence; it suffices by Remark \ref{rque:Alambda} to prove this after identifying $\mc{A}_{\lambda_{1}}$ with $L^{+}G$. From here, we deduce the result from Lemma \ref{lem:Grgerbe}.
\end{proof}

\begin{defi}\label{defi:satake-category}
   We define the Satake category
    \begin{equation*}
    \tn{Sat}(\Hck_{Z \subset G}^{e,\loc,1,(\lambda_{0},\lambda_{1})},\Lambda) \subset D_{\tn{\'{e}t}}(\tn{Gr}_{Z \subset G}^{e,(\lambda_{0},\lambda_{1})},\Lambda)
    \end{equation*}
    to be the preimage of $\tn{Sat}(\Hck_{G/Z,\Spd(C)}^{\loc,1,\lambda_{0}/\lambda_{1}},\Lambda)$ (in the sense of \cite[Definition VI.7.8]{Geometrization})
    under pullback by the isomorphism $\pi^{(\lambda_{0},\lambda_{1})}$. 
    
    \begin{rque} It is straightforward to check (using Proposition \ref{prop:mainGrisom}) that this is the same as the full subcategory of all objects in $\mc{D}_{\tn{\'{e}t}}(\Hck_{Z \subset G}^{e,\loc,1,(\lambda_{0},\lambda_{1})},\Lambda)^{\tn{bd}}$ that are universally locally acyclic and flat perverse. 
    \end{rque}
    
    Similarly, we define
    \begin{equation*}
\tn{Sat}(\Hck_{Z \subset G}^{e,\loc,1},\Lambda) = \prod_{(\lambda_{0},\lambda_{1}) \in \Hom_{F}(u,Z)^{2}} \tn{Sat}(\Hck_{Z \subset G}^{e,\loc,1,(\lambda_{0},\lambda_{1})},\Lambda)  \subset \mc{D}_{\tn{\'{e}t}}(\Hck_{Z \subset G}^{e,\loc,1},\Lambda)^{\tn{bd}},
    \end{equation*}
    which again we can characterize as all bounded objects that are universally locally acyclic and flat perverse. 
\end{defi}

 \begin{prop}\label{prop:pervconv} Fix $\lambda_{0}, \lambda_{1},\lambda_{2} \in \Hom_{F}(u,Z)$.
     \begin{enumerate}
         \item{If $A \in {^{p}D}^{\leq 0}_{\tn{\'{e}t}}(\Hck_{Z \subset G}^{e,\loc,1,(\lambda_{0},\lambda_{1})},\Lambda)^{\tn{bd}}$ and $A' \in {^{p}D}^{\leq 0}_{\tn{\'{e}t}}(\Hck_{Z \subset G}^{e,\loc,1,(\lambda_{1},\lambda_{2})},\Lambda)^{\tn{bd}}$ then $A \star A' \in {^{p}D}^{\leq 0}_{\tn{\'{e}t}}(\Hck_{Z \subset G}^{e,\loc,1,(\lambda_{0},\lambda_{2})},\Lambda)^{\tn{bd}}$.}
\item{If $A,A' \in \tn{Sat}(\Hck_{Z \subset G}^{e,\loc,1},\Lambda)$ then $A \star A' \in \tn{Sat}(\Hck_{Z \subset G}^{e,\loc,1},\Lambda)$.}
         \item{There is a commutative convolution diagram
    \[
    \begin{tikzcd}
       \tn{Sat}(\Hck^{e,\loc,1,(\lambda_{0},\lambda_{1})}_{Z \subset G},\Lambda)\times \tn{Sat}( \Hck^{e,\loc,1,(\lambda_{1},\lambda_{2})}_{Z \subset G},\Lambda) \arrow{r} \arrow["\sim"]{d} &  \tn{Sat}(\tn{Hck}_{Z \subset G}^{e,\loc,1,(\lambda_{0},\lambda_{2})},\Lambda) \arrow["\sim"]{d} \\
        \tn{Sat}(\Hck^{\loc,1,\lambda_{0}/
\lambda_{1}}_{G/Z,\Spd(C)},\Lambda) \times \tn{Sat}(\Hck^{\loc,1,\lambda_{1}/
\lambda_{2}}_{G/Z,\Spd(C)},\Lambda) \arrow{r}  & \tn{Sat}(\Hck^{\loc,1,\lambda_{0}/\lambda_{2}
}_{G/Z,\Spd(C)},\Lambda).
\end{tikzcd}
\]}

     \end{enumerate}
 \end{prop}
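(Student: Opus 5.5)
The plan is to transport everything to the classical Hecke stack for $G/Z$ via $\pi$, where all three statements are known from \cite[Section VI.8]{Geometrization}.

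First, define the convolution $A \star A'$ on the extended side in the usual way. Pull back $A \boxtimes A'$ along the map
\begin{equation*}
\Hck^{e,\loc,2,(\lambda_0,\lambda_1,\lambda_2)}_{Z\subset G}\to \Hck^{e,\loc,1,(\lambda_0,\lambda_1)}_{Z\subset G}\times_{B^eL^+G}\Hck^{e,\loc,1,(\lambda_1,\lambda_2)}_{Z\subset G},
\end{equation*}
which is an equivalence by the usual twisted-product description. Then take $\mu_!$ along the convolution map of Section \ref{sec:Hckconv}.

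Next, use Lemma \ref{lem:outer-faces} together with Proposition \ref{prop:Hckconv}. Together they show that $\pi$ intertwines both the two projections and the multiplication $\mu$ between the extended and classical $2$-legged Hecke stacks. After passing to the $\mathcal{A}_{\lambda_2}$-quotient presentations, the key point is a Cartesian square. By Lemma \ref{lem:Grcompconv} and the equivalence \eqref{eq:econvGriso}, the square with top row $\mu\colon \tn{Gr}^{e,(\lambda_0,\lambda_1,\lambda_2)}_{Z\subset G}\to \tn{Gr}^{e,(\lambda_0,\lambda_2)}_{Z\subset G}$ and bottom row the classical convolution on $\tn{Gr}_{G/Z,\Spd(C)}$ has both vertical maps isomorphisms. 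It is therefore Cartesian, and proper base change gives $\pi^*(B\star B')\cong \pi^*B\star\pi^*B'$ for sheaves $B,B'$ on the classical side.

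One subtlety must be handled here. The vertical maps are only $\mathcal{A}_{\lambda_2}$-equivariant through $\mathcal{A}_{\lambda_2}\cong L^+G\to L^+(G/Z)$, not $L^+(G/Z)$-equivariant. So one must check that the equivalence of Corollary \ref{prop:perveequiv1} (which uses Lemma \ref{lem:Grgerbe}) extends to the whole bounded derived categories appearing in the convolution, not just to the hearts. For statement (1), it suffices to work with the underlying sheaves on affine Grassmannians and equivariance for the smaller group.

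With this compatibility in hand, the three statements follow in order.

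For (1), write $A=\pi^*B$ and $A'=\pi^*B'$. The perverse $t$-structure on the extended side is defined via the Schubert stratification transported along $\pi^{(\lambda_0,\lambda_1)}$ (Remark \ref{rque:strat}), with the same degree bounds $-\langle 2\rho,\nu\rangle$. Hence $B,B'\in{}^pD^{\le0}$, and the classical statement \cite[Proposition VI.8.1]{Geometrization} for $G/Z$ gives $B\star B'\in{}^pD^{\le 0}$. Pulling back gives the claim.

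For (2), ULA-ness and flat perversity are defined by transport along the isomorphism $\pi^{(\lambda_0,\lambda_1)}$ (Definition \ref{defi:satake-category}). The classical preservation of $\tn{Sat}$ under convolution \cite[Proposition VI.8.2]{Geometrization} therefore transfers directly.

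For (3), the square in the statement is exactly the base-change isomorphism above, with the vertical equivalences given by Corollary \ref{prop:perveequiv1}.

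The main obstacle I expect is the 2-categorical bookkeeping. The diagram in Lemma \ref{lem:Grcompconv} relates the extended and classical convolution Grassmannians only via the auxiliary trivialization $\tilde\psi$ and the cocycle identity for $z$. One must verify that these identifications are compatible with the $\mathcal{A}$-actions on the fibres over $B^eL^+G$, so that descent to the Hecke stacks is canonical. I would first do this at the level of $\tn{Gr}$, where there are no automorphisms by Lemma \ref{lem:Grrig}, and then descend.
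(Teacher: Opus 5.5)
Your proposal follows essentially the same route as the paper: Lemma \ref{lem:outer-faces} and the Cartesian square from Proposition \ref{prop:mainGrisom} and Lemma \ref{lem:Grcompconv} let you compute extended convolution by base change from the classical convolution for $G/Z$, and then everything reduces to \cite[Proposition VI.8.1]{Geometrization}. The one step to repair is in (1): you cannot write an arbitrary $A \in {}^{p}D^{\leq 0}$ as $\pi^{*}B$, because $\pi$ is (in mixed characteristic) a $Z$-gerbe, $\pi^{*}$ is an equivalence only on perverse hearts, and the extension to bounded derived categories that you say ``must be checked'' fails in general when $\ell \mid |Z|$; instead, filter $A$ and $A'$ by their shifted perverse cohomology sheaves ${}^{p}H^{i}(A)[-i]$, $i \leq 0$, each of which is a pullback by Corollary \ref{prop:perveequiv1}, and conclude using exactness of $\star$ in each variable, the compatibility $\pi^{*}(B \star B') \cong \pi^{*}B \star \pi^{*}B'$, and stability of ${}^{p}D^{\leq 0}$ under extensions and nonnegative shifts (or, as your fallback suggests, run the argument of loc.\ cit.\ directly on $\mathrm{Gr}^{e,(\lambda_{0},\lambda_{1},\lambda_{2})}_{Z \subset G}$).
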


 \begin{proof}
     Combining Lemma \ref{lem:outer-faces} with Corollary \ref{prop:perveequiv1} and then using the commutative diagram
     \[
     \begin{tikzcd}
     \Hck_{Z \subset G}^{e,\loc,2,(\lambda_{0},\lambda_{1},\lambda_{2})} \arrow{d} \arrow{r} & \Hck_{Z \subset G}^{e,\loc,1,(\lambda_{0},\lambda_{2})} \arrow{d} \\
\Hck_{G/Z,\Spd(C)}^{\loc,2,(\lambda_{0}/\lambda_{1},\lambda_{1}/\lambda_{2})} \arrow{r} & \Hck_{G/Z,\Spd(C)}^{\loc,1,\lambda_{0}/\lambda_{2}},
     \end{tikzcd}
     \]
     which is Cartesian by Proposition \ref{prop:mainGrisom} and Lemma \ref{lem:Grcompconv}, implies that convolution of perverse sheaves on $\Hck_{Z \subset G}^{e,1,\loc}$ can be computed using the analogous convolution on $\Hck_{G/Z,\Spd(C)}^{\loc,1}$, giving statement (3).  All of the remaining claims therefore follow from \cite[Proposition VI.8.1]{Geometrization}.
 \end{proof}

Passing to the dual side, denote by $\widehat{Z}$ the $\Lambda$-group scheme Cartier dual to $\Hom_{\Z}(X^{*}(Z), \mathbb{Q}/\Z)$, which fits into the central extension
\begin{equation*}
    1 \to \widehat{Z} \to \widehat{G/Z} \to \widehat{G} \to 1.
\end{equation*}
 Note that there is an identification
\begin{equation*}
    X^{*}(\widehat{Z}) \xrightarrow{\sim} \Hom_{\ov{F}}(\mu,Z) \xrightarrow{\sim} \Hom_{F}(u,Z),
\end{equation*}
which we will use without comment. 

By taking the weight decomposition of $V \in \Rep_{\Lambda}(\widehat{G/Z})$ and restricting to $\widehat{Z}$ (viewed as a $\Lambda$-group scheme), this category has a natural $X^{*}(\widehat{Z})$-grading.

Consider the category 
\begin{equation*}
\bigoplus_{(\lambda_{0}, \lambda_{1}) \in \Hom_{F}(u, Z)^{2}} \tn{Rep}_{\Lambda}(\widehat{G/Z})^{\lambda_{0}/\lambda_{1}},
\end{equation*}
which carries an additive, associative, non-symmetric ``convolution'' monoidal structure determined by the formula (defined on individual summands)
\begin{equation*}
v_{(\lambda_{i},\lambda_{j})} \star w_{(\lambda_{k}, \lambda_{l})} = (v \otimes w)_{(\lambda_{i},\lambda_{l})}
\end{equation*}
if $\lambda_{j} =\lambda_{k}$ and zero otherwise.

\begin{rque}\label{rque:altmonoidal}
One can identify the monoidal category $\bigoplus_{(\lambda_{0}, \lambda_{1}) \in \Hom_{F}(u, Z)^{2}} \tn{Rep}_{\Lambda}(\widehat{G/Z})^{\lambda_{0}/\lambda_{1}}$ (with $\star$) with the category 
\begin{equation*}
\Coh^{\heartsuit, \Lambda-\tn{flat}}(\pt/\widehat{G/Z} \times_{\pt/\widehat{G}} \pt/\widehat{G/Z})
\end{equation*}
equipped with the convolution monoidal structure. This gives an alternative interpretation of the monoidal category appearing above.
\end{rque}

We obtain an analogue of the geometric Satake isomorphism:

\begin{thm}\label{thm:satake}
    We have an equivalence of monoidal categories
    \begin{equation}\label{eq:pervequiv}
        \tn{Sat}(\Hck^{e,\loc,1}_{Z \subset G},\Lambda) \xrightarrow{\sim} (\bigoplus_{(\lambda_{0}, \lambda_{1}) \in \Hom_{F}(u, Z)^{2}} \tn{Rep}_{\Lambda}(\widehat{G/Z})^{\lambda_{0}/\lambda_{1}},\star).
    \end{equation}
\end{thm}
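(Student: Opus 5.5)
The plan is to reduce everything, component by component, to the classical geometric Satake equivalence of \cite[Theorem VI.11.1]{Geometrization} for the group $G/Z$ (over $\Spd(C)$), and then check that the monoidal structures match. By Definition \ref{defi:satake-category}, $\tn{Sat}(\Hck^{e,\loc,1}_{Z \subset G},\Lambda)$ is the product over $(\lambda_{0},\lambda_{1})$ of the pieces $\tn{Sat}(\Hck^{e,\loc,1,(\lambda_{0},\lambda_{1})}_{Z \subset G},\Lambda)$. By Corollary \ref{prop:perveequiv1} and the definition, pullback along $\pi$ identifies each piece with $\tn{Sat}(\Hck^{\loc,1,\lambda_{0}/\lambda_{1}}_{G/Z,\Spd(C)},\Lambda)$.

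First, I will apply classical Satake for $G/Z$:
\begin{equation*}
\tn{Sat}(\Hck^{\loc,1}_{G/Z,\Spd(C)},\Lambda) \simeq \Rep_{\Lambda}(\widehat{G/Z}).
\end{equation*}
Here I should note that over the geometric point $\Spd(C)$ the Weil-group action plays no role. The decomposition of $\Hck^{\loc,1}_{G/Z}$ by $\pi_{0} = \pi_{1}(G/Z) \to \Hom_{\ov{F}}(\mu,Z)$ corresponds, under Satake, to the grading of $\Rep_{\Lambda}(\widehat{G/Z})$ by central characters of $\widehat{Z}$. The reason is that the connected component containing the Schubert cell of $\nu \in X_{*}(T/Z)^{+}$ carries the highest weight $\nu$, whose restriction to $\widehat{Z}$ is the image of $\nu$ in $X^{*}(\widehat{Z}) = \Hom_{\ov{F}}(\mu,Z)$. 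This is the standard compatibility of Satake with $\pi_{1}$ and the center, which I would justify via the constant term functor to $T/Z$. One must be careful that this uses the same identification \eqref{eq:utomu} as Proposition \ref{prop:Quotslope}; any sign discrepancy can be absorbed by composing with inversion on $\Hom_{F}(u,Z)$. The conclusion is an additive equivalence of each summand $\tn{Sat}(\Hck^{e,\loc,1,(\lambda_{0},\lambda_{1})}_{Z\subset G})$ with $\Rep_{\Lambda}(\widehat{G/Z})^{\lambda_{0}/\lambda_{1}}$. Taking the product, which equals the direct sum on compact/finite-type objects, gives the functor \eqref{eq:pervequiv}.

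Second, I will check monoidality. Convolution on the extended side lands in the $(\lambda_{0},\lambda_{2})$ component only when the middle indices agree. This holds because of the decomposition \eqref{eq:HckeDecomp} and the fact that $\Hck^{e,\loc,2}_{Z\subset G}$ only contains composable pairs, which gives the ``zero otherwise'' clause of $\star$. For composable indices, Proposition \ref{prop:pervconv}(3) shows that convolution corresponds to classical convolution
\begin{equation*}
\tn{Sat}^{\lambda_{0}/\lambda_{1}} \times \tn{Sat}^{\lambda_{1}/\lambda_{2}} \to \tn{Sat}^{\lambda_{0}/\lambda_{2}}
\end{equation*}
for $G/Z$. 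Under classical Satake this convolution is $\otimes$, so it matches $v_{(\lambda_{0},\lambda_{1})}\star w_{(\lambda_{1},\lambda_{2})} = (v\otimes w)_{(\lambda_{0},\lambda_{2})}$. The unit, namely the skyscraper on the trivial modification in each diagonal component $(\lambda,\lambda)$, goes to $\bigoplus_{\lambda}\1_{(\lambda,\lambda)}$.

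The main obstacle is the coherence of the monoidal structure: the associativity constraints, not just the objectwise identification. The extended convolution uses the cocycle twists $z(\cdot,\cdot)_{*}(\psi)$. I must verify that the associativity isomorphism on the extended side, which comes from the algebra-in-correspondences structure established in the Proposition of Section \ref{sec:Hckconv}, is carried by $\pi$ to the classical one. For this I would use that Proposition \ref{prop:Hckconv} gives a \emph{strictly} commutative square of $\pi$ with the $\mu_{i}$ for all $n$, and that these squares are Cartesian by Lemma \ref{lem:Grcompconv} and the accompanying remark for $n$ legs. Together these give a morphism of algebras in correspondences, so pullback is monoidal with all higher coherences. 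The $\pi_{0}$-grading conditions from Lemma \ref{lem:outer-faces} then ensure that the component-wise identifications assemble into the monoidal equivalence onto $(\bigoplus\Rep_{\Lambda}(\widehat{G/Z})^{\lambda_{0}/\lambda_{1}},\star)$.
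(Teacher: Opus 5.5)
Your proposal is correct and follows the paper's route. You identify each $(\lambda_0,\lambda_1)$-piece with $\tn{Sat}(\Hck^{\loc,1,\lambda_0/\lambda_1}_{G/Z,\Spd(C)},\Lambda)\simeq\Rep_\Lambda(\widehat{G/Z})^{\lambda_0/\lambda_1}$ via Corollary \ref{prop:perveequiv1} and classical Satake, then use Proposition \ref{prop:pervconv}(3) to match convolution with $\otimes$ and the decomposition \eqref{eq:HckeDecomp} for the vanishing on non-composable indices. Your checks on the central-character grading and the higher coherences go beyond what the paper writes, and the product-versus-direct-sum issue is moot simply because $\Hom_F(u,Z)\cong X^*(\widehat{Z})$ is finite for finite $Z$.
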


\begin{proof}
    Using the identification of monoidal categories
    \begin{equation*}
\tn{Sat}(\Hck^{\loc,1}_{G/Z,\Spd(C)},\Lambda) \xrightarrow{\sim} \tn{Rep}_{\Lambda}(\widehat{G/Z}),
    \end{equation*}
    from \cite[Theorem I.6.3]{Geometrization} we observe that the equivalence \eqref{eq:pervequivbis} from Corollary \ref{prop:perveequiv1} is identified with an equivalence of categories
     \begin{equation*}
\tn{Sat}(\Hck^{e,\loc,1,(\lambda_{0},
\lambda_{1})}_{Z \subset G},\Lambda) \xrightarrow{\sim} \tn{Rep}_{\Lambda}(\widehat{G/Z})^{\lambda_{0}/\lambda_{1}}.
    \end{equation*}
 
    This map is given explicitly by sending a representation $V \in \tn{Rep}_{\Lambda}(\widehat{G/Z})^{\lambda_{0}/\lambda_{1}}$ to the pullback of the Satake sheaf $\mathcal{S}_{V} \in \tn{Sat}(\Hck_{G/Z}^{\loc,1,\lambda_{0}/\lambda_{1}},\Lambda)$ to $\tn{Sat}(\Hck_{Z \subset G}^{e,\loc,1,(\lambda_{0},\lambda_{1})},\Lambda)$.

    Moreover, Proposition \ref{prop:pervconv} implies that, via the above identification, the convolution map
    \begin{equation*}
\tn{Sat}(\Hck^{e,\loc,1,(\lambda_{0},
\lambda_{1})}_{Z \subset G},\Lambda)  \times \tn{Sat}(\Hck^{e,\loc,1,(\lambda_{1},
\lambda_{2})}_{Z \subset G},\Lambda) \to \tn{Sat}(\Hck^{e,\loc,1,(\lambda_{0},
\lambda_{2})}_{Z \subset G},\Lambda) 
    \end{equation*}
    is identified with the tensor product map
    \begin{equation*}
        \tn{Rep}_{\Lambda}(\widehat{G/Z})^{\lambda_{0}/\lambda_{1}} \times \tn{Rep}_{\Lambda}(\widehat{G/Z})^{\lambda_{1}/\lambda_{2}} \to \tn{Rep}_{\Lambda}(\widehat{G/Z})^{\lambda_{0}/\lambda_{2}}.
    \end{equation*}
    Now applying the disjoint union decomposition
    \begin{equation*}
\Hck^{e,\loc,1}_{Z \subset G} = \bigsqcup_{(\lambda_{0}, \lambda_{1}) \in \Hom_{F}(u, Z)^{2}} \Hck^{e,\loc,1,(\lambda_{0},\lambda_{1})}_{Z \subset G}
\end{equation*}
from above, we see that convolution between perverse sheaves supported on $(\lambda_{i},\lambda_{j})$ and $(\lambda_{k}, \lambda_{l})$-components with $\lambda_{j} \neq \lambda_{k}$ yields zero. Together with our previous observations from this proof, we deduce an identification
\begin{equation*}
     \tn{Sat}(\Hck^{e,\loc,1}_{Z \subset G},\Lambda) \xrightarrow{\sim} \bigoplus_{(\lambda_{0}, \lambda_{1}) \in \Hom_{F}(u, Z)^{2}} \tn{Rep}_{\Lambda}(\widehat{G/Z})^{\lambda_{0}/\lambda_{1}},
\end{equation*}
with the monoidal structure $\star$ on the right-hand side. 
\end{proof}

It is also clear that, via the above identification, the pullback map 
\begin{equation*}
    \tn{Sat}(\Hck_{G/Z,\Spd(C)}^{\loc,1},\Lambda) \to \tn{Sat}(\Hck^{e,\loc,1}_{Z \subset G},\Lambda)
\end{equation*}
corresponds to the embedding
\begin{equation}\label{eq:G/Zpullback}
\tn{Rep}_{\Lambda}(\widehat{G/Z}) \hookrightarrow \bigoplus_{(\lambda_{0}, \lambda_{1}) \in \Hom_{F}(u, Z)^{2}} \tn{Rep}_{\Lambda}(\widehat{G/Z})^{\lambda_{0}/\lambda_{1}}
\end{equation}
determined by sending $\pi \in \tn{Rep}_{\Lambda}(\widehat{G/Z})^{\lambda}$ to $\bigoplus_{(\lambda' \lambda,\lambda') \in \Hom_{F}(u,Z)^{2}} \pi$. 

\begin{rque}
Using the alternative description from Remark \ref{rque:altmonoidal}, the embedding \eqref{eq:G/Zpullback} corresponds to pushforward of coherent sheaves along the diagonal
\begin{equation*}
\pt/(\widehat{G/Z}) \xrightarrow{\Delta} \pt/(\widehat{G/Z}) \times_{\pt/\widehat{G}} \pt/(\widehat{G/Z}).
\end{equation*}
\end{rque}

\begin{lem}
    The embedding \eqref{eq:G/Zpullback} is a monoidal functor.
\end{lem}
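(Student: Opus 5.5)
We must show that the embedding \eqref{eq:G/Zpullback}, call it $\iota$, is monoidal. The target carries the convolution product $\star$ described above, and the source carries the usual tensor product. Two routes are available: a direct computation with graded pieces, and a geometric argument. I plan to do the algebraic computation and use the geometric picture as a consistency check.

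\emph{Algebraic route.} Write $V=\bigoplus_{\lambda} V^{\lambda}$ and $W=\bigoplus_{\mu} W^{\mu}$ for the $X^{*}(\widehat{Z})$-grading. By definition,
\begin{equation*}
\iota(V)=\bigoplus_{\lambda}\bigoplus_{\lambda'} (V^{\lambda})_{(\lambda'\lambda,\lambda')}.
\end{equation*}
Applying the convolution formula $v_{(\lambda_{i},\lambda_{j})}\star w_{(\lambda_{k},\lambda_{l})}=(v\otimes w)_{(\lambda_{i},\lambda_{l})}$ (zero unless $\lambda_{j}=\lambda_{k}$), the product $\iota(V)\star\iota(W)$ pairs $(V^{\lambda})_{(\lambda'\lambda,\lambda')}$ only with $(W^{\mu})_{(\lambda''\mu,\lambda'')}$ with $\lambda''\mu=\lambda'$. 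The product of such a pair is $(V^{\lambda}\otimes W^{\mu})_{(\lambda''\mu\lambda,\lambda'')}$. So for each fixed $\lambda''$ and each $\nu$, the summand in position $(\lambda''\nu,\lambda'')$ is
\begin{equation*}
\bigoplus_{\lambda\mu=\nu} V^{\lambda}\otimes W^{\mu}=(V\otimes W)^{\nu}.
\end{equation*}
Here I use that $\widehat{Z}$ is diagonalizable, so the grading is multiplicative under tensor product. The right-hand side is exactly $\iota(V\otimes W)$ in position $(\lambda''\nu,\lambda'')$. The direct sum is indexed over the group $\Hom_{F}(u,Z)$, and the reindexing $\lambda''\mapsto\lambda'=\lambda''\mu$ is a bijection for each fixed $\mu$, so no summand is lost or duplicated.

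Naturality of these identifications in $V$ and $W$ is clear. The associativity and unit constraints are inherited from those of $\otimes$ on $\Rep_{\Lambda}(\widehat{G/Z})$: both sides reduce to the associator of $\Lambda$-modules on the graded pieces, so the coherence diagrams commute. The unit $\Lambda$ has degree $0$ and maps to $\bigoplus_{\lambda'}\Lambda_{(\lambda',\lambda')}$, which is the unit for $\star$.

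\emph{Geometric check.} In the language of Remark \ref{rque:altmonoidal}, $\iota$ is $\Delta_{*}$ for the relative diagonal $\Delta\colon\pt/\widehat{G/Z}\to\pt/\widehat{G/Z}\times_{\pt/\widehat{G}}\pt/\widehat{G/Z}$. Convolution on the groupoid $\pt/\widehat{G/Z}\times_{\pt/\widehat{G}}\pt/\widehat{G/Z}$ is pull-tensor-push along the composition correspondence. The diagonal is the unit section, i.e.\ a map of groupoids, and the relevant base change is just the Cartesian square of diagonals. This gives $\Delta_{*}V\star\Delta_{*}W\simeq\Delta_{*}(V\otimes W)$.

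The only delicate point is conventions, namely the ordering $(\lambda'\lambda,\lambda')$ versus $(\lambda',\lambda'\lambda)$ and whether the indices compose as $\lambda_{0}/\lambda_{1}\cdot\lambda_{1}/\lambda_{2}=\lambda_{0}/\lambda_{2}$. I will check against Proposition \ref{prop:pervconv} that the first coordinate divided by the second equals the weight. That condition is what makes the chained pieces compose correctly in the computation above.
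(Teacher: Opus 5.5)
Your graded-pieces computation is correct and is exactly the ``elementary calculation'' the paper invokes without further detail: the only inputs are that the $X^{*}(\widehat{Z})$-grading is multiplicative under $\otimes$ and that reindexing over the finite group $\Hom_{F}(u,Z)$ is bijective. The geometric check via $\Delta_{*}$ is also sound, since the relevant fiber product of diagonals is again $\pt/\widehat{G/Z}$ and $\Delta$ is flat, but it is not needed for the proof.
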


\begin{proof}
    This is an elementary calculation.
\end{proof}

The following result comes from the proof of Theorem \ref{thm:satake} and the functoriality of the geometric Satake isomorphism from \cite{Geometrization}:

\begin{corol}\label{corol:satZfunc} The isomorphism from Theorem \ref{thm:satake} is functorial in $Z \subset Z' \subset Z_{G}$ via pullback along the transition maps \eqref{eq:Ztrans} and the functor
\begin{equation*}
    \bigoplus_{(\lambda_{0}, \lambda_{1}) \in \Hom_{F}(u, Z)^{2}} \tn{Rep}_{\Lambda}(\widehat{G/Z})^{\lambda_{0}/\lambda_{1}}\to \bigoplus_{(\lambda_{0}, \lambda_{1}) \in \Hom_{F}(u, Z')^{2}} \tn{Rep}_{\Lambda}(\widehat{G/Z'})^{\lambda_{0}/\lambda_{1}}
\end{equation*}
induced by the maps $\widehat{G/Z'} \to \widehat{G/Z}$ and $Z \hookrightarrow Z'$.
\end{corol}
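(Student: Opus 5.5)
The plan is to reduce the statement to the classical functoriality of geometric Satake for the isogeny $G/Z \to G/Z'$, one $(\lambda_0,\lambda_1)$-component at a time, and then check that the monoidal structures match.

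First, fix $(\lambda_0,\lambda_1) \in \Hom_F(u,Z)^2$ and write $\lambda_i'$ for their images in $\Hom_F(u,Z')$. The transition map \eqref{eq:Ztrans} sends $\Hck^{e,\loc,1,(\lambda_0,\lambda_1)}_{Z \subset G}$ into $\Hck^{e,\loc,1,(\lambda_0',\lambda_1')}_{Z' \subset G}$. By Proposition \ref{prop:Hckquot}, this is compatible with the quotient maps $\pi$. In other words, the square formed by $\Hck^{e,\loc,1}_{Z\subset G} \to \Hck^{e,\loc,1}_{Z'\subset G}$, the two maps $\pi$, and the natural map $\Hck^{\loc,1}_{G/Z,\Spd(C)} \to \Hck^{\loc,1}_{G/Z',\Spd(C)}$ commutes, up to the choice of $\tilde\psi$, which is harmless by the proof of Proposition \ref{prop:Grquot}. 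Via Proposition \ref{prop:Quotslope}, the bottom map carries the $\lambda_0/\lambda_1$-component into the $\lambda_0'/\lambda_1'$-component.

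Second, apply Corollary \ref{prop:perveequiv1} and Definition \ref{defi:satake-category}. Pulling back along the transition map corresponds, under the vertical equivalences, to pulling back Satake sheaves along $\Hck^{\loc,1}_{G/Z} \to \Hck^{\loc,1}_{G/Z'}$. By the functoriality of geometric Satake in \cite{Geometrization} (compatibility with the isogeny $G/Z \to G/Z'$), this pullback corresponds to restricting representations along $\widehat{G/Z'} \to \widehat{G/Z}$. On $\widehat{Z}$-gradings, this restriction maps $\Rep(\widehat{G/Z'})^{\lambda_0'/\lambda_1'}$ into the summands indexed by the preimages, compatibly with $X^*(\widehat Z') \to X^*(\widehat Z)$ being dual to $Z \hookrightarrow Z'$. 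This yields the stated functor on the direct sums, summand by summand.

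Third, check monoidality. Convolution on both extended Hecke stacks is computed via the Cartesian square in the proof of Proposition \ref{prop:pervconv} together with Lemma \ref{lem:changeinZconv}. Classical Satake functoriality is monoidal, so the identification is monoidal as well, exactly as in the proof of Theorem \ref{thm:satake}.

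The main obstacle is the first step. The maps $\pi$ involve the trivializations $\ov{s(\lambda)}_*(\tilde\psi)$ and $\varphi$, and the sections $s$ for $Z$ and $Z'$ need not be compatible. The resulting discrepancy is a $\Hom_F(t,\mathbf{T})$-valued difference. I would handle it by showing that this difference only acts by the $L^+(G/Z')$-action, or by a translation that is canonically isomorphic to the identity on the level of perverse sheaves. Since Satake sheaves are $L^+(G/Z')$-equivariant, such a discrepancy induces canonically trivial isomorphisms after pullback.
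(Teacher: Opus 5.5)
Your argument is essentially the paper's. The paper deduces the corollary from the proof of Theorem \ref{thm:satake} (pulling back Satake sheaves along $\pi$, which is compatible with \eqref{eq:Ztrans} by Proposition \ref{prop:Hckquot}) together with functoriality of Fargues--Scholze Satake for the central isogeny $G/Z\to G/Z'$. Your component-by-component reduction, followed by the monoidality check via Lemma \ref{lem:changeinZconv}, is exactly that argument. Two points need fixing, however.

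\textbf{The main obstacle you identify does not exist.} The section $s$ is fixed once and for all on $\Hom_F(u,Z_G)$, and the sections for finite $Z\subset Z'$ are its restrictions. Consequently:
\begin{itemize}
\item $s(\lambda)$ and $\Q'_{\lambda,S}$ are the same for $Z$ and $Z'$;
\item the trivialization used for $\mathbf{T}/Z'$ is the pushforward of the one used for $\mathbf{T}/Z$;
\item hence the $Z'$-torsor of Lemma \ref{lem:Zred} is $\Q^{Z}_{\lambda,S}\times^{Z}Z'$, and the cocycle $z$ restricts accordingly.
\end{itemize}
This is precisely what makes \eqref{eq:Ztrans} well defined. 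The transition map leaves the data $(\cP_i,f_i)$ unchanged, and it is an isomorphism on each $(\lambda_0,\lambda_1)$-component with $\lambda_i\in\Hom_F(u,Z)$. The square with the two maps $\pi$ commutes, and no $\tilde\psi$ enters at the level of Hecke stacks. Had the sections differed, the right tool would be the isomorphisms $\eta_c$ from the subsection on the section $s$, which commute with the projections to $\Hck^{\loc,1}_{G/Z}$. An $L^+(G/Z')$-equivariance argument would not be the relevant mechanism.

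\textbf{The variance in your second step is reversed.}
\begin{itemize}
\item Pullback goes from the $Z'$-side to the $Z$-side. Restriction along $\widehat{G/Z'}\to\widehat{G/Z}$ goes the other way, $\Rep_\Lambda(\widehat{G/Z})\to\Rep_\Lambda(\widehat{G/Z'})$.
\item Since $\widehat{Z'}\to\widehat{Z}$ is surjective, characters go $X^*(\widehat{Z})\hookrightarrow X^*(\widehat{Z'})$. This matches $\Hom_F(u,Z)\hookrightarrow\Hom_F(u,Z')$, so there are no ``preimages'' to sum over.
\end{itemize}
The correct bookkeeping is as follows. For $\chi\in\Hom_F(u,Z)$, inflation gives an equivalence $\Rep_\Lambda(\widehat{G/Z})^{\chi}\xrightarrow{\sim}\Rep_\Lambda(\widehat{G/Z'})^{\chi}$. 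Indeed, $\ker(\widehat{G/Z'}\to\widehat{G/Z})=\ker(\widehat{Z'}\to\widehat{Z})$ acts trivially exactly on the pieces indexed by $X^*(\widehat{Z})$. The functoriality you need from \cite{Geometrization} is then: pulling back the Satake sheaf of the inflation of $V$ along $\Hck^{\loc,1,\chi}_{G/Z}\to\Hck^{\loc,1,\chi}_{G/Z'}$ gives the Satake sheaf of $V$.

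Pullback along the open and closed embedding $\Hck^{e,\loc,1}_{Z\subset G}\hookrightarrow\Hck^{e,\loc,1}_{Z'\subset G}$ is projection onto the $\Hom_F(u,Z)^2$-summands. On those summands it is inverse to the functor in the statement; equivalently, extension by zero corresponds to that functor.
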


\subsection{Global Hecke stacks}

Recall from \cite[Proposition 19.1.2]{SW20} that, for any $S \in \Perfd_{C}$ and effective Cartier divisor $y' \in \Div^{d}(S)$, there is a faithful functor from the category of triples $(U,V,f)$ consisting of a $G_{S}$-torsor $U$ on $X_{S} \setminus \Gamma_{y'}$, a $G_{S}$-torsor $V$ on $\bbD_{y'}$, and an isomorphism 
\begin{equation*}
f \colon U|_{\bbD_{y'}^{\circ}} \xrightarrow{\sim} V|_{\bbD_{y'}^{\circ}}
\end{equation*}
to the category of $G_{S}$-torsors on $X_{S}$ (see also \cite[Section III.3]{Geometrization}). We call such a triple $(U,V,f)$ a \textbf{Beauville--Laszlo triple} for $X_{S}$. One can define the category of Beauville-Laszlo triples in a completely analogous manner for $[\mc{T}_{S}/\tilde{t}_{S}]$ as well.

\begin{lem}\label{lem:GerbeBL}
There is a functor from the category of Beauville--Laszlo triples for $[\mc{T}_{S}/\tilde{t}_{S}]$ to the category of $G_{S}$-torsors on $[\mc{T}_{S}/\tilde{t}_{S}]$.
\end{lem}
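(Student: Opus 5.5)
The plan is to reduce the gerbe statement to the classical Beauville--Laszlo gluing of \cite[Proposition 19.1.2]{SW20} by passing to equivariant torsors on a cover. By Lemma \ref{lem:equivtors}, a $G_{S}$-torsor on $[\mc{T}_{S}/\tilde{t}_{S}]$ is the same as a $\tilde{t}_{S}$-equivariant $G_{S}$-torsor on $\mc{T}_{S}$. The same holds for torsors on the restricted gerbes $[\mc{T}_{S}^{\circ}/\tilde{t}_{S}]$, $[\mc{T}_{S}^{\loc}/\tilde{t}_{S}]$ and $[\mc{T}_{S}^{\loc,\circ}/\tilde{t}_{S}]$, and for isomorphisms between them. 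A Beauville--Laszlo triple $(U,V,f)$ for the gerbe therefore becomes a triple of $\tilde{t}_{S}$-equivariant objects over $\mc{T}_{S}^{\circ}$, $\mc{T}_{S}^{\loc}$ and $\mc{T}_{S}^{\loc,\circ}$.

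The first step is to reduce to finite level. By Remark \ref{rque:fingerbe}(1), all three pieces of the triple, together with $f$, are pulled back from a finite level gerbe $[\mc{T}_{E/F,S}/\tilde{t}_{E/F,n,S}]$. After enlarging $(E,n)$ we may take a common level for $U$, $V$ and $f$, since $f$ is a section of a finite-type sheaf over a qcqs space. At finite level, $\mc{T}_{E/F,S}$ is a $\Res_{E/F}\mathbb{G}_{m}$-torsor over $X_{S}$, so it is an honest (relative affine, smooth) space over $X_{S}$.

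The second step is to glue. I would first try to work smooth-locally: choose a cover $S' \to \mc{T}_{E/F,S}$ such that the torsor becomes trivial, reducing to the relative curve $X_{S'}$ or a product $X_{S} \times_{S} S'$. On each term of the Čech nerve we apply the Beauville--Laszlo functor of \cite[Proposition 19.1.2]{SW20} to the pulled-back triple, using that the divisor $\Gamma_{y \circ x}$ pulls back to a Cartier divisor whose formal completion is the pullback of $\bbD_{S}$. Functoriality of the classical construction then gives descent data, hence a $G_{S}$-torsor on $\mc{T}_{E/F,S}$. The $\tilde{t}_{E/F,n,S}$-equivariance of $U$, $V$ and $f$ likewise yields an equivariant structure, by applying the functor on $\tilde{t} \times \mc{T}$ and using the same naturality.

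Alternatively, one can argue Tannakian-wise. Representations $G \to \GL(W)$ reduce the problem to vector bundles with a $u$-action, which are isocrystal-twisted vector bundles on $X_{S}^{(E)}$. The classical gluing of \cite[Section III.3]{Geometrization} then applies weight by weight for the $u_{E/F,n}$-action, since $u_{E/F,n}$ is diagonalizable. This is compatible with tensor products, so it gives a $G$-torsor.

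The main obstacle is the base change compatibility. The Beauville--Laszlo functor of \cite{SW20} is stated for $X_{S}$ with $S$ perfectoid, and $\mc{T}_{E/F,S}$ is not of this form. The Tannakian route therefore seems safer: it only requires gluing vector bundles on $X_{S}^{(E)}$ equipped with a grading by $X^{*}(u_{E/F,n})$, compatible with the twist by $\mc{L}_{E/F,S}$. Passage to the limit over $(E,n)$ is harmless by Remark \ref{rque:fingerbe}. Finally, functoriality in morphisms of triples follows from functoriality of each step.
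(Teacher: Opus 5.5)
Your first route does not go through, for the reason you identify yourself. Neither the atlas $\mc{T}_{E/F,S}$ nor its smooth covers are relative Fargues--Fontaine curves. Moreover, no cover of the form $X^{(E')}_{S'}$ can trivialize $\mc{T}_{E/F,S}$, because $\mc{L}_{E/F,S}$ has nonzero degree on every fibre. The missing idea is that you only need to split the $u_{E/F,n}$-gerbe, not the $t$-torsor. This happens on a finite \'{e}tale cover $X^{(E')}_{S}\to X_{S}$ with $n\mid[E':E]$ (Remark \ref{rque:fingerbe}(2)). That cover \emph{is} again a relative Fargues--Fontaine curve, and $X^{(E')}_{S}\times_{X_{S}}X^{(E')}_{S}=\bigsqcup_{\Gal_{E'/F}}X^{(E')}_{S}$. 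This is the paper's proof. By \cite[Proposition 2.50]{Dillery23}, $G_{S}$-torsors on the finite-level gerbe are $\xi$-twisted torsors $(T,\phi)$ on $X^{(E')}_{S}$. One glues the underlying triple $(U,V,g)$ by classical Beauville--Laszlo on $X^{(E')}_{S}$, glues $\phi_{U}$ and $\phi_{V}$ into $\phi$ by functoriality on the overlaps, and deduces $d\phi=\xi$ from faithfulness. This works directly with $G$-torsors and uses nothing about the gerbe beyond its splitting cover.

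Your Tannakian route is a genuinely different argument and can be completed, but as written its key step is only asserted. Diagonalizability of $u_{E/F,n}$ gives a weight decomposition, but it holds only after base change to $E$, so you also need Galois descent back to $X_{S}$. More seriously, for a nonzero weight $\chi$ the summand is a $\chi$-twisted bundle, to which \cite[Section III.3]{Geometrization} does not apply. What makes it work is the following. Over $X^{(E)}_{S}$ the gerbe is the fibre product of the $n$-th root gerbes of the line bundles $\sigma^{*}\mc{L}_{E/F,S}$, $\sigma\in\Gal_{E/F}$. Tensoring with powers of the tautological weight-one root line bundles therefore identifies each weight category with vector bundles on $X^{(E)}_{S}$. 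This identification is compatible with restriction to $X^{\circ}_{S}$, $\bbD_{S}$, $\bbD^{\circ}_{S}$ and with tensor products. With this, exactness of Beauville--Laszlo gluing for vector bundles, and a Tannakian description of $G_{S}$-torsors on the gerbe, your argument is valid. It makes the twisted pieces explicit, but it relies on the special root-gerbe structure of $[\mc{T}_{S}/\tilde{t}_{S}]$ and needs more machinery. The paper's \v{C}ech argument is shorter and more general.
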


\begin{proof}
By Remark \ref{rque:fingerbe}.(1) we can replace $[\mc{T}_{S}/\tilde{t}_{S}]$ and $[\mc{T}_{S}^{\loc}/\tilde{t}_{S}]$ with $\mc{E}:=[\mc{T}_{E/F,S}/\tilde{t}_{E/F,n,S}]$ and $\mc{E}^{\loc} := [\mc{T}_{E/F,S}^{\loc}/\tilde{t}_{E/F,n,S}]$ for some finite Galois $E/F$ and $n \in \mathbb{N}$. By Remark \ref{rque:fingerbe}.(2) the gerbe $\mc{E}$ is trivialized by the finite \'{e}tale cover $X_{S}^{(E')}$ for some finite Galois extension $E'/F$ and is represented by the \v{C}ech $2$-cocycle $\xi_{\mc{E}} := \xi_{E/F,n}$, which are compatible for varying $E/F$ and $n$.

According to \cite[Proposition 2.50]{Dillery23}, the category of $G_{S}$-torsors on $\mc{E}$ is equivalent to the category of \textbf{$\xi_{\mc{E}}$-twisted $G_{S}$-torsors} whose objects are pairs $(T,\phi)$, where $T$ is a $u_{E/F,n}$-equivariant $G_{S}$-torsor on $X_{S}^{(E')}$ and $\pi_{1}^{*}T \xrightarrow{\phi} \pi_{2}^{*}T$ is an equivariant isomorphism of $G_{S}$-torsors on $X_{S}^{(E')} \times_{X_{S}} X_{S}^{(E')}$ such that $d\phi$ (as defined in \cite[Lemma 2.37]{Dillery23}) is translation by $\xi_{\mc{E}}$. Morphisms are isomorphisms of torsors that are compatible with the respective maps $\phi$.

Given a Beauville-Laszlo triple $(\mc{U},\mc{V},f)$ for $\mc{E}$ (descended from a given triple for $[\mc{T}_{S}/\tilde{t}_{S}]$), we denote by $((U,\phi_{U}),(V,\phi_{V}),g)$ the corresponding triple where $(U,\phi_{U})$ and $(V,\phi_{V})$ are twisted torsors.

The triple $(U,V,g)$ is a Beauville-Laszlo triple for $X_{S}^{(E')}$, so by Beauville-Laszlo for $X_{S}^{(E')}$ we can glue this to a $G_{S}$-torsor $T$ over $X_{S}^{(E')}$. Moreover, the functoriality of Beauville-Laszlo for $X_{S}^{(E')}$, after applying the identification 
\begin{equation*}
X_{S}^{(E')} \times_{X_{S}} X_{S}^{(E')} = \bigsqcup_{\Gal_{E'/F}} X_{S}^{(E')},
\end{equation*}
implies that, since $\phi_{V} = \phi_{U} \circ \pi_{1}^{*}g$ on $\bigsqcup\bbD_{S}^{(E'),\circ}$, we can glue these two maps to obtain an isomorphism of torsors 
\begin{equation*}
    \pi_{1}^{*}T \xrightarrow{\phi} \pi_{2}^{*}T
\end{equation*}
over $X_{S}^{(E')} \times_{X_{S}} X_{S}^{(E')}$. Finally, faithfulness of Beauville-Laszlo for $X_{S}^{(E')}$ implies that since $d\phi_{V}$ and $d\phi_{U}$ are translation by $\xi_{\mc{E}}$, so is $d\phi$. The pair $(T,\phi)$ gives the desired $G_{S}$-torsor on $\mc{E}$. All of these constructions are functorial in the triple $(\mc{U},\mc{V},f)$ because of the functoriality of Beauville-Laszlo for $X_{S}^{(E')}$. 
\end{proof}

Consider the Hecke correspondence
\[
\begin{tikzcd}
  &  \Hck_{Z \subset G}^{e,1} \arrow["\overleftarrow{h}"]{ld} \arrow["\overrightarrow{h}"]{rd} & \\
  \Bun_{G}^{e} & & \Bun_{G}^{e} \times \Spd(C).
\end{tikzcd}
\]

\begin{prop}\label{prop:Heckeproper} The maps $\overleftarrow{h}$ and $\overrightarrow{h}$ for $\Hck^{e,1}_{Z \subset G}$ are ind-proper.
\end{prop}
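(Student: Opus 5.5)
The plan is to reduce to the classical statement \cite[Proposition IX.2.2 / Section VI]{Geometrization} (ind-properness of the Hecke correspondence for $\Bun_{H}$) via a Beauville--Laszlo description of the fibers. First decompose $\Hck^{e,1}_{Z \subset G}$ according to $(\lambda_{0},\lambda_{1}) \in \Hom_{F}(u,Z)^{2}$; the maps $\overleftarrow{h}$, $\overrightarrow{h}$ respect this decomposition, so it suffices to treat each $\Hck^{e,1,(\lambda_{0},\lambda_{1})}_{Z \subset G}$. The two maps are symmetric (swapping roles and replacing $f$ by $f^{-1}$ twisted by $\Q_{\lambda_{1}/\lambda_{0}}^{-1}$, using the cocycle identification \eqref{eq:Arnaudisom} to rewrite $\Q_{\lambda_{0}/\lambda_{1}}$ as $\Q_{\lambda_{1}/\lambda_{0}}^{-1}$ up to $z_{*}(\psi)$), so treat $\overrightarrow{h}$ only.

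Next, identify $\overrightarrow{h}$ as a twisted product with fiber the extended affine Grassmannian. Given $S \to \Bun_{G}^{e,\lambda_{1}} \times \Spd(C)$ with torsor $\cP_{1}$, the fiber consists of $(\cP_{0},f)$ with $f\colon \Q_{\lambda_{1}/\lambda_{0},S}\cdot \cP_{0} \to \cP_{1}$ on $[\mc{T}_{S}^{\circ}/\tilde{t}_{S}]$. Using Lemma \ref{lem:GerbeBL} (and its faithfulness, inherited from \cite[Proposition 19.1.2]{SW20}), such data is equivalent to data on the local disc: a band-localized torsor on $[\mc{T}_{S}^{\loc}/\tilde{t}_{S}]$ together with an isomorphism on $[\mc{T}_{S}^{\loc,\circ}/\tilde{t}_{S}]$ to $\Q_{\lambda_{1}/\lambda_{0}}^{-1}\cdot\cP_{1}|$ — the global part away from $y$ being $\Q_{\lambda_{1}/\lambda_{0}}^{-1}\cdot \cP_{1}|_{[\mc{T}_{S}^{\circ}/\tilde{t}_{S}]}$, which is fixed (here it matters that $\Q_{\lambda,S}$ is defined globally on $X_{S}^{\circ}$ via $\psi$, Lemma \ref{lem:Zred}). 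Hence, v-locally on $S$ after trivializing $\cP_{1}|_{[\mc{T}^{\loc}_{S}/\tilde{t}_{S}]}$ as $\widetilde{\Q}_{\lambda_{1},S}$ (possible by Lemma \ref{lem:cohomvanish}-type arguments and Remark \ref{rque:Alambda}), the fiber is $\tn{Gr}^{e,(\lambda_{0},\lambda_{1})}_{Z\subset G,S}$; that is, $\overrightarrow{h}$ is a twisted form of $\tn{Gr}^{e,(\lambda_{0},\lambda_{1})}_{Z \subset G}\times_{\Spd(C)} - $ twisted by the $\mc{A}_{\lambda_{1}}$-torsor of such trivializations. One must also check the glued $\cP_{0}$ is again Kal-basic with inertial map $\lambda_{0}$; this follows since Kal-basicness can be checked on the cover $X_{S}^{\circ}\sqcup \bbD_{S}$ and holds on both pieces.

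Finally, by Proposition \ref{prop:mainGrisom}, $\tn{Gr}^{e,(\lambda_{0},\lambda_{1})}_{Z\subset G} \cong \tn{Gr}^{\lambda_{0}/\lambda_{1}}_{G/Z,\Spd(C)}$, which is ind-proper over $\Spd(C)$, being an increasing union of the proper Schubert varieties $\tn{Gr}_{G/Z,\leq\nu}$ \cite[Section 20]{SW20}; the filtration is $\mc{A}_{\lambda_{1}}$-stable by Remark \ref{rque:Alambda}(1), since Schubert varieties are $L^{+}(G/Z)$-stable. Properness being v-local on the target, the twisted forms of the bounded pieces are proper over $\Bun_{G}^{e}\times\Spd(C)$, giving ind-properness. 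The main obstacle I expect is the second step: making the local-global gluing rigorous on the gerbe, in particular that Lemma \ref{lem:GerbeBL} gives an equivalence (not just a functor) onto torsors with fixed restriction to the punctured curve, and that the twist by $\Q_{\lambda_{1}/\lambda_{0}}$ is compatible with the pullback to the disc; I would handle this by passing to the finite-level \v{C}ech description of Remark \ref{rque:fingerbe} and applying classical Beauville--Laszlo on $X_{S}^{(E')}$ equivariantly.
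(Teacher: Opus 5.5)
Your proposal is correct and follows essentially the same route as the paper. You reduce to each $(\lambda_0,\lambda_1)$-component, swap the torsors so that only $\overrightarrow{h}$ needs treating, and identify the fiber over a v-locally trivialized (in the paper, strictly totally disconnected) $S$ with the bounded extended affine Grassmannian via the gerbe Beauville--Laszlo gluing of Lemma \ref{lem:GerbeBL}; you then conclude from Proposition \ref{prop:mainGrisom} and properness of Schubert varieties for $G/Z$. Two points you make explicit are left implicit in the paper: that Beauville--Laszlo must be an equivalence rather than merely a functor, and that the glued torsor has inertial map $\lambda_0$. The paper only addresses the first through its ``surjective Beauville--Laszlo morphism.''
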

\begin{proof} It suffices to prove this for each $\Hck^{e,1,(\lambda_{0},\lambda_{1})}_{Z \subset G}$. In this setting, pulling back the closed Schubert cells of $\Hck_{Z \subset G}^{e,\loc,1}$ (cf. Remark \ref{rque:strat}) gives (as a v-stack)
\begin{equation*}
\Hck^{e,1,(\lambda_{0},\lambda_{1})}_{Z \subset G} = \varinjlim_{\nu} \Hck^{e,1,(\lambda_{0},\lambda_{1})}_{Z \subset G,\leq \nu} 
\end{equation*}
for $\nu \in X_{*}(T/Z)^{+,\lambda_{0}/\lambda_{1}}$. Moreover, it suffices to show that $\overrightarrow{h}$ is proper for each $\Hck^{e,1,(\lambda_{0},\lambda_{1})}_{Z \subset G,\leq \nu}$, since $\overleftarrow{h}$ can be identified with $\overrightarrow{h}$ (for $\Hck_{Z \subset G}^{e,1,(\lambda_{1},\lambda_{0})}$) after applying the isomorphism given by switching the torsors.

Working v-locally on the target means that it suffices to show that for $S \to \Bun_{G}^{e,\lambda_{1}}$ strictly totally disconnected the morphism $\Hck_{Z \subset G}^{e,1,(\lambda_{0},\lambda_{1})} \times_{\overrightarrow{h},\Bun_{G}^{e,\lambda_{1}}} S \to S$ is ind-proper. Denote by $\mc{X} \in \Bun_{G}^{e,\lambda_{1}}(S)$ for such $S \in \Perfd_{C}$ the corresponding endpoint; we claim that $\Hck_{Z \subset G}^{e,1,(\lambda_{1},\lambda_{0})} \times_{\overrightarrow{h},\Bun_{G}^{e,\lambda_{1}}} S$ is isomorphic to the extended twisted affine Grassmannian
\begin{equation*}
\tn{Gr}_{Z \subset G,\leq \nu}^{e,(\lambda_{0},\lambda_{1})} \tilde{\times} \mc{X}
\end{equation*}
parametrizing, for $S' \to S$, pairs $(\cP,f)$ of a $G_{S'}$-torsor $\cP$ on $[\mc{T}_{S'}^{\loc}/\tilde{t}_{S'}]$ with $\lambda_{\cP} = \lambda_{0}$ and $f$ an isomorphism from $\Q_{\lambda_{1}/\lambda_{0},S'} \cdot \cP$ to $\mc{X}$ on $[\mc{T}_{S'}^{\loc,\circ}/\tilde{t}_{S'}]$ bounded by $\nu$.
We could then deduce the result by Proposition \ref{prop:mainGrisom}, since identifying $\mc{X}$ with $\widetilde{\Q}_{\lambda_{1},S}$ over $[\mc{T}_{S}^{\loc}/\tilde{t}_{S}]$ (possible by \cite[Lemma III.2.6]{Geometrization} because $S$ is strictly totally disconnected) identifies $\tn{Gr}_{Z \subset G,\leq \nu}^{e,(\lambda_{0},\lambda_{1})} \tilde{\times} \mc{X}$ with $\tn{Gr}_{Z \subset G,\leq \nu}^{e,(\lambda_{0},\lambda_{1})}\times S$. 

To prove this description of the fiber, the same argument as for $\Hck_{G}$ holds provided that one has a surjective ``Beauville-Laszlo'' morphism
\begin{equation*}
    \tn{Gr}_{Z \subset G}^{e,(\lambda_{0},\lambda_{1})} \tilde{\times} \mc{X} \to \Bun_{G}^{e,\lambda_{0}} \times S,
\end{equation*}
of v-stacks. 

The existence of the morphism comes from Lemma \ref{lem:GerbeBL} because a given object $(\cP,f) \in [\tn{Gr}_{Z \subset G}^{e,(\lambda_{0},\lambda_{1})} \tilde{\times} \mc{X}](S)$ defines a Beauville-Laszlo triple
\begin{equation*}
(\cP, \Q_{\lambda_{1}/\lambda_{0},S}^{-1} \cdot\mc{X},\Q_{\lambda_{1}/\lambda_{0},S}^{-1}\cdot f)
\end{equation*}
for $[\mc{T}_{S}/\tilde{t}_{S}]$.
This morphism is surjective by the argument in \cite[Proposition III.3.1]{Geometrization}.
\end{proof}

\begin{corol}\label{corol:CartConv}
    The global-to-local convolution diagram \eqref{eq:CartConv} is Cartesian.
\end{corol}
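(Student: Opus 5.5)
The plan is to compare the two fibers of the horizontal convolution maps and show they agree. Concretely, Cartesianness of \eqref{eq:CartConv} means the induced map
\begin{equation*}
\Hck_{Z \subset G}^{e,n} \to \Hck_{Z \subset G}^{e,n-1} \times_{\Hck_{Z \subset G}^{e,\loc,n-1}} \Hck_{Z \subset G}^{e,\loc,n}
\end{equation*}
is an equivalence. By induction, composing the maps $\mu_{i}$, it suffices to treat $n=2$ and $\mu=\mu_{1}$. The argument has three steps.

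\textbf{Unpacking a point of the fiber product.} An $S$-point of the right-hand side consists of:
\begin{itemize}
\item a global object $(\cP_{0},\cP_{2},g)$, where $g\colon \Q_{\lambda_{2}/\lambda_{0},S}\cdot \cP_{0}\to \cP_{2}$ lives on $[\mc{T}_{S}^{\circ}/\tilde{t}_{S}]$;
\item a local object $(\cP_{0}^{\loc},\cP_{1}^{\loc},\cP_{2}^{\loc},f_{0}^{\loc},f_{1}^{\loc})$;
\item an identification of the localization of the global object with $\mu$ of the local one.
\end{itemize}
The goal is to reconstruct a global $\cP_{1}$ together with global modifications $f_{0},f_{1}$, uniquely up to unique isomorphism.

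\textbf{Building $\cP_{1}$ by gluing.} I would use the Beauville--Laszlo functor of Lemma \ref{lem:GerbeBL}, exactly as in the proof of Proposition \ref{prop:Heckeproper}. Away from the leg, $\cP_{1}$ should be $\Q_{\lambda_{1}/\lambda_{0},S}\cdot\cP_{0}|_{[\mc{T}_{S}^{\circ}/\tilde{t}_{S}]}$. On the formal disk it should be $\cP_{1}^{\loc}$. The gluing datum on the punctured disk is $f_{0}^{\loc}$, composed with the identification $\cP_{0}|_{\loc}\cong\cP_{0}^{\loc}$.

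This gives a Beauville--Laszlo triple for $[\mc{T}_{S}/\tilde{t}_{S}]$, hence a $G_{S}$-torsor $\cP_{1}$. Its band is $\lambda_{1}$, since the band can be read off on the disk, where it is that of $\cP_{1}^{\loc}$, and $\lambda$ is locally constant by Lemma \ref{lem:inertialhom}. The modification $f_{0}$ is tautological: the first component of the triple is by construction $\Q_{\lambda_{1}/\lambda_{0},S}\cdot\cP_{0}$ away from the leg.

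The modification $f_{1}$ is then forced. It is defined on $[\mc{T}_{S}^{\circ}/\tilde{t}_{S}]$ by
\begin{equation*}
f_{1}:=g\circ a_{*}(\psi)\circ(\Q_{\lambda_{2}/\lambda_{1},S}\cdot f_{0})^{-1}, \qquad a=z(\lambda_{2}/\lambda_{1},\lambda_{1}/\lambda_{0}),
\end{equation*}
which inverts \eqref{eq:gconv}. This uses that $\psi$ is a global trivialization on $X_{C}^{\circ}$, as in Proposition \ref{prop:hckloc}, so $a_{*}(\psi)$ makes sense globally. The localization of $f_{1}$ agrees with $f_{1}^{\loc}$ because the localized global object equals $\mu$ of the local data.

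\textbf{Inverse constructions and the main obstacle.} I then check that the two constructions are mutually inverse; this follows from the functoriality of the gluing. The hard point is that the paper only records a functor from Beauville--Laszlo triples to torsors, not an equivalence. I need full faithfulness, so that $\cP_{1}$, $f_{0}$ and $f_{1}$ are unique up to unique isomorphism and the comparison map is an equivalence rather than merely essentially surjective.

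To get this, I would redo the twisted-torsor argument of Lemma \ref{lem:GerbeBL}. First pass to a finite level $[\mc{T}_{E/F,S}/\tilde{t}_{E/F,n,S}]$ using Remark \ref{rque:fingerbe}. Then invoke the full Beauville--Laszlo equivalence on $X_{S}^{(E')}$ (cf. \cite[Proposition 19.1.2]{SW20}), which is an equivalence on torsors, together with its compatibility with the descent data $\phi$. Since $\xi$-twisted torsors form a descent category, full faithfulness descends. This yields the equivalence and hence Cartesianness.
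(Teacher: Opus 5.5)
Your proposal is correct and follows the paper's proof: the paper also treats $n=2$, glues the missing global $\cP_{1}$ via Lemma \ref{lem:GerbeBL} from the Beauville--Laszlo triple $(\Q_{\lambda_{1}/\lambda_{0},S}\cdot\cP_{0},\cP_{1}',f_{0}\circ(\Q_{\lambda_{1}/\lambda_{0},S}\cdot h_{0}))$, and takes the second modification to be $g\circ z(\lambda_{2}/\lambda_{1},\lambda_{1}/\lambda_{0})_{*}(\psi)\circ(\Q_{\lambda_{2}/\lambda_{1},S}\cdot i^{-1})$, which is exactly your $f_{1}$. The paper leaves implicit that the gerbe Beauville--Laszlo gluing is fully faithful, which is what gives uniqueness up to unique isomorphism; your argument for it (pass to finite level, use Beauville--Laszlo on $X_{S}^{(E')}$, then descend the twisted-torsor data) is a sound way to supply that step.
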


\begin{proof}
    We prove the case $n=2$; the general case is identical. Suppose we are given $(\cP_{0},\cP_{2},g) \in \Hck_{Z \subset G}^{e,1,(\lambda_{0},\lambda_{2})}(S)$ and $(\cP_{0}',\cP_{1}',\cP_{2}',(f_{0},f_{1})) \in \Hck_{Z \subset G}^{e,\loc,2,(\lambda_{0},\lambda_{1},\lambda_{2})}(S)$ along with an isomorphism $(\cP_{0}^{\loc},\cP_{2}^{\loc},g) \xrightarrow{(h_{0},h_{2})} (\cP_{0}',\cP_{2}', f)$, where $f$ is obtained via convolution (as in \eqref{eq:gconv}) and the superscript ``loc'' denotes the corresponding local torsor.
    
    One fills in the ``missing'' global torsor $\mc{P}_{1}$ in the claimed Cartesian diagram by applying Lemma \ref{lem:GerbeBL} to the Beauville--Laszlo triple $(\Q_{\lambda_{1}/\lambda_{0},S} \cdot \cP_{0}, \cP_{1}',f_{0} \circ (\Q_{\lambda_{1}/\lambda_{0},S } \cdot h_{0}))$, which by construction also defines a modification $\Q_{\lambda_{1}/\lambda_{0},S} \cdot (\cP_{0}|_{[\mc{T}_{S}^{\circ}/\tilde{t}_{S}]}) \xrightarrow{i} \cP_{1}|_{{[\mc{T}_{S}^{\circ}/\tilde{t}_{S}]}}$. 
    
    The required modification $\Q_{\lambda_{2}/\lambda_{1},S} \cdot (\cP_{1}|_{[\mc{T}_{S}^{\circ}/\tilde{t}_{S}]}) \to \cP_{2}|_{[\mc{T}_{S}^{\circ}/\tilde{t}_{S}]}$ is then given by 
    \begin{equation*}
    g \circ z(\lambda_{2}/\lambda_{1},\lambda_{1}/\lambda_{0})_{*}(\psi)\circ (\Q_{\lambda_{2}/\lambda_{1},S} \cdot i^{-1}),
    \end{equation*}
    proving the result.
\end{proof}

\subsection{Extended Hecke action}\label{sec:Extended-Hecke-Action}
This subsection is mostly (until the main theorems at the end) a formal summary of some of the ideas in \cite[Section IX]{Geometrization} and why they still work in this extended setting. In this section, $\Lambda$ is a $\ov{\Z_{\ell}}$-algebra.

First, taking the inverse limit of the isomorphism from Theorem \ref{thm:satake} over all $\Z/\ell^{r}\Z[\sqrt{q}]$ gives an exact $\Z/\ell\Z[\sqrt{q}]$-linear monoidal functor
\begin{equation*}
    \bigoplus_{(\lambda_{0}, \lambda_{1}) \in \Hom_{F}(u, Z)^{2}} \tn{Rep}_{\Z_{\ell}[\sqrt{q}]}(\widehat{G/Z})^{\lambda_{0}/\lambda_{1}} \to \tn{Sat}(\Hck_{Z \subset G}^{e,\loc,1}, \Z_{\ell}[\sqrt{q}]),
\end{equation*}
where the right-hand side is the inverse limit of $\tn{Sat}(\Hck_{Z \subset G}^{e,\loc,1}, \Z/\ell^{r}\Z[\sqrt{q}])$ for all $r$. We pass everything further to $\ov{\Z_{\ell}}$ for simplicity.

Denote by $D_{\blacksquare}(\Bun_{G}^{e},\Lambda)$ the category of solid sheaves of $\Lambda$-modules on the v-site of $\Bun_{G}^{e}$. Recall from \cite[Section IX.2]{Geometrization} that there is a $\tn{Rep}_{\Lambda}(Q)$-linear monoidal functor
\begin{equation*}
\tn{Rep}_{\ov{\Z_{\ell}}}(\widehat{G} \rtimes Q) \to D_{\blacksquare}(\Hck_{G}^{\loc,1}, \ov{\Z_{\ell}})
\end{equation*}
given by composing the functor $V \mapsto \mathcal{S}_{V}$ with $A \mapsto \mathbb{D}(A)^{\vee}$, where $\bbD(A)$ denotes the (relative) Verdier dual in the sense of \cite[Section V.6]{Geometrization}.

In our setting, we obtain an analogous monoidal functor
\begin{equation*}
\bigoplus_{(\lambda_{0}, \lambda_{1}) \in \Hom_{F}(u, Z)^{2}} \tn{Rep}_{\ov{\Z_{\ell}}}(\widehat{G/Z})^{\lambda_{0}/\lambda_{1}} \to \tn{Sat}(\Hck_{Z \subset G}^{e,\loc,1},\ov{\Z_{\ell}})
\end{equation*}
and then composing with $A \mapsto \mathbb{D}(A)^{\vee}$, where $\mathbb{D}$ is the Verdier dual relative to the projection $\Hck_{Z \subset G}^{e,\loc,1} \to \Spd(C)/L^{+}G$
gives an exact monoidal functor
\begin{equation}\label{eq:solidmon}
\bigoplus_{(\lambda_{0}, \lambda_{1}) \in \Hom_{F}(u, Z)^{2}} \tn{Rep}_{\ov{\Z_{\ell}}}(\widehat{G/Z})^{\lambda_{0}/\lambda_{1}}  \to D_{\blacksquare}(\Hck_{Z \subset G}^{e,\loc,1},\ov{\Z_{\ell}}),
\end{equation}
where convolution in $D_{\blacksquare}(\Hck_{Z \subset G}^{e,\loc,1},\ov{\Z_{\ell}})$ is as defined in \cite[Section VII.5]{Geometrization} (all of the machinery developed in this aforementioned section of \cite{Geometrization} works for $\Sat(\Hck_{G}^{\loc,1},\ov{\Z_{\ell}})$ replaced with $\Sat(\Hck_{Z \subset G}^{e,\loc,1,(\lambda_{0},\lambda_{1})},\ov{\Z_{\ell}})$ or $\Sat(\Hck_{Z \subset G}^{e,\loc,1},\ov{\Z_{\ell}})$). 

We can then extend linearly to obtain an exact $\Lambda$-linear monoidal functor
\begin{equation*}
    \bigoplus_{(\lambda_{0}, \lambda_{1}) \in \Hom_{F}(u, Z)^{2}} \tn{Rep}_{\Lambda}(\widehat{G/Z})^{\lambda_{0}/\lambda_{1}}  \to D_{\blacksquare}(\Hck_{Z \subset G}^{e,\loc,1},\Lambda);
\end{equation*}
we denote the image of $V^{e}$ under this functor by $\mc{S}_{V^{e}}'$.

\begin{rque}
    Since each $\Hck_{Z \subset G}^{e,\loc,1,(\lambda_{0},\lambda_{1})} \to \Hck_{G/Z,\Spd(C)}^{\loc,1,\lambda_{0}/\lambda_{1}}$ is a $Z$-gerbe in mixed characteristic and we do not exclude the case where $\ell \mid |Z|$, it is not automatically true that the above functor \eqref{eq:solidmon} (and therefore its extension) is the same as pulling back the corresponding functor for $\Hck_{G/Z}^{\loc,1}$ along this gerbe. This potential discrepancy is due to the failure of taking Verdier dual to commute with pullback. 
\end{rque}

We then obtain a monoidal functor
\begin{equation}\label{eq:monoidal1}
    \bigoplus_{(\lambda_{0}, \lambda_{1}) \in \Hom_{F}(u, Z)^{2}} \tn{Rep}_{\Lambda}(\widehat{G/Z})^{\lambda_{0}/\lambda_{1}}  \to D_{\blacksquare}(\Hck_{Z \subset G}^{e,1},\Lambda)
\end{equation}
by pullback along $\Hck_{Z \subset G}^{e,1} \xrightarrow{\varepsilon} \Hck_{Z \subset G}^{e,\loc,1}$ (using Proposition \ref{prop:hckloc} for the monoidality).

The Hecke correspondence for $ \Hck_{Z \subset G}^{e,1,(\lambda_{0},\lambda_{1})}$ gives rise to a $\Lambda$-linear functor
\begin{equation*}
    D_{\blacksquare}(\Hck_{Z \subset G}^{e,1,(\lambda_{0},\lambda_{1})},\Lambda) \to \tn{Fun}_{\Lambda}(D_{\blacksquare}(\Bun_{G}^{e,\lambda_{0}} \times \Spd(C),\Lambda), D_{\blacksquare}(\Bun_{G}^{e,\lambda_{1}} \times \Spd(C),\Lambda)).
\end{equation*}
Combining this across all $\lambda_{0},\lambda_{1}$ gives an analogous monoidal functor 
\begin{equation}\label{eq:monoidal2}
D_{\blacksquare}(\Hck_{Z \subset G}^{e,1},\Lambda) \to \tn{End}_{\Lambda}(D_{\blacksquare}(\Bun_{G}^{e} \times \Spd(C),\Lambda)).
\end{equation}

We thus obtain the composite functor, for any $V^{e} \in   \bigoplus_{(\lambda_{0}, \lambda_{1}) \in \Hom_{F}(u, Z)^{2}} \tn{Rep}_{\Lambda}(\widehat{G/Z})^{\lambda_{0}/\lambda_{1}}$:
\begin{equation*}
T^{e}_{V^{e}} \colon \mc{D}_{\tn{lis}}(\Bun_{G}^{e},\Lambda) \to D_{\blacksquare}(\Bun_{G}^{e} \times \Spd(C), \Lambda)
\end{equation*}
given by the formula
\begin{equation*}
    A \mapsto \overrightarrow{h}_{\natural}(\overleftarrow{h}^{*}A \prescript{\mathsmaller{\blacksquare}}{}\otimes_{\Lambda}^{L} (\varepsilon^{*}\mathcal{S}_{V^{e}}')),
\end{equation*}
where $\overrightarrow{h}_{\natural}$ is the left adjoint to $\overrightarrow{h}^{*}$ in the category of solid complexes of $\Lambda$-modules.

The following result is an analogue of \cite[Proposition IX.2.1]{Geometrization}:
\begin{prop}
    For any $V^{e} \in  \bigoplus_{(\lambda_{0}, \lambda_{1}) \in \Hom_{F}(u, Z)^{2}} \tn{Rep}_{\Lambda}(\widehat{G/Z})^{\lambda_{0}/\lambda_{1}}$, the functor $T^{e}_{V^{e}}$ defined above restricts to a functor 
    \begin{equation*}
T^{e}_{V^{e}} \colon \mc{D}_{\tn{lis}}(\Bun_{G}^{e}, \Lambda) \to \mc{D}_{\tn{lis}}(\Bun_{G}^{e}, \Lambda).
    \end{equation*}
    The resulting functor
    \begin{equation*}
         \bigoplus_{(\lambda_{0}, \lambda_{1}) \in \Hom_{F}(u, Z)^{2}} \tn{Rep}_{\Lambda}(\widehat{G/Z})^{\lambda_{0}/\lambda_{1}}  \to \tn{End}_{\Lambda}(\mc{D}_{\tn{lis}}(\Bun_{G}^{e},\Lambda)), \hspace{1mm} V^{e} \mapsto T^{e}_{V^{e}},
    \end{equation*}
    is monoidal.
\end{prop}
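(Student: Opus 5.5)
The plan is to follow the proof of \cite[Proposition IX.2.1]{Geometrization} essentially verbatim. We reduce to local properties of the Hecke correspondence that have already been transported to the extended setting.

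First I would decompose $V^{e}$ into its summands, so that it suffices to treat $V \in \tn{Rep}_{\Lambda}(\widehat{G/Z})^{\lambda_{0}/\lambda_{1}}$ placed in the $(\lambda_{0},\lambda_{1})$-slot. Then $T^{e}_{V^{e}}$ only involves the correspondence
\begin{equation*}
\Bun_{G}^{e,\lambda_{0}} \leftarrow \Hck_{Z \subset G}^{e,1,(\lambda_{0},\lambda_{1})} \rightarrow \Bun_{G}^{e,\lambda_{1}} \times \Spd(C).
\end{equation*}
The sheaf $\mc{S}'_{V}$ is supported on a closed Schubert substack $\Hck_{Z \subset G,\leq \nu}^{e,1,(\lambda_{0},\lambda_{1})}$ (Remark \ref{rque:strat}), on which $\overrightarrow{h}$ is proper by Proposition \ref{prop:Heckeproper}. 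For each strictly totally disconnected $S \to \Bun_{G}^{e,\lambda_{1}}$, the proof of that proposition identifies the fibre with $\tn{Gr}^{e,(\lambda_{0},\lambda_{1})}_{Z \subset G,\leq\nu}\times S$. By Proposition \ref{prop:mainGrisom}, this is in turn $\tn{Gr}^{\lambda_{0}/\lambda_{1}}_{G/Z,\Spd(C),\leq \nu}\times S$, a proper and cohomologically smooth-locally nice diamond over $S$. Moreover, under this identification $\mc{S}'_{V}$ becomes a ULA solid sheaf whose Verdier dual is the Satake sheaf, by Corollary \ref{prop:perveequiv1} and Definition \ref{defi:satake-category}.

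Next, I would run the argument of \emph{loc. cit.}: $\overrightarrow{h}_{\natural}$ of a lisse sheaf tensored with $\mathbb{D}(A)^{\vee}$, $A$ ULA, agrees with $\overrightarrow{h}_{*}$ of $\overleftarrow{h}^{*}(-)\otimes A$ up to dualities (via \cite[Proposition VII.3.5, Theorem IX.1.?]{Geometrization}-style statements). Lisse-ness can be checked v-locally on the target and is preserved by proper pushforward of ULA objects. Combined with Corollary \ref{cor:Dlise}(2), this identifies the target $\mc{D}_{\lis}(\Bun_{G}^{e}\times\Spd(C))$ with $\mc{D}_{\lis}(\Bun_{G}^{e})$, giving the restriction claim. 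The only inputs are that the fibres of $\overrightarrow{h}$ are the same Schubert varieties as classically (Proposition \ref{prop:mainGrisom}), and that the local-to-global map $\varepsilon$ is smooth-locally a product. The latter is handled by the Beauville--Laszlo description of Lemma \ref{lem:GerbeBL}.

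For monoidality, I would use that \eqref{eq:monoidal1} is monoidal, coming from \eqref{eq:solidmon} and Proposition \ref{prop:hckloc}. I would then show that \eqref{eq:monoidal2} intertwines convolution on $\Hck^{e,1}_{Z\subset G}$ with composition of functors. This is the standard base change argument on the diagram relating $\Hck^{e,2}$, $\Hck^{e,1}\times_{\Bun}\Hck^{e,1}$ and $\mu$. The key input is that the square comparing the two-step global Hecke stack with its local version is Cartesian (Corollary \ref{corol:CartConv}), together with the associativity of convolution established in Section \ref{sec:Hckconv}. Proper base change for $\overrightarrow{h}$ and the projection formula then give
\begin{equation*}
T^{e}_{V}\circ T^{e}_{W}\simeq T^{e}_{V\star W},
\end{equation*}
with higher coherences following formally from the algebra-in-correspondences structure.

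The main obstacle I expect is the solid-sheaf bookkeeping in the mixed characteristic case with $\ell \mid |Z|$. There the extended Hecke stack is a $Z$-gerbe over the classical one, and, as the preceding remark warns, $\mathbb{D}(-)^{\vee}$ need not commute with pullback along the gerbe. So one cannot simply import the classical lisse-ness statement. I would handle this by working directly with the Verdier dual relative to $\Hck^{e,\loc,1}_{Z\subset G}\to \Spd(C)/L^{+}G$. I would verify ULA-ness and the required duality isomorphisms on $\Hck^{e,\loc,1}$ itself, using that $L^{+}G$ has connected stabilisers (Lemma \ref{lem:Grgerbe}) and that $\tn{Gr}^{e}$ is rigid (Lemma \ref{lem:Grrig}). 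After that, the rest of \cite[Section IX.2]{Geometrization} should apply unchanged.
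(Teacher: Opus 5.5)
Your plan follows the paper's proof. You run \cite[Proposition IX.2.1]{Geometrization} verbatim on each $\Hck_{Z\subset G}^{e,\loc,1,(\lambda_0,\lambda_1)}$ using Proposition \ref{prop:Heckeproper}, the fibre description from its proof, Proposition \ref{prop:mainGrisom} and Corollary \ref{cor:Dlise}, and deduce monoidality from that of the composite of \eqref{eq:monoidal1} and \eqref{eq:monoidal2}; your handling of the $Z$-gerbe issue via the Verdier dual relative to $\Hck^{e,\loc,1}_{Z\subset G}\to\Spd(C)/L^{+}G$ is exactly how $\mc{S}'_{V^{e}}$ is defined. The one input the paper names that you leave implicit is the extended Demazure resolutions of closed Schubert cells, obtained by pulling back the classical ones along $\pi^{(\lambda_0,\lambda_1)}$; these, rather than a general claim that proper pushforward of ULA objects preserves lisse sheaves, are what make lisse-preservation work in loc.\ cit.
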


\begin{proof}
    For the first part, the proof of \cite[Proposition IX.2.1]{Geometrization} holds verbatim if one replaces $\Hck_{G}^{\loc,1}$ (in our notation---the notation \emph{loc. cit.} is different) with $\Hck_{Z \subset G}^{e,\loc,1,(\lambda_{0},\lambda_{1})}$ and uses Proposition \ref{prop:Heckeproper} and the description of the fibers of $\overrightarrow{h}$ from its proof along with the ``extended'' analogue of Demazure resolutions (as in \cite[Section VI.5]{Geometrization}) for the closure of Schubert cells of $\tn{Gr}^{e,(\lambda_{0},\lambda_{1})}_{Z \subset G}$ obtained by pulling back the usual Demazure resolutions for cells (cf. Remark \ref{rque:strat}) of $\tn{Gr}^{\lambda_{0}/\lambda_{1}}_{G/Z,\Spd(C)}$ along the isomorphism $\pi^{(\lambda_{0},\lambda_{1})}$ from Proposition \ref{prop:mainGrisom}. We are also using Corollary \ref{cor:Dlise} for the equivalence between $\mc{D}_{\tn{lis}}(\Bun_{G}^{e},\Lambda)$ and $\mc{D}_{\tn{lis}}(\Bun_{G}^{e} \times \Spd(C),\Lambda)$.

    The second part follows from the fact that $T^{e}_{V^{e}}$ is  obtained by restricting the endomorphism of $D_{\blacksquare}(\Bun_{G}^{e} \times \Spd(C),\Lambda)$ obtained from mapping $V^{e}$ along the composition \eqref{eq:monoidal1} with \eqref{eq:monoidal2}, which is monoidal, to an endomorphism of $\mc{D}_{\tn{lis}}(\Bun_{G}^{e},\Lambda)$.
\end{proof}

There is a second Hecke correspondence from the ``naive'' extended Hecke stacks (Definition \ref{defi:hcknaive}) $\Hck_{Z \subset G}^{e,(I_{i}),\tn{naive}}$ and $\Hck_{Z \subset G}^{e,\loc,(I_{i}),\tn{naive}}$ for a finite set $I$. Explicitly, it is given by 
\[
\begin{tikzcd}
  &  \Hck_{Z \subset G}^{e,(I_{i}),\tn{naive}} \arrow{ld} \arrow{rd} & \\
  \Bun_{G}^{e} & & \Bun_{G}^{e} \times (\Div^{1})^{I}.
\end{tikzcd}
\]

Recall from Proposition \ref{prop:naiveSA} that there is, for each $V \in \tn{Rep}_{\Lambda}((\widehat{G} \rtimes Q)^{I})$, a functor
\begin{equation*}
T^{e}_{V} \colon  \mc{D}_{\tn{lis}}(\Bun_{G}^{e},\Lambda) \to \mc{D}_{\tn{lis}}(\Bun_{G}^{e},\Lambda)^{B\Weil_{F}^{I}}.
\end{equation*}
These functors are defined by running the \cite{Geometrization} machinery for  a completely identical recipe as in \cite{Geometrization} but for the naive extended Hecke stacks, using the decomposition \eqref{eq:naivedecomp} to rigorously justify that all of the proofs go through verbatim.

In particular, the assignment $V \mapsto T^{e}_{V}$ gives a monoidal functor 
\begin{equation*}
    \tn{Rep}_{\Lambda}(\widehat{G} \rtimes Q) \to \tn{End}_{\tn{Rep}_{\Lambda}(Q)}(\mc{D}_{\tn{lis}}(\Bun_{G}^{e},\Lambda))^{B\Weil_{F}}.
\end{equation*}

Observe that base-changing $\Hck_{Z \subset G}^{e,\{0,1\},\tn{naive}}$ along our fixed divisor $\Spd(C) \xrightarrow{y} \Div^{1}$ yields a Cartesian diagram
\[
\begin{tikzcd}
\Hck_{Z \subset G}^{e,\{0,1\},\tn{naive}} \times_{\Div^{1}}  \Spd(C) \arrow{r} \arrow{d} & \Hck_{Z \subset G}^{e,1} \arrow{d} \\
\Hck_{Z \subset G}^{e,\loc,\{0,1\},\tn{naive}} \times_{\Div^{1}} \Spd(C) \arrow{r}  & \Hck_{Z \subset G}^{e,\loc,1}
\end{tikzcd}
\]
and that we have an embedding
\begin{equation*}
    \Hck_{Z \subset G}^{e,\{0,1\},\tn{naive}} \times_{\Div^{1}}  \Spd(C) \to \Hck_{Z \subset G}^{e,1}
\end{equation*}
which realizes $\Hck_{Z \subset G}^{e,\{0,1\},\tn{naive}} \times_{\Div^{1}} \Spd(C)$ as the substack
\begin{equation}\label{eq:naiveincl}
     \bigsqcup_{\lambda \in \Hom_{F}(u,Z)} \Hck_{Z \subset G}^{e,1,(\lambda,\lambda)} \hookrightarrow \bigsqcup_{(\lambda,\lambda') \in \Hom_{F}(u,Z)^{2}} \Hck_{Z \subset G}^{e,1,(\lambda,\lambda')} = \Hck_{Z \subset G}^{e,1},
\end{equation}
and that this also holds for the local analogues of both stacks.

Using these maps, we obtain:

\begin{thm}[Extended Hecke action]\label{thm:extended-hecke-action}
\begin{enumerate}
\item{There is a $\Weil_F^{\bullet}$-linear monoidal natural transformation 
\begin{equation*}
\bigoplus_{\Hom_{F}(u,Z)}\Rep_{\Lambda}(({^L}G)^{\bullet}) \xrightarrow{T} \End_{\Lambda}(\mc{D}_{\tn{lis}}(\Bun_{G}^{e},\Lambda))^{B\Weil_F^{\bullet}},
\end{equation*}
where the monoidal structure on the left-hand side is given by the usual tensor product on summands in the same graded component and is zero otherwise.
}
\item{There is a commutative diagram in $\Cat^{\otimes,\ex, \mathrm{st}}_{\Lambda}$  
\[\begin{tikzcd}
	{ \bigoplus_{(\lambda_{0}, \lambda_{1}) \in \Hom_{F}(u, Z)^{2}} \tn{Rep}_{\Lambda}(\widehat{G/Z})^{\lambda_{0}/\lambda_{1}} } & {\End_{\Lambda}(\mc{D}_{\tn{lis}}(\Bun_{G}^{e},\Lambda)) } \\
	{\bigoplus_{\Hom_{F}(u,Z)}\Rep_{\Lambda}({^L}G)} & {\End_{\Lambda}(\mc{D}_{\tn{lis}}(\Bun_{G}^{e},\Lambda))^{B\Weil_{F}},}
	\arrow["{T^e}", from=1-1, to=1-2]
	\arrow["\oblv", from=2-1, to=1-1]
	\arrow["T"', from=2-1, to=2-2]
	\arrow["\oblv"', from=2-2, to=1-2]
\end{tikzcd}\]}
\end{enumerate}
where the ``$\tn{oblv}$'' map first restricts to $\bigoplus_{\Hom_{F}(u,Z)}\Rep_{\Lambda}(\widehat{G})$ and then embeds via the map $\lambda \mapsto (\lambda,\lambda)$ on the gradings.
\end{thm}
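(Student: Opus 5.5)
Part (1) follows from the naive extended Hecke stacks, and part (2) from comparing the naive and extended Hecke stacks over $\Spd(C)$ via \eqref{eq:naiveincl}.

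For (1), decompose the naive Hecke stacks as in \eqref{eq:naivedecomp}. For each $\lambda\in\Hom_F(u,Z)$, Lemma \ref{lem:naiveHck} identifies $\Hck_{Z\subset G}^{e,(I_i),\tn{naive},\lambda}$ (and its local analogue) with $\Hck_{G_{b_\lambda}}^{(I_i)}$. This identification is compatible with the legs in $(\Div^1)^I$, with convolution and with the global-to-local maps. Since $G_{b_\lambda}$ is an inner form of $G$, it has the same $L$-group ${}^LG$. The construction of \cite[Theorem IX.0.1]{Geometrization}, applied to $G_{b_\lambda}$, therefore gives a $\Weil_F^I$-equivariant monoidal functor, functorial in $I$,
\[
\Rep_\Lambda(({}^LG)^I)\to \End_\Lambda(\mc{D}_{\tn{lis}}(\Bun_G^{e,\lambda},\Lambda))^{B\Weil_F^I}.
\]
Taking the direct sum over $\lambda$, and using $\mc{D}_{\tn{lis}}(\Bun_G^e)=\prod_\lambda \mc{D}_{\tn{lis}}(\Bun_G^{e,\lambda})$, gives $T$. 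The monoidal structure on $\bigoplus_{\Hom_F(u,Z)}$ is the one stated: operators attached to distinct components compose to zero, because each naive operator preserves its component $\Bun_G^{e,\lambda}$. The one thing to check is that the answer is independent of the choice of $b_\lambda$. Changing $b_\lambda$ changes the twisting isomorphism by an isomorphism between inner twists. The functoriality of the Fargues--Scholze Hecke action under such isomorphisms (transport of structure, which fixes the canonical identification of $L$-groups up to inner automorphism) should give canonically isomorphic functors.

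For (2), restrict $T$ to $I=\{*\}$ and forget the $\Weil_F$-equivariance; this computes the naive Hecke operators along $\Hck_{Z\subset G}^{e,\{0,1\},\tn{naive}}\times_{\Div^1}\Spd(C)$. By the Cartesian square preceding the theorem and \eqref{eq:naiveincl}, this stack is the open and closed substack $\bigsqcup_\lambda\Hck_{Z\subset G}^{e,1,(\lambda,\lambda)}$ of $\Hck_{Z\subset G}^{e,1}$. The same holds locally, compatibly with convolution and with the maps $\varepsilon$. Hence the naive operator $T_V$ on $\Bun_G^{e,\lambda}$ equals the extended operator built from a sheaf supported on the $(\lambda,\lambda)$ component. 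It then suffices to identify that sheaf with $\mc{S}'_{V^e}$, where $V^e$ is $V$ inflated to $\widehat{G/Z}$ and placed in degree $(\lambda,\lambda)$. This is a comparison of Satake sheaves on $\Hck^{e,\loc,1,(\lambda,\lambda)}_{Z\subset G}$.

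I would prove this comparison as follows. The pullback along $\pi$ of the Satake sheaf $\mc{S}_V$ for $G/Z$ corresponds, under Theorem \ref{thm:satake}, to $V$ viewed in $\Rep(\widehat{G/Z})^{1}$. The naive Satake sheaf, transported through Lemma \ref{lem:naiveHck}, is the Satake sheaf of $G_{b_\lambda}$, equivalently of $G$ after base change to $C$. Functoriality of geometric Satake along $G\to G/Z$, \cite[Theorem VI.11.1]{Geometrization}, applied on the trivial-twist component, identifies the two. The twist by $\Q_{\lambda/\lambda}=\Q_1$ is trivial, since $s$ is normalized. The Verdier-dual step $A\mapsto\mathbb{D}(A)^\vee$ is the same on both sides, because in both cases it is taken relative to $\Spd(C)/L^+G$; this sidesteps the $Z$-gerbe subtlety of the preceding remark. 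Monoidality of the square then follows from Proposition \ref{prop:hckloc} and the fact that the convolution on the $(\lambda,\lambda)$ components agrees with the naive convolution: the cocycle $z(1,1)$ is trivial, so $a_*(\psi)$ is the identity.

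I expect the main obstacle to be making this last identification coherent, i.e.\ as a commutative diagram in $\Cat^{\otimes,\ex,\mathrm{st}}_\Lambda$ and not just objectwise. That means matching, as monoidal functors, the Fargues--Scholze Satake equivalence for $G_{b_\lambda}$ with the one for $G/Z$ pulled back along $\pi^{(\lambda,\lambda)}$. I would do this at the level of the correspondences in $\mathrm{Corr}$, using Corollary \ref{corol:CartConv} and the compatibility of Proposition \ref{prop:Hckconv}.
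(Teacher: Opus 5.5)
Your proposal is correct and follows the paper's proof. Part (1) is the decomposition \eqref{eq:naivedecomp} combined with Fargues--Scholze applied to each $\Bun_{G_{b_\lambda}}$, as in Proposition \ref{prop:naiveSA}. For part (2) you compose the square relating the naive operators over $\Div^1$ and over $\Spd(C)$ (which the paper justifies via \cite[Theorem 1.4.4]{Hansen24}) with the identification of the naive Hecke stack over $\Spd(C)$ with $\bigsqcup_\lambda \Hck^{e,1,(\lambda,\lambda)}_{Z\subset G}$ from \eqref{eq:naiveincl}; the paper calls this comparison immediate, while you spell out what it requires (triviality of $\Q_{1}$ and $z(1,1)$, functoriality of Satake along $G\to G/Z$, and the same relative Verdier duality on both sides), which is an elaboration of the same argument rather than a different route.
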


\begin{rque}
    Part (1) of the above theorem implies that there is a $W_{F}^{\bullet}$-linear action of the monoidal category $\Rep_{\Lambda}(({^L}G)^{\bullet})$ on $\mc{D}_{\tn{lis}}(\Bun_{G}^{e},\Lambda) = \prod_{\lambda \in \Hom_{F}(u,Z)} \mc{D}_{\tn{lis}}(\Bun_{G}^{e,\lambda},\Lambda)$ which preserves each factor in the product.
\end{rque}

\begin{proof}
    The first part is Proposition \ref{prop:naiveSA}, which, to recall, just breaks up the naive Hecke stack and applies the theory of \cite{Geometrization} to each component. 
    
    For the second part, note that one has separate Hecke operators associated to $\Hck_{Z \subset G}^{e,\loc,\{0,1\},\tn{naive}} \times_{\Div^{1}} \Spd(C)$ and $\Hck_{Z \subset G}^{e,\loc,\{0,1\},\tn{naive}}$ (and their global analogues), which fit into a commutative square
    \[\begin{tikzcd}
	{\bigoplus_{\Hom_{F}(u,Z)} \Rep_{\Lambda}\widehat{G}} & {\End_{\Lambda}(\mc{D}_{\tn{lis}}(\Bun_{G}^{e},\Lambda)) } \\
	{\bigoplus_{\Hom_{F}(u,Z)} \Rep_{\Lambda}{^L}G} & {\End_{\Lambda}(\mc{D}_{\tn{lis}}(\Bun_{G}^{e},\Lambda))^{B\Weil_{F}}}.
	\arrow[from=1-1, to=1-2]
	\arrow["\oblv", from=2-1, to=1-1]
	\arrow["T"', from=2-1, to=2-2]
	\arrow["\oblv"', from=2-2, to=1-2]
\end{tikzcd}\]
   The claimed commutativity holds by \cite[Theorem 1.4.4]{Hansen24} (which itself summarizes \cite{Geometrization}), after applying the decomposition \eqref{eq:naivedecomp} to reduce to the non-extended case.

   We deduce the result from the commutativity of
    \[\begin{tikzcd}
	{\bigoplus_{(\lambda_{0},\lambda_{1}) \in \Hom_{F}(u,Z)^{2}}\Rep_{\Lambda}(\widehat{G/Z})^{\lambda_{0}/\lambda_{1}}} & {\End_{\Lambda}(\mc{D}_{\tn{lis}}(\Bun_{G}^{e},\Lambda)) } \\
	{\bigoplus_{\Hom_{F}(u,Z)}\Rep_{\Lambda}({\widehat{G})}} & {\End_{\Lambda}(\mc{D}_{\tn{lis}}(\Bun_{G}^{e},\Lambda))}
	\arrow["T^{e}", from=1-1, to=1-2]
	\arrow[from=2-1, to=1-1]
	\arrow[from=2-1, to=2-2]
	\arrow["\tn{id}"', from=2-2, to=1-2],
\end{tikzcd}\]
which is immediate from the inclusion \eqref{eq:naiveincl} and the above construction of the operators $T^{e}_{V^{e}}$.
\end{proof}

 By considering the monoidal embeddings
 \begin{equation*}
   \Rep_{\Lambda}({^L}G) \hookrightarrow \bigoplus_{\Hom_{F}(u,Z)}\Rep_{\Lambda}{^L}G, \hspace{1mm} \Rep_{\Lambda}(\widehat{G/Z) } \xrightarrow{\eqref{eq:G/Zpullback}} \bigoplus_{(\lambda_{0},\lambda_{1}) \in \Hom_{F}(u,Z)^{2}}\Rep_{\Lambda}(\widehat{G/Z})^{\lambda_{0}/\lambda_{1}},
 \end{equation*}
 one obtains:

 \begin{corol}\label{cor:maindiag}
 \begin{enumerate}
 \item{There is a $\Weil_F^{\bullet}$-linear monoidal natural transformation 
\begin{equation*}
\Rep_{\Lambda}(({^L}G)^{\bullet}) \to \End_{\Lambda}(\mc{D}_{\tn{lis}}(\Bun_{G}^{e},\Lambda))^{B\Weil_F^{\bullet}};
\end{equation*}
}
    \item{There is a commutative diagram in $\Cat^{\otimes,\ex, \mathrm{st}}_{\Lambda}$  
\[\begin{tikzcd}
	{\Rep_{\Lambda}(\widehat{G/Z})} & {\End_{\Lambda}(\mc{D}_{\tn{lis}}(\Bun_{G}^{e},\Lambda)) } \\
	{\Rep_{\Lambda}({^L}G}) & {\End_{\Lambda}(\mc{D}_{\tn{lis}}(\Bun_{G}^{e},\Lambda))^{B\Weil_{F}}.}
	\arrow["{T^e}", from=1-1, to=1-2]
	\arrow["\oblv", from=2-1, to=1-1]
	\arrow["T"', from=2-1, to=2-2]
	\arrow["\oblv"', from=2-2, to=1-2]
\end{tikzcd}\]}

\end{enumerate}
 \end{corol}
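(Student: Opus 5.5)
The corollary follows by restricting Theorem \ref{thm:extended-hecke-action} along the two displayed monoidal embeddings. The plan is to precompose both horizontal arrows of the theorem with these embeddings and then check that the resulting square still commutes.

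For part (1), I precompose the transformation $T$ of Theorem \ref{thm:extended-hecke-action}(1) with the embedding $\Rep_{\Lambda}(({^L}G)^{\bullet}) \hookrightarrow \bigoplus_{\Hom_{F}(u,Z)}\Rep_{\Lambda}(({^L}G)^{\bullet})$. I take this to be the diagonal embedding $V \mapsto (V)_{\lambda}$, which places $V$ in every graded component. This is the choice compatible with \eqref{eq:G/Zpullback} under $\oblv$: the image of $V$ is $\bigoplus_{\lambda}(V)_{(\lambda,\lambda)}$, which is exactly \eqref{eq:G/Zpullback} applied to a representation of weight $1$ inflated from $\widehat{G}$. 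The diagonal embedding is monoidal because the product on the target is componentwise. It is also $\Weil_{F}^{\bullet}$-linear and compatible with the functoriality in finite sets, since it is applied levelwise. A composite of monoidal, $\Weil_F^{\bullet}$-linear transformations has the same properties, which gives (1).

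For part (2), I paste the square of Theorem \ref{thm:extended-hecke-action}(2) with a left-hand square
\[\begin{tikzcd}
	{\Rep_{\Lambda}(\widehat{G/Z})} & {\bigoplus_{(\lambda_0,\lambda_1)}\Rep_{\Lambda}(\widehat{G/Z})^{\lambda_0/\lambda_1}} \\
	{\Rep_{\Lambda}({^L}G)} & {\bigoplus_{\Hom_F(u,Z)}\Rep_{\Lambda}({^L}G)}
	\arrow["\eqref{eq:G/Zpullback}", from=1-1, to=1-2]
	\arrow["\oblv", from=2-1, to=1-1]
	\arrow[from=2-1, to=2-2]
	\arrow["\oblv"', from=2-2, to=1-2]
\end{tikzcd}\]
Here the left $\oblv$ is restriction to $\widehat{G}$ followed by inflation along $\widehat{G/Z}\to\widehat{G}$, so that its image lies in the weight-$1$ part. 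Going through the bottom and then the right edge sends $V$ to $\bigoplus_\lambda V_{(\lambda,\lambda)}$. Going through the left and then the top edge gives $\eqref{eq:G/Zpullback}(V)=\bigoplus_{(\lambda'\cdot 1,\lambda')}V$, which is the same object. These identifications are canonical and respect the tensor structures, since both sides are computed summandwise by the identity on $V$. The left square therefore commutes in $\Cat^{\otimes,\ex,\mathrm{st}}_\Lambda$, and pasting it with the square of the theorem gives the square of the corollary.

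I do not expect a serious obstacle. The one point needing care is that the $T^e$ in the corollary is literally the theorem's $T^e$ composed with \eqref{eq:G/Zpullback}. This uses that \eqref{eq:G/Zpullback} is monoidal (the preceding lemma), so that the coherence data of the pasted square are the composites of the given ones. Everything else is checked summand by summand on the grading.
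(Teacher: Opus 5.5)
Your proposal is correct and matches the paper's argument: the corollary is obtained by precomposing both rows of Theorem \ref{thm:extended-hecke-action} with the monoidal embeddings $\Rep_{\Lambda}({^L}G) \hookrightarrow \bigoplus_{\Hom_{F}(u,Z)}\Rep_{\Lambda}({^L}G)$ and \eqref{eq:G/Zpullback}. The paper leaves two points implicit that you spell out: that the first embedding is the diagonal one (the unital choice, and the one agreeing with \eqref{eq:G/Zpullback} on representations inflated from $\widehat{G}$), and that the left-hand square commutes.
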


\begin{thm}\label{thm:diagHckact}
     There is a commutative diagram in $\Cat^{\otimes,\ex, \mathrm{st}}_{\Lambda}$  
\[\begin{tikzcd}
	{\Rep_{\Lambda}(\widehat{G}^{e})} & {\End_{\Lambda}(\mc{D}_{\tn{lis}}(\Bun_{G}^{e},\Lambda)) } \\
	{\Rep_{\Lambda}({^L}G}) & {\End_{\Lambda}(\mc{D}_{\tn{lis}}(\Bun_{G}^{e},\Lambda))^{B\Weil_{F}}.}
	\arrow["{T^e}", from=1-1, to=1-2]
	\arrow["\oblv", from=2-1, to=1-1]
	\arrow["T"', from=2-1, to=2-2]
	\arrow["\oblv"', from=2-2, to=1-2]
\end{tikzcd}\]
Moreover, for $V^{e} \in \Rep_{\Lambda}(\widehat{G}^{e})$, the action of $T^{e}_{V^{e}}$ on $\mc{D}_{\tn{lis}}(\Bun_{G}^{e},\Lambda)$ preserves all
limits and colimits, and the full subcategories of compact objects.
 \end{thm}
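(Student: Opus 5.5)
We obtain the diagram by passing to the colimit over finite central subgroups $Z \subset Z_G$ in Corollary \ref{cor:maindiag}. Since $\widehat{G}^e = \varprojlim_Z \widehat{G/Z}$ is a pro-algebraic group with surjective transition maps, every $V^e \in \Rep_\Lambda(\widehat{G}^e)$ that is finitely generated over $\Lambda$ is inflated from some $\widehat{G/Z}$. More precisely, we use the equivalence
\[
\Rep_\Lambda(\widehat{G}^e) \simeq \varinjlim_Z \Rep_\Lambda(\widehat{G/Z}),
\]
where the transition functors are the fully faithful inflations along $\widehat{G/Z'} \to \widehat{G/Z}$ for $Z \subset Z'$. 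It therefore suffices to check two things: the functors $T^e \colon \Rep_\Lambda(\widehat{G/Z}) \to \End_\Lambda(\mc{D}_{\lis}(\Bun_G^e,\Lambda))$ of Corollary \ref{cor:maindiag}(2) are compatible with these inflations as monoidal functors, and the resulting colimit functor still fits into the square with $\Rep_\Lambda({^L}G)$.

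For the compatibility in $Z$, we combine several earlier results. Corollary \ref{corol:satZfunc} gives the functoriality of extended Satake along the transition maps \eqref{eq:Ztrans}. Lemma \ref{lem:changeinZconv} gives compatibility of convolution. The identity $\Bun_G^e = \varinjlim_Z \Bun^e_{Z\subset G}$ shows that the global Hecke correspondences for $Z$ and $Z'$ agree after restricting to $\Hck^{e,1}_{Z\subset G} \subset \Hck^{e,1}_{Z'\subset G}$. Note that the embedding \eqref{eq:G/Zpullback} for $Z$ composed with the functor of Corollary \ref{corol:satZfunc} sends an inflated representation of $\widehat{G/Z}$ to the image under \eqref{eq:G/Zpullback} for $Z'$ of its inflation. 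Indeed, the $\widehat{Z'}$-weights of the inflation are the images of the $\widehat{Z}$-weights under $\Hom_F(u,Z) \hookrightarrow \Hom_F(u,Z')$, and the extra summands indexed by pairs $(\lambda'\lambda,\lambda')$ with $\lambda' \notin \Hom_F(u,Z)$ act on the components $\Bun_G^{e,\lambda'}$ not seen by $\Bun^e_{Z\subset G}$. Here we must take care with the subtlety noted in the remark on Verdier duality: the sheaves $\mc{S}'_{V^e}$ are defined via duality relative to $\Hck^{e,\loc,1}_{Z\subset G} \to \Spd(C)/L^+G$. We check that pullback along the open and closed immersion \eqref{eq:Ztrans} commutes with this relative duality, since the base $\Spd(C)/L^+G$ is the same for both $Z$ and $Z'$. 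This provides a coherent system of monoidal natural isomorphisms, so the $T^e$ glue to a monoidal functor on $\Rep_\Lambda(\widehat{G}^e)$. The square with ${^L}G$ then commutes because it already commutes for each $Z$ by Corollary \ref{cor:maindiag}(2), compatibly in $Z$, and $\oblv$ from $\Rep_\Lambda({^L}G)$ factors through the inclusion of $\Rep_\Lambda(\widehat{G})$ into each stage.

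For the ``moreover'' part, we work component by component. For $V^e$ inflated from $\widehat{G/Z}$, the operator $T^e_{V^e}$ restricted to $\mc{D}_{\lis}(\Bun_G^{e,\lambda_0})$ is a finite sum of operators $A \mapsto \overrightarrow{h}_\natural(\overleftarrow{h}^*A \otimes \varepsilon^*\mc{S}'_{V^e})$ on the components $\Hck^{e,1,(\lambda_0,\lambda_1)}_{Z\subset G}$, and each of these is supported on a bounded Schubert piece $\leq \nu$. After applying the twisting isomorphisms of Proposition \ref{prop:BunGestructure}(3), both endpoints become classical $\Bun_{G_{b_\lambda}}$. The fibres of $\overrightarrow{h}$ are identified, via the proof of Proposition \ref{prop:Heckeproper} and Proposition \ref{prop:mainGrisom}, with bounded affine Grassmannians of $G/Z$, and the sheaf $\mc{S}'_{V^e}$ is ULA there. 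We then repeat the argument of \cite[Theorem IX.2.2 / Corollary IX.2.3]{Geometrization}: the operator has a left and right adjoint given by the Hecke operator for the dual representation on the switched component $(\lambda_1,\lambda_0)$, via the switching isomorphism and Verdier duality. Hence it preserves all limits and colimits, and since its right adjoint preserves colimits, it also preserves compact objects.

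The main obstacle is this adjointness step. We need that the dual of $\mc{S}'_{V^e}$ under the torsor-switching map $\Hck^{e,\loc,1,(\lambda_0,\lambda_1)}_{Z\subset G} \simeq \Hck^{e,\loc,1,(\lambda_1,\lambda_0)}_{Z\subset G}$ is $\mc{S}'_{(V^e)^\vee}$, up to shift and twist. We would try to prove this by transporting along $\pi^{(\lambda_0,\lambda_1)}$ to the classical statement for $\Hck^{\loc,1}_{G/Z}$. For this we need Corollary \ref{prop:perveequiv1} together with the fact that the switching map is intertwined by $\pi$ with the classical one, which should follow by the same cocycle computation as in Proposition \ref{prop:Quotslope}.
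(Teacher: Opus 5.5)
Your proposal is correct and follows the paper's own argument. The paper likewise obtains the square by passing to the direct limit over finite $Z$ in Corollary~\ref{cor:maindiag}, using Lemma~\ref{lem:changeinZconv} and the fact that the construction for $Z$ lives on the open and closed substacks $\Hck^{e,1}_{Z\subset G}\subset\Hck^{e,1}_{Z'\subset G}$ and $\Bun^{e}_{Z\subset G}\subset\Bun^{e}_{Z'\subset G}$. This is the precise form of your remark that the images under \eqref{eq:G/Zpullback} for $Z$ and $Z'$ agree only after restricting to $\Bun^{e}_{Z\subset G}$. For the second part, the paper reruns the argument of \cite[Theorem IX.2.2]{Geometrization} for each finite $Z$; your duality-under-switching step, transported along $\pi^{(\lambda_0,\lambda_1)}$, is the detail that this ``identical argument'' leaves implicit.
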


 \begin{proof}
     Fix $Z \subset Z' \subset Z_{G}$ both finite. The construction of the Hecke action for $Z$ takes place on the closed and open substack
     \begin{equation*}
\Hck_{Z \subset G}^{e,1} = \bigsqcup_{(\lambda_{0},\lambda_{1}) \in \Hom_{F}(u,Z)^{2}} \Hck_{Z' \subset G}^{e,1,(\lambda_{0},\lambda_{1})} \hookrightarrow  \bigsqcup_{(\lambda_{0},\lambda_{1}) \in \Hom_{F}(u,Z')} \Hck_{Z' \subset G}^{e,1,(\lambda_{0},\lambda_{1})} = \Hck_{Z' \subset G}^{e,1},
     \end{equation*}
     in a way which is compatible with the decomposition 
     \begin{equation*}
\Bun_{Z \subset G}^{e} = \bigsqcup_{\lambda \in \Hom_{F}(u,Z)} \Bun_{G}^{e,\lambda} \hookrightarrow \bigsqcup_{\lambda \in \Hom_{F}(u,Z')} \Bun_{G}^{e,\lambda} = \Bun_{Z' \subset G}^{e}.
     \end{equation*}
     The first part of the theorem then follows by Corollary \ref{cor:maindiag}, Lemma \ref{lem:changeinZconv}, and taking the direct limit over all $Z$.

     The proof of the second part comes from the identical argument as in the proof of \cite[Theorem IX.2.2]{Geometrization} (for each finite $Z$).
 \end{proof}

 \subsection{Dependence on choices}
Continue the notation of the previous subsections. There were two choices made above: 
\begin{enumerate}
\item{The choice of isomorphism $\hat{\ov{F}} \xrightarrow{\sim} C^{\sharp}$, used to define the untilts $\{y^{(E)}\}_{E/F}$ lifting a given $y$ required for the identification $\tn{Gr}_{t}(C) \xrightarrow{\sim} X_{*}(t)$;}
\item{The choice of section $s$ of the surjection
\begin{equation}\label{eq:sectionsurj}
     \Hom_{F}(\tilde{t}, \mathbf{T}) \times_{\Hom_{F}(u,\mathbf{T})} \Hom_{F}(u,Z) \to \Hom_{F}(u,Z).
 \end{equation}
}
\end{enumerate}

\begin{rque}
    Although the pinning of $G$ was required to define each $\tn{Gr}_{Z \subset G}^{e,(\lambda_{0},\lambda_{1})}$, the Hecke action constructed above only depends on $\Hck_{Z \subset G}^{e,1,(\lambda_{0},\lambda_{1})}$, whose definition is independent of this choice.
\end{rque}

\subsubsection{The divisors}\label{sec:choiceoflifts}
The choice of a fixed $y \in \Div^{1}(C)$ already implicitly appears in \cite{Geometrization} by using the \'{e}tale fundamental group of $\Div^{1}$. For fixed $y$, any two choices of systems of untilts $\{y_{1}^{(E)}\}_{E/F}$ and $\{y_{2}^{(E)}\}_{E/F}$ are conjugate under some unique $\sigma \in \Gal_{F}$. The difference between the two resulting trivializations $\psi_{2} \circ \psi_{1}^{-1}$ is translation by a unique element $t_{\sigma} \in t(X_{C}^{\circ})$ from which one obtains $dt_{\sigma} \in Z^{1}(X_{C}^{\circ},u)$.

\begin{lem}\label{lem:sigmaaut}
    The class $[dt_{\sigma}] \in H_{\tn{v}}^{1}(X_{C}^{\circ},u)$ is nontrivial. In fact, it is even nontrivial in $\tn{Coker}[H_{\tn{fppf}}^{1}(\D_{C},u) \to H_{\tn{fppf}}^{1}(\D_{C}^{\circ},u)]$.
\end{lem}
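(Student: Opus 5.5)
We assume $\sigma \neq e$, since otherwise the two systems of untilts coincide, $t_\sigma = 1$, and there is nothing to prove. It suffices to prove the stronger statement: a class which is nontrivial in the cokernel of $H^{1}_{\tn{fppf}}(\D_{C},u) \to H^{1}_{\tn{fppf}}(\D_{C}^{\circ},u)$ is in particular nontrivial after restriction to $\D_{C}^{\circ}$, hence nontrivial in $H^{1}_{\tn{v}}(X_{C}^{\circ},u)$. The plan is to make everything explicit at finite level and then reduce to a valuation computation in $\Bdr(C^{\sharp})$.

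\emph{Step 1: computing $t_\sigma$.} By the uniqueness part of the proof of Lemma \ref{lem:Grt}, the $X_{C}^{\circ}$-trivializations of $\mc{T}_{C}^{\circ}$ form a torsor under $t(X_{C}^{\circ}) \cong \varprojlim_{E} \Z[\Gal_{E/F}]_{0}$. Under this identification, translating by $x$ multiplies the image in $\tn{Gr}_{t}(C)=X_{*}(t)$ by $f_{x}$. The trivialization $\psi_2$ comes from uniformizers at $y_{2}^{(E)} = \sigma(y_{1}^{(E)})$, so its image in $\tn{Gr}_{t}(C)$, computed with respect to the system $\{y_1^{(E)}\}$, is the $\sigma$-translate of $\tn{Sh}$. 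Hence $t_\sigma$ corresponds to the element of $\varprojlim_E \Z[\Gal_{E/F}]_0$ whose level-$E$ component is $\delta_{\sigma|_E} - \delta_{e}$, up to a global sign convention that is irrelevant for nontriviality. Concretely, the component of $t_\sigma$ in $\Res_{E/F}(\Gm)(\D_{C}^{\circ}) = \prod_{\Gal_{E/F}} \Bdr(C^{\sharp})^{\times}$ has $\Bdr$-valuation vector $\delta_{\sigma|_E} - \delta_e$.

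\emph{Step 2: the boundary map at finite level.} At level $(E,n)$ the sequence $1\to u \to \tilde t \to t\to 1$ becomes the $n$-power map on $\Res_{E/F}(\Gm)$ with kernel $\Res_{E/F}(\mu_n)$. The class $d x$ is the $\Res_{E/F}(\mu_n)$-torsor of $n$-th roots of $x$. Since $\Bdr(C^{\sharp})$ is a complete discretely valued field of characteristic $0$ with algebraically closed residue field, Kummer theory gives
\[
H^1(\D_C^\circ,\Res_{E/F}\mu_n)=\textstyle\prod_{\Gal_{E/F}}\Bdr(C^{\sharp})^\times/(\Bdr(C^{\sharp})^{\times})^{n} \cong (\Z/n)[\Gal_{E/F}]
\]
via valuations. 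Similarly, $H^1(\D_C,\Res_{E/F}\mu_n)=0$: every unit of $\Bdr^{+}(C^{\sharp})$ is an $n$-th power by Henselianity, exactly as in Lemma \ref{lem:cohomvanish}. Therefore the image of $dt_\sigma$ at level $(E,n)$ is $\delta_{\sigma|_E}-\delta_e \bmod n$, and the cokernel at level $(E,n)$ is all of $(\Z/n)[\Gal_{E/F}]$.

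\emph{Step 3: passing to the limit.} This is the expected main obstacle, because $u$ is pro-finite and we must control $H^1$ of the inverse limit. As in the proof of Lemma \ref{lem:cohomvanish}, we show that the relevant $\varprojlim^{(1)}$ of $H^0(\D_C^\circ,u_{E/F,n})$ vanishes: these are finite groups, so the Mittag-Leffler condition holds. This gives a map
\[
H^1(\D_C^\circ,u)\to\varprojlim_{E,n} H^1(\D_C^\circ,u_{E/F,n})
\]
which is injective (indeed an isomorphism), and the same holds on $\D_C$, where the limit vanishes. It then suffices to find one level $(E,n)$ at which the image of $dt_\sigma$ is nonzero. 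Choose $E$ finite Galois with $\sigma|_{E}\neq e$ and take $n=2$; then $\delta_{\sigma|_E}-\delta_e \not\equiv 0 \bmod 2$ in $(\Z/2)[\Gal_{E/F}]$. Thus $[dt_\sigma]$ is nonzero in the cokernel, and a fortiori in $H^1_{\tn{v}}(X_C^\circ,u)$.

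If the fppf/v comparison or the $\varprojlim^{(1)}$ argument causes trouble, the fallback is to map to a single finite-level quotient $u \to u_{E/F,n}$ and check nontriviality there. Since the relevant maps are functorial in the coefficients, a class whose image is nonzero after pushing out to $u_{E/F,n}$ must itself be nonzero, and this avoids limits entirely.
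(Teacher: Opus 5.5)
Your argument is correct in substance. Its core computation agrees with the paper's: $[dt_\sigma]$ is detected by the valuation vector $[e]-[\sigma]$ of $t_\sigma$ at the points of $X_C^{(E)}$ over $y$. The organization differs. The paper proves the two claims separately. It identifies $H^1_{\tn{v}}(X_C^\circ,u)$ with $\varprojlim\Z[\Gal_{E/F}]_0$ via Kummer and Shapiro on the punctured curve. It then identifies the whole pro-level cokernel with $H^0(C,u(-1))=\varprojlim\widehat{\Z}[\Gal_{E/F}]$ via purity for closed immersions. You instead deduce the first claim from the second by restricting to $\D_C^\circ$. You also replace purity by explicit Kummer theory over the discretely valued field $\Bdr(C^\sharp)$ at a single finite quotient $u\to u_{E/F,n}$, using that this quotient induces a map of cokernels. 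Your route is more elementary and never has to control cohomology with pro-coefficients; the paper's route yields complete descriptions of the groups involved.

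Two corrections are needed.

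First, promote your fallback to the main argument. The $\varprojlim^{(1)}$/Milnor-sequence step for fppf cohomology with pro-coefficients is not justified as written, and it is not needed.

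Second, the paper allows $F$ of equal characteristic, where $\Bdr^{+}(C^\sharp)\cong C^\sharp[[\xi]]$ has characteristic $p$, not $0$. For $p\mid n$ not every unit is an $n$-th power (e.g.\ $1+\xi$ for $n=p$). Consequently:
\begin{itemize}
\item $H^1(\D_C,\Res_{E/F}\mu_n)\neq 0$;
\item $H^1(\D_C^\circ,\Res_{E/F}\mu_n)$ is strictly larger than $(\Z/n)[\Gal_{E/F}]$;
\item the inverse limit over $\D_C$ does not vanish;
\item your choice $n=2$ is unjustified when $p=2$.
\end{itemize}
The conclusion survives. Both rings are finite products of local rings, so $H^1_{\tn{fppf}}(-,\mu_n)$ is units modulo $n$-th powers. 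Hence the cokernel at level $(E,n)$ is $\prod_{\Gal_{E/F}}\Bdr(C^\sharp)^{\times}/\bigl((\Bdr(C^\sharp)^{\times})^{n}\cdot\Bdr^{+}(C^\sharp)^{\times}\bigr)\cong(\Z/n)[\Gal_{E/F}]$ via valuations, in every characteristic. Alternatively, simply take $n\geq 2$ prime to $p$.
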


\begin{proof}
Combining the Kummer sequence with Shapiro's Lemma gives an identification
\begin{equation*}
H_{\tn{v}}^{1}(X_{C}^{\circ},u) \xrightarrow{\sim} \varprojlim \Z[\Gal_{E/F}]_{0},
\end{equation*}
and under this map $[dt_{\sigma}]$ maps to $[e] -[\sigma]$.

For the second claim, we use purity for closed immersions (\cite[Theorem 7.1.2]{CS24}) to identify $\tn{Coker}[H_{\tn{fppf}}^{1}(\D_{C},u) \to H_{\tn{fppf}}^{1}(\D_{C}^{\circ},u)]$ with $H^{0}(C, u(-1)) = \varprojlim \widehat{\Z}[\Gal_{E/F}]$ and observe that $[dt_{\sigma}]$ again maps to $[e] - [\sigma]$. 
\end{proof}

In particular, since $H_{\tn{v}}^{1}(X_{C},u) = H_{\tn{fppf}}^{1}(F,u) = 0$ by Proposition \ref{prop:ucohom} (using \cite[Th\'{e}or\`{e}me 11.4]{Fargues22} for the first equality), the class $[dt_{\sigma}]$ does not lift to a class defined over $X_{C}$ and, locally, does not lift to a class defined over $\D_{C}$.

The cocycle $dt_{\sigma}$ defines an automorphism of the gerbe $[\mc{T}_{C}^{\circ}/\tilde{t}_{C}]$, denoted by $z_{\sigma}$. 

\begin{lem}
    For $\lambda \in \Hom_{F}(u,Z)$ and $S \in \Perfd_{C}$, the $Z_{S}$-torsor obtained using $\psi_{2}$, denoted by $\Q^{(2)}_{\lambda,S}$, is related to $\Q^{(1)}_{\lambda,S}$ (defined using $\psi_{1}$) by the formula 
    \begin{equation*}
\Q^{(2)}_{\lambda,S} = z_{\sigma,*}(\Q^{(1)}_{\lambda,S}).
    \end{equation*}
\end{lem}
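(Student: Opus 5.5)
The plan is to unwind the construction of $\Q_{\lambda,S}$ from Lemma \ref{lem:Zred} and track exactly where the trivialization enters. Recall that $\Q^{(i)}_{\lambda,S}$ is the fiber over $e$ of the composite
\begin{equation*}
\Q'_{\lambda,S} \to \ov{\Q'_{\lambda,S}} \xrightarrow{\ov{s(\lambda)}_{*}(\psi_{i})} \underline{\ov{\mathbf{T}}_{S}},
\end{equation*}
where the identification $\ov{\Q'_{\lambda,S}} \cong \ov{s(\lambda)}_{*}(\mc{T}_{S}^{\circ})$ comes from Lemma \ref{lem:curlyT} and does not depend on $\psi$. The first step is to write $\psi_{2} = t_{\sigma} \cdot \psi_{1}$ with $t_{\sigma} \in t(X_{C}^{\circ})$, pulled back to $S$. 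Functoriality of pushforward then gives $\ov{s(\lambda)}_{*}(\psi_{2}) = \ov{s(\lambda)}(t_{\sigma}) \cdot \ov{s(\lambda)}_{*}(\psi_{1})$. Hence $\Q^{(2)}_{\lambda,S}$ is the fiber of the same map to $\underline{\ov{\mathbf{T}}_{S}}$ over the section $\ov{s(\lambda)}(t_{\sigma})^{-1}$ rather than over $e$.

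The second step is to compare the fibers over $e$ and over $\bar{c} := \ov{s(\lambda)}(t_{\sigma})^{-1}$ inside the $\mathbf{T}_{S}$-torsor $\Q'_{\lambda,S}$. To do this, lift $t_{\sigma}$ v-locally (or after passing to a finite level as in Remark \ref{rque:fingerbe}.(1)) to $\tilde{t}_{\sigma} \in \tilde{t}$. Then $c := s(\lambda)(\tilde{t}_{\sigma})^{-1} \in \mathbf{T}$ lifts $\bar{c}$, and translation by $c$ carries $\Q^{(1)}_{\lambda,S}$ isomorphically onto $\Q^{(2)}_{\lambda,S}$ locally. The two lifts on the overlaps differ by $d\tilde{t}_{\sigma}$, which is the cocycle $dt_{\sigma}$ valued in $u$. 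On these overlaps the translations differ by $s(\lambda)(dt_{\sigma}) = \lambda(dt_{\sigma}) \in Z$, since $s(\lambda)|_{u} = \lambda$. Gluing the local identifications therefore shows that $\Q^{(2)}_{\lambda,S}$ is the twist of $\Q^{(1)}_{\lambda,S}$ by the $Z$-torsor $\lambda_{*}(dt_{\sigma})$.

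The final step is to match this with $z_{\sigma,*}$. The automorphism $z_{\sigma}$ of $[\mc{T}_{C}^{\circ}/\tilde{t}_{C}]$ is translation by the $u$-cocycle $dt_{\sigma}$. For a Kal-basic torsor with inertial homomorphism $\lambda$, such as $\Q^{(1)}_{\lambda,S}$ viewed via its $\tilde{t}$-equivariant structure (Lemma \ref{lem:equivtors}), pushforward along $z_{\sigma}$ twists the torsor by $\lambda_{*}$ of that cocycle. I would verify this with the twisted-torsor description of \cite[Proposition 2.50]{Dillery23} used in Lemma \ref{lem:GerbeBL}: there, changing the gerbe by a $1$-cocycle $\alpha$ in the band changes the descent datum $\phi$ by $\lambda(\alpha)$.

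I expect the main obstacle to be this last identification, together with the sign and direction conventions (whether one gets $z_{\sigma}$ or $z_{\sigma}^{-1}$, and pushforward versus pullback). The plan is to handle it by fixing conventions on the trivialized model $[\bbD^{\circ}_{C}/u_{C}]$ and checking a single cocycle computation there.
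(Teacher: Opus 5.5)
The paper states this lemma without proof, so there is no argument of the authors' to match. Your proof is correct.

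Steps 1 and 2 establish $\Q^{(2)}_{\lambda,S}\cong\lambda_{*}(dt_{\sigma})\cdot\Q^{(1)}_{\lambda,S}$. Translation by $c=s(\lambda)(\tilde{t}_{\sigma})^{-1}$ is an automorphism of the $\tilde{t}$-equivariant $\mathbf{T}_{S}$-torsor $\Q'_{\lambda,S}$ because $\mathbf{T}$ is commutative. The local translations differ on overlaps by $\lambda(dt_{\sigma})$ because $s(\lambda)|_{u}=\lambda$. Step 3 is the standard fact that the band-automorphism of a $u$-gerbe attached to a $u$-torsor $P$ acts on torsors with inertial homomorphism $\lambda$ by twisting with $\lambda_{*}P$, up to the sign you flag.

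A more direct argument, likely the one the authors regard as evident, realizes $z_{\sigma}$ concretely as the automorphism $\tau$ of $[\mc{T}^{\circ}_{C}/\tilde{t}_{C}]$ induced by translating $\mc{T}^{\circ}_{C}$ by $t_{\sigma}$.
\begin{itemize}
\item $\tau$ is well defined since $\tilde{t}$ acts through the commutative group $t$.
\item A one-line check shows $\underline{\mathrm{Isom}}(\mathrm{id},\tau)$ is the fiber of $\tilde{t}\to t$ over $t_{\sigma}$, i.e.\ the $u$-torsor represented by $dt_{\sigma}$.
\item $\tau$ lies strictly over the identity of $[X^{\circ}_{S}/\tilde{t}_{S}]$, so it fixes $\Q'_{\lambda,S}$ on the nose.
\item $\tau$ transports the identification of Lemma \ref{lem:curlyT} to itself composed with $\ov{s(\lambda)}_{*}(\tau)$. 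Hence the map $\Q'_{\lambda,S}\to\underline{\ov{\mathbf{T}}_{S}}$ built from $\psi_{1}$ pulls back to the one built from $\psi_{1}\circ\tau=\psi_{2}$.
\end{itemize}
This gives $\tau^{*}\Q^{(1)}_{\lambda,S}=\Q^{(2)}_{\lambda,S}$ literally, with no local lifts, gluing, or twisted torsors, and it fixes the direction convention for $z_{\sigma}$ in one stroke.

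Your route costs the extra identification in step 3. In exchange, it exhibits the discrepancy explicitly as the $Z$-torsor $\lambda_{*}(dt_{\sigma})$. That is exactly the object whose non-extendability over $X_{C}$ and $\bbD_{C}$ (Lemma \ref{lem:sigmaaut}) drives the surrounding discussion. It also applies when $z_{\sigma}$ is known only through its cocycle, which is how the paper phrases its definition.
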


Lemma \ref{lem:sigmaaut} (and the sentence following it) implies there is no automorphism of $[\mc{T}_{C}/\tilde{t}_{C}]$, or even of $[\mc{T}_{C}^{\loc}/\tilde{t}_{C}]$, lifting $z_{\sigma}$, and therefore no obvious way to use $z_{\sigma}$ to define an isomorphism of the (local or global) Hecke stacks associated to the two distinct $\{y_{i}^{(E)}\}$. It is therefore not clear to us what effect the choice of $\{y^{(E)}\}_{E/F}$ (coming from the choice of $\hat{\ov{F}} \xrightarrow{\sim} C^{\sharp}$) has on the resulting Hecke action (Theorem \ref{thm:extended-hecke-action}).

\begin{rque}
It is interesting to note the parallel between the necessity of choosing the lifts $\{y^{(E)}\}_{E/F}$ for $y \in \Div^{1}(C)$ in this paper and the necessity of choosing a section of the map $V_{\ov{F}} \to V_{F}$ (where $V$ denotes the set of places of a global field $F$) in order to define the global Kaletha gerbe in \cite{Kaletha18b}.
\end{rque}

\subsubsection{The section $s$}
The construction of the extended Hecke stacks (Definitions \ref{defi:elochck} and \ref{defi:eglobhck}) relied on a choice of section $s$ for the surjection \eqref{eq:sectionsurj}.
 
 Using Shapiro's lemma, we may identify $\Hom_{F}(\tilde{t},\mathbf{T})$ with $X_{*}(\mathbf{T})_{\mathbb{Q}}$ and $\Hom_{F}(u,\mathbf{T})$ with the group $X_{*}(\mathbf{T})_{\mathbb{Q}/\Z}$. It follows that picking a section of $\mathbb{Q} \to \mathbb{Q}/\mathbb{Z}$ determines a section of 
 \begin{equation*}
\Hom_{F}(\tilde{t},\mathbf{T}) \to \Hom_{F}(u,\mathbf{T}),
 \end{equation*}
 and therefore, by restricting to $\Hom_{F}(u,Z)$, the desired section. We can thus make a ``distinguished'' choice of section by taking the one corresponding to the section $\mathbb{Q}/\Z \to \mathbb{Q}$ valued in $[0,1)$. However, we will show shortly that the resulting Hecke action (Theorem \ref{thm:extended-hecke-action}) does not depend on the section $s$ at all. 
 
Any two sections $s_{1}$ and $s_{2}$ are related by the formula $s_{2} = s_{1}c$, where $\Hom_{F}(u,Z_{G}) \xrightarrow{c} \Hom_{F}(t,\mathbf{T})$ is a function (a ``$1$-cochain''). Denote by $\prescript{i}{}\Hck^{e,-}_{Z \subset G}$ the corresponding extended (local or global) Hecke stacks for $i=1,2$. There is an isomorphism
\begin{equation*}
    \prescript{1}{}\Hck^{e,-,n}_{Z \subset G} \xrightarrow{\eta_{c}} \prescript{2}{}\Hck^{e,-,n}_{Z \subset G}
\end{equation*}
sending (for $n=1$) $(\cP_{0}, \cP_{1}, f)$ to $(\cP_{0}, \cP_{1}, f \circ c(\lambda_{1}/\lambda_{0})_{*}(\psi)),$ where by $f \circ c(\lambda_{1}/\lambda_{0})_{*}(\psi)$ we mean the composition (using Lemma \ref{lem:curlyT})
\begin{equation*}
    \Q^{(2)}_{\lambda_{1}/\lambda_{0},S} \cdot \cP_{0} = (\Q^{(1)}_{\lambda_{1}/\lambda_{0},S} \cdot c(\lambda_{1}/\lambda_{0})_{*}(\mc{T}_{S})) \cdot \cP_{0} \xrightarrow{c(\lambda_{1}/\lambda_{0})_{*}(\psi)}\Q^{(1)}_{\lambda_{1}/\lambda_{0},S}  \cdot \cP_{0} \xrightarrow{f} \cP_{1}.
\end{equation*}

\begin{lem}
\begin{enumerate}
    \item{The map $\eta_{c}$ is compatible with convolution.}
    \item{The map $\eta_{c}$ commutes with the projection maps $\prescript{i}{}\Hck^{e,\loc,1}_{Z \subset G} \to \Hck_{G/Z}^{\loc,1}$ from Proposition \ref{prop:Hckquot}.}
    \end{enumerate}
\end{lem}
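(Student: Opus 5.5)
The strategy is to prove both parts by direct comparison of the modification isomorphisms, as in the proofs of Proposition \ref{prop:Hckconv} and the associativity of convolution. The key identity is the relation between the two $2$-cocycles. From $s_{2} = s_{1}c$ we get
\begin{equation*}
z_{2}(\alpha,\beta) = z_{1}(\alpha,\beta)\, c(\alpha)\, c(\beta)\, c(\alpha\beta)^{-1},
\end{equation*}
so $z_{2} = z_{1} \cdot dc$ in $\Hom_{F}(t,\mathbf{T})$. Pushing forward $\psi$ along homomorphisms $t \to \mathbf{T}$ is multiplicative: by Lemma \ref{lem:curlyT}, $f \mapsto f_{*}(\mc{T}_{S})$ together with $f \mapsto f_{*}(\psi)$ is a monoidal functor from $\Hom_{F}(t,\mathbf{T})$ to trivialized torsors. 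Hence the isomorphisms $z_{2}(\alpha,\beta)_{*}(\psi)$ and
\begin{equation*}
z_{1}(\alpha,\beta)_{*}(\psi)\, c(\alpha)_{*}(\psi)\, c(\beta)_{*}(\psi)\, c(\alpha\beta)_{*}(\psi)^{-1}
\end{equation*}
agree, after identifying $\Q^{(2)}_{\alpha,S} = \Q^{(1)}_{\alpha,S}\cdot c(\alpha)_{*}(\mc{T}_{S})$. I would first record this compatibility as a small lemma.

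For (1), take $n=2$ with data $(\cP_{0},\cP_{1},\cP_{2},f_{0},f_{1})$ in $\prescript{1}{}\Hck^{e,\loc,2}_{Z\subset G}(S)$. Set $\alpha = \lambda_{2}/\lambda_{1}$ and $\beta = \lambda_{1}/\lambda_{0}$. There are two routes.
\begin{itemize}
\item Applying $\eta_{c}$ and then the convolution \eqref{eq:gconv} for $s_{2}$ gives the composition of $z_{2}(\alpha,\beta)_{*}(\psi^{-1})$, then $\Q^{(2)}_{\alpha,S}\cdot (f_{0}\circ c(\beta)_{*}(\psi))$, then $f_{1}\circ c(\alpha)_{*}(\psi)$.
\item Convolving first and then applying $\eta_{c}$ gives $c(\alpha\beta)_{*}(\psi)$, then $z_{1}(\alpha,\beta)_{*}(\psi^{-1})$, then $\Q^{(1)}_{\alpha,S}\cdot f_{0}$, then $f_{1}$.
\end{itemize}
The isomorphisms $c(-)_{*}(\psi)$ are $Z_{S}$-torsor isomorphisms acting through central twisting, so they commute with $f_{0}$ and $f_{1}$ (naturality of contracted product). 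We can therefore move $c(\alpha)_{*}(\psi)$ and $c(\beta)_{*}(\psi)$ to the front. Equality of the two routes then reduces exactly to the cocycle identity above. The same argument works for all $\mu_{i}$ and for the global stacks. One also checks that $\eta_{c}$ respects morphisms; this is immediate, since it changes only $f$ by a fixed isomorphism natural in $\cP_{0}$.

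For (2), recall from Proposition \ref{prop:Hckquot} that $\pi$ sends $f$ to $g = \bar f\circ \varphi^{-1}_{\lambda_{1}/\lambda_{0},\cP_{0}}$. The isomorphism $\varphi$ comes from the canonical trivialization of $\ov{\Q\cdot \cP}$ induced by $\Q$ being a $Z_{S}$-torsor. It suffices to show that the image of $c(\lambda)_{*}(\psi)$ in $\ov{\mathbf{T}}$-torsors is compatible with the $\varphi$'s, that is,
\begin{equation*}
\varphi^{(1)}_{\lambda,\cP_{0}}\circ \ov{c(\lambda)_{*}(\psi)} = \varphi^{(2)}_{\lambda,\cP_{0}}.
\end{equation*}
This should follow from the construction of $\Q_{\lambda,S}$ in Lemma \ref{lem:Zred}. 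There the reduction to $Z_{S}$ is defined as the fiber over $e$ of $\ov{s(\lambda)}_{*}(\psi)$, and $\ov{s_{2}(\lambda)}_{*}(\psi) = \ov{s_{1}(\lambda)}_{*}(\psi)\cdot \ov{c(\lambda)}_{*}(\psi)$ by the same multiplicativity.

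The main obstacle is bookkeeping rather than new input. One must check that the identification $\Q^{(2)}_{\alpha,S}\cdot\Q^{(2)}_{\beta,S} \cong \Q^{(2)}_{\alpha\beta,S}$ used in \eqref{eq:Arnaudisom} matches the $s_{1}$-version twisted by the $c$'s, with consistent signs and inverses. I would handle this by working throughout with the trivial $\mathbf{T}$-torsors with twisted $\tilde{t}$-actions from Definition \ref{defi:Qdefs}, where everything becomes multiplication in $\Hom_{F}(\tilde t,\mathbf{T})$.
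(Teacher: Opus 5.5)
Your proposal is correct and follows the paper's argument. For (2) you reduce to the same identity relating $\varphi^{(1)}_{\lambda,\cP_{0}}$, $\varphi^{(2)}_{\lambda,\cP_{0}}$ and $\ov{c(\lambda)_{*}(\psi)}$ (the paper states it in inverse form, with $\varphi$ pointing the other way), and you deduce it from multiplicativity of $f \mapsto f_{*}(\psi)$ applied to $s_{2} = s_{1}c$, as the paper does. For (1), which the paper only calls a straightforward calculation, your use of naturality of the central twists to reduce to the coboundary relation $z_{2} = z_{1}\cdot dc$ is exactly the computation left to the reader, and it checks out.
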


\begin{proof}
    The first claim is a straightforward calculation. Unpacking the constructions, the second claim reduces to proving the identity
    \begin{equation*}
        \varphi^{(1)}_{\lambda_{1}/\lambda_{0},\cP_{0}} = \ov{c(\lambda_{1}/\lambda_{0})_{*}(\psi)} \circ \varphi^{(2)}_{\lambda_{1}/\lambda_{0},\cP_{0}},
    \end{equation*}
    where $\varphi^{(i)}_{\lambda_{1}/\lambda_{0},\cP_{0}}$ denotes the canonical isomorphism $\ov{\cP_{0}} \xrightarrow{\sim} \ov{\Q_{\lambda_{1}/\lambda_{0},S}^{(i)} \cdot \cP_{0}}$ from \eqref{eq:varphi}. The desired identity then follows from the equality
    \begin{equation*}
        s_{1}(\lambda_{1}/\lambda_{0})_{*}(\psi^{-1}) = c(\lambda_{1}/\lambda_{0})_{*}(\psi) \circ s_{2}(\lambda_{1}/\lambda_{0})_{*}(\psi^{-1}) = c(\lambda_{1}/\lambda_{0})_{*}(\psi) \circ [s_{1}(\lambda_{1}/\lambda_{0})c(\lambda_{1}/\lambda_{0})]_{*}(\psi^{-1} )
    \end{equation*}
    of isomorphisms of torsors from $\underline{\mathbf{T}_{S}}$ to $s_{1}(\lambda_{1}/\lambda_{0})_{*}(\mc{T}_{S}^{\loc,\circ})$. 
\end{proof}

Putting the above lemma together with basic functoriality properties gives:

\begin{prop}
  All of the maps in the commutative diagram in part (2) of Theorem \ref{thm:extended-hecke-action} remain the same for any choice of section $s$ of \eqref{eq:sectionsurj}.
\end{prop}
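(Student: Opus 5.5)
The plan is to show that every ingredient entering the diagram of Theorem \ref{thm:extended-hecke-action}(2) is intertwined, for two sections $s_1,s_2=s_1c$, by the isomorphism $\eta_c$ and its global analogue, compatibly with all the structure used in the construction. The bottom row and the map $\oblv$ on the right do not involve $s$ at all: they come from the naive Hecke stacks of Definition \ref{defi:hcknaive}, which are defined without any twisting torsors $\Q_{\lambda,S}$. It therefore suffices to treat the top arrow $T^e$ and the left map $\oblv$.

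First, I would check that $\eta_c$ is compatible with global-to-local restriction (Proposition \ref{prop:hckloc}). The isomorphism $c(\lambda_1/\lambda_0)_*(\psi)$ on $X_S^\circ$ restricts to the same map on $\bbD_S^\circ$, since $\psi$ is global. Next, I would check that $\eta_c$ commutes with $\overleftarrow{h}$ and $\overrightarrow{h}$, which is clear because $\eta_c$ does not change the torsors $\cP_0,\cP_1$. By part (1) of the preceding lemma, $\eta_c$ is compatible with convolution, so pullback along it is monoidal on $D_{\blacksquare}$. By part (2), $\eta_c$ lies over $\Hck^{\loc,1}_{G/Z}$, hence is compatible with the maps $\pi$. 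Consequently, pullback along $\eta_c$ intertwines the Satake equivalences of Theorem \ref{thm:satake}, which are defined via $\pi^*$ from the classical Satake sheaves, and it identifies the functors $V^e\mapsto \mc{S}'_{V^e}$. One point needs a remark here: the relative Verdier dual in \eqref{eq:solidmon} commutes with pullback along an isomorphism of stacks over $\Spd(C)/L^+G$, so the potential discrepancy noted in the remark about $Z$-gerbes does not arise.

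Combining these facts, the Hecke operators $T^e_{V^e}=\overrightarrow{h}_\natural(\overleftarrow{h}^*(-)\otimes\varepsilon^*\mc{S}'_{V^e})$ built from ${}^1\Hck$ and from ${}^2\Hck$ agree via base change along the isomorphism $\eta_c$. The monoidal structures also agree, because $\eta_c$ respects the convolution correspondences. Finally, the inclusion \eqref{eq:naiveincl} of the naive stacks as the diagonal components $(\lambda,\lambda)$ is preserved by $\eta_c$. On these components $c(1)$ is trivial, since the section is normalized, so $\eta_c$ restricts to the identity there. This shows that the left $\oblv$ and its compatibility square are also unchanged.

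The main obstacle I expect is the normalization point together with the higher coherence. For the diagonal components I would use normalization, which gives $c(1)=0$, so $\eta_c$ is literally the identity there. For the algebra-in-correspondences structure, I would need $\eta_c$ to be compatible with all $\mu_i$ for every $n$, including the cocycle-twisted maps. I would handle this by checking that $z_2=z_1\cdot dc$ and that $\eta_c$ on $\Hck^{e,n}$ is given componentwise by $c(\lambda_{i+1}/\lambda_i)_*(\psi)$. Associativity is then automatic, because the preceding lemma handles $n=2$ and the general $\mu_i$ are built from it.
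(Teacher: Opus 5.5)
Your proposal is correct and follows the paper's route. The paper deduces the proposition directly from the preceding lemma, namely that $\eta_c$ is compatible with convolution and commutes with the projections $\pi$ to $\mathrm{Hck}^{\mathrm{loc},1}_{G/Z}$, together with ``basic functoriality''; your additional checks (compatibility of $\eta_c$ with global-to-local restriction and with $\overleftarrow{h},\overrightarrow{h}$, invariance of the relative Verdier dual under pullback along an isomorphism, and $\eta_c$ being the identity on the diagonal components $(\lambda,\lambda)$ because the sections are normalized) are exactly the functoriality that the paper leaves implicit.
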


\section{Extended Spectral action}\label{sec:extended-spectral}
We continue to assume that $G$ is a quasi-split connected reductive group over $F$.
\subsection{Extended stack of parameters}

Recall that we denote by $\Par_G := \Par_G^{\square}/\widehat{G}$ the stack of parameters of $G$ as introduced by \cite{Geometrization}, \cite{DHKM} and \cite{ZhuCoherentSheaves}. Recall as well that we have introduced the extended dual group
$$\widehat{G}^e = \varprojlim_{Z} \widehat{G/Z}$$
where the limit ranges through finite central subgroups $Z \subset Z_G$. As in \cite{Fargues22}, we introduce the extended stack of parameters to be 
$$\Par_G^e := \Par_G^{\square}/\widehat{G}^e.$$
We note that it naturally fits into the following pullback square of stacks over $\Spec(\Lambda)$ 
\begin{equation}\label{eq:diag-defi-par-g-e}
    \begin{tikzcd}
	{\Par_G^e} & {\pt/\widehat{G}^e} \\
	{\Par_G} & {\pt/\widehat{G}.}
	\arrow[from=1-1, to=1-2]
	\arrow[from=1-1, to=2-1]
	\arrow[from=1-2, to=2-2]
	\arrow["\alpha", from=2-1, to=2-2]
\end{tikzcd}
\end{equation}

\subsection{Spectral action patterns}

In this subsection, we fix $\Ccal \in \Pr_{\Lambda}$ a  compactly generated $\Lambda$-linear presentable category. The goal of this subsection is to explain what data determines an action of $\Perf(\Par_G^e)$ on $\Ccal$.

Let us define functors $\FinSet \to \Cat^{\otimes, \ex, \mathrm{st}}_{\Lambda}$

\begin{equation*}
\Rep_{\Lambda}(({^L}G)^{\bullet}) : I \mapsto \Rep_{\Lambda}(({^L}G)^I);
\end{equation*}
here, the category $\Rep(\Gamma)$ has to be understood as $\Perf(B\Gamma)$.

We denote by
\begin{equation*}\label{eq:natTransB}
\End(\Ccal)^{B\Weil_F^{\bullet}} : I \mapsto \End(\Ccal)^{B\Weil_F^I}.
\end{equation*}

\begin{thm}[\cite{AGKRRV}, \protect{\cite[Theorem X.0.1]{Geometrization}}]\label{thm:spectralDecompositionTheorem}
Assume that $\ell$ does not divide $|\pi_1(\widehat{G})_{\tn{tor}}|$ if $\Lambda = \Zlb, \Flb$, then there is an equivalence of categories between $\Weil_F^{\bullet}$-linear natural transformations of functors
$$\Rep_{\Lambda}(({^L}G)^{\bullet}) \to \End(\Ccal)^{B\Weil_F^{\bullet}}$$
and compactly supported actions of $\Perf(\Par_G)$ on $\Ccal$. 
\end{thm}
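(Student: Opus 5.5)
The plan is to follow the strategy of \cite[Section X.1]{Geometrization}, which in turn adapts the argument of \cite{AGKRRV}. The idea is to identify both sides with the same universal object built from $\Par_G$. Since the statement only concerns $\Par_G$ and not the extended objects, nothing specific to $\Bun_G^e$ enters.

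\textbf{Step 1 (discretize the Weil group).} Fix a finite quotient $Q$ of $\Weil_F$ through which the action on $\widehat{G}$ factors. Choose a discrete, finitely generated dense subgroup $W \subset \Weil_F/P$ for a suitable open subgroup $P$ of wild inertia, as in \cite{DHKM} and \cite[Section VIII.3]{Geometrization}. Then $\Par_G$ is a disjoint union, over such $P$, of the quotients $Z^1(W,\widehat{G})/\widehat{G}$ of affine schemes of finite type over $\Lambda$. On the sheaf side, a $\Weil_F^{I}$-linear structure on an endofunctor of $\Ccal$ that is continuous (i.e.\ factors through $B\Weil_F^I$ in the solid sense) is equivalent to a $W^I$-action satisfying the corresponding continuity condition. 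This reduces the statement to one about the discrete group $W$ and the stack $Z^1(W,\widehat{G})/\widehat{G}$.

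\textbf{Step 2 (the coend formula).} For a discrete group $W$, show that
\[
\Ind\Perf(Z^1(W,\widehat{G})/\widehat{G}) \simeq \int^{I \in \FinSet} \Rep_{\Lambda}(({^L}G)^I) \otimes \Lambda[W^I],
\]
as algebras in $\Pr$, where the right-hand side is taken in the appropriate sense in $\Pr$. Concretely, this amounts to the identity
\[
\mathcal{O}(Z^1(W,\widehat{G})) = \operatorname*{colim}_{I}\ \mathcal{O}(\widehat{G}^I) \otimes \Lambda[W^I],
\]
which follows from presenting $Z^1(W,\widehat{G})$ by the generators and relations of $W$, together with the fact that $\pt/\widehat{G}$ is $1$-affine. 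It then follows formally that $W^\bullet$-linear natural transformations $\Rep_{\Lambda}(({^L}G)^\bullet) \to \End(\Ccal)^{BW^\bullet}$ are the same as $\Lambda$-linear actions of $\Ind\Perf(\Par_G)$ on $\Ccal$. The \emph{compactly supported} condition on the action corresponds to the requirement that the Hecke-type functors preserve compact objects, which is automatic from the construction.

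\textbf{Step 3 (where the hypothesis on $\ell$ enters).} To pass from the universal property for $\QCoh$ to one for $\Perf$, two facts are needed. First, the objects $\mathcal{O} \otimes V$, for $V \in \Rep_{\Lambda}(\widehat{G})$, should generate $\Perf(\Par_G)$ under finite colimits and retracts. Second, $\Rep_{\Lambda}(\widehat{G})$ should be a rigid, compactly generated category. Both hold once $\ell \nmid |\pi_1(\widehat{G})_{\tn{tor}}|$, by \cite[Section VIII.5]{Geometrization}. This is also the source of the restriction for $\Lambda = \Zlb, \Flb$.

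\textbf{Expected main obstacle.} I expect the hardest part to be Step 1 together with its interaction with Step 3, rather than the formal coend argument. One must show that continuity of the $\Weil_F^I$-actions, which are defined via solid/condensed sheaves, exactly matches the discretization by $W$. One must also check that the resulting action is independent of the choice of $W$ and $P$, which requires comparing the various $Z^1(W,\widehat{G})$ across different choices. My first attempt would be to invoke directly the comparison results of \cite[Section VIII.3]{Geometrization} and \cite{DHKM}, and then check that the excursion-type data glue across these choices.
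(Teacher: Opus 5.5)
The paper does not prove this statement. It is quoted from \cite[Theorem X.0.1]{Geometrization} (building on \cite{AGKRRV}), and your outline follows that same source, so the question is whether it reconstructs that argument correctly. At one essential point it does not: the \emph{compact support} condition.

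Compact support is not about preservation of compact objects. That is automatic for any action of $\Perf(\Par_G)$, since perfect complexes are dualizable and so act by functors with colimit-preserving right adjoints; for Hecke operators on $\Bun_G$ it is the separate geometric input \cite[Theorem IX.2.2]{Geometrization}. Nor is compact support automatic: the regular action of $\Perf(\Par_G)$ on $\Ind\Perf(\Par_G)$ violates it, because $\Ocal$ is compact with non-quasi-compact support.

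Rather, compact support is the spectral counterpart of the continuity built into $\End(\Ccal)^{B\Weil_F^{\bullet}}$. The scheme $Z^1(\Weil_F,\widehat{G})$ is the \emph{increasing} union, not a disjoint union, of the open and closed affine pieces $Z^1(\Weil_F/P,\widehat{G})$ over open $P\subset P_F$. A discretization $W\subset \Weil_F/P$ describes only one such piece. So your Steps 1--2 can at best identify $\Weil_F/P$-linear data with actions of $\Perf(Z^1(\Weil_F/P,\widehat{G})/\widehat{G})$.

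Passing to $\Weil_F$ and all of $\Par_G$ is exactly the step you skipped. For each compact object, continuity forces wild inertia to act through a finite quotient $P_F/P$ (since $P_F$ is pro-$p$ and $\ell\neq p$). This confines the action on that object to the level-$P$ piece. Conversely, a compactly supported action yields continuous $\Weil_F^{I}$-actions.

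There are also two smaller errors.

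First, the ``concrete'' identity in Step 2 is false as written. A coend over finite sets of $\Ocal(\widehat{G}^I)\otimes\Lambda[W^I]$ only sees $W$ as a set. It is the restricted tensor product of copies of $\Ocal(\widehat{G})$ indexed by $W$, i.e.\ functions on the scheme of \emph{all} maps $W\to\widehat{G}$, so the cocycle relations never appear. In \cite{AGKRRV} and \cite{Geometrization} these relations enter because the $W^I$ act by \emph{group} actions compatibly with the tensor structure. One therefore has to work with the categories of $W^I$-representations (with the appropriate variance), not with $\Lambda[W^I]$ as a set-like coalgebra.

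Second, in Step 3, rigidity of $\Rep_{\Lambda}(\widehat{G})$ holds for every $\ell$. The hypothesis on $\ell$ enters only through \cite[Theorem VIII.5.1]{Geometrization}, the identification $\Ind\Perf(\Par_G)\simeq\Ocal(\Par_G^{\square})\text{-}\Mod(\Ind\Perf(\pt/\widehat{G}))$, from which generation by the objects $\Ocal\otimes V$ on each quasi-compact piece follows. This is the same input the paper uses for Theorem \ref{thm:relative-tensor-product}.
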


\begin{rque}
    Recall from \cite{Geometrization} that compactly supported means that for all compact objects $X \in \Ccal$, the functor 
    \begin{align*}
        \Perf(\Par_G) &\to \Ccal \\
        \mathcal{F} &\mapsto \mathcal{F} * X,
    \end{align*}
    factors through a quasi-compact substack $\Par_G' \subset \Par_G$ of parameters which are trivial on some small enough compact open subgroup $P' \subset P_F$ of the wild inertia. 
\end{rque}

We now want to state and prove the extension to get actions of $\Perf(\Par_G^e)$. 

\begin{thm}\label{thm:extendedSpectralDecompositionTheorem}
Assume $\ell$ is as in Theorem \ref{thm:spectralDecompositionTheorem}, then there is a canonical equivalence between quasi-compact actions of $\Perf(\Par_G^e)$ on $\mc{C}$ and the following data :  
\begin{enumerate}
\item a $\Weil_F^{\bullet}$-linear natural transformation $\Rep_{\Lambda}(({^L}G)^{\bullet}) \to \End(\Ccal)^{B\Weil_F^{\bullet}}$, 
\item a commutative diagram in $\Cat^{\otimes,\ex, \mathrm{st}}_{\Lambda}$  
\begin{equation}\label{diag:required-commutative-diagram}
    \begin{tikzcd}
	{\Rep_{\Lambda}(\widehat{G}^e}) & {\End(\Ccal) } \\
	{\Rep_{\Lambda}({^L}G}) & {\End(\Ccal)^{B\Weil_F}}
	\arrow["{T^e}", from=1-1, to=1-2]
	\arrow["\oblv", from=2-1, to=1-1]
	\arrow["T"', from=2-1, to=2-2]
	\arrow["\oblv"', from=2-2, to=1-2]
\end{tikzcd}
\end{equation}
\end{enumerate}
\end{thm}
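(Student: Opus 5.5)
The plan is to reduce the statement to Theorem \ref{thm:spectralDecompositionTheorem} using the Cartesian square \eqref{eq:diag-defi-par-g-e}. The key intermediate result, announced in the introduction as Theorem \ref{thm:relative-tensor-product}, is an equivalence
\[
\Ind\Perf(\Par_G^e) \simeq \Ind\Perf(\Par_G) \otimes_{\Ind\Perf(\pt/\widehat{G})} \Ind\Perf(\pt/\widehat{G}^e).
\]
I would prove it first. Granting it, an action of $\Ind\Perf(\Par_G^e)$ on $\Ccal$ is the same as three pieces of data. The first is an action of $\Ind\Perf(\Par_G)$. The second is an action of $\Ind\Perf(\pt/\widehat{G}^e)$, that is, a monoidal functor $\Rep_\Lambda(\widehat{G}^e)\to\End(\Ccal)$; since all these categories are rigid symmetric monoidal, the commutative algebra structure in $\Pr$ is detected at this level. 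The third is an identification of the two induced actions of $\Ind\Perf(\pt/\widehat{G})$. This uses the universal property of the relative tensor product of commutative algebras, namely that it is a pushout in $\CAlg(\Pr)$, together with the fact that a module structure over a commutative algebra is the same as a map of algebras into $\End(\Ccal)$.

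Next, Theorem \ref{thm:spectralDecompositionTheorem} turns the first datum into item (1). Under that dictionary, the $\Ind\Perf(\pt/\widehat{G})$-action obtained by restricting along $\alpha^*$ is the functor $\Rep_\Lambda(\widehat{G})\to\End(\Ccal)$ obtained from $T$ by restricting to $I=\{*\}$ and forgetting the $\Weil_F$-equivariance. This is exactly the lower-left-to-upper-right composite of \eqref{diag:required-commutative-diagram} restricted to $\Rep(\widehat{G})$. I would verify this compatibility by unwinding the construction in \cite[Section X]{Geometrization}: the map $\Par_G\to\pt/\widehat{G}$ corresponds to evaluating at one leg and forgetting the Weil action. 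The commutative square in item (2) then records precisely the required identification of the two $\Rep(\widehat{G})$-actions. One small point needs care here: a commuting square out of $\Rep({^L}G)$, rather than $\Rep(\widehat{G})$, carries the same information. I would justify this because $\oblv\colon\Rep({^L}G)\to\Rep(\widehat{G})$ is induced by $\pt/\widehat{G}\to\pt/{^L}G$, and the $\Weil_F$-linearity already forces the factorization. Compact support, or quasi-compactness, is then inherited from the $\Par_G$-action, because $\Par_G^e\to\Par_G$ is a $\widehat{Z}$-gerbe-like map with affine fibers.

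The main obstacle is the tensor product formula. I would attack it with the $1$-affineness and descent results of Gaitsgory and Ben-Zvi--Francis--Nadler type, first at finite level: for each finite $Z$, replace $\widehat{G}^e$ by $\widehat{G/Z}$. The map $\pt/\widehat{G/Z}\to\pt/\widehat{G}$ is a $B\widehat{Z}$-fibration with $\widehat{Z}$ a finite diagonalizable group, so it is affine-like and $\Ind\Perf(\pt/\widehat{G/Z})$ is dualizable and rigid over $\Ind\Perf(\pt/\widehat{G})$. Hence base change holds:
\[
\QCoh(\Par_G)\otimes_{\QCoh(\pt/\widehat{G})}\QCoh(\pt/\widehat{G/Z})\simeq \QCoh(\Par_G\times_{\pt/\widehat{G}}\pt/\widehat{G/Z}).
\]
Perfect stacks in the sense of BFN give this when $\widehat{Z}$ has order invertible in $\Lambda$. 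When $\ell$ divides $|\widehat{Z}|$, I expect to need the hypothesis on $\ell$ together with the explicit description $\Rep(\widehat{G/Z})=\bigoplus_{\chi\in X^*(\widehat Z)}$ of graded pieces relative to $\Rep(\widehat{G})$, which is a grading rather than a genuine gerbe issue. In that case one checks directly that $\QCoh(\pt/\widehat{G/Z})$ is the module category over the $\widehat{G}$-algebra $\Ocal(\widehat{G/Z})^{\widehat Z}$-twisted sum, and hence that the base change holds. One also has to check that $\QCoh$ and $\Ind\Perf$ agree on $\Par_G$, which is true since $\Par_G$ is a quotient of an lci affine scheme by a reductive group in the given characteristics. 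Finally, I would pass to the limit over $Z$: $\Rep(\widehat{G}^e)=\varinjlim_Z\Rep(\widehat{G/Z})$, and relative tensor products commute with filtered colimits in $\Pr$, which gives the result.
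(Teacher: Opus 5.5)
Your overall architecture matches the paper's. \cite[Theorem X.0.1]{Geometrization} turns (1) into a $\Perf(\Par_G)$-action, the square (2) identifies the two induced $\Rep_\Lambda(\widehat{G})$-actions, and Theorem~\ref{thm:relative-tensor-product} converts this into a $\Perf(\Par_G^e)$-action.

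\textbf{The gap: your proof of Theorem~\ref{thm:relative-tensor-product} for $\Lambda\in\{\Flb,\Zlb\}$.} This is where you diverge from the paper, and there the argument breaks. You claim that $\QCoh$ and $\Ind\Perf$ agree on $\Par_G$; this is false in that setting.
\begin{itemize}
\item In characteristic $\ell$, a reductive $\widehat{G}$ that is not a torus is not linearly reductive. So $\pt/\widehat{G}$, and in general $\Par_G$, has infinite cohomological dimension.
\item Consequently the structure sheaf is not compact in $\QCoh$, and $\Ind\Perf\to\QCoh$ is not even fully faithful. This is exactly why Fargues--Scholze work with $\Ind\Perf$.
\item For the same reason, Ben-Zvi--Francis--Nadler perfect-stack base change is only available for $\Lambda=\Qlb$, which is the only case where the paper uses it. Invertibility of $|\widehat{Z}|$ is not the relevant condition. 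Indeed $\widehat{Z}$ is diagonalizable, so the $X^{*}(\widehat{Z})$-grading of $\Rep_\Lambda(\widehat{G/Z})$ exists for every $\ell$.
\end{itemize}
The missing input is \cite[Theorem VIII.5.1]{Geometrization}. Under the hypothesis on $\ell$, it identifies $\Ind\Perf(\Par_G)$ with $\Ocal(\Par_G^{\square})\text{-}\Mod(\Ind\Perf(\pt/\widehat{G}))$. Once $\Ind\Perf(\Par_G)$ is a module category over a commutative algebra in $\Ind\Perf(\pt/\widehat{G})$, base change to $\Ind\Perf(\pt/\widehat{G}^e)$ is formal by \cite[Chapter 1, Corollary 8.5.7]{StudyInDerivedAlgGeomI}. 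That is where the hypothesis on $\ell$ enters the tensor product formula, and it makes your finite-level analysis and colimit over $Z$ unnecessary.

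\textbf{A second issue, shared with the paper.} Write $A=\Ind\Perf(\Par_G)$, $B=\Ind\Perf(\pt/\widehat{G})$ and $C=\Ind\Perf(\pt/\widehat{G}^e)$. Your step ``an action of $A\otimes_B C$ is an $A$-action plus a $C$-action agreeing on $B$'' does not follow from the pushout property in $\CAlg(\Pr)$, because $\End(\Ccal)$ is only an associative algebra.
\begin{itemize}
\item An action of $A\otimes_B C$ is an $A$-action together with a $B$-compatible monoidal functor $C\to\End_A(\Ccal)$. So each $T^e_{V^e}$ must also commute coherently with the $\Perf(\Par_G)$-action.
\item Data (1)--(2) do not encode this. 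For a split torus, let $\Perf(\Par_G)$ act on $\QCoh(\Par_G^{\square})$ by pullback along $\Par_G^{\square}\to\Par_G$. Let a weight $\chi$ of $\widehat{G}^e$ act by twisting parameters by a finite-order unramified character depending only on $\chi \bmod X^{*}(\widehat{G})$. This gives data (1)--(2) in which $T^e$ is not $\Ocal(\Par_G^{\square})$-linear.
\end{itemize}
The paper's proof makes the same move (``it follows that there is a monoidal functor\dots''), so this is not a difference of route. Still, the commutation datum should either be added to (2) or checked separately.
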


\begin{proof}
    It follows from \cite[Theorem X.0.1]{Geometrization} that there is a well-defined quasi-compact action of $\Perf(\Par_G)$ on $\Ccal$. 

    Recall also from \emph{loc. cit.} that for each $V \in \Rep_{\Lambda}{^L}G$, there is a canonical vector bundle $\Ecal_V \in \Perf(\Par_G)^{B\Weil_F}$, such that the functor 
    $$\Ccal \xrightarrow{\Ecal_V * - } \Ccal^{B\Weil_F}$$
    is canonically isomorphic to the functor $T_V$. Let us denote by $\mathrm{univ} : \Rep_{\Lambda}({^L}G) \to \Perf(\Par_G)^{B\Weil_F}$ the functor $V \mapsto \Ecal_V$. There is a commutative diagram in $\Cat_{\Lambda}^{\otimes, \ex, \mathrm{st}}$
    \[\begin{tikzcd}
	{\Rep_{\Lambda}({^L}G}) & {\Perf(\Par_G)^{B\Weil_F}} & {\End(\Ccal)^{B\Weil_F}} \\
	{\Rep_{\Lambda}(\widehat{G}}) & {\Perf(\Par_G)} & {\End(\Ccal)}
	\arrow["{\mathrm{univ}}", from=1-1, to=1-2]
	\arrow["\oblv"', from=1-1, to=2-1]
	\arrow[from=1-2, to=1-3]
	\arrow["\oblv", from=1-2, to=2-2]
	\arrow["\oblv", from=1-3, to=2-3]
	\arrow["{\alpha^*}"', from=2-1, to=2-2]
	\arrow[from=2-2, to=2-3]
\end{tikzcd}\]

    The commutative diagram \eqref{diag:required-commutative-diagram} yields a commutative diagram in $\Cat_{\Lambda}^{\otimes, \ex, \mathrm{st}}$
    \[\begin{tikzcd}
	{\Rep_{\Lambda}(\widehat{G}}) & {\Perf(\Par_G)} \\
	{\Rep_{\Lambda}(\widehat{G}^e}) & {\End(\Ccal).}
	\arrow["{\alpha^*}", from=1-1, to=1-2]
	\arrow["\oblv"', from=1-1, to=2-1]
	\arrow[from=1-2, to=2-2]
	\arrow["{T^e}"', from=2-1, to=2-2]
\end{tikzcd}\]
    It follows that there is a monoidal functor 
    $$\Perf(\Par_G) \otimes_{\Perf(\pt/\widehat{G})} \Perf(\pt/\widehat{G}^e) \to \End(\Ccal).$$
    By \Cref{thm:relative-tensor-product}, we have $\Perf(\Par_G) \otimes_{\Perf(\pt/\widehat{G})} \Perf(\pt/\widehat{G}^e) \cong \Perf(\Par_G^e)$ and the theorem is proven.
\end{proof}

The pullback square \eqref{eq:diag-defi-par-g-e} yields a map 
\begin{equation}\label{eq:relative-tensor-product}
   \Ind\Perf(\Par_G) \otimes_{\Ind\Perf(\pt/\widehat{G})} \Ind\Perf(\pt/\widehat{G}^e) \to \Ind\Perf(\Par_G^e).
\end{equation}

\begin{thm}\label{thm:relative-tensor-product}
    Assume that $\ell$ does not divide the order of $\pi_1(\widehat{G})_{\tn{tor}}$ if $\Lambda = \Flb, \Zlb$, the functor \eqref{eq:relative-tensor-product} is an equivalence.
\end{thm}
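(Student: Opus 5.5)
The plan is to reduce to a single finite central subgroup $Z$ and then to exploit the fact that $\widehat{G/Z} \to \widehat{G}$ is a central extension by the diagonalizable group $\widehat{Z}$. This lets the categories on both sides of \eqref{eq:relative-tensor-product} split according to central characters.

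\textbf{Step 1: reduction to finite $Z$.} Since $\widehat{G}^e = \varprojlim_Z \widehat{G/Z}$ and every representation in $\Perf(\pt/\widehat{G}^e)$ factors through a finite stage, we have
\begin{equation*}
\Ind\Perf(\pt/\widehat{G}^e) \simeq \varinjlim_Z \Ind\Perf(\pt/\widehat{G/Z})
\end{equation*}
in $\Pr$, with transition functors given by pullback. The same holds for
\begin{equation*}
\Par_G^e = \varprojlim_Z \Par_G^{\square}/\widehat{G/Z}:
\end{equation*}
a perfect complex on $\Par_G^e$ is pulled back from some finite level $\Par_G^{Z} := \Par_G^\square/\widehat{G/Z}$. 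The relative tensor product commutes with filtered colimits in $\Pr$, so it suffices to prove that
\begin{equation*}
\Ind\Perf(\Par_G) \otimes_{\Ind\Perf(\pt/\widehat{G})} \Ind\Perf(\pt/\widehat{G/Z}) \to \Ind\Perf(\Par_G^{Z})
\end{equation*}
is an equivalence for each finite $Z$, compatibly in $Z$.

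\textbf{Step 2: character decomposition.} The group $\widehat{Z}$ is central and diagonalizable with character group $X^*(\widehat{Z}) = \Hom_F(u,Z)$. Hence any $\widehat{G/Z}$-equivariant object on a stack where $\widehat{Z}$ acts trivially splits canonically into $\chi$-isotypic parts. This gives decompositions
\begin{equation*}
\Ind\Perf(\pt/\widehat{G/Z}) = \bigoplus_\chi \mathcal{M}_\chi, \qquad \Ind\Perf(\Par_G^Z) = \bigoplus_\chi \mathcal{N}_\chi,
\end{equation*}
with $\mathcal{M}_0 = \Ind\Perf(\pt/\widehat{G})$ and $\mathcal{N}_0 = \Ind\Perf(\Par_G)$. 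These are $\Ind\Perf(\pt/\widehat{G})$-linear (resp.\ $\Ind\Perf(\Par_G)$-linear). The map of Step 1 respects the decompositions and sends $\Ind\Perf(\Par_G) \otimes_{\mathcal{M}_0} \mathcal{M}_\chi$ to $\mathcal{N}_\chi$. Here $\mathcal{N}_\chi$ is the category of $\chi$-twisted sheaves for the $\widehat{Z}$-gerbe $\Par_G^Z \to \Par_G$, which is pulled back from the gerbe $\pt/\widehat{G/Z} \to \pt/\widehat{G}$.

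\textbf{Step 3: invertibility.} I would show that each $\mathcal{M}_\chi$ is an invertible $\mathcal{M}_0$-module, with inverse $\mathcal{M}_{-\chi}$ and with the tensor product map $\mathcal{M}_\chi \otimes_{\mathcal{M}_0} \mathcal{M}_{-\chi} \to \mathcal{M}_0$ an equivalence. Concretely, choose a representation $W$ of $\widehat{G/Z}$ with central character $\chi$, for instance one coming from a character of a maximal torus lifting $\chi$. Then tensoring with $W$ and with $W^\vee$ gives adjoint functors between $\mathcal{M}_0$ and $\mathcal{M}_{\pm\chi}$. One then checks generation: compact generators of $\mathcal{M}_\chi$ are of the form $W \otimes V$ with $V \in \Rep\widehat{G}$, up to retracts. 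The same argument, applied after pullback along $\alpha$, gives the analogous statement for $\mathcal{N}_\chi$ over $\mathcal{N}_0$: the pulled-back objects $\alpha^*(W) \otimes \mathcal{F}$ generate $\mathcal{N}_\chi$. Granting this, the map $\mathcal{N}_0 \otimes_{\mathcal{M}_0} \mathcal{M}_\chi \to \mathcal{N}_\chi$ sends a set of compact generators to compact generators. Full faithfulness on these generators reduces, via the dualizability of $\mathcal{M}_\chi$ over $\mathcal{M}_0$, to the base-change identity
\begin{equation*}
\Hom_{\Par_G^Z}(\alpha^*W \otimes \mathcal{F}, \alpha^*W \otimes \mathcal{G}) = \Hom_{\Par_G}(\mathcal{F}, \alpha^*(W^\vee \otimes W)^{\widehat{Z}} \otimes \mathcal{G}),
\end{equation*}
which follows because pushforward along the gerbe $\Par_G^Z \to \Par_G$ is exact on isotypic parts. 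Taking invariants under the diagonalizable group $\widehat{Z}$ is exact even when $\ell$ divides $|Z|$.

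\textbf{Main obstacle.} I expect the hardest point to be the generation statement. One must show that $\chi$-isotypic perfect complexes on $\Par_G^Z$ are generated, under finite colimits and retracts, by objects of the form $\alpha^*W \otimes \mathcal{F}$. One must also make sure that working with $\Ind\Perf$ rather than $\QCoh$ causes no trouble. This is where the hypothesis on $\ell$ enters. I would invoke the results of \cite{Geometrization} and \cite{DHKM} that, under this hypothesis, $\Par_G$ has the needed finiteness properties: it is a disjoint union of quasi-compact lci pieces with $\Perf$ generated by the vector bundles $\mathcal{E}_V$. Lifting this generation statement to the $\chi$-twisted setting via tensoring with $\alpha^* W$ then requires only that $W \otimes W^\vee$ contain the trivial representation as a retract. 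If that fails in modular characteristic, I would instead take $W$ to be a one-dimensional character of a torus and argue locally on $\Par_G$.
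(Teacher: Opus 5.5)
Your route differs from the paper's. There, \cite[Theorem VIII.5.1]{Geometrization} gives $\Ind\Perf(\Par_G)\simeq\Ocal(\Par_G^{\square})\text{-}\Mod(\Ind\Perf(\pt/\widehat{G}))$. Base change to $\Ind\Perf(\pt/\widehat{G}^e)$ is then formal by \cite[Chapter 1, Corollary 8.5.7]{StudyInDerivedAlgGeomI}, with \cite{BenZviFrancisNadler} for $\Qlb$. You instead split the $\widehat{G}^e$-factor along $X^*(\widehat{Z})$ and try to exhibit each isotypic piece as a module category over $\mathcal{M}_0$. Steps 1--2 and your Hom computation are fine.

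\textbf{The gap.} The step that carries all the content for $\Lambda=\Flb,\Zlb$ is not established. Everything rests on some $W\in\mathcal{M}_\chi$ with $\mathbf{1}$ a retract of $W\otimes W^\vee$ (in practice, $\operatorname{rank}W\in\Lambda^\times$). Without it you get neither the Morita description of $\mathcal{M}_\chi$ nor the generation of $\mathcal{N}_\chi$ by the objects $\alpha^*W\otimes\mathcal{F}$. Your fallback does not supply such a $W$:
\begin{itemize}
\item A character of a maximal torus is not an object of $\mathcal{M}_\chi$ at all.
\item A one-dimensional $W\in\mathcal{M}_\chi$ exists only when $\chi$ vanishes on $\widehat{Z}':=\widehat{Z}\cap(\widehat{G/Z})_{\mathrm{der}}$. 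This never happens for $\chi\neq 0$ when $G=\mathrm{SL}_n$.
\item ``Arguing locally on $\Par_G$'' abandons the $\Ind\Perf(\pt/\widehat{G})$-linear setting, and you give no descent statement for the relative tensor product.
\end{itemize}
Generation of $\Perf(\Par_G)$ by the $\mathcal{E}_V$ does not help either. It concerns only the untwisted piece $\mathcal{N}_0$, which your argument never needs.

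\textbf{How to close it.} The missing lemma holds under the hypothesis, and this is exactly where the hypothesis enters.
\begin{itemize}
\item $\widehat{Z}'$ is a quotient of $\ker(\widehat{G}_{\mathrm{sc}}\to\widehat{G}_{\mathrm{der}})$, so $|\widehat{Z}'|$ divides $|\pi_1(\widehat{G})_{\tn{tor}}|$ and is prime to $\ell$.
\item Let $\widehat{T}'\subset\widehat{G/Z}$ be a maximal torus and $\mu_0\in X^*(\widehat{T}')$ any lift of $\chi$. Choose $a\in\Z$ with $a\ell^M\equiv 1 \bmod |\widehat{Z}'|$.
\item Then $\chi-(a\ell^M\mu_0)|_{\widehat{Z}}$ vanishes on $\widehat{Z}'$. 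Since $\widehat{Z}/\widehat{Z}'$ embeds in the torus $\widehat{G/Z}/(\widehat{G/Z})_{\mathrm{der}}$, it extends to a character $\eta$ of $\widehat{G/Z}$.
\item Add a large multiple of $\ell^M\cdot 2\rho$ to $a\ell^M\mu_0+\eta$. This gives a dominant $\mu$ restricting to $\chi$ with $\langle\mu,\alpha^\vee\rangle\in\ell^M\Z$ for every root $\alpha$.
\item If $\ell^M$ exceeds the Coxeter number, Weyl's dimension formula shows the Weyl module $V(\mu)$ has rank prime to $\ell$. So $W=V(\mu)$ works.
\end{itemize}
Monadicity then gives $\mathcal{M}_\chi\simeq B\text{-}\Mod(\mathcal{M}_0)$ and $\mathcal{N}_\chi\simeq(\alpha^*B)\text{-}\Mod(\mathcal{N}_0)$ for $B=W\otimes W^\vee$. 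The same base-change principle the paper uses then finishes the proof.

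Repaired this way, your argument avoids \cite[Theorem VIII.5.1]{Geometrization} entirely. It also makes explicit the identification of $\Ind\Perf(\Par_G^e)$ with a module category, which the paper's ``formally'' leaves implicit. As written, however, it stops exactly at the one point where the hypothesis on $\ell$ is needed.
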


\begin{proof}
    If $\Lambda = \Qlb$, this follows from \cite[Theorem 4.7]{BenZviFrancisNadler}. We now assume that $\Lambda = \Flb, \Zlb$; by \cite[Theorem VIII.5.1]{Geometrization}, the map $\Par_G \to \pt/\widehat{G}$ yields an equivalence
    $$\Ind\Perf(\Par_G) \cong \Ocal(\Par_G^{\square})-\Mod(\Ind\Perf(\pt/\widehat{G})).$$
    The statement now follows formally from \cite[Chapter 1, Corollary 8.5.7]{StudyInDerivedAlgGeomI}.
\end{proof}

\subsection{Spectral action}
We now specialize \Cref{thm:extendedSpectralDecompositionTheorem} to the case of $\mc{D}_{\tn{lis}}(\Bun_{G}^{e},\Lambda)$. 

\begin{thm}\label{thm:extended-spectral-action-on-bun_G}
    Assume that $\ell$ does not divide the order of $\pi_1(\widehat{G})_{\tn{tor}}$ if $\Lambda = \Zlb, \Flb$, there is an action of $\Perf(\Par_G^e)$ on $\mc{D}_{\tn{lis}}(\Bun_{G}^{e},\Lambda)$ satisfying the following property: 
    
    Let $\lambda \in \Hom_F(u, Z_{G})$, upon identifying $\Bun_G^{e,\lambda}$ with $\Bun_{G_b}$ as in Proposition \ref{prop:BunGestructure} $(3)$, the restriction of the action of $\Perf(\Par_G^e)$ to $\Perf(\Par_G)$ is the Fargues--Scholze spectral action. 
\end{thm}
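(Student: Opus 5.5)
The plan is to apply Theorem \ref{thm:extendedSpectralDecompositionTheorem} with $\Ccal = \mc{D}_{\tn{lis}}(\Bun_{G}^{e},\Lambda)$. The theorem requires two inputs, and both have essentially been constructed in Section \ref{sec:extendedheckestacks}. First, $\Ccal$ must be compactly generated. This follows from the decomposition $\mc{D}_{\tn{lis}}(\Bun_{G}^{e},\Lambda) = \prod_{\lambda} \mc{D}_{\tn{lis}}(\Bun_{G}^{e,\lambda},\Lambda)$ together with the twisting isomorphisms $\Bun_{G}^{e,\lambda} \cong \Bun_{G_{b_{\lambda}}}$ of Proposition \ref{prop:BunGestructure}(3), since each factor is compactly generated by \cite{Geometrization}.

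The two data are then supplied as follows.
\begin{enumerate}
\item The $\Weil_F^{\bullet}$-linear natural transformation $\Rep_{\Lambda}(({^L}G)^{\bullet}) \to \End(\Ccal)^{B\Weil_F^{\bullet}}$ is part (1) of Corollary \ref{cor:maindiag}. It comes from the naive Hecke stacks and Proposition \ref{prop:naiveSA}.
\item The commutative square \eqref{diag:required-commutative-diagram} is exactly Theorem \ref{thm:diagHckact}. That result also gives that each $T^{e}_{V^{e}}$ preserves compact objects, which is what one needs to see that the resulting action is compactly supported (quasi-compact).
\end{enumerate}
Theorem \ref{thm:extendedSpectralDecompositionTheorem} then produces an action of $\Perf(\Par_G^e)$ on $\Ccal$, using the identification $\Perf(\Par_G^e) \cong \Perf(\Par_G)\otimes_{\Perf(\pt/\widehat{G})}\Perf(\pt/\widehat{G}^e)$ of Theorem \ref{thm:relative-tensor-product}. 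The hypothesis on $\ell$ is used in both of these theorems.

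It remains to check the compatibility. By construction in the proof of Theorem \ref{thm:extendedSpectralDecompositionTheorem}, restricting the $\Perf(\Par_G^e)$-action along $\Perf(\Par_G) \to \Perf(\Par_G^e)$ gives the $\Perf(\Par_G)$-action attached by \cite[Theorem X.0.1]{Geometrization} to datum (1). By Proposition \ref{prop:naiveSA} and the decomposition \eqref{eq:naivedecomp}, datum (1) is componentwise the Fargues--Scholze Hecke action on $\Bun_{G_{b_{\lambda}}}$. This uses Lemma \ref{lem:naiveHck}: twisting by $b_{\lambda}$ identifies $\Hck_{G}^{e,\tn{naive},\lambda}$ with $\Hck_{G_{b_{\lambda}}}$ compatibly with the two legs, so the Hecke operators and their $\Weil_F^{I}$-equivariance correspond. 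Since the equivalence of Theorem \ref{thm:spectralDecompositionTheorem} is canonical and compatible with products of categories, the induced $\Perf(\Par_G)$-action on each factor $\mc{D}_{\tn{lis}}(\Bun_{G}^{e,\lambda})\cong \mc{D}_{\tn{lis}}(\Bun_{G_{b_\lambda}})$ is the Fargues--Scholze spectral action. Here one uses that $\Par_{G_{b_\lambda}} = \Par_G$ for the inner form $G_{b_\lambda}$, with ${^L}G_{b_\lambda} = {^L}G$.

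The main point needing care is this last identification. We must verify that the isomorphism of Lemma \ref{lem:naiveHck}, and the identification of the Satake categories of $G$ and $G_{b_\lambda}$ that it induces, match the Satake functors of \cite{Geometrization}, including the Weil group action and the normalization. I would check this by observing that twisting by $b_\lambda$ becomes an inner twist over $\Spd(C)$. Geometric Satake for $G_{b_\lambda}$ and for $G$ are then intertwined via the canonical identification of dual groups for inner forms, which is how \cite{Geometrization} treats non-quasi-split groups. The rest is formal bookkeeping.
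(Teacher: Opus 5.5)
Your proposal is correct and follows the paper's route: the paper's proof is exactly the combination of Theorem \ref{thm:diagHckact} (together with the naive $\Weil_F^{\bullet}$-linear datum from Corollary \ref{cor:maindiag}(1)) with Theorem \ref{thm:extendedSpectralDecompositionTheorem}. The compatibility with the Fargues--Scholze action on each $\Bun_{G}^{e,\lambda}\cong\Bun_{G_{b_\lambda}}$ holds by construction, as you say: datum (1) is defined by transporting the Fargues--Scholze Hecke action for $G_{b_\lambda}$ along Lemma \ref{lem:naiveHck}, so the Satake-matching check you flag is already built into that definition.
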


\begin{proof}
This is a direct combination of Theorems \ref{thm:diagHckact} and \ref{thm:extendedSpectralDecompositionTheorem}.
\end{proof}

\subsection{Gradings}
Denote by $Z_{\widehat{G}}^{\Gal_{F},+}$ the preimage of $Z_{\widehat{G}}^{\Gal_{F}}$ in $\widehat{G}^{e}$. Observe that $\Rep_{\Lambda}(\widehat{G}^{e})$ carries a natural $X^{*}(Z_{\widehat{G}}^{\Gal_{F},+})$-grading (and the same is true for $Z_{\widehat{G}}$ replaced by $\widehat{Z}$ for any finite $Z \subseteq Z_{G}$), and, as Fargues proves in \cite[Section 12]{Fargues22}, we have a canonical identification
\begin{equation*}
X^{*}(Z_{\widehat{G}}^{\Gal_{F},+}) \xrightarrow{\sim} \pi_{1}(G)^{e}_{\Gal_{F}}.
\end{equation*}
We will use this identification without comment in the following.

\begin{lem}[Extended analogue of \protect{\cite[Lemma 5.3.2]{Zou24}}]\label{lem:Konradlem1}
For $x \in \pi_{1}(G)^{e}_{\Gal_{F}}$, the Hecke operator $T^{e}_{V^{e}}$ associated to $V^{e} \in \Rep_{\Lambda}(\widehat{G}^{e})^{\chi}$ for $\chi \in X^{*}(Z_{\widehat{G}}^{\Gal_{F},+})$ via Theorem \ref{thm:diagHckact} sends $A \in \mc{D}_{\tn{lis}}(\Bun_{G}^{e,\kappa=x}, \Lambda)^{\omega}$ to $T^{e}_{V^{e}}(A)\in \mc{D}_{\tn{lis}}(\Bun_{G}^{e,\kappa= x-\chi}, \Lambda)^{\omega}$.
\end{lem}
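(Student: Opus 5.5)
The plan is to reduce to a statement about connected components of the extended Hecke stack and to follow the proof of \cite[Lemma 5.3.2]{Zou24} closely. Fix a finite central $Z \subset Z_G$ such that $V^e$ is inflated from $\widehat{G/Z}$; this is possible because $V^e$ is finitely generated and $\widehat{G}^e = \varprojlim_Z \widehat{G/Z}$. Then $T^e_{V^e}$ is computed on $\Hck^{e,1}_{Z\subset G}$ by the formula $A \mapsto \overrightarrow{h}_{\natural}(\overleftarrow{h}^*A \otimes \varepsilon^*\mathcal{S}'_{V^e})$. Since the target category is the product $\prod_x \mc{D}_{\tn{lis}}(\Bun_G^{e,\kappa=x},\Lambda)$, it suffices to prove the following: if $\chi$ is the $Z_{\widehat G}^{\Gal_F,+}$-weight of $V^e$, then $\varepsilon^*\mathcal{S}'_{V^e}$ is supported on those points of $\Hck^{e,1}_{Z\subset G}$ whose modification $(\cP_0,\cP_1,f)$ satisfies $\kappa(\cP_1)=\kappa(\cP_0)-\chi$. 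Preservation of compact objects is already part of Theorem \ref{thm:diagHckact}, so only the support statement needs proof.

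First I localize the support in the weight. Via $V^e \mapsto \Delta_*V^e$ and Theorem \ref{thm:satake}, a $\widehat{G/Z}$-representation $V^e$ is sent to $\bigoplus_{\lambda'} V^e$ placed on the $(\lambda'\lambda,\lambda')$-components, where $\lambda$ is its $\widehat Z$-weight. On each such component the sheaf is the pullback of the Satake sheaf $\mathcal{S}_{V^e}$ on $\Hck^{\loc,1,\lambda}_{G/Z,\Spd(C)}$. Its support therefore lies in Schubert cells $\nu \in X_*(T/Z)^{+}$ whose images in $\pi_1(G/Z)$ equal the $Z_{\widehat{G/Z}}$-weight of $V^e$, or more precisely lie in its image. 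Consequently I need a formula for $\kappa(\cP_1)-\kappa(\cP_0)$ on the stratum $\Hck^{e,1}_{Z\subset G,\nu}$.

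Second, I compute that difference. Since $\kappa$ is locally constant on $\Bun^e_G$ and the strata are connected over $\Spd(C)$ by Lemma \ref{lem:Grgerbe} and Proposition \ref{prop:mainGrisom}, it suffices to check one point of each stratum. I would choose $\cP_1=\widetilde\Q_{\lambda_1,C}$ and take $\cP_0$ to be the twisted modification given by $t^{\nu}$ composed with the translation used in the proof of Proposition \ref{prop:Quotslope}. Everything then reduces to the torus $\mathbf{T}$, where $\kappa$ on $B_e(\mathbf T)$ is the explicit isomorphism to $\pi_1(\mathbf T)^e_{\Gal_F}=X_*(\mathbf T)_{\mathbb Q}/I\cdot X_*(\mathbf T)$ in the torus case. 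The claim becomes that a $\nu$-twisted modification shifts $\kappa$ by minus the image of $\nu$, where $\nu \in X_*(\mathbf T/Z)\subset X_*(\mathbf T)_{\mathbb Q}$, after accounting for the $s(\lambda)\circ\tn{Sh}$ correction from Proposition \ref{prop:Quotslope}. On the $\pi_1(G)_{\Gal_F}$ part this is exactly \cite[Lemma 5.3.2]{Zou24}. On the $\Hom_F(u,Z)$ quotient it is Proposition \ref{prop:Quotslope}. Finally I match this with the identification $X^*(Z_{\widehat G}^{\Gal_F,+})\cong\pi_1(G)^e_{\Gal_F}$ by restricting characters along $Z^{\Gal_F,+}_{\widehat G}\subset \widehat{G/Z}$ and using the exact sequence \eqref{eq:pi1eSES}.

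The main obstacle is this torus computation, specifically matching signs and normalizations among three things: Fargues' extended Kottwitz map, the section $s$ and the trivialization $\psi$ in $\Q_{\lambda}$, and the identification $X^*(\widehat Z)\cong\Hom_F(u,Z)$. The two separate compatibilities, Zou's on $\pi_1(G)_{\Gal_F}$ and Proposition \ref{prop:Quotslope} on the quotient, do not by themselves determine the extension class. To handle this, I would first check the case $G=\mathbf T$ directly, with $V^e$ a character of $\widehat{\mathbf T/Z}$. I would then reduce the general case to it by functoriality along $\mathbf T\to G$, using that $\kappa$ factors through $\pi_1(G)^e_{\Gal_F}$ and that highest weights determine the central character.
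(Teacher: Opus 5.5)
Your proposal is correct in outline and follows essentially the same route as the paper. You reduce to a finite $Z$ and the single component $\Hck^{e,1,(\lambda_x,\lambda_x/\lambda_\chi)}_{Z\subset G}$, identify the fiber of $\overrightarrow{h}$ over a point $b$ with $\tn{Gr}^{e,(\lambda_x,\lambda_x/\lambda_\chi)}_{Z\subset G}\cong \tn{Gr}^{\lambda_\chi}_{G/Z,\Spd(C)}$ via Lemma \ref{lem:GerbeBL} and Proposition \ref{prop:mainGrisom}, and run the support argument of \cite[Lemma 5.3.2]{Zou24}; you are more explicit than the paper, which just says Zou's argument carries over, about the compatibility of the extended Kottwitz map with twisted modifications. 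One small correction: the connectedness you invoke holds for Schubert cells in the fiber over a fixed $b$ (as in the paper's $\mathrm{BL}^{e}_{b}$ square), not for the global strata of $\Hck^{e,1}_{Z\subset G}$, which surject onto the disconnected $\Bun_G^{e,\lambda_1}$, so your torus check must be made once on each connected component of $\Bun_G^{e,\lambda_1}$ (each of which contains a point induced from $\mathbf{T}$) rather than once per stratum.
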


\begin{proof}
    By taking $Z$ sufficiently large and finite we can assume that $V^{e} \in \Rep_{\Lambda}(\widehat{G/Z})^{\chi}$ and that $A \in \mc{D}_{\tn{lis}}(\Bun_{Z \subset G}^{e,\kappa=x}, \Lambda)^{\omega}$. Denote by $\lambda_{x},\lambda_{\chi} \in \Hom_{F}(u,Z)$ the inertial morphisms associated to $x,\chi \in \pi_{1}(G)^{e}_{\Gal_{F}}$. By construction of $T^{e}_{V^{e}}$ (cf. the proof of Theorem \ref{thm:extended-hecke-action}) it suffices to prove this result for the Hecke correspondence 
    \[
\begin{tikzcd}
  &  \Hck_{Z \subset G}^{e,1,(\lambda_{x},\lambda_{x}/\lambda_{\chi})} \arrow["\overleftarrow{h}"]{ld} \arrow["\overrightarrow{h}"]{rd} & \\
  \Bun_{G}^{e,\lambda_{x}} & & \Bun_{G,\Spd(C)}^{e,\lambda_{x}/\lambda_{\chi}}.
\end{tikzcd}
\]

From here, the same argument as in the proof of \cite[Lemma 5.3.2]{Zou24} holds, replacing the $\tn{Bl}_{b}$-pullback square loc. cit. (for $b \in B_{e}(G)$ with $\lambda_{b}=\lambda_{x}/\lambda_{\chi}$, which in the setting of \cite{Zou24} is an element of $B(G)$) with
\[
   \begin{tikzcd}
    \tn{Gr}_{Z \subset G}^{e,(\lambda_{x},\lambda_{x}/\lambda_{\chi})} \arrow["\tn{BL}^{e}_{b}"]{r} \arrow{d} & \Hck_{Z \subset G}^{e,1,(\lambda_{x},\lambda_{x}/\lambda_{\chi})} \arrow["\overrightarrow{h}"]{d} \\
    \{b\} \arrow{r} & \Bun_{G,\Spd(C)}^{e,\lambda_{x}/\lambda_{\chi}},
\end{tikzcd}
\]
where $\tn{BL}_{b}^{e}$ is defined using the variant of Beauville--Laszlo from Lemma \ref{lem:GerbeBL} (cf. also the proof of Proposition \ref{prop:Heckeproper}, identifying the pullback of $b$ to the disk with $\widetilde{\Q}_{\lambda_{x}/\lambda_{\chi},C}$) and then using Proposition \ref{prop:mainGrisom} to identify $\tn{Gr}_{Z \subset G}^{e,(\lambda_{x},\lambda_{x}/\lambda_{\chi})}$ with $\tn{Gr}_{G/Z,\Spd(C)}^{\lambda_{\chi}}$.
\end{proof}

\subsection{Extended categorical conjectures}

With the spectral action in place, we can now formulate the extended variant of the Fargues--Scholze categorical conjecture. Let $(U,\psi)$ be a Whittaker datum for $G$, let us denote by $\Wcal_{\psi} \in \mc{D}_{\tn{lis}}(\Bun_{G},\Lambda)$ the Whittaker sheaf, i.e. 
$$\Wcal_{\psi} = i_{1,!}(c-\ind_{U(F)}^{G(F)}(\psi))$$
where $i_1 : \Bun_G^1 \hookrightarrow \Bun_G$ is the inclusion of the stratum corresponding to $b = 1$. Let us denote by $\Wcal_{\psi}^e$ its extension to $\Bun_G^e$ along the closed and open inclusion $\Bun_G \subset \Bun_G^e$.

Let us denote by 
$$a_{\psi}^e : \Ind\Perf(\Par_G^e) \to \mc{D}_{\tn{lis}}(\Bun_{G}^e,\Lambda)$$
the functor $\mathcal{F} \mapsto \mathcal{F} * \Wcal_{\psi}^e$ and by $c_{\psi}^e : \mc{D}_{\tn{lis}}(\Bun_{G}^e,\Lambda) \to \Ind\Perf(\Par_G^{e})$ its right adjoint. These are the extended versions of the functors 
$$a_{\psi} : \Ind\Perf(\Par_G) \leftrightarrows \mc{D}_{\tn{lis}}(\Bun_{G},\Lambda) : c_{\psi}$$
defined analogously. 

\begin{conj}[Categorical local Langlands equivalence, \protect{\cite[Conjecture X.3.5]{Geometrization}}]\label{conj:CLLC}
    For $\Lambda = \ov{\Z_{\ell}}[\frac{1}{n}]$ where $n := |\pi_{1}(\widehat{G})_{\tn{tor}}|$, the functor $c_{\psi}$ induces an equivalence of categories 
    $$\mc{D}_{\tn{lis}}(\Bun_{G},\Lambda)^{\omega} \cong \Coh_{\nilp}^{\tn{qc}}(\Par_G),$$
    where the right-hand side is the category of coherent sheaves with quasi-compact support and nilpotent singular support. 
\end{conj}

\begin{conj}[Extended categorical local Langlands equivalence]\label{conj:extended-categorical-equivalence}
    Under the same assumptions as Conjecture \ref{conj:CLLC}, the functor $c_{\psi}^e$ induces an equivalence of categories 
    $$\mc{D}_{\tn{lis}}(\Bun_{G}^e,\Lambda)^{\omega} \cong \Coh_{\nilp}^{\tn{qc}}(\Par_G^e).$$
\end{conj}

As a sanity check for Conjecture \ref{conj:extended-categorical-equivalence}, we establish the case of tori in Section \ref{sec:case-of-tori}.

\begin{thm}\label{thm:case-of-tori}
    Conjecture \ref{conj:extended-categorical-equivalence} holds when $G$ is a torus.
\end{thm}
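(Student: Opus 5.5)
For a torus $G = T$ the plan is to reduce to Zou's theorem for tori \cite{Zou24} and then use the relative tensor product description of $\Par_T^e$ (Theorem \ref{thm:relative-tensor-product}) together with the grading compatibility of Lemma \ref{lem:Konradlem1}.

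\emph{Step 1: the automorphic side is a product of copies of $\mc{D}_{\tn{lis}}(\Bun_T)$.} For a torus every $b\in B_e(T)$ is basic and $T_b=T$. By Proposition \ref{prop:BunGestructure}, twisting gives $\Bun_T^{e,\lambda}\cong \Bun_T$ for each $\lambda\in\Hom_F(u,T)$. Hence
\begin{equation*}
\mc{D}_{\tn{lis}}(\Bun_T^e)=\prod_{x\in \pi_1(T)^e_{\Gal_F}} \Rep_\Lambda(T(F)),
\end{equation*}
and the extended Kottwitz map identifies the components with $\pi_1(T)^e_{\Gal_F}\cong X^*(Z_{\widehat T}^{\Gal_F,+})$.

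\emph{Step 2: the spectral side decomposes the same way.} Since $\widehat T^e$ is abelian and acts trivially on $\Par_T^\square$ through $\widehat T^e\to\widehat T$, we get $\Par_T^e=\Par_T^\square/\widehat T^e$. The kernel $\widehat Z=\ker(\widehat T^e\to\widehat T)$ acts trivially, so $\IndCoh(\Par_T^e)$ splits by $X^*(\widehat Z)=\Hom_F(u,T)$-weights. Refining this by the $Z_{\widehat T}^{\Gal_F,+}$-weights, which act by scalars on $\Par_T^\square$, gives a decomposition indexed by $X^*(Z_{\widehat T}^{\Gal_F,+})$. The nilpotent singular support condition is vacuous here because $\Par_T$ is smooth (as in \cite{Zou24}).

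\emph{Step 3: match the pieces.} The degree-zero piece of $\Coh(\Par_T^e)$ for the $\widehat Z$-grading is $\Coh(\Par_T)$. Zou's theorem identifies $\mc{D}_{\tn{lis}}(\Bun_T)^\omega\cong\Coh^{\tn{qc}}(\Par_T)$ via $c_\psi$, and the extended Whittaker sheaf is supported on $\Bun_T^{e,1}=\Bun_T$, so on this piece $c_\psi^e$ restricts to Zou's equivalence. For the other graded pieces:
\begin{itemize}
\item Lemma \ref{lem:Konradlem1} shows that the Hecke operator $T^e_{\chi}$ for a character $\chi$ of $\widehat T^e$ maps $\mc{D}_{\tn{lis}}(\Bun_T^{e,\kappa=x})$ to $\mc{D}_{\tn{lis}}(\Bun_T^{e,\kappa=x-\chi})$.
\item For a torus, $T^e_\chi$ is invertible, with inverse $T^e_{\chi^{-1}}$ by monoidality.
\item On the spectral side, $T^e_\chi$ acts by tensoring with the line bundle $\mc{O}(\chi)$ pulled back from $\pt/\widehat T^e$, which shifts the grading by $\chi$.
\item The restriction $X^*(\widehat T^e)\to X^*(\widehat Z)$ is surjective, so every $\widehat Z$-graded piece is reached by such a twist.
\end{itemize}
Since $c_\psi^e$ is $\Perf(\Par_T^e)$-linear (its left adjoint $a_\psi^e$ is), it intertwines these twists. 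Transporting Zou's equivalence along the twists then gives equivalences on every graded piece, hence an equivalence overall. Compact objects match because the twists preserve compactness (Theorem \ref{thm:diagHckact}).

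\emph{Main obstacle.} The hard step is showing that the action of $\Perf(\Par_T^e)$ on $\mc{D}_{\tn{lis}}(\Bun_T^e)$ really permutes components as described. Concretely, one must check that $T^e_\chi$ is an equivalence between the relevant components and is compatible with the $\Perf(\Par_T)$-action on each component. The first thing to try is an explicit computation, in the torus case, of the extended Hecke stack $\Hck_{Z\subset T}^{e,1,(\lambda,\lambda')}$. Via Proposition \ref{prop:mainGrisom}, $\tn{Gr}_{T/Z}$ is discrete, so each Schubert cell is a single point, and $T^e_\chi$ should reduce to the twisted translation $\cP\mapsto \Q_{\chi,S}\cdot\cP$ modified at $y$. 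This should be an isomorphism of stacks $\Bun_T^{e,\kappa=x}\to\Bun_T^{e,\kappa=x-\chi}$ commuting with the naive Hecke correspondences, following the argument of \cite{Zou24}.
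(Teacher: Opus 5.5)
Your argument is correct, but it is organized differently from the paper's.

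The paper applies the argument of \cite[Lemma 6.2.5]{Zou24} to $\Par_T^e$, viewed as a $\widehat{T}^{\Gal_F,+}$-gerbe over $\Hom(T(F),\mathbb{G}_m)$, with Lemma \ref{lem:gradedaction} in place of Zou's grading lemma. This reduces everything to the weight-zero summand for the full $X^*(\widehat{T}^{\Gal_F,+})\cong\pi_1(T)^e_{\Gal_F}$-grading. That summand descends to $\Par_T$, the action on it is the Fargues--Scholze one, and the resulting functor is literally the degree-zero map in the proof of Zou's Theorem 6.4.1.

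You instead work with the coarser $X^*(\widehat Z)=\Hom_F(u,T)$-grading. You black-box Zou's full theorem on the weight-zero piece $\Par_T$ against the component $\Bun_T^{e,1}=\Bun_T$. You then transport to the other pieces using the invertible line bundles $\mathcal{O}(\chi)$, $\chi\in X^*(\widehat T^e)$, which act by $T^e_\chi$:
\begin{itemize}
\item $T^e_\chi$ is invertible by monoidality in Theorem \ref{thm:diagHckact};
\item it shifts components by Lemma \ref{lem:Konradlem1};
\item every piece is reached because $X^*(\widehat T^e)\to X^*(\widehat Z)$ is surjective.
\end{itemize}
Your route uses only the statement of Zou's theorem, not its internals, and in effect reproves the $\widehat Z$-part of Lemma \ref{lem:gradedaction}. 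The paper's route is shorter given Zou's paper and handles all of $\pi_1(T)^e_{\Gal_F}$ in one step. The underlying mechanism, twisting by line bundles coming from characters of the extended dual group, is the same in both.

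Two of your justifications need correcting, though neither affects the argument.
\begin{itemize}
\item $\widehat T$ does not act trivially on $\Par_T^\square=Z^1(\Weil_F,\widehat T)$ when $T$ is non-split: $t\in\widehat T$ changes a cocycle by the coboundary of $t$. Only $\widehat T^{\Gal_F,+}$, and in particular $\widehat Z$, acts trivially, and that is all you use.
\item $\Par_T$ need not be smooth over $\Zlb$. For example, for $T=\mathbb{G}_m$ with $\ell\mid q-1$ the factor $\Hom(\mu_{q-1},\mathbb{G}_m)$ is not étale. So the nilpotent condition is not vacuous. Rather, since the nilpotent cone of $\widehat{\mathfrak t}^*$ is $\{0\}$, it cuts $\Coh^{\tn{qc}}$ down to $\Perf^{\tn{qc}}$, which is the category appearing in Zou's theorem.
\end{itemize}

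Finally, the ``main obstacle'' you flag is already settled by results in the paper: Lemma \ref{lem:Konradlem1}, the monoidality of $T^e$, and the construction in Theorem \ref{thm:extendedSpectralDecompositionTheorem} of the $\Perf(\Par_T^e)$-action from $\Perf(\Par_T)$ and $T^e$. No explicit computation of the toral extended Hecke stack is needed.
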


\begin{rque}
    The case of tori for the ``classical'' Fargues--Scholze equivalence was established by \cite{Zou24}; the proof of \Cref{thm:case-of-tori} works in the same way. 
\end{rque}

One of the main tools for the proof of \Cref{thm:case-of-tori} is the following  analogue of \cite[Lemma 5.3.3]{Zou24}. Recall that by Lemma \ref{lem:Konradlem1} (and \cite[Proposition 4.3.1]{Zou24}), the category $\tn{Ind} \Perf(\Par_{G}^{e})$ has a natural $X^{*}(Z_{\widehat{G}}^{\Gal_{F},+})$-grading.

\begin{lem}\label{lem:gradedaction}
The functor
\begin{equation*}
a_{\psi}^e : \tn{Ind} \Perf(\Par_{G}^{e}) \to \mc{D}_{\tn{lis}}(\Bun_{G}^{e},\Lambda)
\end{equation*}
preserves the decomposition into $X^{*}(Z_{\widehat{G}}^{\Gal_{F},+}) \cong \pi_{1}(G)^{e}_{\Gal_{F}}$-graded pieces after twisting the isomorphism $X^{*}(Z_{\widehat{G}}^{\Gal_{F},+}) \xrightarrow{\sim} \pi_{1}(G)^{e}_{\Gal_{F}}$ by $-1$.
\end{lem}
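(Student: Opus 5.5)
The plan is to follow the proof of \cite[Lemma 5.3.3]{Zou24}, replacing its inputs by their extended analogues. First I will make the grading on $\Ind\Perf(\Par_G^e)$ explicit. The subgroup $Z_{\widehat{G}}^{\Gal_F,+} \subset \widehat{G}^e$ is central, and its action on the framed parameter space $\Par_G^{\square}$ is trivial because it maps into $Z_{\widehat{G}}^{\Gal_F}$. Hence it acts on every object of $\Perf(\Par_G^e)$ through the stabilizer, and we obtain a canonical decomposition
\begin{equation*}
\Ind\Perf(\Par_G^e) = \bigoplus_{\chi \in X^{*}(Z_{\widehat{G}}^{\Gal_F,+})} \Ind\Perf(\Par_G^e)^{\chi}.
\end{equation*}
This is compatible with the identification of Theorem \ref{thm:relative-tensor-product}: the $\chi$-piece is generated under the action of $\Ind\Perf(\Par_G)$ (more precisely, the degree-zero part of it) by the vector bundles $\mc{O}\otimes V^e$ with $V^e \in \Rep_{\Lambda}(\widehat{G}^e)^{\chi}$. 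I would verify this reduction via \cite[Proposition 4.3.1]{Zou24}, using that $\Perf(\Par_G^e)$ is generated by objects of the form $\mc{F}\otimes V^e$ with $\mc{F}$ pulled back from $\Par_G$.

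Second, I will show that the grading is compatible with the action. Consider an object of the form $\mc{F}\otimes V^e$, where $\mc{F}$ lies in the neutral graded part and $V^e$ is $\chi$-isotypic. By construction of the action in Theorem \ref{thm:extendedSpectralDecompositionTheorem}, it acts by $T^e_{V^e}\circ(\mc{F}*-)$. Lemma \ref{lem:Konradlem1} shows that $T^e_{V^e}$ shifts the $\kappa$-component by $-\chi$. The remaining factor $\mc{F}*-$, coming from the neutral degree of the Fargues--Scholze action, should preserve components; this follows from the classical \cite[Lemma 5.3.3]{Zou24} applied to each $\Bun_{G_{b_\lambda}}$, via the decomposition of Proposition \ref{prop:BunGestructure}(3) and Theorem \ref{thm:extended-spectral-action-on-bun_G}. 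I expect to have to match the grading on $\Par_G$ with the neutral part of the grading on $\Par_G^e$ through the map $\pi_1(G)_{\Gal_F}\to\pi_1(G)^e_{\Gal_F}$ of \eqref{eq:pi1eSES}.

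Third, I will apply this to the Whittaker sheaf. The sheaf $\Wcal^e_\psi$ is supported on $\Bun_G^{e,\kappa=0}$, namely the stratum $b=1$. Therefore
\begin{equation*}
a^e_\psi(\mc{F}\otimes V^e) = T^e_{V^e}(\mc{F}*\Wcal^e_\psi)
\end{equation*}
lies in $\mc{D}_{\lis}(\Bun_G^{e,\kappa=-\chi},\Lambda)$. This gives exactly the twist by $-1$ in the statement. Since $a^e_\psi$ commutes with colimits and the graded pieces are generated by such objects, the functor preserves the decompositions.

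The step I expect to be the main obstacle is the generation and compatibility statement in the first step. One must check that the degree-$\chi$ part of $\Ind\Perf(\Par_G^e)$ really is generated by products of degree-zero objects with $\chi$-isotypic representations. This is subtle because $\Ind\Perf(\Par_G)$ itself carries a nontrivial $X^*(Z_{\widehat{G}}^{\Gal_F})$-grading, and that grading must be compared with the grading by the covering group $Z_{\widehat{G}}^{\Gal_F,+}$. My first attempt will be to write $\Ind\Perf(\Par_G^e)$ as $\mc{O}(\Par_G^{\square})$-modules in $\Rep(\widehat{G}^e)$, by combining \cite[Theorem VIII.5.1]{Geometrization} with Theorem \ref{thm:relative-tensor-product}. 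Since $Z_{\widehat{G}}^{\Gal_F,+}$ acts trivially on $\mc{O}(\Par_G^{\square})$, the grading is then read off from the representation variable, and the generation claim follows.
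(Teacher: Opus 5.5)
Your proposal is correct and is essentially the paper's argument: both use Theorem \ref{thm:relative-tensor-product} to split the action into an $\Ind\Perf(\pt/\widehat{G}^e)$-part and an $\Ind\Perf(\Par_G)$-part. The first is handled by Lemma \ref{lem:Konradlem1}, the second by Zou's classical grading results, and one then uses that $\Wcal^e_\psi$ lives in $\kappa=0$. The paper phrases the reduction through the universal property of the relative tensor product rather than your explicit generation by $\mc{O}\otimes V^e$, and for the classical factor it cites \cite[Lemma 5.3.2]{Zou24} (the $-\chi$ shift for Hecke operators); that is the cleaner input on the non-quasi-split components $\Bun_{G_{b_\lambda}}$, though your step three only needs the component $\lambda=1$.
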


\begin{proof}
    Using Theorem \ref{thm:relative-tensor-product} to write 
    \begin{equation*}
        \tn{IndPerf}(\Par_{G}^{e}) = \Ind\Perf(\Par_G) \otimes_{\Ind\Perf(\pt/\widehat{G})} \Ind\Perf(\pt/\widehat{G}^e),
    \end{equation*}
    we deduce from the universal property of the relative tensor product that it suffices to prove the result for the two separate action maps
    \begin{equation*}
        \tn{IndPerf}(\tn{pt}/\widehat{G}^{e}) \to \mc{D}_{\tn{lis}}(\Bun_{G}^{e},\Lambda), \hspace{1mm} \tn{IndPerf}(\Par_{G}) \to \mc{D}_{\tn{lis}}(\Bun_{G}^{e},\Lambda)
    \end{equation*}
    from Theorem \ref{thm:extended-spectral-action-on-bun_G}, where we use the $X^{*}(Z_{\widehat{G}}^{\Gal_{F},+})$-grading on $\tn{IndPerf}(\tn{pt}/\widehat{G}^{e})$ and the usual $X^{*}(Z_{\widehat{G}}^{\Gal_{F}})$-grading on $\tn{IndPerf}(\Par_{G})$. The former case follows from Lemma \ref{lem:Konradlem1} and the latter case from \cite[Lemma 5.3.2]{Zou24}.
\end{proof}

Recall that there is a well-defined map $\Par_G^e \to \pt/\widehat{G}^e$. 

\begin{lem}\label{lem:reduction-on-the-coherent-side}
    Under the hypotheses of Theorem \ref{thm:relative-tensor-product}, the functor 
    $$\IndCoh_{\tn{nilp}}^{\tn{qc}}(\Par_G) \otimes_{\Ind\Perf(\pt/\widehat{G})} \Ind\Perf(\pt/\widehat{G}^e) \to \IndCoh_{\nilp}^{\tn{qc}}(\Par_G^e)$$
    is an equivalence. 
\end{lem}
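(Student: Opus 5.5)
The strategy is to reduce to Theorem \ref{thm:relative-tensor-product}. That theorem identifies $\Ind\Perf(\Par_G^e)$ with $\Ind\Perf(\Par_G) \otimes_{\Ind\Perf(\pt/\widehat{G})} \Ind\Perf(\pt/\widehat{G}^e)$. We then check that the "$\IndCoh^{\tn{qc}}_{\nilp}$" conditions are compatible with base change along $\alpha$.

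\textbf{Step 1.} Use the description from \cite[Theorem VIII.5.1]{Geometrization} already invoked in the proof of Theorem \ref{thm:relative-tensor-product}:
\begin{equation*}
\Ind\Perf(\Par_G) \cong \Ocal(\Par_G^{\square})\text{-}\Mod(\Ind\Perf(\pt/\widehat{G})).
\end{equation*}
Combined with \cite[Chapter 1, Corollary 8.5.7]{StudyInDerivedAlgGeomI}, base change of module categories gives
\begin{equation*}
\mathcal{M} \otimes_{\Ind\Perf(\pt/\widehat{G})} \Ind\Perf(\pt/\widehat{G}^e) \cong \Ocal(\Par_G^{\square})\text{-}\Mod(\mathcal{M}\otimes \cdots)
\end{equation*}
for any $\Ind\Perf(\Par_G)$-module category $\mathcal{M}$. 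Concretely, for $\mathcal{M}$ a module over $\Ind\Perf(\Par_G)$, the tensor product
\begin{equation*}
\mathcal{M} \otimes_{\Ind\Perf(\pt/\widehat{G})} \Ind\Perf(\pt/\widehat{G}^e) \cong \mathcal{M} \otimes_{\Ind\Perf(\Par_G)} \Ind\Perf(\Par_G^e)
\end{equation*}
is obtained by associativity of relative tensor products. This reduces the claim to showing
\begin{equation*}
\IndCoh^{\tn{qc}}_{\nilp}(\Par_G) \otimes_{\Ind\Perf(\Par_G)} \Ind\Perf(\Par_G^e) \xrightarrow{\sim} \IndCoh^{\tn{qc}}_{\nilp}(\Par_G^e).
\end{equation*}

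\textbf{Step 2.} The map $\beta \colon \Par_G^e \to \Par_G$ is a gerbe, namely a base change of $\pt/\widehat{G}^e \to \pt/\widehat{G}$, banded by the pro-finite-type diagonalizable kernel $\widehat{Z}_{\infty} = \varprojlim \widehat{Z}$. It is therefore smooth-locally the projection $\Par_G^{\square}/\widehat{G} \times B\widehat{Z}_\infty$. Since $\widehat{Z}_\infty$ is diagonalizable, $\QCoh(B\widehat{Z}_\infty) = \bigoplus_{\chi \in X^*(\widehat{Z}_{\infty})} \Mod_\Lambda$, with no $\ell$-torsion issues. Consequently $\Ind\Perf(\Par_G^e)$ decomposes into $\chi$-isotypic pieces, each of which is an invertible $\Ind\Perf(\Par_G)$-module: after a descent argument it is the category of modules twisted by the $\widehat{Z}_\infty$-gerbe class pushed out along $\chi$. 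The same decomposition holds for $\IndCoh$ on the gerbe, and singular support and quasi-compact support are insensitive to the gerbe directions. I would make this precise by working with each finite $\widehat{G/Z}$, where $\widehat{Z}$ is finite diagonalizable over $\Lambda$, and then taking the colimit over $Z$.

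\textbf{Step 3.} Using the $\chi$-decomposition, the functor of Step 1 becomes a direct sum over $\chi$ of the maps
\begin{equation*}
\IndCoh^{\tn{qc}}_{\nilp}(\Par_G)\otimes_{\Ind\Perf(\Par_G)} \mathcal{L}_\chi \to \IndCoh^{\tn{qc}}_{\nilp}(\Par_G^e)^\chi,
\end{equation*}
where $\mathcal{L}_\chi$ denotes the $\chi$-isotypic piece of $\Ind\Perf(\Par_G^e)$. Each of these is checked smooth-locally on $\Par_G$, where the gerbe trivializes and both sides become $\IndCoh^{\tn{qc}}_{\nilp}(\Par_G)$. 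Compatibility with smooth descent relies on the fact that tensoring with a dualizable module category commutes with the limit. Here $\mathcal{L}_\chi$ is invertible, hence dualizable, and \cite[Theorem VIII.5.1]{Geometrization} makes $\Ind\Perf(\Par_G)$ rigid.

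The main obstacle is Step 2/3: justifying that $\IndCoh_{\nilp}$ (rather than $\QCoh$) satisfies descent along $\beta$ compatibly with the relative tensor product, and that nilpotent singular support pulls back correctly. If direct descent is awkward, I would instead use that $\IndCoh_{\nilp}$ is generated by $\Coh_{\nilp}$. I would then check on compact generators that $\beta^*$ and $\beta_*$ (exact, since $\widehat{Z}$ is linearly reductive as a diagonalizable group) preserve nilpotent singular support and quasi-compact support. This gives a fully faithful, essentially surjective functor on compacts.
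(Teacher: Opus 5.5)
Your proposal uses the same two inputs as the paper's proof, which is only two sentences long. First, Theorem \ref{thm:relative-tensor-product} is invoked to get the base change for $\IndCoh^{\tn{qc}}$ without support conditions. Second, the $\nilp$ condition is matched on both sides because coherent singular support is detected after pullback to $\Par_G^{\square}$, a common cover of $\Par_G$ and $\Par_G^{e}$. Your Step 1 (associativity, reducing to base change along $\beta\colon \Par_G^e\to\Par_G$) is the natural reading of the paper's first sentence. Your remark that singular support and quasi-compact support do not see the gerbe direction is the paper's second sentence. What you add is an explicit mechanism for the base change along $\beta$: the $\widehat{Z}_\infty$-isotypic splitting into invertible twisted pieces, plus descent. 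The paper simply asserts that this step follows from Theorem \ref{thm:relative-tensor-product}. So your route is more explicit and the paper's more economical; once that theorem is available you do not need the invertibility of the isotypic pieces.

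If you carry out Step 3, be careful in the modular case $\Lambda=\Flb,\Zlb$, which is exactly where the hypothesis of Theorem \ref{thm:relative-tensor-product} matters. There $\Ind\Perf(\Par_G)$ and $\IndCoh^{\tn{qc}}(\Par_G)$ are Ind-completions, and they are not recovered as limits over the \v{C}ech nerve of $\Par_G^{\square}\to\Par_G$. For $\Ind\Perf$, that limit is $\QCoh(\Par_G)$, in which $\mathcal{O}$ is in general not compact, because $\widehat{G}$ is typically not linearly reductive in characteristic $\ell$. So ``tensoring with the dualizable $\mathcal{L}_\chi$ commutes with the descent limit'' does not by itself compare the two sides. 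The comparison has to be made on compact objects, which is your fallback, and it should be the main line. The key point there is that $\beta_*$ is exact, since it takes $\widehat{Z}_\infty$-invariants. This gives $\Hom(\beta^*\mathcal{F}\otimes V,\beta^*\mathcal{F}'\otimes V')\simeq\Hom\bigl(\mathcal{F},\mathcal{F}'\otimes\alpha^*((V^{\vee}\otimes V')^{\widehat{Z}_\infty})\bigr)$, which matches the Hom complexes in the relative tensor product. The $\nilp$ and quasi-compact support conditions are then handled by reading supports on $\Par_G^{\square}$, as in the paper.
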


\begin{proof}
    It follows from Theorem \ref{thm:relative-tensor-product} that the natural functor 
    $$\IndCoh^{\tn{qc}}(\Par_G) \otimes_{\Ind\Perf(\pt/\widehat{G})} \Ind\Perf(\pt/\widehat{G}^e) \to \IndCoh^{\tn{qc}}(\Par_G^e)$$
    is an equivalence. As coherent singular support is detected after pullback to $\Par_G^{\square}$, we get the desired equivalence.
\end{proof}

It follows from Lemma \ref{lem:reduction-on-the-coherent-side} that Conjecture \ref{conj:extended-categorical-equivalence} can be deduced from the ``usual'' Fargues--Scholze categorical conjecture if one proves the following conjecture. 

\begin{conj}\label{conj:reduction-extended-to-classical}
    The functor 
    $$\mc{D}_{\tn{lis}}(\Bun_{G},\Lambda) \otimes_{\Ind\Perf(\pt/\widehat{G})} \Ind\Perf(\pt/\widehat{G}^e) \to\mc{D}_{\tn{lis}}(\Bun_{G}^e,\Lambda)$$
    is an equivalence, where $\tn{IndPerf}(\pt/\widehat{G})$ acts on $\mc{D}_{\tn{lis}}(\Bun_{G},\Lambda)$ through the spectral action and the pullback along $\Par_G \to \pt/\widehat{G}$.
\end{conj}

\begin{thm}\label{thm:reduction-connected-center-case}
    Conjecture \ref{conj:reduction-extended-to-classical} holds when $G$ has connected center and $H^1(F,Z_G) = 0$.
\end{thm}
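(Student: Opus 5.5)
We will exploit the fact that, for connected center, the extended dual group is a torus extension of $\widehat{G}$ in a very simple way. The plan has four steps.

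\emph{Step 1: the dual side.} When $Z_G$ is connected, every finite central $Z\subset Z_G$ lies in the torus $Z_G$. The groups $\widehat{G/Z}$ then form a tower of central isogenies over $\widehat{G}$ with kernels $\widehat{Z}$. The key observation we aim to verify is that the extension
\begin{equation*}
1\to \widehat{Z}_\infty:=\varprojlim_Z \widehat{Z}\to \widehat{G}^e\to \widehat{G}\to 1
\end{equation*}
splits canonically enough. Hence $\Ind\Perf(\pt/\widehat{G}^e)$, viewed as an $\Ind\Perf(\pt/\widehat{G})$-module, is identified with the category of $X^*(\widehat{Z}_\infty)=\Hom_F(u,Z_G)$-graded objects of $\Ind\Perf(\pt/\widehat{G})$, twisted by characters.

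Concretely, the derived group of $\widehat{G}$ is simply connected, because $\pi_1$ of the dual derived group is the dual of the character group of $Z_G/Z_G^\circ=1$. So $\widehat{G}^e\cong \widehat{G}\times_{\widehat{G}/\widehat{G}_{\mathrm{der}}}\widetilde{D}$, where $\widetilde{D}$ is a pro-covering of the torus $\widehat{G}/\widehat{G}_{\mathrm{der}}=\widehat{Z_G}$.

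After base change, this gives
\begin{equation*}
\mc{D}_{\tn{lis}}(\Bun_G)\otimes_{\Ind\Perf(\pt/\widehat{G})}\Ind\Perf(\pt/\widehat{G}^e)\cong \bigoplus_{\lambda\in\Hom_F(u,Z_G)} \mc{D}_{\tn{lis}}(\Bun_G)\otimes \mathcal{L}_\lambda,
\end{equation*}
where $\mathcal{L}_\lambda$ ranges over the graded pieces. Each piece is an invertible $\Ind\Perf(\pt/\widehat{G})$-module, namely the module of $\widehat{G}^e$-representations of central character $\lambda$.

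\emph{Step 2: matching pieces.} Both sides carry compatible gradings. On the target, $\mc{D}_{\tn{lis}}(\Bun_G^e)=\prod_\lambda \mc{D}_{\tn{lis}}(\Bun_G^{e,\lambda})$, and by Lemma \ref{lem:Konradlem1} the functor sends the $\lambda$-graded piece to $\Bun_G^{e,\lambda^{-1}}$ (up to the sign convention of Lemma \ref{lem:gradedaction}). It therefore suffices to show, for each $\lambda$, that
\begin{equation*}
\mc{D}_{\tn{lis}}(\Bun_G)\otimes\mathcal{L}_\lambda\to \mc{D}_{\tn{lis}}(\Bun_G^{e,\lambda^{-1}})
\end{equation*}
is an equivalence. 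This map is $T^e_{V}$ composed with the inverse twist, for any $V$ generating $\mathcal{L}_\lambda$.

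\emph{Step 3: invertibility via characters.} We choose a character $\chi$ of $\widehat{G}^e$ trivial on $\widehat{G}_{\mathrm{der}}$ whose restriction to $\widehat{Z}_\infty$ is $\lambda$. Such a $\chi$ exists because $\widehat{G}^e/\widehat{G}_{\mathrm{der}}=\widetilde{D}$ is commutative and the character map onto $X^*(\widehat{Z}_\infty)$ is surjective. Then $\mathcal{L}_\lambda=\chi\otimes\Ind\Perf(\pt/\widehat{G})$, and $T^e_\chi$ is invertible with inverse $T^e_{\chi^{-1}}$ by monoidality (Theorem \ref{thm:diagHckact}). This reduces Step 2 to showing that $T^e_\chi$ is an equivalence $\mc{D}_{\tn{lis}}(\Bun_G)\to\mc{D}_{\tn{lis}}(\Bun_G^{e,\lambda^{-1}})$, which is formal from invertibility. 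We must also check that the module structures match, i.e. that $T^e_\chi$ intertwines the two $\Perf(\Par_G)$-actions. This is not automatic: in \cite{Zou26} the action of $\Perf(\pt/\widehat{Z_G})$ interacts with the twisting by central characters.

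\emph{Step 4: the main obstacle.} Here we use $H^1(F,Z_G)=0$. Following \cite{Zou26}, one identifies $T^e_\chi$ geometrically as a twist by a $Z_G$-torsor on the gerbe $[\mc{T}_C/\tilde{t}_C]$ composed with the twisting isomorphism $\Bun_G^{e,\lambda^{-1}}\cong\Bun_{G_b}$. Here $G_b=G$, since for connected center with vanishing $H^1$ every $\lambda$ lifts to a basic $b$ whose inner twist is trivial. The Fargues--Scholze action is then transported along this identification.

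I expect the hardest part to be verifying that the character Hecke operator $T^e_\chi$, whose extended Satake sheaf is supported on a single Schubert cell (a point, since $\chi$ is minuscule in the torus direction), agrees with this torsor-twist functor on $\Perf(\Par_G)$-linear structures. I would first compute this for $G=Z_G$ a torus, using the torus case (Theorem \ref{thm:case-of-tori}). I would then use the $z$-extension-type map $G\to G/G_{\mathrm{der}}$ together with the compatibility of the spectral action with central isogenies established in \cite{Zou26}.
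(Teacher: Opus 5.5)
Your Steps 1--3 already give a complete proof, by a route different from the paper's, and Step 4 can be dropped. Two corrections are needed.

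\textbf{First correction: the extension does not split.} The sequence $1\to\widehat{Z}_\infty\to\widehat{G}^e\to\widehat{G}\to 1$ has no section when $Z_G$ is a nontrivial torus. Composing a section with $\widehat{G}^e\to\widehat{G}^e/\widehat{G}_{\mathrm{der}}\cong\varprojlim_n\widehat{Z_G}$ would produce a section of $\varprojlim_n\widehat{Z_G}\to\widehat{Z_G}$. On character groups this is a retraction $X_*(Z_G)\otimes\mathbb{Q}\to X_*(Z_G)$ of the inclusion, which cannot exist.

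What you actually use is weaker, and you prove it correctly in Step 3: every character of $\widehat{Z}_\infty$ extends to a character $\chi_\lambda$ of $\widehat{G}^e$. Hence each $\widehat{Z}_\infty$-weight summand of $\Ind\Perf(\pt/\widehat{G}^e)$ is free of rank one over $\Ind\Perf(\pt/\widehat{G})$ on $\chi_\lambda$. This holds as modules only; the $\chi_\lambda$ cannot be chosen multiplicatively in $\lambda$, which is exactly the non-splitting.

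\textbf{Second correction: the ``module structures'' check is vacuous.} The functor of Conjecture \ref{conj:reduction-extended-to-classical} is fixed in advance. It is the $\Ind\Perf(\pt/\widehat{G}^e)$-linear extension of the already $\Ind\Perf(\pt/\widehat{G})$-linear functor $i_*\colon\mc{D}_{\lis}(\Bun_G,\Lambda)\to\mc{D}_{\lis}(\Bun_G^e,\Lambda)$. So you only need its underlying functor to be an equivalence, and this follows from what you already have:
\begin{itemize}
\item On the $\lambda$-summand the functor is $T^e_{\chi_\lambda}\circ i_*$.
\item $T^e_{\chi_\lambda}$ is an autoequivalence with inverse $T^e_{\chi_\lambda^{-1}}$, by the monoidality in Theorem \ref{thm:diagHckact}.
\item By construction (or Lemma \ref{lem:Konradlem1}) it maps $\mc{D}_{\lis}(\Bun_G^{e,\mu},\Lambda)$ into $\mc{D}_{\lis}(\Bun_G^{e,\mu\lambda^{-1}},\Lambda)$, so it restricts to an equivalence $\mc{D}_{\lis}(\Bun_G,\Lambda)\simeq\mc{D}_{\lis}(\Bun_G^{e,\lambda^{-1}},\Lambda)$.
\item Since $\lambda\mapsto\lambda^{-1}$ is a bijection of $\Hom_F(u,Z_G)$ and $\mc{D}_{\lis}(\Bun_G^e,\Lambda)=\prod_\mu\mc{D}_{\lis}(\Bun_G^{e,\mu},\Lambda)$, you are done.
\end{itemize}
In any case, commutation of $T^e_{\chi_\lambda}$ with the $\Perf(\Par_G)$-action is automatic, since both act through the symmetric monoidal category $\Perf(\Par_G^e)$.

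Consequently your argument never uses $H^1(F,Z_G)=0$. Also, the fact $G_b=G$ for a lift $b$ of $\lambda$ comes from connectedness of $Z_G$ (lift $\lambda$ to $B_e(Z_G)$), not from the vanishing of $H^1$.

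\textbf{Comparison with the paper.} The paper uses the same dual-group input (Lemma \ref{lem:reduction-to-center}) but then argues geometrically. It rewrites the base change as one along $\Ind\Perf(\Par_{Z_G})\to\Ind\Perf(\Par^e_{Z_G})$ via Theorem \ref{thm:relative-tensor-product} for the torus $Z_G$. It then converts this to $\mc{D}_{\lis}(\Bun_{Z_G},\Lambda)\to\mc{D}_{\lis}(\Bun^e_{Z_G},\Lambda)$ using the extended torus equivalence (Proposition \ref{prop:equivalence-monoidal-categories}, where $H^1(F,Z_G)=0$ enters through \cite{Zou26}) and Zou's compatibility of the spectral action with the central action. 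It concludes with $\Bun_G^e\simeq\Bun_G\times^{\Bun_{Z_G}}\Bun^e_{Z_G}$ (Proposition \ref{prop:quotient-groupoid-by-center}, where connected center enters).

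That route interprets the extension of scalars as twisting by the Picard stack $\Bun^e_{Z_G}$ and ties it to the torus case, at the cost of depending on \cite{Zou26}. Yours is a formal consequence of the monoidality of the extended Hecke action and the way it permutes components. It yields the theorem assuming only that $Z_G$ is connected.
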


This is proved in \S \ref{sec:connectedZredpf}.

\begin{rque}
    As suggested by the discussion in Section \ref{sec:reduction-perspectives}, the primary missing input expected for a full reduction theorem is the compatibility of the spectral action with the action of the center. Once this compatibility is established, the general reduction conjecture should reduce to the connected-center case.
\end{rque}

\begin{rque}
    The hypothesis $H^1(F,Z_G) = 0$ in the statement of \Cref{thm:reduction-connected-center-case} comes from the dependency on \cite{Zou26} in our proof of the theorem. It is expected that the main results of \emph{loc. cit.} hold without that assumption. We expect that once the stronger version of \cite{Zou26} is established, we should be able to remove the cohomological vanishing assumption on the center. Note however that for the proof we provide the connectedness of the center is crucial. 
\end{rque}

Since the categorical local Langlands correspondence for $\GL_n$ has been established by \cite{HansenMann26}, we can state the following corollary: 
\begin{corol}
    Let $G = \GL_n$, $F$ a finite extension of $\mathbb{Q}_p$ and $\Lambda = \Qlb$. If \cite[Conjecture 1.6.2]{HansenMann26} holds then Conjecture \ref{conj:extended-categorical-equivalence} holds. 
\end{corol}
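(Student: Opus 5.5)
The plan is to deduce the statement from Theorem \ref{thm:reduction-connected-center-case}, Lemma \ref{lem:reduction-on-the-coherent-side} and the Hansen--Mann equivalence, by tensoring the latter over $\Ind\Perf(\pt/\widehat{G})$ with $\Ind\Perf(\pt/\widehat{G}^e)$.

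\emph{Step 1: hypotheses of Theorem \ref{thm:reduction-connected-center-case}.} For $G=\GL_n$ the center is $Z_G=\Gm$, which is connected. Hilbert's Theorem 90 gives $H^1(F,\Gm)=0$. Since $\Lambda=\Qlb$, the condition on $\ell$ in Theorems \ref{thm:relative-tensor-product} and \ref{thm:extended-spectral-action-on-bun_G} is vacuous (and in any case $\pi_1(\widehat{\GL_n})\cong\Z$ is torsion free). So the extended spectral action exists, and Conjecture \ref{conj:reduction-extended-to-classical} holds for $\GL_n$, giving
\[
\mc{D}_{\lis}(\Bun_{G},\Lambda)\otimes_{\Ind\Perf(\pt/\widehat{G})}\Ind\Perf(\pt/\widehat{G}^e)\xrightarrow{\sim}\mc{D}_{\lis}(\Bun_G^e,\Lambda).
\]

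\emph{Step 2: tensoring the Hansen--Mann equivalence.} Assuming \cite[Conjecture 1.6.2]{HansenMann26}, the spectral action gives a $\Perf(\Par_G)$-linear equivalence $\mc{D}_{\lis}(\Bun_G)\simeq\IndCoh(\Par_G)$ sending $\Wcal_\psi$ to $\Ocal_{\Par_G}$. I take this functor to be the one induced by $a_\psi$, with inverse $c_\psi$.
\begin{itemize}
\item Because it is $\Perf(\Par_G)$-linear, it is in particular $\Ind\Perf(\pt/\widehat{G})$-linear via pullback along $\alpha$. So it remains an equivalence after applying $-\otimes_{\Ind\Perf(\pt/\widehat{G})}\Ind\Perf(\pt/\widehat{G}^e)$.
\item To match the right-hand side with the form in Lemma \ref{lem:reduction-on-the-coherent-side}, I would check that the Hansen--Mann target agrees with $\IndCoh^{\tn{qc}}_{\nilp}(\Par_G)$. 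I expect this to follow from the proof in \cite{HansenMann26}: over $\Qlb$ the compact objects correspond to coherent sheaves with quasi-compact support and nilpotent singular support, as in Conjecture \ref{conj:CLLC}.
\item Lemma \ref{lem:reduction-on-the-coherent-side} then identifies the tensored right-hand side with $\IndCoh^{\tn{qc}}_{\nilp}(\Par_G^e)$.
\end{itemize}
Combining this with Step 1 gives an equivalence $\mc{D}_{\lis}(\Bun_G^e)\simeq\IndCoh^{\tn{qc}}_{\nilp}(\Par_G^e)$.

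\emph{Step 3: the equivalence is the Whittaker functor.} It remains to see that this equivalence is $\Perf(\Par_G^e)$-linear and is the functor $a^e_\psi$ (so its inverse is $c^e_\psi$).
\begin{itemize}
\item Linearity: the $\Perf(\Par_G^e)$-action on both sides is induced, through Theorem \ref{thm:relative-tensor-product}, from the $\Perf(\Par_G)$- and $\Perf(\pt/\widehat{G}^e)$-actions. The functor in Step 1 is linear for both by construction, since it is the action map supplied by Theorem \ref{thm:extendedSpectralDecompositionTheorem}. So the composite is linear.
\item Whittaker normalization: $\Wcal^e_\psi$ is the image of $\Wcal_\psi\otimes\Ocal_{\pt/\widehat{G}^e}$ under the functor of Step 1. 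This object corresponds to $\Ocal_{\Par_G}\otimes\Ocal_{\pt/\widehat{G}^e}=\Ocal_{\Par_G^e}$.
\item Hence the composite sends $\mathcal F\mapsto\mathcal F*\Wcal^e_\psi$, i.e.\ it is $a^e_\psi$, and its inverse is the right adjoint $c^e_\psi$.
\end{itemize}
Restricting to compact objects yields Conjecture \ref{conj:extended-categorical-equivalence}.

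The main obstacle is the bookkeeping in Steps 2 and 3. One must confirm that the Hansen--Mann equivalence really is $a_\psi$ and that it is compatible with the $\Perf(\Par_G)$-module structure used in Conjecture \ref{conj:reduction-extended-to-classical}. One must also check that passing between the $\IndCoh$ and compact-object ($\Coh^{\tn{qc}}_{\nilp}$) formulations commutes with the relative tensor product. I would first try the latter via compact generation: $\Ind\Perf(\pt/\widehat{G}^e)$ is rigid over $\Ind\Perf(\pt/\widehat{G})$, so compact objects of the relative tensor product are generated by exterior products of compact objects.
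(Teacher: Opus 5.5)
Your proposal is correct and follows the paper's argument. The paper's proof combines the Hansen--Mann equivalence with Theorem \ref{thm:reduction-connected-center-case}, which applies because $Z_{\GL_n}=\Gm$ is connected and $H^1(F,\Gm)=0$, and Lemma \ref{lem:reduction-on-the-coherent-side} transfers the equivalence on the spectral side, exactly as in your Steps 1--2; your Step 3 and the $\IndCoh$ versus $\Coh^{\tn{qc}}_{\nilp}$ bookkeeping are details the paper treats as immediate. The identification you left as an expectation holds because nilpotent singular support is automatic over $\Qlb$ (the Tate twist in the Frobenius action forces it).
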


\begin{proof}
    It is an immediate consequence of \cite[Theorem 1.1.1]{HansenMann26} and \Cref{thm:reduction-connected-center-case}.
\end{proof}

\begin{rque}
    Conjecture 1.6.2 in \cite{HansenMann26} is the compatibility of the functor $c_{\psi}$ with geometric Eisenstein series functors which is expected to hold in full generality. 
\end{rque}

\subsection{Proof of the conjecture for tori}\label{sec:case-of-tori}

The goal of this subsection is to prove \Cref{thm:case-of-tori}. The version of this theorem for the ``classical'' Fargues--Scholze equivalence was established by \cite{Zou24}. Since the details are essentially the same, we will provide them only when the argument differs in a substantive way, and otherwise just refer to \cite{Zou24}.

Fix $T$ a torus defined over $F$. Observe that $\Bun_{T}^{e}$ has a unique Whittaker sheaf $\mc{W}^{e} \in \mc{D}_{\tn{lis}}(\Bun_{T}^{e},\Lambda)^{\omega}$, defined by pushing forward the Whittaker sheaf $\mc{W}$ (\cite[Definition 6.2.1]{Zou24}) on the closed and open $\Bun_{T} = \Bun_{T}^{e,1} \hookrightarrow \Bun_{T}^{e}$.

\begin{proof}[Proof of Theorem \ref{thm:case-of-tori}]
The identical argument in \cite[Lemma 6.2.5]{Zou24} applied to the $\widehat{T}^{\Gal_{F},+}$-gerbe $\Par_{T}^{e}$ over the algebraic space $\Hom(T(F),\mathbb{G}_{m})$ (as defined in \cite[Lemma 4.1.2]{Zou24}), replacing Lemma 5.3.3 loc. cit. with our Lemma \ref{lem:gradedaction}, implies that it suffices to prove the result for the induced map
\begin{equation}\label{eq:zeropieceSA}
    \Perf^{\tn{qc}}(\Par_{T}^{e})_{0} \to \mc{D}(T(F)\tn{-$\tn{Mod}_{\Z_{\ell}}$})^{\omega},
\end{equation}
where $\Perf^{\tn{qc}}(\Par_{T}^{e})_{0}$ is the $0$-isotypic summand of $\Perf^{\tn{qc}}(\Par_{T}^{e})$ under the $X^{*}(\widehat{T}^{\Gal_{F},+})$-grading from \cite[Lemma 4.3.2]{Zou24}.

By construction (cf. Theorem \ref{thm:extended-spectral-action-on-bun_G} and the proof of Theorem \ref{thm:extended-hecke-action}), for any $G$ the action of $\Perf(\Par_{G})$ via the pullback map $\Perf(\Par_{G}) \to \Perf(\Par_{G}^{e})$ is the usual spectral action from \cite{Geometrization}. Moreover, any object of $\Perf^{\tn{qc}}(\Par_{T}^{e})_{0}$ has trivial action by $\widehat{T}^{\Gal_{F},+}$ by definition and thus descends to $\Par_{T}$ (recall that $\Par_{T}^{e} = \pt/\widehat{T}^{e} \times_{\pt/\widehat{T}} \Par_{T}$). We conclude that the map \eqref{eq:zeropieceSA} is the same one from the proof of \cite[Theorem 6.4.1]{Zou24}. The result then follows from the proof loc. cit.
\end{proof}

\section{Around the reduction conjecture}\label{sec:reduction-theorem}
Throughout the next two subsections section we assume that $Z_{G}$ is connected.

\subsection{The Picard groupoid $\Bun_{Z_{G}}^{e}$}
There is a natural action map 
\begin{equation*}
    \Bun_{G} \times \Bun_{Z_{G}}^{e} \xrightarrow{a} \Bun_{G}^{e}
\end{equation*}
given by sending $(\mc{P},\mc{U})$ to $\mc{U} \cdot \mc{P}$.
\begin{prop}\label{prop:quotient-groupoid-by-center}
    The map $a$ induces an isomorphism of v-stacks
    \begin{equation*}
        \Bun_{G} \times^{\Bun_{Z_{G}}} \Bun_{Z_{G}}^{e} \xrightarrow{\sim} \Bun_{G}^{e}.
    \end{equation*}
\end{prop}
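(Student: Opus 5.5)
Here $\Bun_{Z_G}\subset\Bun_{Z_G}^e$ is the open and closed substack $\Bun^{e,1}_{Z_G}$. The plan is to check that $a$ is a $\Bun_{Z_G}$-torsor over its image and that it is surjective; together these give the isomorphism. Both $\Bun_{Z_G}$ and $\Bun_{Z_G}^e$ are commutative Picard v-stacks under contracted product of $Z_G$-torsors. Since $Z_G$ is central and abelian, the inertial homomorphisms add: $\lambda_{\mc{U}\cdot\cP}=\lambda_{\mc{U}}\lambda_{\cP}$. Hence $a$ is equivariant for the antidiagonal action of $\Bun_{Z_G}$ on $\Bun_G\times\Bun_{Z_G}^e$, given by $(\cP,\mc{U})\mapsto(\mc{V}\cdot\cP,\mc{V}^{-1}\cdot\mc{U})$, and therefore factors through the contracted product $\Bun_{G}\times^{\Bun_{Z_G}}\Bun_{Z_G}^e$.

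For full faithfulness, we work with $S$-points. An isomorphism $\mc{U}\cdot\cP\cong\mc{U}'\cdot\cP'$ forces $\lambda_{\mc{U}}=\lambda_{\mc{U}'}$, because $\lambda_{\cP}=\lambda_{\cP'}=1$. Then $\mc{V}:=\mc{U}'\otimes\mc{U}^{-1}$ has trivial inertia. By Lemma \ref{lem:descendtors} (applied with $H=Z_G$), $\mc{V}$ descends to a $Z_G$-torsor on $X_S$, i.e.\ a point of $\Bun_{Z_G}(S)$. The given isomorphism then becomes $\cP\cong\mc{V}\cdot\cP'$ between torsors with trivial inertia, and this again descends to $X_S$ by Lemma \ref{lem:descendtors}. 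This identifies the isomorphism sets on the two sides, and hence the automorphism groups of points, compatibly.

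For essential surjectivity, take $\cP\in\Bun_G^{e}(S)$ with inertia $\lambda\in\Hom_F(u,Z_G)$. Lemma \ref{lem:inertialhom} lets us assume $\lambda$ is constant after a clopen decomposition of $S$. The task is to find, v-locally on $S$, a $Z_G$-torsor $\mc{U}$ on $[\mc{T}_S/\tilde{t}_S]$ with inertia $\lambda$; then $\mc{U}^{-1}\cdot\cP$ has trivial inertia and descends to $\Bun_G(S)$ by Lemma \ref{lem:descendtors}. Here connectedness of $Z_G$ is used: $Z_G$ is a torus, so the surjectivity in \eqref{eq:HomZSES} (the Shapiro argument, applied with $Z_G$ in place of $\mathbf{T}$) lets us lift $\lambda$ to some $\mu\in\Hom_F(\tilde t,Z_G)$. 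Taking the trivial $Z_G$-torsor with $\tilde t$-action through $\mu$, as in Definition \ref{defi:Qdefs} and Lemma \ref{lem:curlyT}, gives a global $Z_G$-torsor on $[\mc{T}_S/\tilde t_S]$ with inertia $\lambda$, defined over all of $S$, with no need for a cover. Equivalently, this is the statement that $B_e(Z_G)\to\Hom_F(u,Z_G)$ is surjective, as in Proposition \ref{prop:BunGestructure}(3).

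I expect the main obstacle to be the full-faithfulness step, at the level of stacks rather than isomorphism classes. One must check that descending $\mc{V}$ and the isomorphism via Lemma \ref{lem:descendtors} is compatible with composition, so that the comparison is an equivalence of groupoids and not only a bijection on $\pi_0$. It should be, because the adjunction $f^*f_*\to\mathrm{id}$ is an isomorphism on torsors with trivial inertial action and the construction is functorial. The equivariance data defining the contracted product also has to match the associativity and commutativity constraints of the contracted product of $Z_G$-torsors, which is a formal verification.
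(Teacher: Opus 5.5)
Your proposal is correct and follows the paper's route. You factor $a$ through the antidiagonal quotient, show its nonempty fibers are $\Bun_{Z_G}$-torsors, and translate by a $Z_G$-torsor of prescribed inertia $\lambda$ (which exists because $Z_G$ is a torus) to reduce to the inertia-$1$ component $\Bun_G$. The one difference is that you build this translating torsor explicitly over every $S$ from a lift $\mu \in \Hom_F(\tilde t, Z_G)$ of $\lambda$, whereas the paper checks surjectivity on geometric points using the surjectivity of $B_e(Z_G) \to \Hom_F(u,Z_G)$ from Fargues; your version has the small advantage of giving v-surjectivity directly.
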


\begin{proof}
   Evidently the map factors through the quotient by the anti-diagonal $\Bun_{Z_{G}}$-action, and the non-empty fibers of $a$ are $\Bun_{Z_{G}}$-torsors. Surjectivity on geometric points is the same, by Proposition \ref{prop:BunGestructure}, as the surjectivity of
   \begin{equation*}
       B(G) \times B_{e}(Z_{G}) \to B_{e}(G),
   \end{equation*}
   which follows immediately from the surjectivity of $B_{e}(Z_{G}) \to \Hom_{F}(u,Z_{G})$ (this is implicit in the short exact sequence \eqref{eq:pi1eSES} which, as recalled loc. cit., is proved in \cite[Section 9]{Fargues22}). Note that this is where we use the assumption that $G$ has connected center.
   
   Translating by $b_{\lambda}$ in $\Bun_{Z_{G}}^{e}(C)$ lifting a given $\lambda \in \Hom_{F}(u,Z_{G})$ then reduces the claimed result to the same claim for the action $\Bun_{G} \times \Bun_{Z_{G}} \to \Bun_{G}$, where it is clear.
\end{proof}

For $T$ an $F$-rational torus, denote by $(\mc{D}_{\tn{lis}}(\Bun_{T}^{e},\Lambda),\star)$ the symmetric monoidal category with the monoidal structure induced by the multiplication map on the Picard groupoid $\Bun_{T}^{e}$. There is a canonical action of $\Bun_{Z_G}^e$ on $\Bun_G^e$.

\begin{corol}\label{corol:relative-tensor-over-center}
    There is a canonical equivalence 
    \begin{equation*}
     \mc{D}_{\tn{lis}}(\Bun_{G}^{e},\Lambda) \cong 
     \mc{D}_{\tn{lis}}(\Bun_{G},\Lambda) \otimes_{
     \mc{D}_{\tn{lis}}(\Bun_{Z_{G}},\Lambda)} 
     \mc{D}_{\tn{lis}}(\Bun_{Z_{G}}^{e},\Lambda).
    \end{equation*}
\end{corol}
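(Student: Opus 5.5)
The plan is to deduce the corollary from Proposition \ref{prop:quotient-groupoid-by-center} by a general descent statement for sheaves on quotients of stacks by group stacks. Concretely, $\Bun_{G} \times^{\Bun_{Z_{G}}} \Bun_{Z_{G}}^{e}$ is the geometric realization of the bar construction
\begin{equation*}
\cdots \; \Bun_{G} \times \Bun_{Z_{G}} \times \Bun_{Z_{G}}^{e} \rightrightarrows \Bun_{G} \times \Bun_{Z_{G}}^{e},
\end{equation*}
whose $n$-th term is $\Bun_{G} \times \Bun_{Z_{G}}^{n} \times \Bun_{Z_{G}}^{e}$. First, I would use v-descent for $\mc{D}_{\tn{lis}}$ (or rather for $\mc{D}_{\blacksquare}$, restricting to lisse objects afterwards, as in \cite[Section VII]{Geometrization}) to write
\begin{equation*}
\mc{D}_{\tn{lis}}(\Bun_{G}^{e},\Lambda) \simeq \lim_{[n] \in \Delta} \mc{D}_{\tn{lis}}(\Bun_{G} \times \Bun_{Z_{G}}^{n} \times \Bun_{Z_{G}}^{e},\Lambda),
\end{equation*}
using that the map $a$ is a $\Bun_{Z_{G}}$-torsor, hence a v-cover, by Proposition \ref{prop:quotient-groupoid-by-center}.

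Second, I would establish a K\"unneth formula: for the stacks appearing, exterior product gives an equivalence
\begin{equation*}
\mc{D}_{\tn{lis}}(\Bun_{G},\Lambda) \otimes \mc{D}_{\tn{lis}}(\Bun_{Z_{G}},\Lambda)^{\otimes n} \otimes \mc{D}_{\tn{lis}}(\Bun_{Z_{G}}^{e},\Lambda) \xrightarrow{\sim} \mc{D}_{\tn{lis}}(\Bun_{G} \times \Bun_{Z_{G}}^{n} \times \Bun_{Z_{G}}^{e},\Lambda).
\end{equation*}
For the torus factors this is elementary. By Proposition \ref{prop:BunGestructure} and Corollary \ref{cor:Dlise}, $\Bun_{Z_{G}}$ and $\Bun_{Z_{G}}^{e}$ are disjoint unions of classifying stacks $[\pt/\underline{Z_{G}(F)}]$ (the latter after twisting componentwise), so their categories are products of $\mc{D}(Z_{G}(F)\text{-}\Mod_{\Lambda})$. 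For such factors, K\"unneth against $\mc{D}_{\tn{lis}}(\Bun_{G},\Lambda)$ reduces to the statement that for a locally pro-$p$ group $H$ acting trivially, $\mc{D}_{\tn{lis}}(X \times [\pt/\underline{H}]) \simeq \mc{D}_{\tn{lis}}(X) \otimes \mc{D}(H\text{-}\Mod)$, which I would check via compact generators $c\text{-}\ind_{K}^{H}\Lambda$ and the semi-orthogonal decomposition of $\Bun_{G}$. Dualizability, and hence compatibility of the products with $\otimes$, follows from compact generation.

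Third, I would compare the cosimplicial diagram with the bar resolution computing the relative tensor product. In $\Pr$, the colimit $\mc{D}_{\tn{lis}}(\Bun_{G}) \otimes_{\mc{D}_{\tn{lis}}(\Bun_{Z_{G}})} \mc{D}_{\tn{lis}}(\Bun_{Z_{G}}^{e})$ along $!$-pushforwards is identified with the limit along the right adjoints, which are pullbacks up to shifts and twists. The structure maps in the bar construction are the action and multiplication maps. These are $\Bun_{Z_{G}}$-torsor projections, so they have well-behaved $\natural$-pushforwards, and convolution is defined with ${}_{\natural}$ as in \cite{Geometrization}. Compatibility of the K\"unneth equivalence with these face maps is formal.

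I expect the main obstacle to be the second step, K\"unneth for $\Bun_{G} \times \Bun_{Z_{G}}^{e}$ in the lisse/solid setting with $\ell$-adic coefficients. The first approach I would try is to reduce, via the stratification and Corollary \ref{cor:Dlise}, to compactly generated pieces $\mc{D}(G_{b}(F)\times Z_{G}(F)\text{-}\Mod)$. There I would use that $c\text{-}\ind$ of products of compact opens generates, and then glue along the semi-orthogonal decomposition, which is compatible with tensoring by the dualizable category $\mc{D}_{\tn{lis}}(\Bun_{Z_{G}}^{e})$.
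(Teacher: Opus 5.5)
Your proposal is correct in outline, but it takes a considerably heavier route than the paper. The paper uses neither descent nor a K\"unneth formula. It decomposes both sides by $\lambda\in\Hom_F(u,Z_G)$ and picks a point $b_\lambda$ of $\Bun_{Z_G}^e$ lifting $\lambda$; this uses the same surjectivity $B_e(Z_G)\to\Hom_F(u,Z_G)$ that enters Proposition \ref{prop:quotient-groupoid-by-center}. Translation by $b_\lambda$ then identifies $\Bun_{Z_G}^{e,\lambda}\cong\Bun_{Z_G}$, $\Bun_{Z_G}$-equivariantly, and $\Bun_G^{e,\lambda}\cong\Bun_G$. Hence $\mc{D}_{\tn{lis}}(\Bun_{Z_G}^e,\Lambda)\simeq\prod_\lambda\mc{D}_{\tn{lis}}(\Bun_{Z_G},\Lambda)$ is a free module. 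Products agree with coproducts in $\Pr$, and the relative tensor product commutes with them. So the statement collapses componentwise to the tautology $M\otimes_A A\simeq M$.

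Your route has three steps: v-descent along the torsor $a$, a K\"unneth formula for $\Bun_G\times\Bun_{Z_G}^n\times\Bun_{Z_G}^e$, and the identification of the totalization with the bar construction via right adjoints. It is a general mechanism. It shows the canonical functor $a_\natural(-\boxtimes-)$ is an equivalence without trivializing the $\Bun_{Z_G}$-torsors in a module-compatible way, so it would apply to quotients by Picard stacks where no such trivialization is available.

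The price is exactly the points you flag, plus one you pass over quickly. First, K\"unneth for $\mc{D}_{\tn{lis}}$ must be proved in the solid $\ell$-adic setting, glued along the infinite stratification of $\Bun_G$. Second, you need descent for lisse rather than solid sheaves along the non-representable map $a$. That requires checking that $(a_n)_\natural$ preserves lisse objects and that lisseness is detected after pullback along $a$. The paper sidesteps all of this.

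Your step two already invokes the componentwise description $\Bun_{Z_G}^{e,\lambda}\cong\Bun_{Z_G}$ obtained by twisting by a point $b$. Since twisting a torsor under an abelian group by $b$ is just translation, this identification is $\Bun_{Z_G}$-equivariant. Once you note that, steps one and three become unnecessary: the module is free and the relative tensor product is formal.

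A minor point on step three: use $\natural$-pushforwards consistently rather than mixing in $!$-pushforwards. Then the right adjoints in the bar construction are exactly the $*$-pullbacks of the \v{C}ech nerve, with no twists to track.
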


\begin{proof}
    By grouping together components on both sides and translating by $b_{\lambda} \in \Bun_{Z_{G}}^{e}(C)$ for $\lambda \in \Hom_F(u, Z_G)$ if necessary as in Proposition \ref{prop:quotient-groupoid-by-center}, we are reduced to 
    $$\mc{D}_{\tn{lis}}(\Bun_{G},\Lambda) \cong 
     \mc{D}_{\tn{lis}}(\Bun_{G},\Lambda) \otimes_{
     \mc{D}_{\tn{lis}}(\Bun_{Z_{G}},\Lambda)} 
     \mc{D}_{\tn{lis}}(\Bun_{Z_{G}},\Lambda).$$
\end{proof}

\begin{prop}[Extended  analogue of \protect{\cite[Theorem 3.4]{Zou26}}]\label{prop:equivalence-monoidal-categories}
    If $H^{1}(F,T)=0$, then the spectral action of Theorem \ref{thm:extended-spectral-action-on-bun_G} induces an equivalence of symmetric monoidal categories
    \begin{equation*}
        (\mc{D}_{\tn{lis}}(\Bun_{T}^{e},\Lambda),\star) \to (\QCoh(\Par_{T}^{e}), \otimes).
    \end{equation*}
\end{prop}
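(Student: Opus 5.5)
The plan is to reduce to Zou's non-extended statement \cite[Theorem 3.4]{Zou26} for $T$, which gives a symmetric monoidal equivalence $(\mc{D}_{\tn{lis}}(\Bun_{T},\Lambda),\star) \simeq (\QCoh(\Par_{T}),\otimes)$ under $H^{1}(F,T)=0$. We then base change both sides along $\Ind\Perf(\pt/\widehat{T}) \to \Ind\Perf(\pt/\widehat{T}^{e})$.

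On the spectral side, Theorem \ref{thm:relative-tensor-product} (for a torus $\pi_{1}(\widehat{T})_{\tn{tor}}=0$, so there is no condition on $\ell$) gives
\begin{equation*}
\QCoh(\Par_{T}^{e}) \simeq \QCoh(\Par_{T}) \otimes_{\QCoh(\pt/\widehat{T})} \QCoh(\pt/\widehat{T}^{e}).
\end{equation*}
Since $\widehat{T}^{e} \to \widehat{T}$ is surjective with diagonalizable kernel $\widehat{Z}_{\infty} := \varprojlim \widehat{Z}$, this is $\QCoh(\Par_{T})$ tensored with a $\Hom_{F}(u,T)$-graded piece. Concretely, $\QCoh(\Par_{T}^{e}) = \bigoplus_{\nu \in \Hom_{F}(u,T)} \QCoh(\Par_{T}^{e})_{\nu}$, where each graded piece is an invertible $\QCoh(\Par_{T})$-module. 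Each such piece is trivialized by choosing a character of $\widehat{T}^{e}$ restricting to $\nu$ on $\widehat{Z}_{\infty}$; such characters exist by \eqref{eq:pi1eSES}.

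On the automorphic side, $\Bun_{T}^{e} = \bigsqcup_{\lambda} \Bun_{T}^{e,\lambda}$, and translation by a lift $b_{\lambda} \in \Bun_{T}^{e}(C)$ identifies each component with $\Bun_{T}$. This gives a $\mc{D}_{\tn{lis}}(\Bun_{T})$-linear decomposition of $(\mc{D}_{\tn{lis}}(\Bun_{T}^{e}),\star)$ as a $\Hom_{F}(u,T)$-graded monoidal category, with each graded piece invertible. This is the $T$-case of Corollary \ref{corol:relative-tensor-over-center}, since $T$ has connected center.

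The functor itself will be defined as follows. First, the action of Theorem \ref{thm:extended-spectral-action-on-bun_G} gives $a^{e}\colon \QCoh(\Par_{T}^{e}) \to \mc{D}_{\tn{lis}}(\Bun_{T}^{e})$, $\mathcal{F} \mapsto \mathcal{F} * \mc{W}^{e}$. Its inverse, or more precisely the direction in the statement, is obtained by showing that $\star$-convolution with $A \in \mc{D}_{\tn{lis}}(\Bun_{T}^{e})$ commutes with the spectral action. Equivalently, the extended Hecke operators $T^{e}_{V^{e}}$ are given by $\star$-convolution with $T^{e}_{V^{e}}(\delta_{1})$. This follows from the Picard-groupoid structure of $\Hck^{e,1}_{T}$: for a torus, $\Hck_{T}^{e,1,(\lambda_{0},\lambda_{1})}$ is a torsor under $\Bun_{T}$ times a point of $\tn{Gr}^{e}$ by Proposition \ref{prop:mainGrisom}. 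One then checks, using Lemma \ref{lem:gradedaction} and Lemma \ref{lem:Konradlem1}, that the resulting functor sends the $\nu$-graded piece $\QCoh(\Par^{e}_{T})_{\nu}$ to $\mc{D}_{\tn{lis}}(\Bun_{T}^{e,-\nu})$. On the $0$-graded piece, the functor agrees with Zou's functor, by the argument in the proof of Theorem \ref{thm:case-of-tori}, since objects of degree $0$ descend to $\Par_{T}$. It then suffices to check that the functor carries the invertible generator of each graded piece, namely the character $\chi$ of $\widehat{T}^{e}$, to an invertible object of $\mc{D}_{\tn{lis}}(\Bun_{T}^{e,\kappa=-\chi})$. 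By Lemma \ref{lem:Konradlem1} and the computation of Hecke operators for a torus via $\tn{Gr}^{e}$, this object is a skyscraper $\delta_{b}$ at the basic point $b$ with $\kappa(b)=-\chi$, twisted by a character. Symmetric monoidality then follows from the monoidality of $T^{e}$ in Theorem \ref{thm:diagHckact} together with symmetry on the $0$-part.

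The main obstacle is the second half of this construction. One must verify that the extended Hecke operators for a torus are genuinely given by $\star$-convolution with skyscrapers at the basic points $b_{\lambda}$, and that these skyscrapers are compatible with Zou's identification of $\mc{D}_{\tn{lis}}(\Bun_{T})$ with $\QCoh(\Par_{T})$, so that the gluing across graded pieces is coherent. The dependence on the chosen lift $b_{\lambda}$ versus the character $\chi$ has to cancel up to the $\Bun_{T}$-action. I would try to handle this by identifying $\Hck^{e,1}_{T}$ with $\Bun_{T}^{e} \times \tn{Gr}^{e}_{T}$ via the multiplication map. Since $\tn{Gr}^{e,(\lambda_{0},\lambda_{1})}_{T}$ is discrete, namely $X_{*}(T/Z)^{\lambda_{0}/\lambda_{1}}$ by Proposition \ref{prop:mainGrisom}, this reduces everything to a computation on points.
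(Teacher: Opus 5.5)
Your outline is correct, but it is organized differently from the paper's proof.

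The paper works directly with the finer $\pi_1(T)^e_{\Gal_F}$-grading. It splits the Picard stack $1\to\pt/T(F)\to\Bun_T^e\to\underline{B_e(T)}\to 1$ after base change to $C$. It then combines \cite[Lemma 2.5]{Zou26} with $\Par_T^e=\pt/\widehat{T}^e\times_{\pt/\widehat{T}}\Par_T$ to get $\Par_T^e\cong\Hom(T(F),\Gm)\times\pt/\widehat{T}^{\Gal_F,+}$, and asserts that the proof of \cite[Theorem 3.4]{Zou26} runs verbatim.

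You instead use Zou's theorem as a black box on the $\Hom_F(u,T)$-degree-zero piece and transport it to the other degrees by $\star$-translation with invertible objects. This isolates the one new geometric input: the extended Hecke operators commute with $\star$. That input is easy to supply here:
\begin{itemize}
\item $\Bun_T^e$ acts on $\Hck^{e,1}_T$ by tensoring $\cP_0$, $\cP_1$ and $f$ with a fixed torsor $\mathcal{R}$.
\item This is well defined because $\Q_{\lambda_1/\lambda_0,S}$ depends only on the ratio $\lambda_1/\lambda_0$, which tensoring leaves unchanged.
\item The action commutes with $\overleftarrow{h}$ and $\overrightarrow{h}$ and preserves the relative position, hence preserves the kernels $\varepsilon^*\mc{S}'_{V^e}$.
\end{itemize}
Given this, invertibility of $T^e_\chi(\mc{W}^e)$ is formal. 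Indeed $\mc{W}^e$ is the $\star$-unit and $T^e_{\chi^{-1}}T^e_\chi\simeq\mathrm{id}$ by monoidality, so $T^e_{\chi^{-1}}(\mc{W}^e)$ is a $\star$-inverse. You therefore need neither the skyscraper computation nor any coherence check across graded pieces, since $\mathcal{F}\mapsto\mathcal{F}*\mc{W}^e$ is a single functor.

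What your route buys is that the hypothesis $H^1(F,T)=0$ enters only through the non-extended theorem. This is worth noting because $\pi_1(T)^e_{\Gal_F}$ usually has torsion even when $H^1(F,T)=0$; for $T=\Res_{E/F}\Gm$ it is $\mathbb{Q}[\Gal_{E/F}]/\mathbb{Z}[\Gal_{E/F}]_0$. The paper's route, in exchange, directly produces the finer identification of graded pieces via $X^*(\widehat{T}^{\Gal_F,+})\cong B_e(T)$.

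Three minor points:
\begin{itemize}
\item Corollary \ref{corol:relative-tensor-over-center} is tautological for $G=T$. What you actually use is the component decomposition plus translation by $b_\lambda$.
\item Characters of $\widehat{T}^e$ lifting $\nu$ exist because $X^*(\widehat{T}^e)=X_*(T)\otimes\mathbb{Q}\to X_*(T)\otimes\mathbb{Q}/\mathbb{Z}=\Hom_F(u,T)$ is onto, not because of \eqref{eq:pi1eSES}.
\item The symmetric (not merely monoidal) structure does not follow from monoidality of $T^e$ alone. You still have to match the braidings on the invertible generators; this is routine here since both sides are strictly commutative.
\end{itemize}
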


\begin{proof}
    First, observe that there is (by Proposition \ref{prop:BunGestructure}) an exact sequence of Picard stacks
    \begin{equation*}
        1 \to \pt/T(F) \to \Bun_{T}^{e} \to \underline{B_{e}(T)} \to 1
    \end{equation*}
    which (regardless of the assumptions on $T$) splits after base-changing to $\Perfd_{C}$ via pulling back along the canonical map $[\mc{T}_{C}/\tilde{t}_{C}] \to \Kott \times \Kal$. 

    On the dual side, \cite[Lemma 2.5]{Zou26} constructs an isomorphism $\Par_{T} \xrightarrow{\sim} \Hom(T(F), \mathbb{G}_{m}) \times \pt/\widehat{T}^{\Gal_{F}}$ compatible with the projections to $\pt/\widehat{T}$, and since $\Par_{T}^{e} = \pt/\widehat{T}^{e} \times_{\pt/\widehat{T}} \Par_{T}$ this induces an isomorphism 
    \begin{equation*}
         \Par_{T}^{e} \xrightarrow{\sim} \Hom(T(F), \mathbb{G}_{m}) \times \pt/\widehat{T}^{\Gal_{F},+}.
    \end{equation*}
    From here the proof of \cite[Theorem 3.4]{Zou26} holds verbatim.
\end{proof}

\subsection{Proof of Theorem \ref{thm:reduction-connected-center-case}} \label{sec:connectedZredpf}

The goal of this subsection is to prove \Cref{thm:reduction-connected-center-case}, that is, we want to prove that the following map (cf. Conjecture \ref{conj:reduction-extended-to-classical})
\begin{equation}\label{eq:reduction-conneced-center}
    \mc{D}_{\tn{lis}}(\Bun_{G},\Lambda) \otimes_{\Ind\Perf(\pt/\widehat{G})} \Ind\Perf(\pt/\widehat{G}^e) \to\mc{D}_{\tn{lis}}(\Bun_{G}^e,\Lambda)
\end{equation}
is an equivalence assuming $G$ has connected center and $H^1(F, Z_G) = 0$. 

\begin{lem}\label{lem:reduction-to-center}
If $G$ has connected center, then there exists a canonical isomorphism of stacks 
    \begin{equation*}\label{eq:reduction-to-center}
        \pt/\widehat{G} \times_{\pt/\widehat{Z_G}} \pt/\widehat{Z_{G}}^e = \pt/\widehat{G}^e.
    \end{equation*}
\end{lem}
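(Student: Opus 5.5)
The plan is to reduce the statement to a Cartesian square of group schemes and then pass to classifying stacks. First, I would record that when $Z_G$ is connected, every finite central subgroup $Z \subset Z_G$ lies in the torus $Z_G$. For each such $Z$ we have the exact sequences $1 \to Z \to Z_G \to Z_G/Z \to 1$ and $1 \to Z \to G \to G/Z \to 1$. Dualizing the resulting maps $G \to G/Z$ and $Z_G \to Z_G/Z$ gives central isogenies $\widehat{G/Z} \to \widehat{G}$ and $\widehat{Z_G/Z} \to \widehat{Z_G}$. Both have kernel $\widehat{Z}$, the Cartier dual of $\Hom_{\Z}(X^*(Z),\mathbb{Q}/\Z)$, as in the central extension recalled before Remark \ref{rque:altmonoidal}.

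Next, the inclusion $Z_G \hookrightarrow G$ induces $\widehat{G} \to \widehat{Z_G}$, the dual of the torus $G/G_{\tn{der}}$ viewed via $Z_G \to G \to G/G_{\tn{der}}$. I would check compatibility at the level of root data using a maximal torus $T \supset Z_G$. The map $G/Z \to (G/Z)/(G_{\tn{der}}Z/Z) = G/G_{\tn{der}}Z$ is compatible with $Z_G/Z \to G/G_{\tn{der}}Z$. This gives a commutative square
\begin{equation*}
\begin{tikzcd}
\widehat{G/Z} \arrow{r} \arrow{d} & \widehat{Z_G/Z} \arrow{d} \\
\widehat{G} \arrow{r} & \widehat{Z_G}
\end{tikzcd}
\end{equation*}
Both vertical maps are surjective with the same kernel $\widehat{Z}$, and the induced map on kernels is the identity. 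The latter is verified on character lattices: both kernels have character group $\Hom_{\ov{F}}(\mu,Z) = X_*(T/Z)/X_*(T)$, and the restriction from $X_*(T/Z)/X_*(T)$ to $X_*(Z_G/Z)/X_*(Z_G)$ is an isomorphism, which I would check by hand. A morphism of extensions with identical kernel and surjective vertical maps is Cartesian, so $\widehat{G/Z} \cong \widehat{G} \times_{\widehat{Z_G}} \widehat{Z_G/Z}$ as group schemes over $\Lambda$.

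I then pass to the limit over $Z$. Since the $Z$ in $Z_G$ are cofinal among finite subgroups of the torus $Z_G$, the limit $\varprojlim_Z \widehat{Z_G/Z}$ is exactly $\widehat{Z_G}^e$. Fiber products commute with limits, so $\widehat{G}^e \cong \widehat{G} \times_{\widehat{Z_G}} \widehat{Z_G}^e$.

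Finally, I would convert this to the stack statement. For group homomorphisms $H_1 \to K \leftarrow H_2$ with one leg surjective (here $\widehat{Z_G}^e \to \widehat{Z_G}$ is faithfully flat with pro-finite-type kernel), the natural map $\pt/(H_1 \times_K H_2) \to \pt/H_1 \times_{\pt/K} \pt/H_2$ is an isomorphism. Indeed, the fiber of the right-hand side over $\pt$ is $K/(\text{image})$, which is a point by surjectivity; equivalently, the fiber of $\pt/H_1 \times_{\pt/K} \pt/H_2 \to \pt/H_1$ is $(H_1\times_K H_2)\backslash H_1$-type data, which is trivial.

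The main obstacle is the lattice-level check that the kernels agree, i.e. that $X_*(Z_G/Z)/X_*(Z_G) \to X_*(T/Z)/X_*(T)$ is bijective. Injectivity holds because $X_*(Z_G)=X_*(T)\cap X_*(Z_G)_{\mathbb{Q}}$. Surjectivity is where connectedness is used: both groups are Pontryagin dual to $Z(\ov{F})$ via $\nu \mapsto \nu(\zeta)$, so each is $\Hom(\mu,Z)$.
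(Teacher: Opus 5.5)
Your proof is correct, but it takes a more explicit route than the paper. The paper's proof is a single observation. When $Z_G$ is connected, $\widehat{G}^e \cong \widehat{G}_{\tn{sc}}\times\varprojlim_n\widehat{Z_G}$ with transition maps $z\mapsto z^n$; this splits off the derived group $\widehat{G}_{\tn{der}}=\widehat{G}_{\tn{sc}}$, which is simply connected precisely because $Z_G$ is connected. The paper leaves to the reader both the comparison of this product with $\widehat{G}\times_{\widehat{Z_G}}\widehat{Z_G}^e$ and the passage to classifying stacks.

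You proceed differently in three steps. First, you prove canonically at each finite level that $\widehat{G/Z}\cong\widehat{G}\times_{\widehat{Z_G}}\widehat{Z_G/Z}$, by checking that the kernel map is an isomorphism, i.e. that $X_*(Z_G/Z)/X_*(Z_G)\to X_*(T/Z)/X_*(T)$ is bijective on character groups. Second, you pass to the limit; the finite central subgroups of $G$ and of the torus $Z_G$ are literally the same index set, so no cofinality is needed. Third, you use $\pt/(H_1\times_K H_2)\simeq\pt/H_1\times_{\pt/K}\pt/H_2$ when one leg is surjective. Your route needs neither simple connectivity of $\widehat{G}_{\tn{der}}$ nor a choice of splitting. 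It lands directly on the fiber-product shape of the lemma and makes the group-to-stack step explicit. The paper's route instead gives a sharper structural description of $\widehat{G}^e$, at the cost of those omitted steps.

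Two points in your write-up need tightening.

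First, your heuristic for the last step is off. The fiber of $\pt/H_1\times_{\pt/K}\pt/H_2\to\pt/H_1$ is $\pt/\ker(H_2\to K)$, not a point. The right argument has two parts: every triple $(P_1,P_2,\alpha)$ is locally isomorphic to the trivial one, and the trivial triple has automorphism group $H_1\times_K H_2$. For the first part, lift the discrepancy $k\in K$ between local trivializations to $H_2$ using surjectivity.

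Second, identifying the kernels via $\nu\mapsto\nu(\zeta)\in Z(\ov{F})$ breaks down in characteristic $p$ when $Z$ is not étale, e.g. $\mu_p\subset\Gm$. Either use $\Hom_{\ov{F}}(\mu,Z)$ scheme-theoretically, as the paper does, or count orders. Both quotients have order equal to the degree $|Z|$ of the isogenies $T\to T/Z$ and $Z_G\to Z_G/Z$, so injectivity suffices. In particular, connectedness of $Z_G$ is used only to place every finite central $Z$ inside the torus $Z_G$ and to make sense of $\widehat{Z_G}$. It is not what gives surjectivity.
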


\begin{proof}
    This follows from the fact that $\widehat{G}^e = \widehat{G}_{\tn{sc}} \times \varprojlim_n \widehat{Z_G}$ where the transitions are given by $z \mapsto z^n$. 
\end{proof}

It follows from Lemma \ref{lem:reduction-to-center} that the left-hand side of \eqref{eq:reduction-conneced-center} is then equivalent to 
\begin{equation}\label{eq:reduction-connected-center-part-2}
     \mc{D}_{\tn{lis}}(\Bun_{G},\Lambda) \otimes_{\Ind\Perf(\pt/\widehat{G})} \Ind\Perf(\pt/\widehat{G}^e) \cong \mc{D}_{\tn{lis}}(\Bun_{G},\Lambda) \otimes_{\Ind\Perf(\pt/\widehat{Z_G})} \Ind\Perf(\pt/\widehat{Z_G}^e).
\end{equation}

\begin{proof}[Proof of \Cref{thm:reduction-connected-center-case}]
    We have the following chain of equivalences 
    \begin{align*}
        &\mc{D}_{\tn{lis}}(\Bun_{G},\Lambda) \otimes_{\Ind\Perf(\pt/\widehat{G})} \Ind\Perf(\pt/\widehat{G}^e) \\
        &\cong \mc{D}_{\tn{lis}}(\Bun_{G},\Lambda) \otimes_{\Ind\Perf(\pt/\widehat{Z_G})} \Ind\Perf(\pt/\widehat{Z_G}^e) \\
        &\cong \mc{D}_{\tn{lis}}(\Bun_{G},\Lambda) \otimes_{\Ind\Perf(\Par_{Z_G})} \Ind\Perf(\Par_{Z_G}^e) \\
        &\cong \mc{D}_{\tn{lis}}(\Bun_{G},\Lambda) \otimes_{\mc{D}_{\tn{lis}}(\Bun_{Z_G},\Lambda)} \mc{D}_{\tn{lis}}(\Bun_{Z_G}^e,\Lambda) \\
        &\cong \mc{D}_{\tn{lis}}(\Bun_{G}^e,\Lambda).
    \end{align*}
    The maps are as follows:
    \begin{enumerate}
        \item the first equivalence is \eqref{eq:reduction-connected-center-part-2},
        \item the second map is \eqref{eq:relative-tensor-product} applied to $Z_G$, 
        \item the third map follows from the equivalence of Proposition \ref{prop:equivalence-monoidal-categories} and the fact that in the non-extended setting, this action is compatible with the action of $\mc{D}_{\tn{lis}}(\Bun_{Z_{G}},\Lambda)$ on $\mc{D}_{\tn{lis}}(\Bun_{G},\Lambda)$, which is \cite[Theorem 4.1]{Zou26}, 
        \item the last equivalence follows from Corollary \ref{corol:relative-tensor-over-center};
    \end{enumerate}
    we have thus proven the result.
\end{proof}

\subsection{A $2$-categorical Fourier--Mukai heuristic}\label{sec:reduction-perspectives}

Conjecture \ref{conj:reduction-extended-to-classical} predicts that the automorphic category for the extended stack can be recovered from the usual Fargues--Scholze category by extension of scalars from $\Ind\Perf(\pt/\widehat{G})$ to $\Ind\Perf(\pt/\widehat{G}^{e})$. The purpose of this subsection is to give an informal discussion about how a variant of the $2$-categorical Fourier--Mukai methods of \cite{GLCV} suggests such an equivalence. 

In the final steps of the proof of the Geometric Langlands conjectures, the authors of \cite{GLCV} reduce from a general reductive group $G$ to $G$ semisimple and then to $G$ semisimple and simply connected. The second reduction uses a $2$-categorical Fourier--Mukai transform and produces variants of the Geometric Langlands equivalence for unramified inner forms \cite[Corollary 8.10.2]{GLCV}. Since the formalism of rigid inner forms is designed to organize the inner forms of $G$, it is natural to expect an analogue of this construction in the Fargues--Scholze setting. We assume that $\Lambda$ is torsion, that $G$ is semisimple, and that $F$ has characteristic zero. We temporarily set $Z:=Z_G$.

\begin{defi}
    Let $\Gamma$ be a finite flat abelian group scheme over $F$. We define $\Ge(\Gamma)$ to be the stack of gerbes banded by $\Gamma$, given by
    \begin{align*}
        S \in \Perfd \mapsto \{\tn{\'{e}tale gerbes over} \ X_S\}.
    \end{align*}
\end{defi}

Dually, we denote by $\Ge^{\alg}(\pi_1(\widehat{G})(1))$ the condensed algebraic stack over $\Lambda$ given by
$$\Ge^{\alg}(\pi_1(\widehat{G})(1)) := \Maps(B\Weil_F, B^2(\pi_1(\widehat{G})(1))).$$
We make the following assumptions about sheaves of categories on these stacks:
\begin{enumerate}
    \item The stack $\Ge^{\alg}(\pi_1(\widehat{G})(1))$ has trivial condensed structure and is determined by a single $\infty$-groupoid.
    \item The stack $\Ge(Z)$ is discrete, meaning that as a stack over $\Perfd$ it is pulled back from an $\infty$-groupoid, and there is an equivalence between sheaves of categories on $\Ge(Z)$ and $\Ge^{\alg}(Z)$.
    \item There is a well-defined $2$-categorical Fourier--Mukai duality of algebraic stacks over $\Lambda$ (see \cite[Section 8.1]{GLCV})
    \begin{equation}\label{eq:two-fm-transform}
        \Ge^{\alg}(\pi_1(\widehat{G})(1)) \times \Ge^{\alg}(Z) \to B^2\Gm.
    \end{equation}
\end{enumerate}
These assumptions are reasonable. The last two are analogues of the descriptions in \cite[Sections 8.3 and 8.4]{GLCV}, while the first should follow from the finiteness of $\pi_1(\widehat{G})(1)$.

Following \cite{GLCV}, we introduce two sheaves of categories:
\begin{enumerate}
    \item Consider the map $\pi^{\autom} : \Bun_{G_{\ad}} \to \Ge(Z)$. As in \cite[Section 8.6.3]{GLCV}, define
    $$\mc{D}^{\autom} := \pi^{\autom}_*\mc{D}_{\et,\Bun_{G_{\ad}}},$$
    where $\mc{D}_{\et,\Bun_{G_{\ad}}}$ is the sheaf of categories whose global sections are $\mc{D}_{\et}(\Bun_{G_{\tn{ad}}})$. By the assumption above, we view it as a sheaf of categories on $\Ge^{\alg}(Z)$.
    \item Since $\Par_G$ is $1$-affine, the spectral action of $\Perf(\Par_G)$ on $\mc{D}(\Bun_G)$ determines a sheaf of categories $\mc{D}_{\Par_G}$ on $\Par_G$\footnote{For this informal discussion, we ignore the differences between $\QCoh$ and $\Ind\Perf$.}. Consider the map $\pi^{\spec} : \Par_G \to \Ge^{\alg}(\pi_1(\widehat{G})(1))$. As in \cite[Section 8.6.6]{GLCV}, define
    $$\mc{D}^{\spec} := \pi^{\spec}_*\mc{D}_{\Par_G}.$$
\end{enumerate}

\begin{conj}[Analogue of \protect{\cite[Theorem 8.6.8]{GLCV}}]\label{conj:two-categorical-fourier-mukai-transform}
    Under the $2$-categorical Fourier--Mukai transform \eqref{eq:two-fm-transform}, we have
    \begin{equation}
        2\tn{-FM}(\mc{D}^{\spec}) = \mc{D}^{\autom},
    \end{equation}
    up to a sign twist.
\end{conj}

Let us relate Conjecture \ref{conj:two-categorical-fourier-mukai-transform} to Conjecture \ref{conj:reduction-extended-to-classical}. Recall that $u$ denotes the group banding the gerbe $\Kal$ and that there is a canonical isomorphism
$$\Hom_F(u, Z) = X^*(\pi_1(\widehat{G})(1)).$$
We also have
$$\Bun_G^e = \bigsqcup_{\lambda \in \Hom_F(u, Z)} \Bun_G^{e, \lambda}.$$
In particular, there is a Cartesian diagram
\[\begin{tikzcd}
	{\Bun_{G_{\ad}}} & {\Bun_{G}^e} \\
	{\Ge(Z)} & {\Hom_F(u,Z)}
	\arrow["{\pi^{\autom}}"', from=1-1, to=2-1]
	\arrow[from=1-2, to=1-1]
	\arrow["{\pi^e}", from=1-2, to=2-2]
	\arrow["{\ev_{\Kal}}", from=2-2, to=2-1]
\end{tikzcd}\]
Dually, there is a diagram
\[\begin{tikzcd}
	{\Par_G^e} & {\Par_G} & \\
	{B\widehat{G}^e} & {B\widehat{G}} & {\Ge^{\alg}(\pi_1(\widehat{G})(1))} \\
	\pt & {B^2\pi_1(\widehat{G})(1)}
	\arrow[from=1-1, to=1-2]
	\arrow[from=1-1, to=2-1]
	\arrow[from=1-2, to=2-2]
	\arrow[from=1-2, to=2-3]
	\arrow[from=2-1, to=2-2]
	\arrow[from=2-1, to=3-1]
	\arrow[from=2-2, to=3-2]
	\arrow["\ev"{description}, from=2-3, to=3-2]
	\arrow["q"', from=3-1, to=3-2]
\end{tikzcd}\]
where $\ev : \Ge^{\alg}(\pi_1(\widehat{G})(1)) \to B^2\pi_1(\widehat{G})(1)$ is induced by the map $\pt \to B\Weil_F$. This map is $1$-affine. Assuming Conjecture \ref{conj:two-categorical-fourier-mukai-transform} and the compatibilities of $2$-categorical Fourier--Mukai transforms established in \cite[Section 8.4]{GLCV}, we obtain an equivalence
\begin{equation}\label{eq:reduction-fm}
    q^*\ev_{\pt,*}\mc{D}^{\spec} \cong \ev_{\Kal}^*\mc{D}^{\autom}.
\end{equation}

Unwinding the definitions, the left-hand side is
$$q^*\ev_{\pt,*}\mc{D}^{\spec} = \mc{D}(\Bun_G) \otimes_{\Ind\Perf(\pt/\widehat{G})} \Ind\Perf(\pt/\widehat{G}^e),$$
while the right-hand side is
$$\ev_{\Kal}^*\mc{D}^{\autom} = \mc{D}(\Bun_G^e).$$
Thus \eqref{eq:reduction-fm} gives an equivalence of the form predicted by Conjecture \ref{conj:reduction-extended-to-classical}. Even assuming Conjecture \ref{conj:two-categorical-fourier-mukai-transform}, it remains unclear whether this equivalence agrees with the natural functor appearing in Conjecture \ref{conj:reduction-extended-to-classical}.

\printbibliography

\end{document}